\newcommand{\singlespaced}{\renewcommand{\baselinestretch}{1}\normalfont}
\newtheorem{theorem}{Theorem}[section]
\newtheorem{corollary}[theorem]{Corollary}
\newtheorem{lemma}[theorem]{Lemma}
\newtheorem{proposition}[theorem]{Proposition}
\theoremstyle{definition}
\newtheorem{definition}[theorem]{Definition}
\theoremstyle{remark}
\newtheorem{remark}[theorem]{Remark}
\newtheorem{example}[theorem]{Example}
\newtheorem{exercise}[theorem]{Exercise}
\numberwithin{equation}{section}
\newcommand{\sslash}{\mathbin{/\mkern-6mu/}}
\renewcommand{\R}{{\tb R}}
\renewcommand{\C}{{\tb C}}
\newcommand{\pc}{{\mathbf P}}
\renewcommand{\bc}{{\mathbf B}}
\newcommand{\cc}{{\mathbf C}}
\renewcommand{\mc}{{\mathbf M}}
\newcommand{\nc}{{\mathbf N}}
\newcommand{\fc}{{\mathbf F}}
\newcommand{\gc}{{\mathbf G}}
\newcommand{\ic}{{\mathbf I}}
\newcommand{\kc}{{\mathbf K}}
\renewcommand{\sc}{{\mathbf S}}
\newcommand{\tc}{{\mathbf T}}
\newcommand{\uc}{{\mathbf U}}
\newcommand{\vc}{{\mathbf V}}
\newcommand{\wc}{{\mathbf W}}
\newcommand{\xc}{{\mathbf X}}
\newcommand{\zc}{{\mathbf Z}}
\newcommand{\hc}{{\mathbf H}}
\newcommand{\field}{{\tb k}}
\newcommand{\cl}{{\textrm{cl}}}
\newcommand{\orb}{\tm{Orb}}
\newcommand{\op}{^{op}}
\renewcommand{\d}{\mathrm{d}}
\renewcommand{\Com}{\mathrm{Com}}
\newcommand{\D}{\mathrm{D}}
\newcommand{\I}{\mathrm{I}}
\newcommand{\K}{\mathrm{{K}}}
\renewcommand{\H}{\mathrm{H}}
\newcommand{\Ker}{\mathrm {{K}}}
\renewcommand{\Im}{\mathrm{{I}}}
\newcommand{\Supp}{\mathrm{Supp}}
\def\thetitle{MATROIDS AND CANONICAL FORMS: THEORY AND APPLICATIONS}
\def\theauthor{Gregory F Henselman-Petrusek}
\def\theadvisor{Robert W Ghrist}
\def\theyear{2017}
\begin{document}

\pagenumbering{roman}
\large\newlength{\oldparskip}\setlength\oldparskip{\parskip}\parskip=.33in
\thispagestyle{empty}
\begin{center}
\thetitle

\theauthor

A DISSERTATION

in

Electrical and Systems Engineering
\end{center}

\noindent\singlespaced\large
Presented to the Faculties of the University of Pennsylvania in Partial
Fulfillment of the Requirements for the Degree of Doctor of Philosophy

\begin{center}
\theyear
\end{center}

\noindent Supervisor of Dissertation

\bigskip
\noindent\makebox[-0.06in][l]{\rule[2ex]{3in}{.3mm}}
\singlespaced
Robert W Ghrist, Andrea Mitchell University Professor of Mathematics and Electrical and Systems Engineering\\

\noindent Graduate Group Chairperson 

\bigskip
\noindent\makebox[-0.06in][l]{\rule[2ex]{3in}{.3mm}}
\singlespaced
Alejandro Ribeiro, Rosenbluth Associate Professor  of Electrical and Systems Engineering\\
\vspace{-.25cm}

\noindent
\singlespaced
Dissertation Committee:\\
Chair: Alejandro Ribeiro, Rosenbluth Associate Professor  of Electrical and Systems Engineering \\
Supervisor: Robert W Ghrist, Andrea Mitchell University Professor of Mathematics and Electrical and Systems Engineering\\
Member: Rakesh Vohra, George A. Weiss and Lydia Bravo Weiss University Professor of Economics and Electrical and Systems Engineering
\vspace*{\fill}

\normalsize\parskip=\oldparskip

\newpage

\chapter*{}

\pagenumbering{gobble}

\noindent MATROIDS AND CANONICAL FORMS: THEORY AND APPLICATIONS
\vspace{1cm}

\noindent COPYRIGHT 

\noindent 2017

\noindent Gregory F Henselman

\newpage
\pagenumbering{roman}
\setcounter{page}{3}

\begin{center}
\end{center}
\begin{center}
\end{center}

\vfill
\vfill

\begin{center}
\emph{For Linda}
\end{center}

\vfill
\vfill

\newpage

\begin{center}
\end{center}

\begin{center}
\vfill
\end{center}
\vspace{-.5cm}
\begin{center}
\vfill

\begin{tabular}{p{6.75cm}}


\textgreek{\seqsplit{\doublespacing \lsstyle GIATHGIAGIAMOUPOUMOUEDWSETOMUALOGIATONPAPPOUMOUPOUMOUEDWSETOPNEUMATOUGIATHMHTERAMOUPOUMOUEDWSETHAGAPHTHSGIATONPATERAMOUPOUMOUEDWSETHNELPIDATOUGIATHNADELFHMOUPOUKRATAEITONDROMOMASKAIGIATONJAKOOPOIOSFWTIZEITAMATIAMASGIAKAITLINTOUOPOIOUHFILIAEINAIHKALUTERHMOUSUMBOULHGIAJANETHSOPOIASTOERGOEINAIAUTO}}
\vspace{.8cm}

\vspace{5cm}

\end{tabular}

\end{center}


%

\newpage
\begin{center}
ACKNOWLEDGEMENTS
\end{center}

I would like to thank my advisor, Robert Ghrist, whose encouragement and support made this work possible.   His passion for this field is inescapable, and his mentorship  gave purpose to  a host of burdens.

A tremendous debt of gratitude is owed to Chad Giusti, whose sound judgement has been a guiding light for the past nine years, and whose mathematical insights first led me to consider research in computation.

A similar debt is owed to the colleagues  who shared in the challenges of the doctoral program: Jaree Hudson, Kevin Donahue, Nathan Perlmutter, Adam Burns, Eusebio Gardella, Justin Hilburn, Justin Curry, Elaine So, Tyler Kelly, Ryan Rogers, Yiqing Cai, Shiying Dong, Sarah Costrell, Iris Yoon, and Weiyu Huang.   Thank you for your friendship and support  at the high points and the low.

This work has benefitted enormously from interaction with mathematicians who took it upon themselves to help my endeavors.  To Brendan Fong, David Lipsky, Michael Robinson, Matthew Wright, Radmila Sazdanovic, Sanjeevi Krishnan, Pawe\l{} Dlotko, Michael Lesnick, Sara Kalisnik, Amit Patel, and Vidit Nanda, thank you.  

Many faculty have made significant contributions to the content of this text, whether directly or indirectly.  Invaluable help came from Ortwin Knorr, whose instruction is a constant presence in my writing.    To Sergey Yuzvinsky, Charles Curtis, Hal Sadofsky, Christopher Phillips, Dev Sinha, Nicholas Proudfoot, Boris Botvinnik, David Levin, Peng Lu, Alexander Kleshchev, Yuan Xu, James Isenberg, Christopher Sinclair, Peter Gilkey, Santosh Venkatesh, Matthew Kahle, Mikael Vejdemo-Johansson, Ulrich Bauer, Michael Kerber, Heather Harrington, Nina Otter, Vladimir Itskov, Carina Curto, Jonathan Cohen, Randall Kamien,  Robert MacPherson, and Mark Goresky, thank you.  Special thanks to Klaus Truemper, whose text  opened the world of matroid decomposition to my imagination.

Special thanks are due, also, to my dissertation committee, whose technical insights continue to excite new ideas for the possibilities of topology in complex systems, and whose coursework led directly to my work in combinatorics.

Thank you to Mary Bachvarova, whose advice was decisive in my early graduate education, and whose friendship offered shelter from many storms.

Finally, this research was made possible by the encouragement, some ten years ago, of my  advisor Erin McNicholas, and of my friend and mentor Inga Johnson.  Thank you for bringing mathematics to life, and for showing me the best of what it could be.  Your knowledge put my feet on this path, and your faith was  the  reason I imagined following it.  

\vfill
\vfill
\vfill


\newpage
\begin{center}
  ABSTRACT\\
\thetitle\\
\vspace{.5in}

  \theauthor\\
  \theadvisor
\end{center}
This thesis proposes a combinatorial generalization of a nilpotent operator on a vector space.  The resulting object is highly natural, with basic connections to a variety of fields in pure mathematics, engineering, and the sciences.  For the purpose of exposition we focus the discussion of applications on homological algebra and computation, with additional remarks in lattice theory, linear algebra, and abelian categories.  For motivation, we recall that the methods of algebraic topology have driven remarkable progress in the qualitative study of large, noisy bodies of data over the past 15 years.   A primary tool in \emph{Topological Data Analysis [TDA]} is the \emph{homological persistence module}, which leverages  categorical structure  to compare algebraic shape descriptors across multiple scales of measurement.   Our principle application to computation is  a novel algorithm to calculate persistent homology which, in certain cases, improves the state of the art by several orders of magnitude.  Included are novel results in discrete, spectral, and algebraic {Morse theory}, and on the strong maps of matroid theory.   The defining theme throughout is interplay between the \emph{combinatorial} theory matroids and the \emph{algebraic} theory of categories.   The nature of these interactions is remarkably simple, but their consequences in homological algebra, quiver theory, and combinatorial optimization represent new and widely open fields for  interaction between the disciplines.

\noindent


\vspace*{\fill}

\newpage

\tableofcontents

\listoftables

\listoffigures

\newpage
\pagenumbering{arabic}
\include{introdept}
\include{back}
\include{finitedept}
\include{infinitedept}


\setcounter{page}{1}
\chapter{Introduction}
\label{ch:intro}

\subsubsection{Motivation: computational homology}
In the past fifteen years, there has been a steady advance in the use of techniques and principles from algebraic topology to address problems in the data sciences. This new subfield of \emph{Topological Data Analysis} [TDA] seeks to extract robust qualitative  features from large, noisy data sets. At the simplest and most elementary level, one has \emph{clustering}, which returns something akin to connected components. There are, however, many higher-order notions of global features in connected components of higher-dimensional data sets that are not describable in terms of clustering phenomena. Such ``holes'' in data are quantified and collated by the classical tools of algebraic topology: \emph{homology}, and the more recent, data-adapted, parameterized version, \emph{persistent homology} \cite{CarlssonTopology09,EHPersistent08,GhristBarcodes08}. Homology and persistent homology will be described in Chapter \ref{ch:efficienthomologycomputation}. For the moment, the reader may think of homology as an enumeration of ``holes'' in a data set, outfitted with the structure of a sequence of vector spaces whose bases identify and enumerate the ``essential'' holes in a data set.

This work has as its motivation and primary application, the efficient computation of homology, persistent homology, and higher variants (such as cellular sheaf cohomology \cite{Curry:2016:DMT:2979741.2979751}) for application to TDA. Computational homology is an intensely active area of research with a rich literature \cite{KMMComputational04,ELZTopological02,EHComputational10}. For this introductory summary, it suffices to outline a little of the terminology without delving into detail. Homology takes as its input a sequence of vector spaces $C=(C_k)$ and linear transformations $\partial_k\colon C_k\to C_{k-1}$, collectively known as a \emph{chain complex} that, roughly, describes how simple pieces of a space are assembled.

\begin{equation}
\label{eq:ChainComplex}
\displaystyle
\xymatrix{
C \, =  \, \,
\cdots \ar[r] & C_k \ar[r]^{\partial_k} & C_{k-1} \ar[r]^{\partial_{k-1}} &
\cdots \ar[r]^{\partial_2} & C_1 \ar[r]^{\partial_1} & C_0 \ar[r]^{\partial_0} & 0
} .
\end{equation}
\vspace{0cm}

The chain complex is the primal object in homological algebra, best seen as the higher-dimensional analogue of a graph together with its  adjacency matrix. This ``algebraic signal'' is compressed to a homological core through the standard operations of linear algebra: kernels and images of the boundary maps $\partial$.

The standard algorithm to compute homology of a chain complex is to compute the \emph{Smith normal form} of the aggregated boundary map $\partial:C\to C$, where one concatenates the individual terms of (\ref{eq:ChainComplex}) into one large vector space. This graded boundary map has a block structure with zero blocks on the block-diagonal (since $\partial_k\colon C_k\to C_{k-1}$) and is nonzero on the superdiagonal blocks. The algorithm for computing Smith normal form is a slight variant of the ubiquitous Gaussian elimination, with reduction to the normal form via elementary row and column operations. For binary field coefficients this reduction is easily seen to be of time-complexity $O(n^3)$ in the size of the matrix, with an expected run time of $O(n^2)$.

This is not encouraging, given the typical sizes seen in applications. One especially compelling motivation for computing homology comes from a recent set of breakthrough applications in neuroscience by Giusti et al. \cite{GPC+Clique15}, which uses homology to extract network structure from a collection of neurons based solely on a correlation matrix.  For a typical experimental rig of 250 neurons, the resulting 250-by-250 matrix leads to chain complexes whose net dimension (the size of the resulting boundary matrix) is in the hundreds of billions. Such values of $n$ frustrates the usual algorithms: see \cite{OPT+roadmap} for benchmarks, which as of January 2017 state that with best available software over a 1728-core Sandybridge cluster, the largest complex tested has net dimension $n\approx 3\times 10^9$. The motivation for and direct outcome of this thesis is an algorithm for the efficient computation of homology in larger systems, immediately useful in TDA for Neuroscience and more.

\subsubsection{Approach: three ingredients}
In order to achieve a breakthrough in computational speed and memory management, this thesis turns to increased abstraction as the means of ascent. There are three ingredients that, though all classical and well-known in certain sectors, are synthesized in a novel way here.  These ingredients are as follows.

\begin{enumerate}
\item {\bf Matrix Factorization:} This first ingredient is the most familiar and least surprising. As homology computation in field coefficients is little more than Gaussian reduction, one expects the full corpus of matrix factorization methods to weigh heavily in any approach. One novelty of this thesis is the reconciliation of matrix factorization with the more abstract (category-theoretic) approaches to homology, as well as to the preprocessing/reduction methods of \cite{KMMComputational04}.

\item {\bf Matroid Theory:} The hero of this story is matroid theory, the mathematical fusion of combinatorial geometry and optimization theory \cite{orientedmatroids93,OxleyMatroid11,truemperMatroidDecomposition}. Matroids have a rich history in combinatorics and combinatorial optimization, but are largely absent in the literature on computational homology. This thesis introduces the language and methods of matroids to computational algebraic topology, using this to relate combinatorial optimization to homology computation. As a result, a novel and wholly combinatorial approach to homology is derived, using filtrations (and bifiltrations) of matroids as an extension of the notion of a chain complex (\ref{eq:ChainComplex}) above.

\item {\bf Homological Algebra:} This core branch of Mathematics is simply the algebra of diagrams \cite{GMMethods03}. In its simplest emanation, one works with, first, sequences of vector spaces and linear transformations [chain complexes], generalizing to more intricate diagrams. The key functional tools of linear algebra --- kernels, images, cokernels, coimages, and the like --- yield inferential engines on diagrams, the simplest of which are homologies. One quickly sees that, as with combinatorial structures leading to matroids, diagrams of vector spaces rely little on the actual details of linear algebra: only the core constructs count. This prompts the usual ascension to \emph{abelian categories} with vector-space-like objects, and transformations with suitable notions of kernels and cokernels. This thesis synthesizes homological algebra with matroid and matrix factorization methods.
\end{enumerate}

As a simple example of the intersection of these three subfields, consider the classical, essential operation of \emph{pivoting} in Gaussian reduction. The correspondences alluded to above yield a reinterpretation of pivoting as (1) a manifestation of the \emph{Exchange Lemma} in a particular matroid, and (2) an instance of the \emph{Splitting Lemma} in the homological algebra of short exact sequences. Though not in itself a deep result, this observation points the way to deeper generalizations of matrix methods in homological and combinatorial worlds.

The Gaussian pivot, the exact splitting, and the matroid basis exchange, are a single event. Iteration of this step is the natural progression and, in these domains, points to the common thread connecting them with each other and with computational homology. This mutual intersection is \emph{Discrete Morse Theory} [DMT], a fairly novel generalization \cite{FormanMorse98,KozlovDiscrete05} of the classical Morse Theory on manifolds \cite{MilnorMorse63}. Discrete Morse Theory has been the basis for some of the most effective compression schemes for simplicial and cellular complexes, leading to novel algorithms \cite{MNMorse13} and software. This thesis unearths a previously unknown connection between Discrete Morse Theory and (1) the Schur complement in matrix factorization; (2) minimal basis representations and greedy optimization in matroid theory; and (3) exact seqeunces in homological algebra.

The technical details of how these subjects merge and react to yield dramatic improvements in homology computation are not difficult. Indeed, they are presaged in a canonical result seen by every undergraduate student of Mathematics that acts as a microcosm, combining elements of combinatorics, matrix factorization, and (hidden) homological algebra. This key foreshadowing is the \emph{Jordan Form}.

\subsubsection{Primal example: Jordan bases}
Consider the (abelian) category of finite-dimensional $\C$-linear vector spaces and $\C$-linear maps. The Jordan bases of a complex operator are well-known. Less well-known is the implicit matroid-theoretic structure of Jordan bases: this has not, as far as the author knows, appeared in published literature. On a formal level the connection is fundamental:  matroid theory is built on the study of \emph{minimal bases} (read: bases subordinate to a flag of vector spaces) and the study of Jordan forms centers on bases for flags stabilized by a linear operator.  That this connection is deep can be demonstrated by an application to the problem of computing Jordan bases using minimal bases, which  is simple enough to describe with no matroid-theoretic language at all, though some notation will be required.

The story begins with a standard reduction: to describe the Jordan bases of an arbitrary complex operator, it suffices to describe those of a nilpotent one, since to every operator corresponds a nilpotent with identical Jordan bases.  Therefore fix a $T$ such that $T^n = 0$, for some $n$.  The approach will hinge on the canonical projection map,  $q$, from the base space of $T$ to its quotient by the image of $T$.   A subset of the base space is \emph{$q$-independent} if it maps to an independent set under $q$.  Likewise, sets that map to bases are \emph{$q$-bases}.   These definitions are slightly nonstandard, but the reader who continues through the background section will see where they fit into ordinary combinatorial terminology.

For every real-valued function $\kc$ on the domain of $T$ and every finite subset $E$,  define the \emph{$\kc$-weight} of $E$  to be $\sum_{e \in E} \kc(e)$.     A $q$-basis $B$ has \emph{minimum weight}  if its weight is minimal among $q$-bases.   The specific weight function that will occupy our attention is the one uniquely defined by the condition  $\tm{Ker}(T^m) = \{\kc \le m\}$, for all nonnegative integers $m$.

For convenience, let the \emph{orbit} of $E$  be the set  of all nonzero vectors that may be expressed in form $T^m e$, for some $e$ in $E$, and the \emph{support} of a linear combination $\sum_{e \in E} \ak_e e$  be the set of all $e$ such that $\ak_e$ is nonzero.  We may now describe the relationship between $q$-bases and Jordan bases precisely.

\begin{proposition}
\label{prop:intro1}
The Jordan bases of a nilpotent operator $T$ are exactly the orbits of minimum-weight $q$-bases.
\end{proposition}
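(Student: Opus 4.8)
The plan is to recognise the $\kc$-weight of a $q$-basis as an upper bound for the size of its orbit, to bound that size below by $\dim V$, and then to read the proposition off the equality case. Write $V$ for the base space of $T$ and set $d=\dim V$. Since $v\in\mathrm{Ker}(T^m)$ exactly when $\kc(v)\le m$, the weight function is simply $\kc(v)=\min\{m\ge 0 : T^m v=0\}$, so for each $e$ the chain $e,Te,\dots,T^{\kc(e)-1}e$ consists of nonzero vectors (and is precisely the set of nonzero $T^m e$), has at most $\kc(e)$ members, and the orbit of a set $E$ is $\bigcup_{e\in E}\{e,Te,\dots,T^{\kc(e)-1}e\}$. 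Hence $|\mathrm{Orb}(E)|\le\sum_{e\in E}\kc(e)$, the $\kc$-weight of $E$.

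First I would show that $\mathrm{Orb}(B')$ spans $V$ for every $q$-basis $B'$. As $q(B')$ is a basis of $V/\mathrm{im}(T)$ we have $V=\mathrm{span}(B')+\mathrm{im}(T)$; applying $T$ and substituting repeatedly yields $V=\mathrm{span}(B')+\mathrm{span}(TB')+\mathrm{im}(T^2)=\cdots=\mathrm{span}(B'\cup TB'\cup T^2B'\cup\cdots)+\mathrm{im}(T^n)=\mathrm{span}\,\mathrm{Orb}(B')$, using $T^n=0$. Together with the previous paragraph this gives, for every $q$-basis, $\kc\text{-weight}(B')\ge|\mathrm{Orb}(B')|\ge d$; in particular the minimum $\kc$-weight over $q$-bases is at least $d$.

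Next I would analyse equality. If $B'$ is a $q$-basis of $\kc$-weight $d$, the chain of inequalities collapses: $|\mathrm{Orb}(B')|=\sum_{b\in B'}\kc(b)=d$, so $\mathrm{Orb}(B')$ is a spanning set of size $\dim V$, hence a basis, and the surjection $(b,m)\mapsto T^m b$ (over $b\in B'$, $0\le m<\kc(b)$) onto $\mathrm{Orb}(B')$, being a map between finite sets of equal size, is a bijection. Injectivity forces the chains of distinct elements of $B'$ to be disjoint and each chain to have $\kc(b)$ distinct members; since $T$ sends $T^m b$ to the next element of its chain or to $0$, these chains partition $\mathrm{Orb}(B')$ into Jordan chains, so $\mathrm{Orb}(B')$ is a Jordan basis. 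Conversely, given a Jordan basis $B$, partition it into Jordan chains and let $B'$ be the set of chain generators; the non-generators lie in $\mathrm{im}(T)$, so $q(B')$ spans $V/\mathrm{im}(T)$, a space of dimension $d-\mathrm{rank}(T)$, which equals the number $|B'|$ of chains by the Jordan form, forcing $q(B')$ to be a basis and $B'$ a $q$-basis; visibly $\mathrm{Orb}(B')=B$ and $\kc\text{-weight}(B')=\sum(\text{chain lengths})=d$. Thus the minimum $\kc$-weight is exactly $d$, so ``minimum-weight $q$-basis'' coincides with ``$q$-basis of $\kc$-weight $d$'', and the two directions just proved say precisely that the Jordan bases of $T$ are the orbits of such $q$-bases.

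The step that needs genuine care is the equality analysis: from $\kc\text{-weight}(B')=d$ one must extract not merely that $\mathrm{Orb}(B')$ has cardinality $d$ but that the chain parametrisation is injective, so that $\mathrm{Orb}(B')$ breaks into disjoint nondegenerate chains and is a Jordan basis in the strict sense (a disjoint union of Jordan strings), rather than just a $T$-stable spanning set --- the sandwich $d\le|\mathrm{Orb}(B')|\le\kc\text{-weight}(B')$ is exactly what delivers this, and a non-minimal $q$-basis can indeed have its chains overlap. A smaller point, used in the converse, is the count $|B'|=\dim(V/\mathrm{im}(T))$ for the generator set of a Jordan basis, i.e.\ $\mathrm{rank}(T)=d-(\#\text{chains})$; throughout I freely use the existence of Jordan bases, which is legitimate since the statement asserts only a characterisation.
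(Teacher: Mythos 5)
Your argument is correct and takes a genuinely different route from the paper's. You organize the whole proof around the cardinality sandwich $\dim V \le |\mathrm{Orb}(B')| \le \sum_{b\in B'}\kc(b)$, valid for every $q$-basis $B'$: the right inequality because $(b,m)\mapsto T^m b$ for $b\in B'$, $0\le m<\kc(b)$, surjects from a set of that size onto the orbit, and the left because iterating $V=\mathrm{span}(B')+\mathrm{im}(T)$ shows the orbit spans. From an arbitrary Jordan basis you then extract a $q$-basis of weight exactly $\dim V$, so the minimum weight is $\dim V$; at the minimum the sandwich collapses, the surjection becomes a bijection, and disjointness and nondegeneracy of the chains fall out, giving the Jordan property. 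The paper instead proves the forward implication directly: independence of the orbit by an exchange argument (a dependence relation would let one replace some $s\in B$ with a strictly lighter vector, contradicting minimality), spanning by a quotient construction (the induced nilpotent on $V/\mathrm{span}(\mathrm{Orb}(B))$ has trivial cokernel, forcing the quotient to vanish), and the converse via uniqueness of the Jordan decomposition. Your version is more elementary, identifies the optimal value (minimum weight $=\dim V$) which the paper never states, and needs only the existence of Jordan bases rather than uniqueness; the paper's exchange argument is less transparent at this stage but deliberately foreshadows the matroid machinery developed throughout the rest of the thesis.
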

\newcommand{\orbb}{E}
\begin{proof}
Let $B$ be a $q$-basis of minimum weight, and  let $E$ be its orbit.   We will first show that $E$ is independent.


If some nontrivial linear combination in $E$ evaluates to zero then, by factoring out as many powers of $T$ as possible, the same can be expressed as $T^m \sk$, where $\sk$ is a linear combination in $\orbb$ whose support contains at least one element of $B$.  Let $S$ denote the support of $\sk$, and assume, without loss of generality, that no element of $S$ vanishes under $T^m$.

Since $q(\sk)$ lies in the span of $q(S \cap B)$, there is an $s$ in $S \cap B$ for which $q(S \cap B)$ and $q(S \cap B - \{s\} \cup \{\sk\})$ have equal spans.  Evidently, $B - \{s\} \cup \{\sk\}$ is a $q$-basis.  Since $T^m$ vanishes on $\sk$ and not on $s$, however, this new basis has weight strictly less than $B$.  This is impossible, given our starting hypothesis, so $E$ must be independent.

To see that $E$ has full rank, let $U$ denote the quotient of the base space by the span of $\orbb$.  Our operator  induces a nilpotent map on $U$, and the cokernel of that map is trivial if and only if $U$ is trivial.  By the First Isomorphism Theorem, the same cokernel may be identified with the quotient of the base space by the span of  $\orbb \cup \tm{Im}(T)$.  The latter vanishes, and so does $U$.  Thus $\orbb$ is a basis.  Evidently,  it is a Jordan basis.

This establishes that the orbit of every minimum-weight $q$-basis is a Jordan basis.  For the converse,  consider a bijection between Jordan bases and certain of the $q$-bases.  In the forward direction, this map sends $J$ to $J - \tm{Im}(T)$.  In reverse, it sends $J - \tm{Im}(T)$ to its orbit.  Thanks to  uniqueness of the Jordan decomposition, every $q$-basis in the image of this map will have equal weight.  At least one such will have minimum weight -- for we have shown that at least one Jordan basis can be expressed as the orbit of a minimum-weight $q$-basis  -- and therefore all do.  Thus every Jordan basis is the orbit of a minimum-weight $q$-basis.
%
%
%
%
\end{proof}

Conveniently,  minimum weight $q$-bases have a simple description.  Let $K_m$ denote the kernel of $T^m$, and put $B_m = B \cap K_m$

\begin{proposition}
\label{prop:intro2}
A $q$-basis $B$ has minimum weight  if and only if $q(B_m)$ spans $q(K_m$), for all $m$.
\end{proposition}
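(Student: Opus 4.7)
The plan is to translate the weight of a $q$-basis into a quantity indexed by the filtration $K_0 \subseteq K_1 \subseteq \cdots$, and then identify minimum weight with simultaneous maximality of $|B_m|$ for every $m$.

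First I would rewrite the weight combinatorially. For any nonzero $b$ one has $\kc(b) = \#\{m \ge 1 : b \notin K_{m-1}\}$, so swapping the order of summation yields
\[
\sum_{b \in B} \kc(b) \;=\; \sum_{m \ge 0} \bigl(|B| - |B_m|\bigr),
\]
a finite sum since $B_m = B$ once $m$ is sufficiently large. Every $q$-basis has the same cardinality (namely the dimension of the quotient), so minimizing the $\kc$-weight of $B$ is equivalent to maximizing every $|B_m|$ simultaneously.

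Next I would bound each $|B_m|$. Because $q(B)$ is a basis of the quotient, $q$ restricts to a bijection of $B_m$ with a linearly independent subset of $q(K_m)$, whence
\[
|B_m| \;=\; \dim \operatorname{span} q(B_m) \;\le\; \dim q(K_m),
\]
with equality precisely when $q(B_m)$ spans $q(K_m)$.

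The main obstacle is to show that these upper bounds are \emph{jointly} attainable, since a priori maximizing $|B_m|$ for one value of $m$ could conflict with maximizing it for another. I would resolve this by constructing a witness $q$-basis by induction on $m$. Starting from $B_0 = \varnothing$, and having produced $B_{m-1} \subseteq K_{m-1}$ with $q(B_{m-1})$ a basis of $q(K_{m-1})$, I would extend $q(B_{m-1})$ to a basis of $q(K_m)$ by appending images $q(b)$ of chosen preimages $b \in K_m$. Each such preimage must lie in $K_m \setminus K_{m-1}$, since otherwise its image would already be spanned by $q(B_{m-1})$, contradicting independence. Iterating until the filtration saturates produces a $q$-basis $B$ with $|B_m| = \dim q(K_m)$ for every $m$, so the joint maximum is achieved.

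Combining the three ingredients yields the equivalence: a $q$-basis has minimum weight if and only if each $|B_m|$ attains its upper bound $\dim q(K_m)$, which holds if and only if $q(B_m)$ spans $q(K_m)$ for every $m$.
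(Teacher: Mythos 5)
Your argument is correct, and it takes a genuinely different route from the paper's. The paper proves the forward implication (minimum weight implies spanning for all $m$) by a single exchange: if $q(B_m)$ fails to span $q(K_m)$, pick a vector $v \in K_m$ with $q(v)$ outside the span of $q(B_m)$ and swap it in for some element of $B - B_m$; the new $q$-basis is strictly lighter, contradicting minimality. The converse is left implicit, resting on the observation that every basis satisfying the spanning condition has the same weight, together with the existence of a minimizer. You instead rewrite the weight as a staircase sum $\sum_{b \in B}\kc(b) = \sum_{m\ge 0}\bigl(|B|-|B_m|\bigr)$, bound each $|B_m|$ above by $\dim q(K_m)$ with equality precisely when $q(B_m)$ spans, and exhibit a witness $q$-basis achieving all these bounds at once via a filtered extension argument. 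Your route buys both directions from the same accounting, avoids exchange entirely, and makes transparent why all minimizers share a common weight. One logical point worth tidying: you assert early that minimizing the weight ``is equivalent to maximizing every $|B_m|$ simultaneously,'' but at that stage you have only that minimum weight is equivalent to maximizing $\sum_m |B_m|$; the upgrade to termwise maximality depends on the joint attainability you establish afterward. You do flag and close this gap, but the proof reads more cleanly if the equivalence with the sum is stated first and termwise maximality deduced only after the inductive construction is in hand.
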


\begin{proof}
Fix any $q$-basis $B$, and suppose $q(B_m)$ does not span $q(K_m)$, for some $m$.  Then there exists $\sk \in K_m$ such that $q(\sk)$ lies outside the span of $q(B_m)$, and one can construct a new $q$-basis by replacing an element of $B - B_m$ with $\sk$.   The new basis will weigh strictly less than $B$, so $B$ is not minimal.
\end{proof}

The following corollary is an immediate consequence.  To save unnecessary verbiage, let us say that a set \emph{represents} a basis in a quotient space if it maps to one under the canonical projection map.



\begin{corollary}
\label{cor:intro3}
A set $B$ is a minimum-weight $q$-basis if and only if there exist  $I_m \su K_m$ such that
\[
B = I_1 \cup \cd \cup I_n
\]
and $I_m$ represents a basis in $q(K_m)/q(K_{m-1})$, for all $m$.
\end{corollary}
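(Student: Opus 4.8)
The plan is to read the corollary off Proposition~\ref{prop:intro2}, which states that a $q$-basis $B$ is minimum-weight precisely when $q(B_m)$ spans $q(K_m)$ for every $m$, where $B_m=B\cap K_m$. Both implications then reduce to bookkeeping along the filtration $0=K_0\subseteq K_1\subseteq\cdots\subseteq K_n$, whose top term is the entire base space because $T^n=0$; throughout I use the short exact sequence $0\to q(K_{m-1})\to q(K_m)\to q(K_m)/q(K_{m-1})\to 0$.

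For the forward implication, suppose $B$ is a minimum-weight $q$-basis and set $I_m=B_m\setminus B_{m-1}$. Because $K_{m-1}\subseteq K_m$, these sets are disjoint, their union is $B_n=B$, and each $I_m\subseteq K_m$. That $I_m$ represents a basis of $q(K_m)/q(K_{m-1})$ is then forced: the classes of $q(I_m)$ span the quotient since $q(B_m)$ spans $q(K_m)$ (Proposition~\ref{prop:intro2}) while $q(B_{m-1})$ maps to $0$ there; and they are independent, for any relation among them, after a witnessing element of $q(K_{m-1})$ is rewritten through the spanning set $q(B_{m-1})$, would produce a nontrivial relation among $q(B)$, contradicting $q$-independence of $B$ (here one uses that $I_m$ and $B_{m-1}$ are disjoint subsets of $B$).

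For the converse, suppose $B=I_1\cup\cdots\cup I_n$ with $I_m\subseteq K_m$ and $I_m$ representing a basis of $q(K_m)/q(K_{m-1})$. Observe first that the $I_m$ are automatically pairwise disjoint: an element of $I_j$ with $j<m$ lies in $K_{m-1}$, hence has zero class in $q(K_m)/q(K_{m-1})$, which is impossible for a member of a basis. An induction on $m$ now shows $q(I_1\cup\cdots\cup I_m)$ is a basis of $q(K_m)$: the inductive step glues a basis of $q(K_{m-1})$ onto a lift of a basis of the quotient, and both spanning and independence follow at once from the exact sequence above. At $m=n$ this says $q(B)$ is a basis of $q(K_n)$, i.e.\ of all of $q$'s codomain, so $B$ is a $q$-basis; and since $I_1\cup\cdots\cup I_m\subseteq K_m$ and therefore sits inside $B_m$, the set $q(B_m)$ spans $q(K_m)$ for every $m$, whence $B$ has minimum weight by Proposition~\ref{prop:intro2}.

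There is essentially no obstacle here; the only points needing care are recognizing that the sets $I_m$ appearing in the statement are forced to partition $B$ compatibly with the filtration (rather than forming an arbitrary cover), and keeping straight which spanning and independence claims live in $q(K_m)$ and which in the subquotient $q(K_m)/q(K_{m-1})$ — both managed uniformly through the short exact sequence.
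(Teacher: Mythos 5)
Your proof is correct and takes essentially the same approach as the paper: the forward direction defines $I_m = B_m \setminus B_{m-1}$ exactly as the paper suggests, and the converse is the "simple reversal" the paper alludes to, fleshed out via induction along the filtration. Your observation that the $I_m$ appearing in the statement are forced to be pairwise disjoint is a helpful addition that the paper's terse proof leaves implicit.
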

\begin{proof}
If $B$ is a minimum-weight basis then we may take $I_m = B_m - B_{m-1}$, by Proposition \ref{prop:intro2}.  The converse is simple reversal.
\end{proof}


An immediate application of these results is the ease with which one can now compute Jordan bases:  For each $m$,  collect enough vectors from $K_m$ to represent a basis in $q(K_m)/q(K_{m-1})$.  Then, take their union.

A second application regards the ``constructive'' proof of the Jordan form in linear algebra texts.  A noted source of discomfort with this approach is that its construction procedure can be shown to work, but there is no clear sense \emph{why} it works.  Here, too, discrete optimization can shed some light.
As examples we take three clean constructive arguments.
\begin{enumerate}
\item The first, by Tao \cite{taoblog07}, begins with an arbitrary basis of the base space.   In general, the orbit of this set will span the space but  fail to be linearly independent.  It is shown that each linear dependence relation reveals how to augment one of the original basis vectors so as to shorten its orbit, in essence replacing $\{v, Tv, \ld, T^n v\}$ with $\{v + u, T(v + u), \ld, T^k(v + u)\}$, for some  $k$ less than $n$.  Since the union of orbits grows strictly smaller after each iteration, the process  terminates.  The set that remains is a linearly independent union of orbits that spans the  space.  That is, a Jordan basis.

\item The second, by Wildon \cite{wildonshortproofjcf}, inducts on the dimension of the base space.  The inductive hypothesis provides a Jordan basis for the restriction of $T$ to its image, which for a niltpotent operator lies properly inside the domain.  Since this basis lies in the image, every orbit $\{v, Tv, \ld, T^n v\}$ is contained in that of some vector $u$ such that $v = Tu$.  Moreover, some vectors in the basis vanish under $T$, and we may extend these to form a basis for the kernel.  It is shown via algebraic operations that the set composed of the orbits of the $u$, \emph{plus} all the kernel elements, is linearly independent.  Dimension counting then shows the set to be a basis.

\item The third and perhaps the most direct comes from Baker \cite{BakerBlog}, who argues inductively that the linear span of every maximum-length orbit has a $T$-invariant complement.  Splitting off maximal-length orbits from successively smaller complements gives the desired decomposition.
\end{enumerate}

What do these approaches have in common?  With the benefit of hindsight, each computes a minimum-weight $q$-basis.   In fact, each implements one of two classical algorithms in combinatorial optimization.  In reference to $q$-bases, the last of these algorithms begins with an empty set, $B_0$, and so long as $B_k$ is properly contained in a $q$-independent set of cardinality $|B_k|+1$, chooses from among these a 1-element extension of  minimum weight, assigning this the label $B_{k+1}$.  We will show in later section that, because $q$-bases realize the structure of a matroid, this process returns a minimum-weight basis.  The common name for this procedure is the \emph{greedy algorithm} for matroid optimization.  The first algorithm begins with a complete $q$-basis $B$.  So long as some element $s$ in $B$ can be exchanged for some element $t$ to form a new $q$-basis of lesser weight, the algorithm does so.  Again, because $q$-bases realize the structure of a matroid -- and, more generally, of an $M$-convex set -- the output is an optimal basis.  This is called \emph{gradient descent}.

Clearly, the algorithm of Tao implements gradient descent.  That of Baker is a formal dual to the matroid greedy algorithm we have just described.  In this formulation the weight function of interest is the length of a Jordan chains, hence the focus on maximal cycles.  That of Wildon implements a quotient construction which we will discuss in later sections.  Pleasingly, it takes much less work to prove and understand these observations than it did to prove our first sample proposition.  Indeed, the only reason we have worked so hard in the fist place was to avoid the use of formal language.


\subsubsection{Outline and Contributions of the Thesis}

The principal arc of our story is a dramatic simplification of the example described above regarding Jordan bases.   The idea is to de-clutter the anecdote by rising in abstraction and removing excess structure.
\begin{itemize}
 \item In place of $q$-bases, we turn to simpler matroid bases.
 \item In place of linear operators, we generalize to morphisms in an Abelian category.
\end{itemize}

This lift in level of abstraction yields increased generality. This is leveraged into the following specific contributions:

\begin{enumerate}
\item The principal outcome of the thesis is a novel algorithm for computing homology and persistent homology of a complex. This is incarnated in the \emph{Eirene} software platform, which is benchmarked against leading competitors and shown to give roughly two order-of-magnitude improvement in speed (elapsed time) and memory (max heap) demands on the largest complexes computed in the literature. This can be found in \S\ref{sec:ops}.
\item In \S\ref{sec:gradcanform} we give a novel definition of \emph{combinatorial homology} in terms of matroid bifiltrations. This generalizes homology of chain complexes and persistent homology and permits the merger of greedy algorithms with more classical homological algebra.
\item A novel relationship between the Schur complement (in linear algebra), discrete Morse Theory (in computational homology), and minimal bases (in combinatorial optimization) is developed in \S\ref{ch:efficienthomologycomputation} and \S\ref{ch:morsetheory}; this is used as a key step in building the algorithm for \emph{Eirene}.
\end{enumerate}



These outcomes are the product of careful distilling of the notion of minimal bases from the foreshadowed Jordan form story above. The steps are as follows:
\begin{itemize}
\item Chapter \ref{ch:matroids} is a self-contained introduction to the tools from matroid theory here needed, with an emphasis on \emph{exchange} as the key concept.
\item Chapter \ref{ch:modularity} reviews modularity in matroids as the precursor for generating minimal bases.
\item Chapter \ref{ch:canonicalforms} introduces a formal notion of a nilpotent operator on a matroid, and classifies its canonical forms.  As special cases, we derive combinatorial generalizations of homology and persistent homlogy.
\item Chapter \ref{ch:exchangeformulae} provides formulae relating the combinatorial operation of exchange with the algebraic operation of matrix factorization.   The main technical insight is an lifting of the \emph{M\"obius Inversion Formula} to the natural setting of homological algebra, abelian categories.
\item Chapter \ref{ch:elementaryexchange} classifies the LU and Jordan decompositions of classical matrix algebra combinatorially.   We describe combinatorial algorithms to obtain such decompositions by greedy optimization.   The key idea in this formalism is the matroid theoretic application of \emph{elementary exchange}.
\item Chapter \ref{ch:efficienthomologycomputation}  applies the algorithms of Chapter \ref{ch:elementaryexchange} to the problem of efficient homology computation.   Our main observation is a three-way translation between the topology, algebra, and combinatorics of a cellular space.
\item Chapter \ref{ch:morsetheory} posits a new foundation for spectral and algebraic Morse theory.  This approach is simpler and more general than the published results in either subject of which we are aware.   The main idea is to lift the notion of a Morse complex to that of a Jordan complement.
\end{itemize}



\chapter{Notation}
Although a linear-algebraic sensibility is the default mode of this thesis, some conventions are category-theoretic in nature.  For instance, we employ a superscript symbol $op$ to emphasize the symmetry between certain pairs of maps or objects.

To each function $f$ we associate a domain $\D(f)$, an image $\I(f)$, and, optionally, a \emph{codomain} $\D\op(f)$.   We will write $f: A \to B$ to declare that $A = \D(f)$ and  $B = \D\op(f)$.  The \emph{identity} function on $A$ is the unique map $1_A:A \to A$ so that $1_A(a) = a$.   A map \emph{into $W$} is a map with codomain $W$, and a map \emph{out of $W$} is a map with domain $W$.  We write
\begin{align*}
f(S) = \{f(s) : s \in S\} && f \inv(T) = \{a \in A: f(a) \in T\}
\end{align*}
for any $S  \su A$ and $T \su B$.

The terms \emph{collection} and \emph{family} will be used interchangeably with the term \emph{set}.    An  \emph{$A$-indexed family in $B$} is a set function $f:A \to B$.       We will sometimes write $f_a$ for $ f(a)$, and either $(f_a)_{a \in A}$ or $(f_a)$ for the function $f$.      A \emph{sequence in $B$} is an $I$-indexed family in $B$, where $I=\{n \in \Z : a \le n \le b\}$ for some  $a, b \in \Z \cup \{-\infty, +\infty\}$.    Given an indexed family $f$ and a collection $\ic$ of unindexed sets, we write $f \in \ic$ when $f$ is injective and $\I(f) \in \ic$.

Several  mathematical operations accept indexed families as inputs, e.g.\ sum, product,  union, and intersection.   By convention
\begin{align*}
\sum_{a \in \emptyset} f_a = 0 && \prod_{a \in \emptyset} f_a = 1 && \bigcup_{a \in \emptyset} f_a = \emptyset && \bigcap_{a \in \emptyset } f_a = E,
\end{align*}
where $E = \cup_{a \in A} f_a$.  Arguments will be dropped from the expression $\square_{s \in S} f_s$ where context leaves no room for confusion, e.g.\ $\sum f = \sum_{a \in A} f_a$.   

We will frequently write $A - B$ for the set difference $\{a \in A : a \notin b\}$.  The left-right rule is observed for order of set operations, so that
$$
A - B \cup C = (A - B) \cup C,
$$
though such expressions will generally be reserved  for cases where $B$ and $C$ have trivial intersection, hence where $A - B \cup C = A \cup C - B$.

An \emph{unindexed family in $B$} is a subset  $S \su B$.  To every unindexed family corresponds a canonical indexed family, $1_S$.  By abuse of notation, we will use $S$ and $1_S$ interchangeably as inputs to the standard operations, for instance writing $\sum S$ for $\sum_{s \in S} s$.

 Given  any relation $\sim$ on $B$, we write $f_{\sim b}$ for the set $\{a \in A : f(a) \sim b\}$.  If $f$ is real-valued, for example,  then $f_{\le t} = \{a \in A: f(a) \le t\}$.  Similarly, given any $C \su A$, we will write $C_{f \sim b}$ for $\{c \in C: f(c) \sim b\}$.  By extension we set 
 \begin{align*}
f_{\sim a} = f_{\sim f(a)} && and  && C_{\sim a} = C_{\sim f(a)}.
 \end{align*}

The \emph{characteristic function} of a subset $J \su I$ is the zero-one set function $\chi_J$ with domain $I$ defined by
  \begin{align*}
  \chi_J(i) =
  \begin{cases}
  1 & i \in J  \\
  0 & else.
  \end{cases}
  \end{align*}

Symbols $\R$ and $\C$ denote the fields of real and complex numbers, respectively.  Given a coefficient field $\field$, we write $\field^I$ for the set of all functions $I \to \field$, regarded as $\field$-linear vector space under the usual addition and scalar multiplication.   We write $\K(T)$ for the null space of a linear map $T$, and $\Hom(U,V)$ for the space of  linear maps $U \to V$.   If $V$ is a vector space and $I$ is a set, then the \emph{support} of  $f:I \to V$ is
  $$
  \Supp(f) = \{i \in I : f(i) \neq 0\}.
  $$

The \emph{product} of  an indexed family of $\field$-linear spaces $(V_i)_{i \in I}$ is the space of all maps $f:I \to \cup V$ such that $f_i \in V_i$ for all $i$, equipped with the usual addition and scalar multiplication 
\begin{align*}
(f + g)_i = f_i + g_i && (\ak \cdot f)_i = \ak \cdot  f_i.
\end{align*}
The \emph{coproduct} of $(V_i)_{i \in I}$ is the subspace $\{f \in \times_i V_i : |\Supp(f)| < \infty\}.$  The product and coproduct constructions will coincide in most cases of interest.

\begin{example}  By  convention $\R^n = \R^{\{1, \ld, n\}}$.   If $V_1 =  \cd = V_n = \R$, then $\times V = \R^n$.   Since every element of $\R^n$ has finite support,  $\oplus V = \times V$.
\end{example}

Given any family of maps $f: I \to \Hom(W, V_i)$, we write $\times f$ for the linear transformation $W \to \times V$ that assigns to each $w \in W$ the function $(\times f)(w): I \to \times V$ such that 
$$(\times f)(w)_i = f_i(w).
$$   Dually, given any family $f: I \to \Hom(V_i, W)$, we write $\oplus f$ for the map $\oplus V \to W$ that assigns to each $v \in \oplus V$ the sum $\sum_i f_i(v)$.  

\begin{example}  Suppose $W = V_i  = \R$ for $i \in I = \{1,2\}$, so that $\times V  = \oplus V = \R^2$.  If $f_1$ and $f_2$ are  linear maps  $\R \to \R$, then 
\begin{align*}
\times f: \R \to \R^2 && (\times f)(x) = ( f_1(x), f_2(x))
\end{align*}
and
\begin{align*}
\oplus f : \R^2 \to \R && (\oplus f)(x,y) = f_1(x) + f_2(y).
\end{align*}
\end{example}


\part{Canonical Forms}

\chapter{Background: Matroids}
\label{ch:matroids}

This chapter gives a comprehensive introduction to the elements of matroid theory used in this text.   It is suggested that the reader skim  \S\ref{sec:indrankclose}-\ref{sec:modularity} on a first pass, returning as needed for the small number of applications that require greater detail.  Thorough treatments of any of the subjects introduced in this summary may be found in any one of several excellent texts, e.g. \cite{orientedmatroids93,truemperMatroidDecomposition}.

\section{Independence, rank, and closure}  \label{sec:indrankclose}

An \emph{independence system} is a mathematical object determined by the data of a set $E$, called the \emph{ground set}, and a family of subsets of $E$  that is closed under inclusion.   The subsets of $E$ that belong to this family are called \emph{independent}, and those that do not are called \emph{dependent}.  One requires the empty set to be independent.
An independence system satisfies the \emph{Steinitz Exchange Axiom} if for every pair of independent sets $I$ and $J$ such that $|I|<|J|$, there is at least one $j \in J$ such that  $I \cup \{j\}$ is independent.  The systems that satisfy this property have a special name.

\begin{definition}
\label{def:matroid}
A \emph{matroid} is an independence system that satisfies the Steinitz Exchange Axiom.
\end{definition}

In keeping with standard convention, we  write $|\mc|$ for the ground set of a matroid $\mc = (E, \ic)$, and $\ic(\mc)$ for the associated family of independent sets.

The pair $(E, \ic)$, where $E$ is a subset of a vector space and $\ic$ is the family of all linearly independent subsets of  $E$, is an archetypal  matroid.  The fact that $\ic$ satisfies the Steinitz Exchange Axiom is the content of the eponymous Steinitz exchange lemma.  A closely related example is the family of all sets that \emph{map} to independent sets under an index function $r: E \to V$.   Matroids realized in this fashion are called \emph{linear} or \emph{representable}, and the associated index functions are called \emph{linear representations}.   Representations that index the columns of a matrix give rise to the term ``matroid.''

\begin{remark}
Every representation function $r: E \to \field^I$ determines an array $[r] \in I \times E$ such that $[r](i,e) = r(e)_i$.  We call this the \emph{matrix representation} of $r$.
\end{remark}

A proper subset of the linear matroids is the family of {graphical matroids}.    Every (finite) undirected graph $G = (V, E)$, where $V$ is the set of vertices and $E$ the set of edges, determines an independence system $(E, \ic)$, where $\ic$ is the family of forests.  That this system satisfies the Exchange Axiom follows from the fact that a set of edges forms a forest if and only if the corresponding columns of the node-incidence matrix are linearly independent over the two element field.  A matroid is \emph{graphical} if it is isomorphic to one of this form.

Graphical matroids are a field of study in their own right, but also provide helpful intuition at all levels of matroid theory.  At the introductory level, especially, recasting the statement of a general theorem in terms of a small graphical example, e.g.\ the complete graph on four vertices, has a useful tendency to demystify abstract results.  Several terms used generally throughout matroid theory have graph-theoretic etymologies: an element $e$ of the ground set is a \emph{loop} if the singleton $\{e\}$ is dependent.  Two non-loops are \emph{parallel} if the two-element set that contains them both is dependent.  A matroid is \emph{simple} if it has no loops and no parallel pairs, i.e., no dependent sets of cardinality strictly less than three.

Some familiar constructions on vector spaces have natural counterparts in matroid theory.  To begin with dimension, let us say that an independent set is \emph{maximal} if it is properly contained in no other independent set.  We call such sets  \emph{bases}, and denote the family of all bases $\bc(\mc)$.   It is clear from the Exchange Axiom that every basis has equal cardinality, and this common integer is called the \emph{rank} of the matroid.  Since the pair $\mc|_S = (S, \{I \in \ic : I \su S\})$ is itself a matroid for every  $S \su E$, we may extend the notion of rank to that of a \emph{rank function} $\rk$ on the subsets of $E$.

In linear algebra, the  span of a subset $S \su V$, denoted $\spann(S)$, may be characterized as the set of  vectors $v$ such that any maximal independent subset of $S$ is also maximal in $S \cup \{v\}$.  The analog  for matroids is the collection of all $t$ such that $\rk(S) = \rk(S \cup \{t\})$.  We call this  the \emph{closure} of $S$, denoted $\cl(S)$.   A subspace of $V$ is a subset that equals its  linear span, and, by analogy, a \emph{flat} of a matroid is defined to be a subset that equals its  closure.  Flats are also called \emph{closed sets}.

\section{Circuits}
\label{sec:circuits}
A set $\zk$ is called \emph{minimally dependent} if it contains no proper dependent subset, or, equivalently, if every subset of order $|\zk|-1$ is independent.  Minimally dependent sets are called \emph{circuits}.
While the combinatorial definition of a circuit may seem a bit alien at first glance, circuits themselves are quite familiar to students of graph theory and linear algebra.  It is quickly shown, for example, that a set $C$ in a graphical matroid forms a circuit if and only if the corresponding edges  form a simple cycle in the underlying graph, since any one edge may be dropped from such a set to form a tree.  For a fixed basis $B$ of a finite dimensional vector space $V$, moreover,  every vector may be  expressed as a unique linear combination in  $B$.  The set
$$
\zeta_B(v) = \{v\} \cup \supp(\ak)
$$
where $\ak$ is the indexed family of scalars such that $v = \sum_{b \in B} \ak_b$, is called the \emph{fundamental circuit} of $v$ with respect to $B$.  It is simple to check that this is a bona fide circuit in the associated matroid, since its rank and cardinality differ by one, and each $u \in \zk_B(v)$ lies in the linear span of $\zk_B(v) - \{u\}$.

More generally, given any element $i$ of a matroid with basis $B$, one can define the \emph{fundamental circuit of $i$ with respect to $B$}, denoted  $\zk_B(i)$, to be the set containing $v$ and those  $j \in B$  for which $B  - \{j\} \cup \{i\}$ is a basis.  It is simple to check that this, also, is a bona fide circuit, since every subset of cardinality $|\zk_B(i)|-1$ extends to a basis.

\begin{exercise}
\label{ex:forest}
Suppose that $E$ is the family of edges in an undirected graph, and $\ic$  the family of forests.  If $B$ is a spanning forest and $e \in E - B$, then the matroid-theoretic fundamental circuit of $e$ in $B$ agrees with the graph-theoretic fundamental circuit.
\end{exercise}

It will be useful to have a separate notation for the portion of $\zk_B(i)$ that lies in $B$.  We call this  the \emph{support of $i$ with respect to $B$}, denoted  $\supp_B(i)=\zk_B(i) - \{i\}$.  Let us further set $\supp_B(b) = b$ for any $b$ in $B$ and $\supp_B(S) = \cup_{s \in S} \supp_B(s)$.   This notation is not standard, so far as we are aware, but it is highly convenient.

\begin{example}
\label{ex:matrixN}
Let $\field$ be the two-element field and $N$ be the $3 \times 6$ matrix over $\field$ shown below.  Here, for visual clarity, zero elements appear only as blank entries.  Let $\mc$ be the matroid on ground set $\{a_1, a_2, a_3, b_1, b_2, b_3\}$ for which a subset $I$ is independent iff it indexes a linearly independent set of column vectors.  The fundamental circuit of $a_1$ with respect to basis $B  = \{b_1, b_2, b_3\}$ is  $\{a_1, b_2, b_3\}$.  Its combinatorial support  is $\{b_2, b_3\}$.  The fundamental circuit of $a_3$ is $\{a_3, b_1, b_3\}$, and its combinatorial support is $\{b_1, b_3\}$.
\vspace{0.2cm}

\[
\begin{array}{|ccc|ccc|}
\multicolumn{1}{c}{a_1} & a_2 & \multicolumn{1}{c}{a_3} & b_1 & b_2 & \multicolumn{1}{c}{b_3}\\ \cline{1-6}
 &1&1&1&& \\
 1& 1&&&1& \\
 1&1 & 1&&&1 \\ \cline{1-6}
\end{array}
\]
\vspace{0.2cm}

\end{example}

The reader will note that in the matrix $N$ of Example \ref{ex:matrixN}, the matroid-theoretic support of each column coincides with its linear support as a vector in $\field^n$, if one identifies the  unit vectors $b_1, b_2, b_3$ with the rows that support them.

In general, any  representation that carries $B \su E$ bijectively onto a basis of unit vectors in $\field^B$ is  called \emph{$B$-standard}.  Standard representations play a foundational role in the study of linear matroids, due in part to the correspondence between linear and combinatorial supports observed above.   This correspondence, which  we have observed for one standard representation, evidently holds for all.

One elementary operation in matroid theory is the rotation of elements into and out of a basis.   Returning to $N$, let us consider the operation  swapping $a_2$ for $b_2$ in $B$, producing a new basis $C = B - \{b_2\} \cup \{a_2\}$.  We can generate a $C$-standard representation of $M$ by multiplying $N$ on the left with an invertible $3 \times 3$ matrix $Q$, while leaving the column labels unchanged.  This multiplication should map $a_2$ to the second standard unit vector and  leave the remaining unit vectors unchanged.  These conditions determine that $Q$ should have form

\[
\arraycolsep=7pt\def\arraystretch{1}
\begin{array}{|ccc|}
\cline{1-3}
1 & 1 & \\
& 1& \\
& 1 & 1 \\ \cline{1-3}
\end{array}
\]
\vspace{0.2cm}

over the two element field.   The resulting representation, $M$, has form

\[
\begin{array}{|ccc|ccc|}
\multicolumn{1}{c}{a_1} & a_2 & \multicolumn{1}{c}{a_3} & b_1 & b_2 & \multicolumn{1}{c}{b_3}\\ \cline{1-6}
 1&&1&1&1& \\
 1& 1&&&1& \\
 & & 1&&1&1 \\ \cline{1-6}
\end{array}
\]
\vspace{0.2cm}

Note that the fundamental circuit of $b_2$ with respect to $C$ agrees with the fundamental circuit of $a_2$ with respect to $B$.

We define the \emph{support matrix} of a basis $B$, denoted  $\tm{Supp}_B \in \{0,1\}^{B \times A}$ by the condition $\tm{Supp}_B(i,j) = 1$ iff $i \in \supp_B(j)$.   The literature refers to this with the lengthier name \emph{$B$-fundamental circuit incidence matrix}.
As we have seen, if  $[r] \in \field^{B \times |\mc|}$ is the matrix realization of a $B$-standard representation $r: |\mc| \to \field^B$,  then $\Supp_B(|\mc|)$ is the matrix obtained by switching all nonzero entries of $[r]$ to 1's.

%
%
%

\section{Minors}

A \emph{minor} of a matroid $\mc$ is a matroid obtained by a sequence of two elementary operations, called \emph{deletion} and \emph{contraction}.   A \emph{deletion minor} of $\mc$ is a pair $ (S, \ic|_S)$, where $S$ is a subset of $E$ and $\ic_S = \{I \in \ic : I \su S\}$.  We call this the \emph{restriction} of $\mc$ to $S$ or the minor obtained by \emph{deleting} $E - S$.

The notion of a contraction minor is motivated by the following example.  Suppose are given a vector space $V$, a subspace $W$, and a surjection $r: V \to U$ with kernel $W$.  We may regard $r$ as the index function of a linear matroid on ground set $V$,  declaring $S \su V$ to be independent if and only if $r(S)$ is independent in $U$.  If we follow this action by deleting $S$, the resulting matroid $\nc$  \emph{contraction minor} of $V$.  The general operation of contraction is an elementary generalization of this construction.

Of note, a subset $I$ is independent in $\nc$ if and only if one (respectively, all) of the following three conditions are met: (i) $I \cup J$ is independent, for some basis $J$ of $W$, (ii) $I \cup J$ is independent for \emph{every} basis of $W$, and (iii)   $I \cup J$ is independent in $V$ for every  independent  $J \su W$.

Let us see if we can equate these conditions combinatorially.  Equivalence of (ii) and (iii) is clear from the definition of an independence system.  For (i) and (ii), posit a basis $S$ of $W$ such that  $I \cup S$ is linearly independent in $V$.  Any basis $T$ of $W$ may then be extended to a maximal independent subset of $I \cup S \cup T$, by adding some elements of $I \cup S$.  None of the elements in this extension can come from $S$, since otherwise $T$ would fail to be maximal in $W$.  Thus the extended basis includes into $T \cup I$.  By dimension counting, it is exactly $T \cup I$.  Therefore the union of $I$ with \emph{any} basis of $W$ is independent in $V$, and all three conditions are equivalent.

This argument does not depend on the fact that $W$ is a subspace of $V$, or on any algebraic structure whatsoever.  For any $W \su E$, then, and any  matroid $(E, \ic)$, we may unambiguously define $\ic/W$ to be the family of all $I$ that satisfy one or all of (i), (ii), and (iii).  This family is  closed under inclusion, and since only elements from $I$ could possibly be transferred from a set of form $S \cup I $ to  a set of form $S \cup J $ while preserving independence, satisfaction of the Steinitz exchange axiom by $\ic/W$ follows from $\ic$.  Thus the pair $(E - W, \mc/W)$ is a bona fide matroid, the \emph{contraction minor} of $\mc$ by $E-W$.

\begin{remark}
\label{rem:contractionminor}
We will  make frequent use of a related matroid,   $\mc \sslash W = (E, I /W)$, in later sections.
\end{remark}

It will be useful to record the following two identities for future reference, while the material is fresh.  For any $U$ and $W$, one has
\begin{align}
(\ic /U) /W = \ic / (U \cup W)  && \mathrm{and} && \ic/U = \ic/ \cl(U) .
\end{align}
The first assertion follows easily from the definition of contraction.  One can verify the second is by noting that $I$ is independent in $\ic/U$ if and only if $\rk(U\cup I) = \rk(U) + |I|$.  This suffices, since $\rk(U \cup I) = \rk(\cl(U) \cup I)$.

A matroid obtained by some sequence of deletion and contraction operations is called a \emph{minor}.  It can be shown that distinct sequences will produce the same minor, provided that the each deletes (respectively, contracts) the same set of elements in total.  Thus any minor may be expressed in form $(\mc/T)|_{S}$.  Where context leaves no room for confusion, we will abbreviate this expression to $S/T$.
More generally, we write $S/T$ to connote the minor $(\mc/(T \cap E))|_{(S \cap T \cap E )}$ for sets $S$ and $T$ not contained in $E$.  This will be convenient, for example, when  ``intersecting'' two minors of the same matroid.


\section{Modularity}
\label{sec:modularity}

Given any pair of subsets $S, T \su E$, one can in general extend a basis $B$ for $S \cap T$ to either a basis $B \cup I$ of $S$ or a basis $B \cup J$ of $T$.  Since $B \cup I \cup J$ then generates $S \cup T$, it follows that
\begin{align}
\label{eq:submodular}
\rk(S \cup T) \le \rk(S) + \rk(T) - \rk(S \cap T).
\end{align}
A real-valued function on the power set of $E$ that satisfies \eqref{eq:submodular} for any $S$ and $T$ is called \emph{submodular}.

The language of submodularity is motivated by the theory of lattices.  If $V$ is a finite-dimensional vector space and ${\mathcal L}$ the associated lattice of linear subspaces (read:\ matroid flats), then the restriction of $\rk$ to ${\mathcal L}$ coincides exactly with the height function of this lattice.   By definition, a lattice with height function function $h$ is \emph{semimodular} if $r(s \vee t) \le r(s) + r(t) - r(s \wedge t)$ for all $s$ and $t$, and \emph{modular} if strict equality holds.  By way of extension, we  say the pair $(S,T)$ {obeys the modular law (with respect to $\rk$)}  if strict equality holds in \eqref{eq:submodular}.  More generally, given any families of sets ${\mathcal S}$ and ${\mathcal T}$, we  say that $(\mathcal S, \mathcal T)$ is modular if every $(S, T) \in \sc \times \tc$ is modular.

It will be useful to have one alternative interpretation of \eqref{eq:submodular}.  Since $\rk(S  / T) = \rk(S \cup T)  - \rk(S)$ and $\rk(T/ S \cap T) = \rk(T) - \rk(S \cap T)$, this inequality may be rewritten
\begin{align}
\rk(S/T) \le \rk(S / S \cap T).
\end{align}
Strict equality holds if and only if $(S, T)$ is modular.  In fact, with modularity more can be said.


\begin{lemma}
\label{lem:modularisomorphism}
If $(S, T)$ obeys the modular law, then $S/T = S/(S \cap T)$.
\end{lemma}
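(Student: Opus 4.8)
The plan is to show that the two minors $S/T=(\mc/T)|_S$ and $S/(S\cap T)=(\mc/(S\cap T))|_S$ coincide by checking that they have the same ground set and the same family of independent sets. The ground sets agree, since contracting and then restricting to $S$ leaves $S-T$ in the first case and $S-(S\cap T)=S-T$ in the second. For the independent sets, one inclusion is free and uses no modularity: if $I\su S-T$ is independent in $\mc/T$, then by the rank criterion ($I$ independent in $\mc/U$ iff $\rk(U\cup I)=\rk(U)+|I|$) we have $\rk(T\cup I)=\rk(T)+|I|$, and submodularity applied to the sets $T$ and $(S\cap T)\cup I$ — whose meet is exactly $S\cap T$ because $I\cap T=\varnothing$ — gives $\rk((S\cap T)\cup I)-\rk(S\cap T)\ge \rk(T\cup I)-\rk(T)=|I|$; the reverse inequality is automatic (as $I$ is independent), so $I$ is independent in $\mc/(S\cap T)$. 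Thus the real content of the lemma is the opposite inclusion, and this is exactly where the modular law will be spent.

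For that inclusion I would fix $I\su S-T$ independent in $\mc/(S\cap T)$ together with a basis $B_0$ of $S\cap T$. By condition (ii) in the characterization of independence in a contraction, $B_0\cup I$ is independent in $\mc$; since $(S\cap T)\cup I\su S$, I extend it to a basis $B_0\cup I\cup B_1$ of $S$, and I separately extend $B_0$ to a basis $B_0\cup B_2$ of $T$. No element of $B_2$ can lie in $S$: such an element would lie in $S\cap T\su\cl(B_0)$, contradicting independence of $B_0\cup B_2$. Hence $X:=B_0\cup I\cup B_1\cup B_2$ is a disjoint union with $|X|=\rk(S)+(\rk(T)-\rk(S\cap T))$, and by the modular law for $(S,T)$ this equals $\rk(S\cup T)$. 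Since $X$ contains the spanning set $B_0\cup I\cup B_1$ of $S$ and the spanning set $B_0\cup B_2$ of $T$, we get $S\cup T\su\cl(X)$, so $\rk(X)=\rk(S\cup T)=|X|$ and $X$ is independent. Therefore its subset $(B_0\cup B_2)\cup I$ is independent, and since $B_0\cup B_2$ is a basis of $T$, condition (i) shows $I$ is independent in $\mc/T$, as desired.

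I expect the one genuinely delicate point to be the cardinality count for $X$: one must verify that $B_0,I,B_1,B_2$ are pairwise disjoint — the only non-obvious part being $B_2\cap S=\varnothing$, which is precisely where the fact that $B_0$ spans $S\cap T$ is used — and then observe that ``$X$ spans $S\cup T$'' together with ``$|X|=\rk(S\cup T)$'' forces $X$ to be independent, the equality $|X|=\rk(S\cup T)$ being exactly the modular identity. Everything else is routine bookkeeping with the rank criterion and the three equivalent descriptions of independence in a contraction recalled in the text.
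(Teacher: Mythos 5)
Your proof is correct and is essentially the paper's argument, just written out in slightly greater detail. Where the paper takes a basis $I$ of $S/(S\cap T)$ and a basis $J$ of $T/(S\cap T)$ and shows $B\cup I\cup J$ is a basis of $S\cup T$ by the modular cardinality count, you work with an arbitrary independent $I\subseteq S-T$ and extend it; your $I\cup B_1$ plays the role of the paper's $I$ and your $B_2$ the role of the paper's $J$, and the decisive step (the modular identity forcing $|X|=\rk(S\cup T)$, hence independence of $X$) is identical — you have simply made the easy reverse inclusion and the disjointness of $B_2$ from $S$ explicit, both of which the paper leaves to the reader.
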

\begin{proof}
Let $B$ be a basis for $S \cap T$, and let $I$,  $J$ be bases for  $S/(S \cap T)$ and $T/(S \cap T)$, respectively.  Then $B \cup I$ is a basis for $S$ and $B \cup J$ is a basis for $T$.  The union $B \cup I \cup J$ contains a basis for $S \cup T$, and by modularity $|B \cup I \cup J| = \rk(S \cup T)$.  Thus $B \cup I \cup J$ is a \emph{basis} for $S \cup T$, so $I \in \ic(S/T)$.   It follows that the independent sets of $S/(S \cap T)$ are also independent in $S/T$.  Since the converse holds as well, the desired conclusion follows.
\end{proof}

\section{Basis Exchange}
\label{sec:basisexchange}

A \emph{based} matroid is a pair $(\mc, B)$, where $\mc = (E, \ic)$ is a matroid and $B$ is a basis.  A fundamental operation on based matroids is  \emph{basis} exchange:  the substitution of $B$ in $(\mc, B)$ with a basis of form $C = B - I \cup J$.

A \emph{based representation} (with coefficients in field $\field$) of a based matroid is a linear representation   $r: E \to \field^B$ such that $r(b) = \chi_b$ for all $b \in B$.   We call such  representations  \emph{$B$-standard}.    Basis exchange induces a canonical transformation of based representations,  sending each $r: E \to \field^B$ to the composition $T \com r$, where $T$ is the linear map $\field^B \to \field^C$ such that $(T \com r) (c) = \chi_c$ for each element $c \in C$.    

In general, if we regard $r$ as an element of $\field^{B \times E}$ with block decomposition
\vspace{0.2cm}

\[
\begin{array}{cccc}
&\hspace{.2cm}J\hspace{.2cm} & E - J \\ \cline{2-3}
\multicolumn{1}{c|}{I}&\ak & \multicolumn{1}{c|}{\bk}  \\
\multicolumn{1}{c|}{B-I}&\ck & \multicolumn{1}{c|}{\dk}\\ \cline{2-3}
\end{array}
\]
\vspace{0.2cm}

then $T$ may be understood as an element of $\field^{C \times B}$ with block form
\vspace{0.2cm}

\[
\begin{array}{cccc}
&\hspace{.2cm}I\hspace{.2cm} & B - I \\ \cline{2-3}
\multicolumn{1}{c|}{J}&\ak\inv & \multicolumn{1}{c|}{0}  \\
\multicolumn{1}{c|}{B-I}&-\ck\ak \inv & \multicolumn{1}{c|}{1}\\ \cline{2-3}
\end{array}
\]
\vspace{0.2cm}

where, $1$ denotes an identity submatrix of appropriate size.  Representation $T \com r$ will then have form

\[
\begin{array}{cccc}
&\hspace{.2cm}J\hspace{.2cm} & E - J \\ \cline{2-3}
\multicolumn{1}{c|}{J}&1 & \multicolumn{1}{c|}{\ak\inv \bk}  \\
\multicolumn{1}{c|}{B-I}&0 & \multicolumn{1}{c|}{\sk}\\ \cline{2-3}
\end{array}
\]
\vspace{0.2cm}

where, by definition, $\sk = \dk - \ck \ak\inv \bk$ is the \emph{Schur complement} of $\ak$ in $r$.   In the special case where $I = \{i\}$ and $J = \{j\}$ are singletons, multiplication on the left with $T$ is precisely the clearing operation that transforms column $j$ to the unit vector supported on $i$ (and subsequently relabels this row as $j$).   By analogy we may regard composition with $T$ as the ``block clearing'' operation that transforms all the columns of $J$ to unit vectors (and relabels the corresponding rows).  This perspective will be  developed in later sections.


\subsubsection{Base, Rank, and Closure}

Based matroids owe their importance in part to the special relationship between the elements of a distinguished basis and those of the ground set generally.  It is very easy, for example, to describe the closure of any $I \su B$: it is the collection of all $e \in E$ for which $I \cup \{e\}$ is dependent.  If a $B$-standard representation is available, then we may alternately characterize this set  as the collection of all $e$ for which $\supp(r(e)) \su I$.  Simultaneously, if we wish to find a basis for $I$ we may take $I$ itself, and to calculate its rank we may take $|I|$.

The situation is more delicate with sets not contained in $B$.  This is one place where elementary exchange offers some help.  If, for example, we wish to know the rank of an arbitrary subset $S$, we may rotate as many elements as possible from $S$ into the standard basis $B$.  This may be done element-by-element via singleton exchange operations, or in sets of elements simultaneously via the ``block-clearing'' operation.  One is forced to stop moving elements of $S$ into $B$ if and only if one of following two equivalent stopping conditions are met (i)  the current basis, $C$, contains a basis for $S$, (ii) the closure of $S \cap C$ contains $S$.  Once either condition holds, $\rk(S) = |S \cap C|$.  This procedure also admits the calculation of $\cl(S) = \cl(S \cap C)$, via the support heuristic used for $\supp(I)$.

\subsubsection{Contraction}

Elementary exchange offers a means to calculate contraction minors, as well.  Recall that  the independent sets of $\mc \sslash S$ are those  $I$ for which $I \cup J$ is independent, for some (equivalently, every) basis $J$ of $S$.  Let us suppose that $I$ is given, together with a basis $B$ containing $I$ and a $B$-standard representation $r$.  Since $r$ carries $I$ to a collection of unit vectors, it is easy to see that $q \com r$ represents $\mc \sslash I$, where $q: \field^B \to \field^{B-I}$ is the canonical deletion operator.

Given $r$, can in general leverage the fact that
\[
\mc \sslash S = \mc \sslash \cl(I) = \mc \sslash I
\]
for   $I \in \bc(S)$ to compute a representation of $\mc \sslash S$, for any $S$.  To do so, simply rotate an  $I \in \bc(S)$ into the standard basis by elementary exchange, and apply the deletion operator $q$.   To obtain a representation for the formal contraction $\mc/ C$, restrict the resulting representation to $E - S$.

\subsubsection{Deletion}

The role of deletion in standard representations is in many ways dual to that of contraction.  The simplest case holds when one wishes to delete a set $I$ that is \emph{disjoint from} the standard basis $B$.  In this case one may simply restrict the representation $r$ to $E - I$.   To delete an arbitrary $S$ requires slightly more care, as deletion of elements from the standard basis may result in a non-standard representation.

To address this issue, first rotate as many elements of $S$ as possible \emph{out} of the standard basis.  There is only one condition under which some elements of $S$ will remain in the resulting basis, $C$, namely that the corresponding representation $r$ has the following block matrix form
\vspace{0.2cm}

\[
\begin{array}{c|ccc|}
\multicolumn{1}{c}{}& S \cap C  & S - C & \multicolumn{1}{c}{E - C - S } \\ \cline{2-4}
S \cap C 	& 1 & * & 0 \\
C - S 	& 0 & * & * \\ \cline{2-4}
\end{array}
\]
\vspace{0.2cm}

where asterisks denote blocks of indeterminate form.  Consequently, if $q: \field^C \to \field^{C-S}$ is the standard deletion operator, then $(q \com r)|_{E - S}$ will represent  $\mc - S$ with respect to the standard basis $C - S$.

\section{Duality}

The \emph{dual} of a matroid $\mc = (E, \ic)$, denoted $\mc^* = (E, \ic^*)$, is the matroid on ground set $E$ for which a subset $S$ is  independent if and only if the complement of $S$ in $E$ contains a basis.   Bases of the dual matroid are called \emph{cobases} of $\mc$.

Duality is integral to the study of matroids general, and to our story in particular.  We will introduce terminology for this structure as needed throughout the text.


\chapter{Modularity}
\label{ch:modularity}

\section{Generators}
\label{sec:generators}

Recall from Section \ref{sec:modularity} that the rank function $\rk$ of a matroid $\mc = (E,\ic)$ is \emph{submodular}, meaning
\begin{align} \label{eq:submodular2}
\rk(S \cup T) + \rk(S \cap T) \le \rk(S) + \rk(T)
\end{align}
for any  $S, T \su E$.  We say that a family $\sc$ \emph{obeys the modular law} if strict equality holds in \eqref{eq:submodular2} for all $S, T \in \sc$.  

\begin{remark}  We will often refer to a family that obeys the modular law as a \emph{modular}, for shorthand.  For $|\sc| = 2$ this convention skirts on an abuse of language, since the field of lattice theory assigns a very different meaning to ``modular pair,'' which concerns ordered pairs in a poset.
\end{remark}

\begin{remark} \label{rmrk:modulartransversetame}  Every pair of linear subspaces $S, T \su W$ obeys the modular law in the  matroid associated to $W$.  This pair is defined to be \emph{transverse} if 
$$
\rk(W) = \rk(S) + \rk(T) - \rk(S \cap T).
$$   
The tameness conditions imposed by modularity  on set families is analogous and indeed closely linked to the tameness conditions imposed by transversality in bundle theory and geometric topology\cite{GPDifferential10}.
\end{remark}

A set $I$ \emph{spans} a set $S$ if $S \su \cl(I)$, and \emph{generates} $S$ if $I\cap S$ spans $S$.  If in addition $I$ is independent, we say $I$ \emph{freely generates} $S$.   By extension, $I$ spans (respectively, generates,  freely generates) a family  $\sc$ if $I$ spans (respectively, generates,  freely generates) each $S \in \sc$.     Free generation has a basic relation to modularity.

\begin{lemma}
\label{lem:freeclosed}  
The family of sets freely generated by an independent set $I$ is closed under union and finite intersection.
\end{lemma}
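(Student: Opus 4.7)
The plan is to reduce to the binary case by an obvious induction on the number of sets involved, then handle union and intersection separately. The union case is nearly a formality; the intersection case is where the real work lives.

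For the union of two freely generated sets $S, T$, I would observe that $I \cap (S \cup T) = (I \cap S) \cup (I \cap T)$ is independent as a subset of $I$, and that monotonicity of $\cl$ applied to the hypotheses $S \subseteq \cl(I \cap S)$ and $T \subseteq \cl(I \cap T)$ yields $S \cup T \subseteq \cl((I \cap S) \cup (I \cap T))$. Hence $I$ freely generates $S \cup T$.

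For the intersection, I would leverage the union case together with submodularity of $\rk$. The paragraph above also shows that $(I \cap S) \cup (I \cap T)$ is simultaneously independent and spanning in $S \cup T$, hence a basis; in particular $\rk(S \cup T) = |I \cap S| + |I \cap T| - |I \cap S \cap T|$. The free-generation hypotheses give $\rk(S) = |I \cap S|$ and $\rk(T) = |I \cap T|$. Substituting into $\rk(S \cup T) + \rk(S \cap T) \le \rk(S) + \rk(T)$ yields $\rk(S \cap T) \le |I \cap S \cap T|$. Since $I \cap S \cap T$ is an independent subset of $S \cap T$, the reverse inequality is automatic, so equality holds and $I \cap S \cap T$ is a basis of $S \cap T$. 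In particular it spans $S \cap T$, giving free generation.

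The main obstacle I anticipate is precisely this last step: ensuring that the rank arithmetic forces $I \cap S \cap T$ to be \emph{spanning}, not merely an independent set of the correct cardinality. Submodularity is exactly the hinge that closes this gap, and as a byproduct one recovers that every pair of sets freely generated by $I$ obeys the modular law in the sense of \eqref{eq:submodular2}, consistent with the theme of the chapter.
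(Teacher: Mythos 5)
Your proof is correct and follows the paper's argument essentially verbatim: handle the union first (the paper also treats this as immediate), then, for the intersection, chain inclusion–exclusion on $|I_S| + |I_T| - |I_{S\cup T}| = |I_{S\cap T}|$ against submodularity of $\rk$ and the trivial bound $|I_{S\cap T}| \le \rk(S\cap T)$ to force equality. The only difference is cosmetic — you move the inequalities around in a slightly different order, and you make explicit the closing remark about modularity that the paper defers to Proposition~\ref{prop:freemodular}.
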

\begin{proof}
Let $I_U$ denote $I \cap U$ for any $U$.  If  $\uc$ is any family of sets generated by $I$ then  $I_{\cup_{U \in \uc} U}$  freely generates $\cup_{U \in \uc} U$.  Thus closure under union.   For finite intersection, suppose that $\uc = \{ S, T \}$.  Then $I_S$ and $I_T$ and $I_{S \cup T}$ freely generate $S$, $T$, and $S \cup T$, whence
\[
\rk(S) + \rk(T) - \rk(S \cup T) = |I_S| + |I_T| - |I_{S \cup T}|  =   |I_{S \cap T}| \le \rk(S \cap T).
\]
In fact strict equality must hold throughout, since submodularity of the rank function demands $\rk(S) + \rk(T) - \rk(S \cup T) \ge \rk(S \cap T)$.  Therefore $ |I_{S \cap T}|  =  \rk(S \cap T)$.  The desired conclusion follows.
\end{proof}

\begin{proposition}  
\label{prop:freemodular}
All freely generated set families are modular.
\end{proposition}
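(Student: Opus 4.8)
The plan is to reduce the modularity of a freely generated family $\sc$ to the two-element case and then invoke Lemma~\ref{lem:freeclosed}. So let $I$ be an independent set freely generating $\sc$, fix $S, T \in \sc$, and write $I_U = I \cap U$ for any $U$. By definition of free generation, $I_S$, $I_T$, $I_{S\cup T}$, and $I_{S \cap T}$ are the relevant ``witnessing'' independent subsets of $I$ — but a subtlety is that $S \cup T$ and $S \cap T$ need not themselves lie in $\sc$, so I cannot directly say $I$ freely generates them. This is exactly where Lemma~\ref{lem:freeclosed} does the work: it tells me precisely that $I$ \emph{does} freely generate both $S \cup T$ and $S \cap T$, even though these sets may not be members of the original family.

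First I would record, using Lemma~\ref{lem:freeclosed}, that $\rk(S\cup T) = |I_{S\cup T}|$ and $\rk(S \cap T) = |I_{S \cap T}|$, while free generation of $S$ and $T$ themselves gives $\rk(S) = |I_S|$ and $\rk(T) = |I_T|$. Next, since $I_{S \cup T} = I \cap (S \cup T) = I_S \cup I_T$ and $I_{S \cap T} = I_S \cap I_T$, the trivial inclusion--exclusion identity for \emph{finite sets} yields
\[
|I_S| + |I_T| = |I_S \cup I_T| + |I_S \cap I_T| = |I_{S \cup T}| + |I_{S \cap T}|.
\]
Substituting the rank identities from the previous step turns this into $\rk(S) + \rk(T) = \rk(S \cup T) + \rk(S \cap T)$, which is exactly the equality case of \eqref{eq:submodular2}. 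Since $S, T \in \sc$ were arbitrary, $\sc$ obeys the modular law.

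The only real obstacle is the one already flagged: one must not assume $S \cup T$ or $S \cap T$ belongs to $\sc$, and must instead route the argument through Lemma~\ref{lem:freeclosed} to get their ranks. In fact the proof of Lemma~\ref{lem:freeclosed} already essentially carried out this computation (establishing $|I_{S\cap T}| = \rk(S\cap T)$), so this proposition is almost a repackaging of that lemma; the new content is simply observing that the same chain of equalities, read without the final inequality, delivers the modular law for the pair $(S,T)$. I would keep the write-up to a few lines, citing Lemma~\ref{lem:freeclosed} for the union and intersection rank identities and then invoking set-theoretic inclusion--exclusion.
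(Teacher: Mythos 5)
Your proof is correct and follows the same route as the paper's: apply Lemma~\ref{lem:freeclosed} to get $\rk(S\cup T) = |I_{S\cup T}|$ and $\rk(S\cap T) = |I_{S\cap T}|$, use free generation for $\rk(S)=|I_S|$, $\rk(T)=|I_T|$, and close with inclusion--exclusion on the finite sets $I_S$, $I_T$. The paper compresses this into a single chain of equalities; your version just makes each step (and the subtlety that $S\cup T$, $S\cap T$ need not lie in $\sc$) explicit.
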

\begin{proof}  Let $\sc$ be any family of sets.  If $\sc$ is freely generated by an independent set $I$, then 
\[
\rk(S) + \rk(T)  = |I_S| + |I_T| =  |I_{S \cup T}|  +  |I_{S \cap T}| =   \rk(S \cup T) + \rk(S \cap T)
\]
for any $S, T \in \sc$ by Lemma \ref{lem:freeclosed}.  The desired conclusion follows.
\end{proof}


The converse to Proposition \ref{prop:freemodular} is in general false: if $u$, $v$, and $w$ are points in the Euclidean plane and none is a scalar multiple of another,  then the family $\{ \{u\}, \{v\}, \{w\} \}$ is modular and  cannot be generated by any independent set.

Let us describe some additional conditions necessary for free generation.  Fix any freely generated set family $\sc$  and, to avoid pathologies, assume   $\sc$ is finite and  $\cup_{S \in \sc} S = E$.  By  Lemma \ref{lem:freeclosed}  we may assume that $\sc$ is closed under union and  intersection,  so there exists an independent  $I$ for which $I_{\cap_{T \in \tc} T} - I_{\cup_{U \in \uc} U}$ freely generates
\begin{align}
\label{eq:discreteelement}
(\cap_{T \in \tc} T) / (\cup_{U \in \uc} U)
\end{align}
for arbitrary $\tc, \uc \su \sc$.    For convenience let us define
\begin{align*}
\ov \tc = \bigcap_{T \in \tc} T && \ul \tc = \bigcup_{T \notin  \tc} T,
\end{align*}
put
\begin{align*}
 \mc_\tc = \ov \tc / \ul \tc && E_\tc = |\mc_\tc|
\end{align*}
and set
$$
\mc / \sc = \bigcup_{\tc \su \sc} \mc_\tc.
$$

Evidently $E_{\tc} = \ov \tc - \ul \tc$.    The following is simple to verify by set arithmetic.  

\begin{lemma} \label{lem:setunion} For any $\sc$,
$$
\left | \mc /\sc \right | = |\cup_\sc S|.
$$
That is, the family $\{E_\tc: \tc \su \sc \}$ is a pairwise disjoint partition of $\cup_\sc S$.
\end{lemma}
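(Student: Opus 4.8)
The claim has two parts: (1) the sets $E_\tc$ for $\tc \subseteq \sc$ are pairwise disjoint, and (2) their union is $\cup_{S \in \sc} S$. Since $E_\tc = \ov\tc - \ul\tc$ as noted just before the lemma, both parts are pure set arithmetic with the families $\ov\tc = \cap_{T \in \tc} T$ and $\ul\tc = \cup_{T \notin \tc} T$, so I would not invoke any matroid structure at all --- the statement is about the ground sets only. The cleanest route is to exhibit, for each element $e \in \cup_{S \in \sc} S$, the \emph{unique} $\tc$ with $e \in E_\tc$, and to read off the unique $\tc$ from $e$ directly.

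First I would introduce, for $e \in \cup_{S \in \sc} S$, the set $\tc_e = \{ S \in \sc : e \in S \}$ of members of $\sc$ containing $e$; note $\tc_e$ is nonempty by hypothesis on $e$. The key computation is the equivalence $e \in E_\tc \iff \tc = \tc_e$. For the forward direction: $e \in E_\tc = \ov\tc - \ul\tc$ means $e \in \cap_{T \in \tc} T$ (so every $T \in \tc$ contains $e$, i.e. $\tc \subseteq \tc_e$) and $e \notin \cup_{T \notin \tc} T$ (so no $T \notin \tc$ contains $e$, i.e. $\tc_e \subseteq \tc$); together $\tc = \tc_e$. For the reverse: if $\tc = \tc_e$ then every $T \in \tc$ contains $e$ so $e \in \ov\tc$, and every $T \notin \tc$ fails to contain $e$ so $e \notin \ul\tc$, giving $e \in E_\tc$.

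From this equivalence both halves follow immediately. Disjointness: if $e \in E_{\tc} \cap E_{\tc'}$ then $\tc = \tc_e = \tc'$. Covering: every $e \in \cup_{S \in \sc} S$ lies in $E_{\tc_e}$, and conversely every $E_\tc$ is contained in $\ov\tc \subseteq \cup_{S\in\sc} S$ provided $\tc$ is nonempty (and if $\tc = \emptyset$ then $\ov\tc = E = \cup_{S \in \sc} S$ by the standing convention $\bigcap_{a \in \emptyset} f_a = E$ with $E = \cup_{S \in \sc} S$, while $\ul\emptyset = \cup_{T \in \sc} T = E$ as well, so $E_\emptyset = \emptyset$ and contributes nothing --- this is the one edge case worth a sentence). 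Hence $\{ E_\tc : \tc \subseteq \sc \}$ partitions $\cup_{S \in \sc} S$, and summing cardinalities gives $|\mc/\sc| = \sum_{\tc \subseteq \sc} |E_\tc| = |\cup_{S \in \sc} S|$, using that $\mc/\sc$ as an indexed family has ground set $\cup_{\tc} E_\tc$.

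The only "obstacle" is bookkeeping around the empty-index conventions ($\tc = \emptyset$, and the behavior of $\ov\tc$, $\ul\tc$ there) and making sure the notation $|\mc/\sc|$ is being read as the cardinality of the ground set of the union matroid, i.e. $\cup_\tc E_\tc$; once the equivalence $e \in E_\tc \iff \tc = \tc_e$ is in hand, there is no real difficulty, which is presumably why the author calls it "simple to verify by set arithmetic."
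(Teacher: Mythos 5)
Your proof is correct and does exactly what the paper declines to spell out: it says only that the lemma is ``simple to verify by set arithmetic,'' and your introduction of $\tc_e = \{S \in \sc : e \in S\}$ together with the equivalence $e \in E_\tc \iff \tc = \tc_e$ is the canonical such verification, including the necessary attention to the $\tc = \emptyset$ edge case under the paper's conventions for empty intersections. Nothing further is needed.
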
 

Lemma \ref{lem:setunion} notwithstanding, it is in general false that
\begin{align*}
\mc/\sc  = \cup_{\sc}  S.
\end{align*}
It is not even true that
\begin{align}
\rk \left ( \mc / \sc \right ) = \rk(\cup_\sc S).  \label{eq:equalrank}
\end{align}
Under certain circumstances, however, it will be within our horizon to show
\begin{align}
\ic \left ( \mc/ \sc \right )  \su  \ic (\cup_{\sc}  S).  \label{eq:independentinclusion}
\end{align}

What circumstances, in particular?   Arrange the elements of $2^\sc$ into a sequence $(\uc_i)$.  For convenience put $\mc_i = \mc_{\uc_i}$ and $E_i = E_{\uc_i}$.  We say that $(\uc_i)$ is a \emph{linear order} on $\sc$ if $E_i \su \ul \uc_j$ whenever $i \le j$.  

\begin{lemma}  Suppose that $\sc$ is finite.  If it  has a linear order, then \eqref{eq:independentinclusion} holds true.
\end{lemma}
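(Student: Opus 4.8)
The plan is to prove \eqref{eq:independentinclusion} one element at a time. Fix $I\in\ic(\mc/\sc)$. By Lemma~\ref{lem:setunion} the ground set of $\mc/\sc$ is $\cup_\sc S$, so $I\su\cup_\sc S$, and it suffices to show $I\in\ic(\mc)$ --- which is the same as $I\in\ic(\cup_\sc S)$. Since the sets $E_\tc$ are pairwise disjoint and partition $\cup_\sc S$ (again Lemma~\ref{lem:setunion}), the union $\mc/\sc=\bigcup_{\tc\su\sc}\mc_\tc$ is a direct sum, so that $I\in\ic(\mc/\sc)$ simply unwinds into the family of conditions: $I\cap E_\tc\in\ic(\mc_\tc)$ for every $\tc\su\sc$. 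Enumerate $2^\sc$ as $\uc_1,\ldots,\uc_N$ using the linear order, and put $\mc_i=\mc_{\uc_i}$, $E_i=E_{\uc_i}$, $I_i=I\cap E_i$. Since the $E_i$ partition $\cup_\sc S\supseteq I$, we have $I=I_1\cup\cdots\cup I_N$, so the goal is to show this union is independent in $\mc$.

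The core is an induction on $k\in\{0,\ldots,N\}$ proving $P_k:=I_1\cup\cdots\cup I_k\in\ic(\mc)$, the case $k=N$ giving the claim. The base case $P_0=\emptyset$ is clear. For the step, the linear-order property gives $E_i\su\ul\uc_k$ for all $i<k$, hence $P_{k-1}=\bigcup_{i<k}I_i\su\ul\uc_k$. By the inductive hypothesis $P_{k-1}$ is independent, so I enlarge it to a maximal independent subset $J_k$ of $\ul\uc_k$; by the Steinitz Exchange Axiom $J_k$ is a basis of $\ul\uc_k$. Since $I_k$ is independent in $\mc_k=\ov\uc_k/\ul\uc_k$, the definition of the contraction minor gives that $I_k\cup J$ is independent in $\mc$ for every basis $J$ of $\ul\uc_k$; taking $J=J_k$ yields $I_k\cup J_k\in\ic(\mc)$. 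Finally $P_k=P_{k-1}\cup I_k\su J_k\cup I_k$ is a subset of an independent set, hence $P_k\in\ic(\mc)$. This closes the induction, and $k=N$ completes the proof.

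The linear order enters exactly once, in the inclusion $P_{k-1}\su\ul\uc_k$, and this is the heart of the matter: it forces the part of $I$ already assembled at stage $k$ to sit inside the contracted set $\ul\uc_k$, so that a single basis $J_k$ of $\ul\uc_k$ can simultaneously contain $P_{k-1}$ and witness independence of the new piece $I_k$. Without a linear order one could only complete $P_{k-1}\cap\ul\uc_k$ to a basis of $\ul\uc_k$, and such a basis need not contain $P_{k-1}$, so the inductive gluing breaks --- this is precisely why the hypothesis is needed. Beyond this I anticipate only two points of care: verifying that $\mc/\sc$-independence decomposes coordinatewise over the partition $\{E_\tc\}$ (immediate from disjointness of the $E_\tc$ together with the definition of $\mc/\sc$ as a union), and reading ``basis of $\ul\uc_k$'' in the contraction-minor definition as a maximal independent subset taken inside $\mc$ --- exactly the $J_k$ obtained by extending $P_{k-1}$. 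The whole argument is one greedy pass assembling $I$ from its pieces in the order prescribed by the linear order.
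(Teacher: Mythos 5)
Your proof is correct and takes essentially the same route as the paper, which sketches the two-minor case ($I_1$ sits inside $\ul\uc_2$ and is independent there, while $I_2$ is independent in $\ov\uc_2/\ul\uc_2$, so both together are independent) and then says ``the general case follows by a simple induction.'' You have supplied precisely that induction, correctly identifying the point where the linear-order hypothesis is used: it forces $P_{k-1}\su\ul\uc_k$, so that $P_{k-1}$ can be extended to a single basis $J_k$ of $\ul\uc_k$ that both contains the already-assembled part and certifies independence of $I_k\cup J_k$ via the contraction definition.
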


%
\begin{proof}
In the special case $m = 2$ the desired result holds by definition, since $I_1$ is an independent subset of the restriction minor $\ul \uc_2$, and $\mc_2 = \ov \uc_2/ \ul \uc_2$.  The general case follows by a simple induction.
\end{proof}


%
%
%
%
%

\begin{proposition}  
\label{prop:generatingbases}
Suppose that $\sc$ is freely generated, and $\cup_\sc S = E$.  Then the bases that generate $\sc$ are exactly the bases of $\mc/\sc$.
\end{proposition}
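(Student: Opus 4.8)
The plan is to set up a two-way inclusion between the bases of $\mc$ that generate $\sc$ and the bases of the matroid $\mc/\sc$. The entry point is Lemma~\ref{lem:setunion}: the sets $E_\tc$ are pairwise disjoint with union $\cup_\sc S = E$, so $\mc/\sc = \bigcup_{\tc\su\sc}\mc_\tc$ is the disjoint union (direct sum) of the minors $\mc_\tc$, and hence a subset $B\su E$ is a basis of $\mc/\sc$ precisely when $B\cap E_\tc$ is a basis of $\mc_\tc$ for every $\tc\su\sc$. With this reformulation, the whole argument hinges on a single claim, which is essentially the assertion made in the discussion preceding the proposition (read with $\uc=\sc-\tc$): if $J\su E$ is independent in $\mc$ and freely generates $\sc$, then $J\cap E_\tc$ is a basis of $\mc_\tc$ for every $\tc\su\sc$.

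To prove the claim I would argue as follows. Since $J$ freely generates $\sc$, Lemma~\ref{lem:freeclosed} shows $J$ freely generates every set obtained from members of $\sc$ by finite unions and intersections; with the hypothesis $\cup_\sc S = E$ (so $E$ is itself such a union) and the triviality that $\emptyset$ is freely generated, this covers $\ov\tc$, $\ul\tc$, and $\ov\tc\cup\ul\tc = E_\tc\sqcup\ul\tc$ for every $\tc\su\sc$, including the degenerate cases $\tc=\emptyset$ (where $\ov\tc=E$) and $\tc=\sc$ (where $\ul\tc=\emptyset$). Consequently $J\cap\ul\tc$ is a basis of $\mc|_{\ul\tc}$ and $J\cap(\ov\tc\cup\ul\tc) = (J\cap E_\tc)\sqcup(J\cap\ul\tc)$ is a basis of $\mc|_{\ov\tc\cup\ul\tc}$. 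By the definition of the contraction minor $\mc_\tc$ as $\mc|_{\ov\tc\cup\ul\tc}$ contracted by $\ul\tc$, a subset of $E_\tc$ is a basis of $\mc_\tc$ exactly when its union with a basis of $\mc|_{\ul\tc}$ is a basis of $\mc|_{\ov\tc\cup\ul\tc}$; taking that auxiliary basis to be $J\cap\ul\tc$ forces $J\cap E_\tc$ to be a basis of $\mc_\tc$.

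Granting the claim, I would first extract two rank identities. Replacing the given free generator $I$ by $I\cap E$, which still freely generates $\sc$, the claim gives $\rk(\mc_\tc) = |I\cap E_\tc|$, hence $\rk(\mc/\sc) = \sum_\tc \rk(\mc_\tc) = |I| = \rk(\mc)$ --- the last equality because $I$ is independent and $\cl(I)\supseteq\bigcup_{S\in\sc}\cl(I\cap S)\supseteq\bigcup_{S\in\sc}S = E$. Since each $S\in\sc$ is the disjoint union of those $E_\tc$ with $S\in\tc$, one also gets $\rk(S) = |I\cap S| = \sum_{\tc\ni S}\rk(\mc_\tc)$. Now for the first inclusion: if $B$ is a basis of $\mc/\sc$, then since $\sc$ is finite it admits a linear order (e.g.\ enumerate $2^\sc$ by non-increasing cardinality), so the lemma immediately preceding the proposition yields \eqref{eq:independentinclusion}, i.e.\ $\ic(\mc/\sc)\su\ic(\mc)$ (using $\cup_\sc S = E$); hence $B$ is independent in $\mc$, and $|B| = \rk(\mc/\sc) = \rk(\mc)$ makes it a basis of $\mc$. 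For $S\in\sc$ we have $B\cap S = \bigsqcup_{\tc\ni S}(B\cap E_\tc)$, so $|B\cap S| = \sum_{\tc\ni S}\rk(\mc_\tc) = \rk(S)$, and an independent subset of $S$ of cardinality $\rk(S)$ spans $S$; thus $B$ generates $\sc$. For the reverse inclusion: if $B$ is a basis of $\mc$ that generates $\sc$, then $B$ is an independent subset of $E$ that freely generates $\sc$, so by the claim $B\cap E_\tc$ is a basis of $\mc_\tc$ for every $\tc$, which by the direct-sum description says exactly that $B$ is a basis of $\mc/\sc$.

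The only real obstacle is the claim that a free generator restricts to a basis on every minor $\mc_\tc$; everything else is disjointness of the $E_\tc$ (Lemma~\ref{lem:setunion}), the previously established inclusion $\ic(\mc/\sc)\su\ic(\mc)$, and rank counting. Inside the claim, the points that need care are the boundary cases $\tc\in\{\emptyset,\sc\}$ together with the empty-union and empty-intersection conventions, and the appeal to Lemma~\ref{lem:freeclosed} to upgrade free generation of $\sc$ to free generation of the auxiliary sets $\ov\tc$, $\ul\tc$, and $\ov\tc\cup\ul\tc$.
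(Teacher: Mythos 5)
Your proof is correct and rests on the same central claim as the paper's --- that an independent free generator of $\sc$ restricts to a basis of each minor $\mc_\tc$ --- but the technical route differs in two useful ways. For the claim itself, the paper decomposes $\ov\tc = E_\tc\sqcup(\ov\tc\cap\ul\tc)$, shows $J \cup I_{E-E_\tc}$ is independent iff $J\cup I_{\ov\tc\cap\ul\tc}$ is independent in $\ov\tc$, and then invokes Lemma~\ref{lem:modularisomorphism} (through modularity of freely generated families, Proposition~\ref{prop:freemodular}) to equate $\ov\tc/(\ov\tc\cap\ul\tc)$ with $\ov\tc/\ul\tc$. You instead use the disjoint decomposition $\ov\tc\cup\ul\tc = E_\tc\sqcup\ul\tc$ and the identification $\mc_\tc = (\mc|_{\ov\tc\cup\ul\tc})/\ul\tc$, which needs only the standard commutation of deletion and contraction, so the modularity lemma drops out; both routes get the needed free-generation facts from Lemma~\ref{lem:freeclosed}. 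For the inclusion $\bc(\mc/\sc)\subseteq\{\text{bases of }\mc\text{ that generate }\sc\}$, the paper's argument leaves the passage from the one-block replacement ``$J\cup I_{E-E_\tc}$ is a basis'' to the simultaneous replacement ``$\cup_\tc J_\tc$ is a basis'' implicit, whereas you make it explicit by invoking the lemma on linear orders (providing the enumeration of $2^\sc$ by non-increasing cardinality as a witness) to obtain \eqref{eq:independentinclusion}, then closing with the rank identity $\rk(\mc/\sc)=\rk(\mc)$. The mathematical content is the same in both treatments; yours removes one dependence (modularity) and surfaces a compressed step.
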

\begin{proof}
Recall that
\begin{align*}
 E_\tc = \ov \tc - \ul \tc,
\end{align*}
so that $\ov \tc$ may be expressed as a disjoint union
$$
\ov \tc = (\ov \tc - \ul \tc) \cup (\ov \tc \cap \ul \tc) = E_\tc \cup (\ov \tc \cap \ul \tc) 
$$
and $E$ as a disjoint union 
$$
E = \ov \tc  \cup (E - \ov \tc) =  E_\tc \cup (\ov \tc \cap \ul \tc) \cup (E - \ov \tc).
$$

For economy let us denote  $A \cap B$ by $A_B $.  By hypothesis there exists an $I \in \bc(\mc)$ such that $I_S \in \bc(S)$ for all $S \in \sc$.  Fixing  any $J \su E_\tc$, let us ask when
\begin{align}
J \cup (I_{E- E_\tc})   \label{eq:questionset}
\end{align}
will be independent in $\mc$.  Set $I$ decomposes as a disjoint union 
$$
I_{E_\tc} \;\; \cup \;\; I_{\ov \tc \cap \ul \tc} \; \; \; \cup \;\; I_{E - \ov \tc} \; \; \; = \; \; \;    I_{\ov \tc } \; \; \; \cup \;\; I_{E - \ov \tc},
$$
and since the family of sets feely generated by a basis is closed under union and finite intersection,  $I_{\ov \tc}$ is a basis for $\ov \tc$.  Thus $I_{E - \ov \tc}$ is a basis for $E/ \ov \tc$,  hence
\begin{align*}
 J \cup  (I_{E- E_\tc}) \; \; \; = \; \; \;  J \;\; \cup \;\; I_{\ov \tc \cap \ul \tc} \; \; \; \cup \;\; I_{E - \ov \tc} 
\end{align*}
is independent in $\mc$ iff 

\begin{align}
J \;\; \cup \;\; I_{\ov \tc \cap \ul \tc}   \label{eq:jcap}
\end{align}
is independent in $\ov \tc$.

Now recall that freely generated families are modular, so that, by Lemma \ref{lem:modularisomorphism},
$$
\ov \tc / \ul \tc = \ov \tc / (\ov \tc \cap \ul \tc).
$$
Since $I_{\ov \tc \cap \ul \tc}$ is a basis for $\ov \tc \cap \ul \tc$, it follows that \eqref{eq:questionset} is independent in $\mc$ (respectively,  \eqref{eq:jcap} is independent in $\ov \tc$) iff $J$ is independent in $\ov \tc / \ul \tc = \mc _\tc$.    In particular, \eqref{eq:questionset} is a basis iff $J$ is a basis in $\mc _\tc$.  It follows  that every set of form 
\begin{align*}
\cup_{\tc \su \sc} J_\tc  && J_\tc \in \bc(\mc_\tc)
\end{align*}
is a basis of $\mc$ that freely generates $\sc$.  That is, every basis of $\mc/\sc$ freely generates $\sc$.  The converse follows easily from the definition of $\mc/\sc$.
\end{proof}

\begin{proposition}
\label{prop:freelygeneratediff}
A finite set family $\sc$ is freely generated iff it satisfies  \eqref{eq:equalrank} and \eqref{eq:independentinclusion}.
\end{proposition}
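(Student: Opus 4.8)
The plan is to prove the two directions separately, building on Proposition~\ref{prop:generatingbases} and Lemma~\ref{lem:setunion} for the forward implication, and on a rank-counting argument together with the linear-order machinery for the converse. Throughout, write $E = \cup_\sc S$ and recall that the family $\{E_\tc : \tc \su \sc\}$ partitions $E$ (Lemma~\ref{lem:setunion}).

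First I would handle the easy direction: if $\sc$ is freely generated, then by Proposition~\ref{prop:generatingbases} the bases of $\mc/\sc$ are exactly the bases of $\mc$ that freely generate $\sc$; in particular $\mc/\sc$ and $\mc$ have the same rank, which is \eqref{eq:equalrank}, and every independent set of $\mc/\sc$ extends to a basis of $\mc/\sc$, hence to a basis of $\mc$, which gives \eqref{eq:independentinclusion}. (If one wants to avoid invoking a linear order here, note that \eqref{eq:equalrank} and \eqref{eq:independentinclusion} together are exactly what Proposition~\ref{prop:generatingbases} yields once we know the bases coincide.)

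For the converse, assume \eqref{eq:equalrank} and \eqref{eq:independentinclusion} hold; I must produce an independent $I$ with $I \cap S \in \bc(S)$ for all $S \in \sc$. The idea is to build $I$ block-by-block along the partition: for each $\tc \su \sc$ choose $J_\tc \in \bc(\mc_\tc)$ and set $I = \cup_\tc J_\tc$. By Lemma~\ref{lem:setunion} the $J_\tc$ live in pairwise disjoint sets $E_\tc$, so $I$ is a genuine (unindexed) set of the right cardinality $\sum_\tc \rk(\mc_\tc) = \rk(\mc/\sc) = \rk(E)$ using \eqref{eq:equalrank}. By \eqref{eq:independentinclusion}, $I$ is independent in $\mc$, and since $|I| = \rk(E)$ it is a basis of $\mc$. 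It then remains to check that $I_S := I \cap S$ is a basis of $S$ for each $S \in \sc$. For this I would argue that $I_S = \cup\{J_\tc : \tc \ni S\}$ (the blocks $E_\tc$ with $S \in \tc$ are exactly those contained in $\ov\tc$ with $S$ among the intersecting sets, so their union is contained in $S$, while the remaining blocks miss $S$), compute its cardinality, and compare with $\rk(S)$; equality should follow by induction along a linear order on $\sc$ restricted to the sub-family of members containing a fixed element, the same telescoping/contraction bookkeeping used in Proposition~\ref{prop:generatingbases}.

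The main obstacle I anticipate is precisely this last bookkeeping step: verifying that the partition blocks assemble correctly so that $I \cap S$ has the right rank for \emph{every} $S$, not merely that $I$ is a basis of the ambient matroid. The subtlety is that \eqref{eq:independentinclusion} only gives independence of $\ic(\mc/\sc)$ inside $\mc$, not inside each restriction $\mc|_S$, so one has to extract the per-$S$ statement by slicing the partition and using modularity of the relevant sub-minors (via Lemma~\ref{lem:modularisomorphism}, exactly as in the proof of Proposition~\ref{prop:generatingbases}). Care is also needed because a linear order is guaranteed to exist only under finiteness of $\sc$, which is hypothesized, so invoking the linear-order lemma is legitimate; but one should double-check that the induction is set up on the correct quantity (e.g.\ number of members of $\sc$, or size of $2^\sc$) so that the base case $m=2$ genuinely reduces to the definition of a contraction minor as in that earlier proof.
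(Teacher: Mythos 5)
Your forward direction matches the paper's: both reduce to Proposition~\ref{prop:generatingbases}.

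For the converse, you correctly set up the target — producing $I = \cup_\tc J_\tc$ with $J_\tc \in \bc(\mc_\tc)$, noting that $I$ is a basis of $\mc$ by \eqref{eq:equalrank} and \eqref{eq:independentinclusion}, and reducing to showing $|I_S| = \rk(\mc|_S)$ for each $S \in \sc$, i.e.\ $\rk([\mc/\sc]|_S) = \rk(\mc|_S)$. You correctly flag this as the crux. But the resolution you sketch is circular. You propose to close the gap ``using modularity of the relevant sub-minors (via Lemma~\ref{lem:modularisomorphism}), exactly as in the proof of Proposition~\ref{prop:generatingbases}.'' Lemma~\ref{lem:modularisomorphism} requires the pair in question to already be modular, and the bookkeeping in the proof of Proposition~\ref{prop:generatingbases} uses the hypothesis that $\sc$ is freely generated (which is where modularity comes from, via Proposition~\ref{prop:freemodular}). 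In the converse direction we are trying to \emph{prove} free generation, so neither tool is available yet.

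The paper closes this gap with a genuinely different idea that you are missing. It replaces $\mc$ by the auxiliary matroid $\nc = \mc \sslash S$ (contraction keeping the full ground set), observes that $\nc_\tc = \mc_\tc$ whenever $E_\tc \subsetneq S$ and $\rk(\nc_\tc) = 0$ otherwise, and hence that $\nc/S = \mc/S$ and $[\nc/\sc]/S = [\mc/\sc]/S$. Since $\rk([\nc/\sc]|_S) = \rk(\nc|_S) = 0$, and since \eqref{eq:independentinclusion} can be verified for $\nc$ in place of $\mc$, one obtains $\rk([\mc/\sc]/S) \le \rk(\mc/S)$, and then \eqref{eq:equalrank} converts this into $\rk([\mc/\sc]|_S) \ge \rk(\mc|_S)$. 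The reverse inequality is immediate from \eqref{eq:independentinclusion}. This contraction trick extracts the per-$S$ rank equality directly from the two hypotheses without invoking any modularity; it is the key idea your proposal lacks, and without it the converse direction does not go through.
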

\begin{proof}
For convenience, assume  and without loss of generality that $\cup_\sc S$ is the ground set of $\mc$.

Let us first suppose that $\sc$ is freely generated.  Then  \eqref{eq:equalrank} and \eqref{eq:independentinclusion} follow directly from Proposition \ref{prop:generatingbases}.
Conversely, posit \eqref{eq:equalrank} and \eqref{eq:independentinclusion}, and let $S \in \sc$ be given.  It will suffice to show that
\begin{align}
\rk( [{\mc / \sc}] |_S )  = \rk ( \mc|_S),  \label{eq:eqrk}
\end{align}
for in this case every basis of $\mc/\sc$ will freely generate $\sc$ in $\mc$.  Therefore let us argue the lefthand side is neither greater nor less than the right.

Put $\nc = \mc \sslash S$.    It is simple to check that $\nc_\tc = \mc_\tc$ when $E_\tc \subsetneq S$, and that $\rk(\nc_\tc) = 0$ otherwise.  It follows that 
\begin{align*}
\nc/S = \mc/S && and && [{\nc/ \sc}] / S  = [\mc / \sc] / S.  
\end{align*}
It may be argued  that \eqref{eq:independentinclusion} holds with $\nc$ in place of $\mc$, so
$$
\rk(\nc/\sc) \le \rk(\nc ).
$$  
Moreover, since
$$
 \rk( [{\nc / \sc}]  | _ S )   =  \rk ( \nc |_S)  
$$ 
vanishes, one has
\begin{align*}
 \rk \left ( [{\nc/ \sc}] / S  \right )  \le  \rk( \nc / S)
\end{align*}
and therefore
\begin{align*}
\rk  ({\mc/ \sc}) -  \rk( [{\mc / \sc}] |_S ) = \rk \left ( [{\mc/ \sc}] / S  \right ) \le \rk( \mc / S) = \rk(\mc) - \rk( \mc |_S).  
\end{align*}
Since $\rk(\mc/\sc) = \rk(\cup_\sc S) = \rk(\mc)$ by hypothesis, the lefthand side of \eqref{eq:eqrk} is no less than the right.  The converse holds by \eqref{eq:independentinclusion}, and the desired conclusion follows.
%
%
%
%
\end{proof}

Let us consider the form taken by $\mc/\sc$ for several important types of $\sc$.

\subsubsection{Filtrations}

A (finite) \emph{filtration} is nested sequence of sets  $\sc = \{S_1 \su \cd \su S_m\}$, where $S_m$ is the ground set.   The nonempty minors that make up $\mc/\sc$ are those of form $S_{p} / S_{p-1}$.
Since the rank of $S_p/S_{p-1}$ equals  $\rk(S_{p}) - \rk(S_{p-1})$, the ranks of $\mc$ and $\mc/\sc$ agree for every filtration  $\sc$.  Thus filtrations are both modular and freely generated, though this is easy enough to show directly -- one can always build a basis that generates $\sc$ by first finding a basis for $S_1$, extending to a basis for $S_2$, et cetera.

There is a one to one correspondence between length $m$ filtrations and functions  $\fc: E \to \{1, \ld, m\}$.  In one direction this correspondence sends $\fc$ to the sequence $(\fc_{\le p})_{p = 1}^m$, where
$$
\fc_{\le p}= \{i: \fc(i) \le p\}.
$$  
In the opposite direction the correspondence sends $\sc$  to the unique integer-valued function $\fc$ such that  $S_p = \fc_{\le p}$.  It will be convenient to identify functions with filtrations under this correspondence,  so that one may speak, for example, of the value of a filtration  $\sc$ on an element $e$, and of the matroid $\mc/\fc$ for some function $\tb F$.   \emph{Nota bene:} under this identification, $\fc_p = \fc_{\le p}$.    

\subsubsection{Bifiltrations}

A \emph{bifiltration} is a family of form $\sc = \fc \cup \gc$, where $\fc$ and $\gc$ are filtrations.  The minors that make up $\mc/\sc$ are those of form
\[
(\fc_p \cap \gc_q) / (\fc_{p-1} \cup \gc_{q-1}).
\]
Such unions need not be  freely generated or even modular.  Unlike arbitrary set families, however, they must be either both or neither.

\begin{proposition}
\label{prop:bimodularmeansfree}
A bifiltration is freely generated if and only if it is modular.
\end{proposition}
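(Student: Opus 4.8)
The forward direction (freely generated $\Rightarrow$ modular) is already Proposition \ref{prop:freemodular}, so the content is the converse: a modular bifiltration is freely generated. The plan is to build a single basis $I$ that freely generates every member of $\sc = \fc \cup \gc$, exploiting the ``rectangular'' index structure of a bifiltration. Write $S_{p,q} = \fc_p \cap \gc_q$, so the members of $\sc$ themselves are the $S_{m,q}$ and $S_{p,m}$ (taking $m$ large enough to be a common top index), but it is natural to work with the whole grid $\{S_{p,q}\}$. Because $\fc$ and $\gc$ are filtrations, $S_{p,q} \su S_{p',q'}$ whenever $p \le p'$ and $q \le q'$, so the grid is a finite lattice under inclusion with $S_{p,q} \cup S_{p',q'} = S_{\max(p,p'),\max(q,q')}$; the pairwise modularity hypothesis on $\sc$ is what I must leverage to control the meets $S_{p,q} \cap S_{p',q'} = S_{\min(p,p'),\min(q,q')}$.

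First I would reduce to showing that the family of \emph{all} $S_{p,q}$ is freely generated by one independent set, since $\fc$ and $\gc$ are sub-families of this grid and free generation of a family passes to sub-families. To produce such a set, I would enumerate the grid in a linear extension of the product order (say lexicographically in $(p+q, p)$) and greedily extend bases: start with a basis $I_{1,1}$ of $S_{1,1}$, and at each step, having a basis $I_{p',q'}$ of the current $S_{p',q'}$ that is consistent (agrees on intersections) with everything built so far, extend it through the two immediate predecessors $S_{p-1,q}$ and $S_{p,q-1}$. The key local claim is: if $B$ is a basis of $S_{p-1,q} \cap S_{p,q-1} = S_{p-1,q-1}$, extended to a basis $B \cup I$ of $S_{p-1,q}$ and to a basis $B \cup J$ of $S_{p,q-1}$, then $B \cup I \cup J$ is a basis of $S_{p,q} \supseteq S_{p-1,q}\cup S_{p,q-1}$ — i.e.\ $B \cup I \cup J$ is independent \emph{and} spanning. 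Spanning is automatic once I know $S_{p,q} \su \cl(S_{p-1,q}\cup S_{p,q-1})$, which follows from $\rk(S_{p,q}) = \rk(S_{p-1,q}) + \rk(S_{p,q-1}) - \rk(S_{p-1,q-1})$, the modular law for the pair $(S_{p-1,q}, S_{p,q-1})$ — and this pair lies in $\fc \cup \gc$ only after checking membership, which is the subtle point (see below). Independence of $B\cup I\cup J$ then follows from Lemma \ref{lem:modularisomorphism}: modularity of $(S_{p-1,q},S_{p,q-1})$ gives $S_{p-1,q}/S_{p,q-1} = S_{p-1,q}/S_{p-1,q-1}$, so $I$, which is independent in the latter contraction, is independent in the former, meaning $B\cup J \cup I$ is independent in $\mc$.

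The main obstacle is exactly that the pairs I need to be modular — the ``diagonal'' pairs $(S_{p-1,q}, S_{p,q-1})$, and more generally arbitrary pairs from the grid — are \emph{not} literally among the sets $S \in \fc \cup \gc$ for which modularity is hypothesized. So the real work is a propagation lemma: from modularity of $\fc \cup \gc$ as a family, deduce modularity of the full grid $\{S_{p,q}\}$, or at least of all the diagonal pairs. I would prove this by downward induction on indices using submodularity as the ``free'' inequality and the hypothesized equalities as anchors: for instance, $\rk(S_{m,q}) + \rk(S_{p,m}) = \rk(S_{p,q}) + \rk(S_{m,m})$ holds since $S_{m,q}, S_{p,m} \in \sc$, and one chases such identities around rectangles, repeatedly invoking $\rk(A\cup B)+\rk(A\cap B)\le \rk(A)+\rk(B)$ with equality forced by a pincer argument (as in the proof of Lemma \ref{lem:freeclosed}). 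Concretely, knowing free generation holds at the boundary of a sub-rectangle of the grid, Lemma \ref{lem:freeclosed} (closure of freely-generated families under union and intersection) lets me conclude it holds at the interior corner. Once the full grid is known to be modular — equivalently, freely generated — restricting to $\fc$ and to $\gc$ finishes the proof. I would present the argument as: (1) the propagation lemma upgrading $\sc$-modularity to grid-modularity; (2) the greedy construction of $I$ using Lemmas \ref{lem:modularisomorphism} and \ref{lem:freeclosed}; (3) the observation that free generation of the grid restricts to $\sc$.
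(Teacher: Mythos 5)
Your approach is genuinely different from the paper's, and it has a real gap at the ``key local claim.'' You assert that a basis $B$ of $S_{p-1,q-1}$, extended to bases $B\cup I$ of $S_{p-1,q}$ and $B\cup J$ of $S_{p,q-1}$, yields a basis $B\cup I\cup J$ of $S_{p,q}$, and you attribute the rank identity $\rk(S_{p,q}) = \rk(S_{p-1,q}) + \rk(S_{p,q-1}) - \rk(S_{p-1,q-1})$ to the modular law for the pair $(S_{p-1,q}, S_{p,q-1})$. But the modular law for that pair controls $\rk(S_{p-1,q} \cup S_{p,q-1})$, the rank of the \emph{set-theoretic union}, not $\rk(S_{p,q})$, and the containment $S_{p-1,q}\cup S_{p,q-1} \su S_{p,q}$ is in general strict. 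The rank identity in fact fails even for freely generated bifiltrations: take $E = \{e\}$ a single non-loop, $\fc_1 = \gc_1 = \emptyset$, $\fc_2 = \gc_2 = E$. This is trivially modular and freely generated by $\{e\}$, yet $\rk(S_{2,2}) = 1$ while $\rk(S_{1,2}) = \rk(S_{2,1}) = \rk(S_{1,1}) = 0$. More generally, whenever a generating independent set contains an element with $\fc$-value $p$ and $\gc$-value $q$, both nonminimal --- a ``new arrival'' at bigrade $(p,q)$ --- the left side of your identity exceeds the right by the count of such arrivals, and these are exactly the elements your greedy step never produces. A propagation lemma establishing modularity of all grid pairs would therefore not close the gap: modularity of $(S_{p-1,q},S_{p,q-1})$ is strictly weaker than the identity you use. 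To repair the scheme you would have to additionally extend $B\cup I\cup J$ to a basis of $S_{p,q}$ at each corner and prove those extensions can be made compatibly across the grid, which is where all the difficulty actually lives; the final appeal to ``modular $\Leftrightarrow$ freely generated'' for the grid is also circular, since the grid is not itself a bifiltration in the sense of the proposition.

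The paper's proof avoids the grid entirely. It applies Lemma \ref{lem:modularisomorphism} to the hypothesized modular pair $(\fc_p, \gc_{q-1})$ to identify $(\fc_p \cap \gc_q)/\gc_{q-1}$ with $(\fc_p \cap \gc_q)/(\fc_p \cap \gc_{q-1})$, then telescopes ranks over $q$ to conclude that $\fc_p$ has the same rank in $\mc$ as in $\mc/\gc = \cup_q\, \gc_q/\gc_{q-1}$. A basis of $\mc/\gc$ that generates $\fc$ in $\mc/\gc$ then automatically generates both filtrations in $\mc$. Your reduction to the grid and the appeal to Lemma \ref{lem:freeclosed} are sound observations, but the quotient construction $\mc/\gc$ is what silently handles the new-arrival bookkeeping that the greedy scheme misses.
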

\begin{proof}
One implication has already been established by Proposition \ref{prop:freemodular}.  For the converse, suppose $\sc$ to be modular.   Recall from the background section on modularity that $\fc_p / \gc_{q-1} = \fc_p / (\fc_p \cap \gc_{q-1})$, so that, in particular,
\[
(\fc_p \cap \gc_q)/ \gc_{q-1} = (\fc_p \cap \gc_{q})/ (\fc_p \cap \gc_{q-1}).
\]
The rank of the righthand side is $\rk(\fc_p \cap \gc_q) - \rk(\fc_p \cap \gc_{q-1})$, so a sum over $q$ telescopes to $\rk(\fc_p)$.  The left hand side is isomorphic to the restriction of  $\nc_q = \gc_q/\gc_{q-1}$ to  $\fc_p \cap |\nc_q|$, so $\fc_p$ has the same rank in $\mc$ that it has in $\mc/\gc = \cup_q \nc_q$.  Any basis that generates $\fc$ in the latter matroid will therefore generate both filtrations in the former.
\end{proof}

\subsubsection{$N$-filtrations}

It is a simple matter to show that a union of $N \ge 2$ filtrations may be modular without being freely generated, even in the special case where each is a linear filtration on a vector space.   Given  three distinct lines $\ell_1, \ell_2, \ell_3$ in the Euclidean plane, for example, the family $\{\ell_1, \ell_2, \ell_3 \}$ is modular, but  cannot be generated by an independent set.

\section{Minimal Bases}
An object of basic interest in the study of matroids is the weighted basis.  Given any real-valued function $\fc$ on $E$, we may define the \emph{$\fc$-weight} of a finite subset $S \su E$ to be the sum $\sum_S \fc(s)$.  A basis is \emph{$\fc$-minimal} (respectively, $\fc$-maximal) if its weight is no greater (respectively, no less) than that of any other basis.  Here and throughout the remainder of this text, we will assume that  weight functions take finitely many values, hence  minimal bases will always exist.

It is an important property of minimal bases that they determine a matroid independence system in their own right.  By extension of the notation introduced in the preceding section, let us identify $\fc$ with the family of sublevel sets 
$$
\{\fc_{\le \ek} : \ek \in \R\}
$$
so that $\mc/\fc = \cup_{\ek \in \R} \; \fc_{\ek} / \fc_{< \ek}$.

\begin{lemma}
\label{lem:genlem}
The $\fc$-minimal bases of $\mc$ are the bases of $\mc / \fc$.  Equivalently, they are the bases that generate $\fc$.
\end{lemma}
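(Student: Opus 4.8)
The plan is to reduce the whole statement to one short weight computation. First I would observe that the sublevel-set family of $\fc$, namely $\{\fc_{\le \ek}:\ek\in\R\}$ with which $\fc$ is identified, is a \emph{filtration}, and filtrations were noted above to be freely generated. Hence Proposition \ref{prop:generatingbases} already identifies the bases that generate $\fc$ with the bases of $\mc/\fc$, so the two displayed characterizations are equivalent and it remains only to show that the $\fc$-minimal bases of $\mc$ are exactly the bases that generate $\fc$.

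For that, let $v_1<v_2<\cd<v_k$ enumerate the (finitely many) values of $\fc$, put $S_j=\fc_{\le v_j}$, $r_j=\rk(S_j)$, and $r_0=0$; note $S_k=E$ and $r_k=\rk(\mc)$. For any basis $B$ set $N_j(B)=|B\cap S_j|$. Two facts drive everything: (i) $B\cap S_j$ is an independent subset of $S_j$, so $N_j(B)\le r_j$ for all $j$, while $N_0(B)=0$ and $N_k(B)=|B|=r_k$; and (ii) $B$ generates $\fc$ precisely when $B\cap S_j$ is a basis of $S_j$ for every $j$, i.e.\ precisely when $N_j(B)=r_j$ for every $j$. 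Grouping the elements of $B$ by their $\fc$-value and summing by parts,
\[
\sum_{b\in B}\fc(b)=\sum_{j=1}^{k} v_j\,(N_j(B)-N_{j-1}(B))=v_k r_k-\sum_{j=1}^{k-1}(v_{j+1}-v_j)\,N_j(B).
\]

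Since each gap $v_{j+1}-v_j$ is strictly positive and $N_j(B)\le r_j$, the right-hand side is at least the constant $W:=v_k r_k-\sum_{j=1}^{k-1}(v_{j+1}-v_j)r_j$, with equality if and only if $N_j(B)=r_j$ for all $j<k$ (the case $j=k$ being automatic) --- that is, by (ii), if and only if $B$ generates $\fc$. A generating basis exists (extend a basis of $S_1$ through $S_2,\ld,S_k$, as in the remarks on filtrations), so the minimum $\fc$-weight over all bases equals $W$ and is attained exactly at the generating bases. Combined with the first paragraph, this gives: $\fc$-minimal bases $=$ bases generating $\fc$ $=$ bases of $\mc/\fc$.

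I do not expect a real obstacle here; the only point to handle carefully is the summation-by-parts step together with the strict positivity of the gaps $v_{j+1}-v_j$, which is exactly what makes equality in the bound force $N_j(B)=r_j$ and thereby yields both directions simultaneously. (If one prefers a more combinatorial route for the harder inclusion ``$\fc$-minimal $\Rightarrow$ generating'', one can argue by exchange: if $B\cap S_p$ fails to span $S_p$, the Steinitz axiom produces $s\in S_p-B$ with $(B\cap S_p)\cup\{s\}$ independent; the fundamental circuit of $s$ in $B$ must then contain an element of $E-S_p$, and swapping it out for $s$ yields a basis of strictly smaller weight, contradicting minimality. The weight identity above is cleaner and handles both inclusions at once, so that is the version I would write.)
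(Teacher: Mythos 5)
Your proof is correct, and it reaches the statement by a genuinely different route than the paper. The paper's argument is a pure exchange argument: if $B_{\fc\le\ek}$ fails to span $\fc_{\le\ek}$ for some $\ek$, pick $e\in\fc_{\le\ek}$ extending $B_{\fc\le\ek}$ independently, extend to a basis by Steinitz using elements of $B_{\fc>\ek}$, and observe the result is strictly lighter than $B$; the converse is dispatched by noting all generating bases have the same weight. You instead compute the weight of an arbitrary basis explicitly via Abel summation over the level sets of $\fc$, derive a lower bound $W=v_kr_k-\sum_{j<k}(v_{j+1}-v_j)r_j$, and show that equality is equivalent to $|B\cap S_j|=\rk(S_j)$ for all $j$, i.e.\ to generation. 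Both approaches are sound; the exchange argument is more in the spirit of the greedy-algorithm correctness proof that the surrounding text emphasizes (and handles the harder inclusion with no computation at all), while your summation-by-parts identity yields the minimum weight $W$ in closed form and settles both inclusions in one stroke. Your parenthetical exchange argument at the end is essentially the paper's own proof, so you have in fact reconstructed it as well. The invocation of Proposition \ref{prop:generatingbases} for the equivalence of ``generates $\fc$'' with ``basis of $\mc/\fc$'' is exactly right and matches the way the paper itself uses that proposition for filtrations.
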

\begin{proof}
Fix a basis $B$, and suppose there exists an $\ek$ for which $B_{\fc \le \ek}$ is not a basis of $\fc_{ \le \ek}$.  There exists at least one $e$ in $\fc _{\le \ek}$ such that $B_{\fc \le \ek} \cup \{e\}$ is independent, and this union may be extended to a basis by adding some elements of $B_{\fc > \ek}$.  The new basis will be identical to $B$, except that one element of $B_{\fc > \ek}$ will be replaced by  $e \in \fc_{\le \ek}$.   The old basis evidently outweighs the new, so $B$ is not minimal.   Therefore $B_{\fc \le \ek}$ freely generates $\fc_{ \le \ek}$, whenever $B$ is minimal.  The converse is immediate, since every basis that satisfies this criterion has identical weight.
\end{proof}

The following is a vacuous consequence, but merits expression for completeness.  We say that a basis is $\fc$-$\gc$ minimal if it is minimal with respect to both $\fc$ and $\gc$.

\begin{proposition} Let filtrations $\fc$, $\gc$ be given.   Either
\begin{itemize}
\item $\fc \cup \gc$ is not modular, and 
\item no $\fc$-$\gc$ minimal basis exists
\end{itemize}
or 
\begin{itemize}
\item $\fc \cup \gc$ is modular,
\item   $\fc$-$\gc$ minimal basis exist, 
\item they are exactly the bases of  $\mc/ (\fc \cup \gc)$, and
\item they are exactly the bases that generate $\fc \cup \gc$.
\end{itemize}
\end{proposition}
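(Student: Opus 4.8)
The plan is to show that this proposition is, as the text says, a ``vacuous consequence'' that merely repackages earlier results; the real content has already been proven in Proposition \ref{prop:bimodularmeansfree} (a bifiltration is freely generated iff it is modular), Proposition \ref{prop:generatingbases} (if $\sc$ is freely generated and $\cup_\sc S = E$, then the bases generating $\sc$ are exactly the bases of $\mc/\sc$), and Lemma \ref{lem:genlem} (the $\fc$-minimal bases of $\mc$ are the bases generating $\fc$). So the proof is essentially a case split on whether $\sc = \fc \cup \gc$ is modular.

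First I would dispatch the negative branch. Suppose $\fc \cup \gc$ is not modular. I must show no $\fc$-$\gc$ minimal basis exists. Argue by contradiction: if $B$ were minimal with respect to both $\fc$ and $\gc$, then by Lemma \ref{lem:genlem} applied twice, $B$ generates $\fc$ and $B$ generates $\gc$, hence $B$ generates the family $\fc \cup \gc$, and being a basis it is independent, so $B$ freely generates $\fc \cup \gc$. By Proposition \ref{prop:freemodular} a freely generated family is modular, contradicting the hypothesis. (One small point to check: ``minimal'' here must be interpreted so that $\fc$-$\gc$ minimality really does force generation of each of $\fc$ and $\gc$ separately; since weight functions are assumed to take finitely many values, an $\fc$-minimal basis exists and Lemma \ref{lem:genlem} characterizes exactly those, so a basis minimal for both is $\fc$-minimal and $\gc$-minimal individually — this is the content that makes the implication go through.)

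Next the positive branch: suppose $\fc \cup \gc$ is modular. Then by Proposition \ref{prop:bimodularmeansfree}, $\sc := \fc \cup \gc$ is freely generated. Since an $\fc$-minimal basis and a $\gc$-minimal basis each exist (finitely-many-values hypothesis), and a basis is $\fc$-$\gc$ minimal precisely when it is $\fc$-minimal and $\gc$-minimal, Lemma \ref{lem:genlem} identifies the $\fc$-$\gc$ minimal bases with the bases that generate $\fc$ and generate $\gc$, i.e.\ the bases that generate $\fc \cup \gc$ — and such bases exist (take a basis for $\fc \cup \gc$ supplied by free generation), establishing the second and fourth bullets. For the third bullet, invoke Proposition \ref{prop:generatingbases} with $\sc = \fc \cup \gc$ (WLOG $\cup_\sc S = E$, since $\gc$, being a filtration, already has the ground set as its top term, so $\cup_\sc S = E$ automatically): the bases generating $\sc$ are exactly the bases of $\mc/\sc = \mc/(\fc\cup\gc)$. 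Chaining these identifications gives all four bullets.

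The only mild obstacle is bookkeeping about what ``$\fc$-$\gc$ minimal'' means and making sure the equivalence ``$B$ generates $\fc$ and generates $\gc$'' $\iff$ ``$B$ generates $\fc \cup \gc$'' is literally the definition of generating a family (which it is, per \S\ref{sec:generators}: $I$ generates a family iff it generates each member). Everything else is a direct citation chain, so I expect no genuine difficulty — the proposition is stated, after all, only ``for completeness.''
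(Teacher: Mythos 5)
Your proof is correct and uses essentially the same citation chain as the paper's own proof (Lemma \ref{lem:genlem}, Proposition \ref{prop:bimodularmeansfree}, Proposition \ref{prop:generatingbases}); the explicit case split and the detour through Proposition \ref{prop:freemodular} in the negative branch are just a more verbose packaging of the same argument, since \ref{prop:freemodular} is one direction of \ref{prop:bimodularmeansfree}. The paper states the same content as three consecutive biconditionals without a case split.
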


%

\begin{proof}
 The bases that generate $\fc \cup \gc$ are the $\fc$-$\gc$ minimal bases, by Lemma \ref{lem:genlem}.   Such bases exist if and only if $\fc \cup \gc$ is modular, by Proposition \ref{prop:bimodularmeansfree}.  When they do exist, they coincide exactly with the bases of  $\mc / (\fc \cup \gc)$, by Proposition \ref{prop:generatingbases}.
\end{proof}

\chapter{Canonical Forms}
\label{ch:canonicalforms}

This chapter  introduces a combinatorial abstraction of the idea of a Jordan basis for a nilpotent operator.  This is not the first combinatorial model of a nilpotent canonical form.  A closely related notion  for complete $\vee$-homomorphisms on  atomistic lattices was introduced by J.\ Szigeti in \cite{SZIGETI2008296}.  The idea of Szigeti was to understand  operators via their action on the lattice of subspaces of the domain.  In this formulation, orbits of vectors are replaced by orbits of one-dimensional subspaces, or more generally \emph{atoms}, and bases are replaced by families of atoms that join irredundantly  to  the maximum element of the lattice.

While the language of posets dramatically increases the reach of traditional canonical forms, several structures of interest break down in this more general context.   Of particular significance, distinct bases in a lattice that lacks the Jordan-H\"older property may have different cardinalities.  Moreover, while a sufficient condition for the existence of Jordan bases is given in \cite{SZIGETI2008296}, no structural characterization  is given.  Our main result states that in any lattice where bases satisfy the Exchange Axiom, an exact condition for existence can be given, and the corresponding Jordan bases may be characterized combinatorially.

\section{Nilpotent Canonical Forms}
\label{sec:nilpotentforms}

Let $\mc$ be a matroid with ground set $E$.  We  say a set function $T: E \to E$   is $\vee$\emph{-complete} if $T  (\cl(S)) \su \cl(TS)$ for every $S \su E$.   

\begin{remark}
If $\mc$ is a simple, then $T$ is $\vee$-complete if and only if the rule 
$$
F \mapsto \cl(TF )
$$ 
determines a $\vee$-complete homomorphism on the associated lattice of flats.
\end{remark}

\begin{remark}  In the field of matroid theory, $\vee$-complete functions are called \emph{strong maps}.  Much is known about these maps and the category they generate;  see for example \cite{white_1986, Heunen2017}.
\end{remark}

Several properties of strong maps follow directly from the definition.  
The first is closure under composition: $TU$ is strong whenever $T$ and $U$ are strong.
The second concerns a notion of null space.  Suppose that $\mc$ contains a loop $0$.  Define the \emph{kernel} of $T$ by  $\K(T)= \{e \in E : T(e) \in \cl(0)\}$. This is the natural counterpart to the notion of a kernel introduced in \cite{SZIGETI2008296}.    One may argue that
\begin{align}
\rk(TS) \le \rk(S/\K(T))  \label{eq:TSineq}
\end{align}
for any subset $S$, as follows.  Extend a basis, $J$, of $\K(T)$ to a basis $I \cup J$ of $S \cup \K(T)$.   One then has $\rk(TS) = \rk(T(S \cup \K(T))) = \rk(T I)$.  The righthand side is bounded by $|I| = \rk(S/\K(T))$, hence \eqref{eq:TSineq}.   We will say that $T$ is \emph{complementary} if strict equality holds for every subset $S$.\footnote{In the standard language of strong maps, a morphism is complementary if and only if it has defect zero.}

In the special case where $S$ is the entire ground set, \eqref{eq:TSineq} implies that
\begin{align}
\rk ( \tm{K}(T)) + \rk( \Im({T})) \le \rk(\mc). \label{eq:kernelimcomplementary}
\end{align}
Strict equality holds in \eqref{eq:kernelimcomplementary} when $T$ is complementary, and in fact, the converse holds as well.  Why?  If sets $I$, $I \cup J$, and $I \cup J \cup B$ are bases for $\K(T)$, $\K(T) \cup S$, and $\mc$, respectively, then equality in \eqref{eq:kernelimcomplementary} implies the second identity in
\[
\rk(T(J \cup B)) = \rk(\Im(T)) = \rk(\mc) - \rk(\K(T)) = \rk(\mc) - |I| = |J \cup B|.
\]
Thus $T(J)$ is independent, so $\rk(TS) = |J| = \rk(S/\K(T))$.

\begin{remark}  A simple consequence of the preceding observation may be stated as follows.  Let  $T$ be a complementary strong map $\mc \to \mc$.  Then $T$ factors as
$$
\mc \lr{q} \mc  \sslash \K(T) \lr{r} \mc
$$
where $q$ and $r$ are strong maps and
\begin{itemize}
\item  $q$ is identity (as a set function)
\item  $r$ is an embedding, in the sense that $\rk_\mc(r(S)) = \rk_{\mc \sslash \K(T)}(S)$ for all $S \su |M|$.
\end{itemize}
Thus every complementary strong map may be viewed as a contraction followed by an embedding.
\end{remark}

We will say that $T$ is \emph{nilpotent} if $T^n \su \cl(0)$ for some $n$.  The \emph{orbit} of an element $e$ under a nilpotent operator $T$ is the set (possibly empty) of all nonzero elements that may be expressed in form $T^m e$, for some nonnegative $m$.   A basis is \emph{Jordan} with respect to $T$ if it may be expressed as the disjoint union of some $T$-orbits.

A reasonable point of departure for the study of  Jordan bases is the relation between kernels, images, and nilpotence. Provided $T^n = 0$, one may define an increasing sequence of kernels 
$$
\kc: \; \; \Ker(T^0) \su \cd \su \Ker(T^n)
$$ and one of images  
$$
\ic: \; \; \Im(T^n) \su \cd \su \Im(T^0),
$$ each terminating with the ground set.  We call these the \emph{kernel} and \emph{image} filtrations, respectively, of $T$.  

\begin{remark} Our notation for the image filtration is slightly unfortunate, as it conflicts with the universal convention that $\ic$ should denote a family of independent sets.  The reward of this transgression is the clarity it lends to certain duality results,  c.f.\ Corollary \ref{cor:jordanconditions}. 
\end{remark}

\begin{lemma}
\label{lem:kerimgen}
Every Jordan basis of a nilpotent strong map $T$ freely generates the kernel and image filtrations of $T$.
\end{lemma}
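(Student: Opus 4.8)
The plan is to show that a Jordan basis $B$ freely generates each sublevel set of the kernel filtration $\kc$ and each sublevel set of the image filtration $\ic$; by definition of free generation it suffices to check that $B \cap \Ker(T^m)$ is a basis of $\Ker(T^m)$ for every $m$, and likewise $B \cap \Im(T^m)$ is a basis of $\Im(T^m)$ for every $m$. Since $B$ is a basis of $\mc$ it is certainly independent, so the only content is the spanning (equivalently, rank-counting) statement: one must verify $|B \cap \Ker(T^m)| = \rk(\Ker(T^m))$ and $|B \cap \Im(T^m)| = \rk(\Im(T^m))$.

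First I would set up notation for the orbit decomposition: write $B = \bigsqcup_{e \in A} \orb(e)$ where $A$ is the set of orbit generators and $\orb(e) = \{e, Te, \ldots, T^{k(e)}e\}$ with $T^{k(e)}e \ne 0$ but $T^{k(e)+1}e \in \cl(0)$. The key local observation is that for a single orbit, $\orb(e) \cap \Ker(T^m)$ consists exactly of the ``tail'' $\{T^{j}e : k(e) - m < j \le k(e)\}$ — the last $m$ elements of the chain (or all of $\orb(e)$ if $k(e)+1 \le m$). This is where $\vee$-completeness and the loop structure enter: one needs that $T^j e \in \cl(0)$ forces $T^{j'} e \in \cl(0)$ for all $j' \ge j$ (immediate from $T(\cl(S)) \subseteq \cl(TS)$ applied to $S = \{0\}$), and conversely that $T^j e \notin \cl(0)$ implies $T^{j-1}e \notin \cl(0)$ contrapositively. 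So within each orbit the kernel-membership pattern is a clean suffix.

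Next, the counting: summing over orbits, $|B \cap \Ker(T^m)| = \sum_{e \in A} \min(m,\, k(e)+1)$. To match this with $\rk(\Ker(T^m))$ I would use the relation between successive kernels. Since $T$ is a nilpotent strong map, one has the chain $\cl(0) = \Ker(T^0) \subseteq \Ker(T^1) \subseteq \cdots$, and I would argue that the number of orbit generators $e$ with $k(e)+1 \ge m$ — i.e., the orbits long enough to contribute a fresh element at level $m$ — equals $\rk(\Ker(T^m)) - \rk(\Ker(T^{m-1}))$. Telescoping then gives the full identity. The cleanest route is probably to induct on $m$: at $m=0$, $B \cap \cl(0)$ is the set of elements of orbits entirely inside $\cl(0)$, which by independence of $B$ is at most one element, and matches $\rk(\cl(0))$ (which is $0$ if $0$ is a loop and $B$ avoids parallel copies, or is handled directly). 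For the inductive step one passes between level $m-1$ and $m$ by the orbit-tail description, using that applying $T$ shifts a basis of $\Ker(T^m)/\Ker(T^{m-1})$-type data. The image filtration is then handled by the dual/symmetric argument: $\Im(T^m) = T^m(E)$ and $B \cap \Im(T^m)$ picks out, from each orbit, the elements $T^j e$ with $j \ge m$ (the ``head'' of length $k(e)+1-m$), and a parallel telescoping count against $\rk(\Im(T^{m-1})) - \rk(\Im(T^m))$ closes it; here the relevant input is the complementarity-type inequality $\rk(T S) \le \rk(S/\K(T))$ from \eqref{eq:TSineq}, or rather its consequence that $T$ restricted to a transversal of $\K(T)$ is rank-preserving, which is exactly what makes the image counts telescope correctly.

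The main obstacle I anticipate is the rank bookkeeping in a purely matroidal (not linear-algebraic) setting: in linear algebra one would just say $\dim(\Ker T^m) = \sum_i \min(m, \lambda_i)$ from the Jordan block sizes, but here one must derive the analogous statement from the axioms — specifically, that consecutive kernel ranks $\rk(\Ker(T^{m})) - \rk(\Ker(T^{m-1}))$ count exactly the orbits of length $\ge m$. I expect this requires combining $\vee$-completeness (to control how $T$ acts on closures, hence on flats spanned by partial orbits) with the Exchange Axiom (to ensure the orbit-tails at each level are genuinely a basis of the relevant minor, not merely independent), and the argument will likely mirror the structure of the proof of Proposition~\ref{prop:intro1} in the introduction, but streamlined since we now have Lemma~\ref{lem:genlem} and the machinery of $\mc/\sc$ available.
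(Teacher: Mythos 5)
Your overall strategy — count elements of $J$ inside $\Ker(T^m)$ and inside $\Im(T^m)$ orbit by orbit, then match against rank — is aimed in the right direction, and the suffix/prefix observation about how a single orbit meets $\Ker(T^m)$ and $\Im(T^m)$ is correct. But the way you propose to close the argument has a circularity. You want to establish, level by level, that the number of orbits of length $\ge m$ equals $\rk(\Ker(T^m)) - \rk(\Ker(T^{m-1}))$, and for the image count you invoke ``its consequence that $T$ restricted to a transversal of $\K(T)$ is rank-preserving.'' That is \emph{not} a consequence of \eqref{eq:TSineq}: the inequality only gives $\rk(TS) \le \rk(S/\K(T))$, and equality (rank-preservation on a transversal) is precisely the definition of $T$ being \emph{complementary}. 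Complementarity is what the existence of a Jordan basis is later shown to \emph{imply} (the ``only if'' direction of the equivalence after Proposition~\ref{prop:complementaryiffjordan} is deduced from this very lemma), so you cannot assume it here.

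The paper avoids the per-level telescoping entirely by coupling the two counts against a single application of \eqref{eq:kernelimcomplementary} for $T^m$. Write $J_{\kc \le m} = J \cap \Ker(T^m)$ and note that, because $J$ is a disjoint union of orbits, $T^m$ is injective on $J_{\kc > m}$ and lands in $J$, so $T^m J_{\kc > m}$ is an independent subset of $\Im(T^m)$ with $|T^m J_{\kc > m}| = |J_{\kc > m}|$. Hence
$$
|J_{\kc \le m}| + |T^m J_{\kc > m}| = |J| = \rk(\mc),
$$
while independence gives $|J_{\kc \le m}| \le \rk(\Ker(T^m))$ and $|T^m J_{\kc > m}| \le \rk(\Im(T^m))$, and \eqref{eq:kernelimcomplementary} gives $\rk(\Ker(T^m)) + \rk(\Im(T^m)) \le \rk(\mc)$. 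Sandwiching forces equality everywhere, so $J_{\kc \le m}$ is a basis of $\Ker(T^m)$ and $T^m J_{\kc > m}$ (hence $J \cap \Im(T^m)$) is a basis of $\Im(T^m)$ — for each $m$ independently, with no induction and no complementarity hypothesis. If you want to keep your orbit-counting framing, the fix is to observe that the orbit-tail and orbit-head counts for a fixed $m$ always sum to the orbit length (each $T^j e$ counts once as tail or once as head depending on whether $j \ge k(e)+1-m$ or $j < k(e)+1-m$), and then apply the same sandwich; the per-level telescoping against rank differences is the step you cannot justify at this stage.
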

\begin{proof}
Suppose $J$ is a Jordan basis of $T$.  Since $J _{\kc \le m}$ and $T^mJ_{\kc > m}$ are independent subsets of $\Ker(T^m)$ and $\Im(T^m)$, respectively, inequality \eqref{eq:kernelimcomplementary} implies that $J_{\kc \le m}$  generates $\Ker(T^m)$, and $T^mJ_{\kc > m}$  generates $\Im(T^m)$.
\end{proof}

%
%
%
%
%
%
%

\begin{proposition}
\label{prop:complementaryiffjordan}
If $T^m$ is complementary for all nonnegative $m$, then $J$ is a Jordan basis iff it may be expressed as the orbit of a $\kc$-minimal basis of \emph{$\mc/ \Im(T)$}.
\end{proposition}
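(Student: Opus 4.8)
The plan is to mirror the structure of the introductory Proposition~\ref{prop:intro1}, now phrased entirely in matroid language, using the complementarity hypothesis on every power $T^m$ to replace the linear-algebra facts (First Isomorphism Theorem, uniqueness of Jordan decomposition) that were used there. First I would fix notation: write $K_m = \Ker(T^m)$ for the kernel filtration $\kc$, and note that $\mc/\Im(T)$ is the contraction of $\mc$ that ``kills the image,'' playing the role of the quotient by $\im(T)$ in the introduction. The weight function $\kc$ is the one with $K_m = \{\kc \le m\}$, so a $\kc$-minimal basis of $\mc/\Im(T)$ is, by Lemma~\ref{lem:genlem}, exactly a basis of $\mc/\Im(T)$ that generates the filtration $\kc$ there; I will use both descriptions interchangeably.

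The forward direction --- every Jordan basis $J$ of $T$ arises as the orbit of such a minimal basis --- is the easier half. Given a Jordan basis $J$, Lemma~\ref{lem:kerimgen} tells us $J$ freely generates the kernel filtration $\kc$. I would extract from $J$ the subset $B$ of orbit-initial elements, i.e.\ those $e$ with $e \in J$ but $e \notin TE$ (equivalently, $B = J - \Im(T)$), so that $J$ is the disjoint union of the orbits of $B$. The claim to verify is that $B$ is a basis of $\mc/\Im(T)$ that generates $\kc$ in that minor; since the orbit of $e$ lands in $K_{\kc(e)}$ and $J$ generates each $K_m$, a rank count using complementarity of each $T^m$ (via \eqref{eq:kernelimcomplementary} applied to $T^m$) shows that the orbit-initial elements of weight $\le m$ must already generate $K_m$ modulo $\Im(T)$, giving minimality by Lemma~\ref{lem:genlem}. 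The converse direction --- the orbit of a $\kc$-minimal basis $B$ of $\mc/\Im(T)$ is a Jordan basis --- is where the real work lies, and it is the analogue of the first, longer argument in the proof of Proposition~\ref{prop:intro1}. Set $J = \bigcup_{e \in B} \orb(e) \cup B$. I would show (a) $J$ is independent in $\mc$, and (b) $J$ has full rank. For (b), the argument from the introduction transports cleanly: the span of $\orb(B)$ together with $\Im(T)$ is all of $E$ because $B$ generates $\mc/\Im(T)$, and $T$ induces a nilpotent complementary strong map on the contraction by $\langle \orb(B)\rangle$ whose triviality forces that contraction to be trivial --- so $J$ spans, and being independent it is a basis, manifestly Jordan.

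The main obstacle, as in the source proposition, is step (a): proving that the union of orbits of a $\kc$-minimal basis of $\mc/\Im(T)$ is independent in $\mc$. The linear-algebra proof factored out maximal powers of $T$ from a putative dependence and used a basis-exchange argument to produce a strictly lighter $q$-basis, contradicting minimality. Here I must recast this using strong-map machinery: a circuit of $\mc$ contained in $J$ would, after pushing through the appropriate power $T^k$ chosen maximal, yield a dependence among the images of orbit elements that survives in some $K_m$; one then uses the Exchange Axiom in $\mc/\Im(T)$ to swap an offending heavy element of $B$ for a lighter one produced by this dependence, contradicting Lemma~\ref{lem:genlem}'s characterization of minimal bases. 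The delicate points are (i) making sure the maximal-power reduction $T^k$ behaves well under a strong map --- in particular that $T$ restricted to relevant closures remains complementary, which is exactly what the hypothesis ``$T^m$ complementary for all $m$'' buys us --- and (ii) checking that the exchanged set is still a basis of the contraction $\mc/\Im(T)$ and genuinely lighter in $\kc$-weight. I expect the bulk of the writeup to be devoted to formalizing this exchange-and-descent step, with everything else following from Lemmas~\ref{lem:kerimgen} and~\ref{lem:genlem} and inequality~\eqref{eq:kernelimcomplementary}.
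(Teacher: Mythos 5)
Your proposal mirrors the paper's proof: the forward direction is exactly a combination of Lemmas~\ref{lem:kerimgen} and~\ref{lem:genlem}, and the converse proceeds by the same two-step argument (independence via a circuit contradiction that violates $\kc$-minimality, spanning via the induced nilpotent strong map on the contraction $\mc/\cl(J)$ having trivial cokernel). The one step you flag as delicate is handled in the paper by using complementarity of $T^m$ to transfer the circuit condition from $\zk = T^m\omega$ to $\omega$ inside $\mc/\kc_{\le m}$ and then, \emph{a fortiori}, inside $\mc/(\kc_{<w}\cup\Im(T))$, where the max-weight $w \in \omega \cap B$ gives the contradiction --- precisely the ``exchange-and-descent'' you anticipate.
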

\begin{proof}
That every Jordan basis  may be expressed as the orbit of a $\kc$-minimal basis of $\mc/\I(T)$ follows  from Lemma \ref{lem:genlem}, which characterizes the minimal bases of a filtration as those that freely generate it, and from Lemma \ref{lem:kerimgen}.  

For the converse, suppose  $J$ to be the orbit  a  $\kc$-minimal basis $B \in \bc(\mc/\I(T))$.  We will show that $J$ is a basis.  To establish independence, assume for a contradiction that $J$ contains a circuit $\zk$.   Fix an integer $m$ and a subset $\omega \su J$ such that $\zk = T^m \omega$ and $ \omega \cap B$ is nonempty.   Since each   $z \in \zk$ lies in the closure of $\zk - \{z\}$, complementarity implies that  $w \in \omega$ lies in the closure of $\omega - \{w\}$ in  $\mc/\kc_{\le m}$.  Since $\kc$ takes values strictly greater than $m$ on $\omega$, it follows \emph{a fortiori} that $w$ lies in the closure of $\omega - \{w\}$ in $\mc/(\kc_{< w} \cup \I(T))$.  However, if we take $w \in \omega \cap B$ to have maximum $\kc$-weight this implies a contradiction, since $w$ includes into a $\kc$-basis in $\mc/\I(T)$.  Thus $J$ is independent.

To see that $J$ spans $\mc$, let $\nc$ be the matroid obtained by introducing a unique zero element into the contraction minor $\mc/ \cl(J)$.  Evidently, $T$ induces a nilpotent map $Q$  on $\nc$ that sends $e$ to $Te$ if $e$ lies outside $\cl(J)$, and to zero otherwise.   Since $\cl_{\nc}(S) = \cl(J \cup S ) - J$ for any $S$, an element $j \in \mc - J$ belongs to $T \cl_\nc(S)$ if and only if it lies in
\[
T(\cl(J \cup S )) \su \cl(T (J \cup S)) \su \cl(J \cup TS).
\]
Inclusion in the righthand side is equivalent to membership in $\cl_\nc(TS)$, for $j$ outside $J$, so $Q\cl_\nc(S) \su \cl_\nc(QS)$.  In particular, $Q$ is $\vee$-complete.

As $Q$ is evidently nilpotent, it follows that either $\rk(Q \nc) < \rk(\nc)$ or $\nc$ has rank zero.  If the former holds, then $\nc/ Q\nc$ will have positive rank.  This is impossible, since the independent sets of $\nc/Q \nc$ are exactly those of $\mc / (J \cup T\mc)$, and the latter has rank zero.  Therefore $\nc$ has rank zero, whence $J$ is a basis.
\end{proof}

Jordan bases admit a natural dual characterization.  Let us say that  $\{e, \ld, T^m e\}$ is a \emph{preorbit} of $T^me$ if there exists   if 
$$
e \in T^m \mc - T^{m+1} \mc.
$$
A preorbit of a set $S$ is a union of form $\cup_{s \in S} J_s$, where  for each $s \in S$ the set $J_s$ is a preorbit of $s$.   The proof of the following observation is entirely analogous to that of Proposition \ref{prop:complementaryiffjordan}.  The details are left as an exercise to the reader.

\begin{proposition}
A nilpotent strong map $T$ has a Jordan basis if and only if $T^m$ is complementary for all nonnegative $m$.
\end{proposition}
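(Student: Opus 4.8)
The plan is to establish the two implications separately: the ``if'' direction will be a direct consequence of Proposition~\ref{prop:complementaryiffjordan}, while the ``only if'' direction will follow from Lemma~\ref{lem:kerimgen} together with the equivalence established right after \eqref{eq:kernelimcomplementary}, namely that strict equality in \eqref{eq:kernelimcomplementary} characterizes complementary strong maps.

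For the ``if'' direction, suppose $T^m$ is complementary for every nonnegative $m$. Since $T$ is nilpotent, the kernel weight $\kc$ is bounded above by any integer $n$ with $T^n\su\cl(0)$, so it takes only finitely many values; by the standing convention on weight functions, the matroid $\mc/\Im(T)$ therefore admits a $\kc$-minimal basis $B$. Proposition~\ref{prop:complementaryiffjordan} then guarantees that the orbit of $B$ is a Jordan basis of $\mc$. In particular $\mc$ has a Jordan basis, which is all that is required here.

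For the ``only if'' direction, let $J$ be a Jordan basis of $T$ and fix $m\ge 0$. By Lemma~\ref{lem:kerimgen}, $J$ freely generates both the kernel and image filtrations of $T$; as the proof of that lemma makes explicit, $J_{\kc\le m}=J\cap\K(T^m)$ is a basis of $\K(T^m)$ and $T^mJ_{\kc>m}$ is a basis of $\Im(T^m)$. Because $J$ is a disjoint union of genuine $T$-orbits, and a nilpotent set map has no periodic points among the non-loops (so the successive iterates along each orbit are pairwise distinct), $T^m$ restricts to an injection on $J_{\kc>m}$; hence $\rk\Im(T^m)=|T^mJ_{\kc>m}|=|J_{\kc>m}|$ while $\rk\K(T^m)=|J_{\kc\le m}|$. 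Adding these, and using that $\{\kc\le m\}$ and $\{\kc>m\}$ partition $J$, we obtain
\[
\rk\K(T^m)+\rk\Im(T^m)=|J_{\kc\le m}|+|J_{\kc>m}|=|J|=\rk(\mc),
\]
i.e.\ strict equality holds in \eqref{eq:kernelimcomplementary} with $T^m$ in place of $T$. Since $T^m$ is again a strong map, the equivalence recalled above shows $T^m$ is complementary; as $m\ge 0$ was arbitrary, this completes the argument.

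I expect the main obstacle to be the injectivity of $T^m$ on $J_{\kc>m}$ --- equivalently, the claim that $T^mJ_{\kc>m}$ has the same size as $J_{\kc>m}$ and still spans $\Im(T^m)$. This is the one step that genuinely uses the hypothesis that $J$ is a disjoint union of \emph{honest orbits} rather than merely a basis compatible with the kernel filtration; without it the cardinality bookkeeping that yields equality in \eqref{eq:kernelimcomplementary} breaks down. Everything else is formal --- an appeal to Proposition~\ref{prop:complementaryiffjordan} for existence, to Lemma~\ref{lem:kerimgen} for free generation, and to the finiteness convention on weight functions. I would also note that the dual statement, in which Jordan bases are characterized as preorbits of minimal bases for the image filtration, is proved identically with orbits replaced by preorbits and $\kc$ by the image weight; this symmetry is precisely why the resulting existence criterion is symmetric in kernels and images.
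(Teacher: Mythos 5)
Your proof is correct and takes essentially the same route as the paper's: the ``if'' direction is an immediate consequence of Proposition~\ref{prop:complementaryiffjordan} (together with the standing convention that weight functions take finitely many values, so a $\kc$-minimal basis of $\mc/\Im(T)$ exists), and the ``only if'' direction revisits the proof of Lemma~\ref{lem:kerimgen}, partitioning the Jordan basis $J$ into $J_{\kc\le m}$ and $J_{\kc>m}$ and concluding that each generates the corresponding kernel or image. You correctly identify and fill in the cardinality step that the paper leaves tacit --- that $T^m$ is injective on $J_{\kc>m}$ because $J$ is a disjoint union of genuine orbits of a nilpotent map --- which is exactly what forces equality in \eqref{eq:kernelimcomplementary}.
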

\begin{proof}
The ``if'' portion follows from Proposition \ref{prop:complementaryiffjordan} below.  The ``only if''  follows from proof of Lemma \ref{lem:kerimgen}, where we showed that every Jordan basis may be partitioned into two disjoint subsets, one generating the kernel of $T^m$ and the other its image.
\end{proof}

\begin{proposition}  If $T^m$ is complementary for all nonnegative $m$, then the Jordan bases of $T$ are  the preorbits of $\ic$-maximal basis in $\K(T)$.
\end{proposition}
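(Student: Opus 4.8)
The plan is to dualize the proof of Proposition~\ref{prop:complementaryiffjordan} under the kernel--image symmetry of a nilpotent strong map: orbits become preorbits, the minor $\mc/\Im(T)$ becomes the restriction $\K(T)$, generation of the kernel filtration becomes generation of the image filtration, and $\kc$-minimality becomes $\ic$-maximality. Concretely, I identify the (decreasing) image filtration $\ic$ with the depth function $e \mapsto \max\{m : e \in \Im(T^m)\}$, so that $\{e : \ic(e) \ge m\} = \Im(T^m)$; then, by the evident variant of Lemma~\ref{lem:genlem} for this filtration, a basis $B \su \K(T)$ is $\ic$-maximal exactly when $B \cap \Im(T^m)$ is a basis of $\Im(T^m)\cap\K(T)$ for every $m$, i.e.\ exactly when $B$ generates the image filtration inside $\K(T)$. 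A preorbit of $b \in \K(T)$ is a chain $g_b, Tg_b, \ld, T^{\ic(b)}g_b = b$ with $g_b$ any preimage of $b$ of order $\ic(b)$; since $Tb = 0 \neq b$, this chain is a $T$-orbit of length $\ic(b)+1$, and a short check shows that for distinct $b, b' \in B$ the two chains are disjoint (in particular $b \mapsto g_b$ is injective), so a preorbit of $B$ is automatically a disjoint union of $T$-orbits.

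For the forward inclusion, let $J$ be a Jordan basis, a disjoint union of orbits. The bottom element of each orbit lies in $\K(T) = \Ker(T^1)$ and no other element of its orbit does, so $B := J \cap \K(T) \su J$ selects one element per orbit. By Lemma~\ref{lem:kerimgen}, $J$ freely generates both the kernel and the image filtrations of $T$; hence $B = J \cap \Ker(T^1)$ is a basis of $\K(T)$, and by Lemma~\ref{lem:freeclosed} (free generation is closed under finite intersection) $J$ freely generates $\Im(T^m)\cap\K(T)$ for every $m$, so $B \cap \Im(T^m)$ is a basis of $\Im(T^m)\cap\K(T)$ and $B$ is $\ic$-maximal. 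Finally, using that $J \cap \Im(T^{k+1})$ is a basis of $\Im(T^{k+1})$ one checks that an orbit of $J$ with bottom $b$ has length exactly $\ic(b)+1$, so $J$ is a preorbit of $B$.

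The reverse inclusion is the substantive half. Let $B$ be an $\ic$-maximal basis of $\K(T)$ and let $J$ be a preorbit of $B$; since $J$ is already a disjoint union of $T$-orbits, it suffices to show $J \in \bc(\mc)$. First I would pin down the cardinality: $|J| = |B| + \sum_{b \in B}\ic(b)$, and rewriting $\sum_b \ic(b) = \sum_{m \ge 1}|B \cap \Im(T^m)| = \sum_{m \ge 1}\rk(\Im(T^m)\cap\K(T))$ via $\ic$-maximality, then invoking complementarity of $T^m$ in the form $\rk(\Im(T^m)\cap\K(T)) = \rk(\Ker(T^{m+1})) - \rk(\Ker(T^m))$, the sum telescopes to $\rk(\mc) - \rk(\K(T)) = \rk(\Im(T))$ (the last equality by complementarity of $T$), whence $|J| = \rk(\K(T)) + \rk(\Im(T)) = \rk(\mc)$. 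It therefore remains only to prove $J$ independent, after which independence together with maximal cardinality forces $J \in \bc(\mc)$. For independence I would run the circuit argument of Proposition~\ref{prop:complementaryiffjordan} in the dual direction: from a putative circuit $\zk \su J$ pass to a subset $\omega \su J$ of the form $T^m\zk$ that meets $B$; since $T$ is $\vee$-complete, $\omega$ inherits the dependence of $\zk$, and selecting from $\omega \cap B$ the element of least depth produces a dependence which, transferred back through the complementary map $T^m$, contradicts the fact that $B$ generates the image filtration inside $\K(T)$.

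I expect that last step to be the main obstacle. Whereas Proposition~\ref{prop:complementaryiffjordan} factors powers of $T$ out of a circuit and lands on the distinguished basis inside $\mc/\Im(T)$, here one must apply powers of $T$ and track, through the complementary maps $T^m$, precisely how a dependence among Jordan-chain elements descends to a dependence in $\K(T)$ incompatible with $\ic$-maximality --- which amounts to dualizing the filtered-closure bookkeeping of that proof (replacing the contraction minors appearing there by the appropriate restriction minors). Everything else is either a direct dualization or the routine cardinality computation above, and every passage between images and kernels rests squarely on the hypothesis that each $T^m$ is complementary.
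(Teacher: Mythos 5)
The paper supplies no proof of this proposition; the paragraph just before it declares the argument ``entirely analogous to that of Proposition~\ref{prop:complementaryiffjordan}'' and leaves the details as an exercise, so there is no reference argument to match you against. Your proposal is that dualization carried out, and the overall plan is the intended one. Your treatment of spanning is in fact a genuine simplification of the template: where Proposition~\ref{prop:complementaryiffjordan} adjoins a loop to $\mc/\cl(J)$ and shows the induced nilpotent strong map forces rank zero, you simply count --- $|J| = |B| + \sum_{m\ge 1}\rk(\Im(T^m)\cap\K(T))$, and since $\Im(T^m)\cap\K(T) = T^m\K(T^{m+1})$ together with complementarity of $T^m$ yields $\rk(\Im(T^m)\cap\K(T)) = \rk(\K(T^{m+1})) - \rk(\K(T^m))$, the sum telescopes to $\rk(\mc) - \rk(\K(T)) = \rk(\Im(T))$, so $|J| = \rk(\mc)$ and spanning is automatic once independence is established. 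That calculation is correct and makes plainer where complementarity is used than the quotient-matroid trick does.

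The independence step, which you flag yourself, is not yet closed, and the sketch as written does not obviously close it. Proposition~\ref{prop:complementaryiffjordan} uses complementarity in the \emph{lifting} direction: a dependence among $T^m\omega = \zk$ is pulled back to a dependence among $\omega$ in $\mc/\K(T^m)$, and the contradiction with $\kc$-minimality of $B$ in $\mc/\Im(T)$ is then extracted by a careful shift of the ambient minor (the passage ``it follows \emph{a fortiori} that $w$ lies in the closure of $\omega - \{w\}$ in $\mc/(\kc_{<w}\cup\Im(T))$''). Your dual plan pushes $\zk$ \emph{forward} to $\omega = T^m\zk$, which needs only the strong-map inclusion $T\cl(S)\su\cl(TS)$ and produces a dependent set $\omega\su J$ meeting $B$; but $\omega$ still contains elements above the bottom rungs of their chains, hence outside $\K(T)$, so it does not yet collide with the fact that $B$ generates the trace of the image filtration inside $\K(T)$. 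The phrase ``transferred back through the complementary map $T^m$'' is precisely the missing content --- the dual of the orbit proof's minor-shifting step --- and it needs to be carried out explicitly rather than gestured at before this counts as a complete argument. One more note: your reading of $\ic$ as the depth function $e\mapsto\max\{m : e\in\Im(T^m)\}$ is the one under which $\ic$-maximality selects exactly the bases generating the image filtration restricted to $\K(T)$, and it is what the statement must mean; the paper's literal filtration-to-function correspondence would reverse the sign, a notational tension the Remark following the definition of the image filtration appears to acknowledge.
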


In summary,  we have the following.

\begin{corollary}  \label{cor:jordanconditions}
If $\mc$ is a matroid and
$$
T: \mc \to  \mc,
$$ 
is a nilpotent strong map, then following are equivalent.
\begin{enumerate}
\item $T$ has a Jordan basis. 
\item $T^m$ is complementary, for all  $m$.
\item The Jordan bases of $T$ are the orbits of  $\kc$-minimal bases in \emph{$\mc\;/\;\Im(T)$}.
\item The Jordan bases of $T$ are the preorbits of $\ic$-maximal bases in \emph{$\mc \; |  \;\Ker(T)$}.
\end{enumerate}
\end{corollary}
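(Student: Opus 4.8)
The plan is to assemble the corollary from the three propositions that precede it, treating the corollary as essentially a bookkeeping exercise that collates their content into a single cycle of implications. First I would observe that the equivalence $(1)\Leftrightarrow(2)$ is exactly the statement of the (unnumbered) Proposition asserting ``a nilpotent strong map $T$ has a Jordan basis if and only if $T^m$ is complementary for all nonnegative $m$.'' So nothing new is needed there; I would simply cite it. With $(2)$ in hand, I would then invoke Proposition \ref{prop:complementaryiffjordan} to get $(2)\Rightarrow(3)$: under the hypothesis that every $T^m$ is complementary, that proposition says $J$ is a Jordan basis iff it is the orbit of a $\kc$-minimal basis of $\mc/\Im(T)$, which is precisely the content of $(3)$. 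Dually, I would invoke the Proposition stating ``if $T^m$ is complementary for all nonnegative $m$, then the Jordan bases of $T$ are the preorbits of $\ic$-maximal bases in $\K(T)$'' to get $(2)\Rightarrow(4)$.

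To close the loop, I would note that either of $(3)$ or $(4)$ trivially implies $(1)$: each asserts an explicit description of the (nonempty, since minimal/maximal bases always exist under the standing finiteness assumption on weight functions) set of Jordan bases, so in particular a Jordan basis exists. Thus $(1)\Leftrightarrow(2)$, $(2)\Rightarrow(3)$, $(2)\Rightarrow(4)$, and $(3)\Rightarrow(1)$, $(4)\Rightarrow(1)$ together give the full equivalence. The one point requiring a sentence of care is that $(3)$ and $(4)$ as phrased in the corollary are biconditional characterizations (``the Jordan bases of $T$ are exactly \ldots''), not merely existence statements, so I must make sure the cited propositions deliver the ``exactly'' and not just one inclusion; reading Proposition \ref{prop:complementaryiffjordan} and its dual, they do, since both are stated as ``iff'' characterizations of which bases are Jordan.

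I expect no serious obstacle here — the real work has already been done in Proposition \ref{prop:complementaryiffjordan} and its dual. The only thing worth double-checking is the consistency of notation between the filtration-based statements ($\mc/\Im(T)$ with the kernel filtration $\kc$, versus $\K(T)$ with the image filtration $\ic$) and the fact that $\kc$-minimal bases of $\mc/\Im(T)$ coincide with bases that generate $\kc$ by Lemma \ref{lem:genlem}; if one wanted a fully self-contained argument one could restate the two propositions in the ``generates the filtration'' language and then note the corollary is immediate. I would write the proof as a short paragraph: cite the existence proposition for $(1)\Leftrightarrow(2)$, cite Proposition \ref{prop:complementaryiffjordan} for $(2)\Leftrightarrow(3)$ (the reverse implication $(3)\Rightarrow(2)$ going through $(3)\Rightarrow(1)\Rightarrow(2)$), cite the dual proposition for $(2)\Leftrightarrow(4)$ similarly, and conclude.
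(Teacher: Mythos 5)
Your proposal is correct and matches the paper's intent exactly: the paper offers no separate proof of Corollary~\ref{cor:jordanconditions}, presenting it as a summary (``In summary, we have the following'') of the three preceding propositions, and your collation — using the unnumbered existence proposition for $(1)\Leftrightarrow(2)$, Proposition~\ref{prop:complementaryiffjordan} and its dual for $(2)\Rightarrow(3)$ and $(2)\Rightarrow(4)$, and closing the loop via the nonemptiness of the sets described in $(3)$ and $(4)$ — is precisely the implicit argument. Your care in noting that Proposition~\ref{prop:complementaryiffjordan} is a one-directional conditional and that the reverse implication must be routed through $(3)\Rightarrow(1)\Rightarrow(2)$ is exactly the right bookkeeping.
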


The fourth and final characterization is encountered quite often in practice.  The procedure  for finding a nilpotent Jordan basis outlined in \S\ref{sec:jordanalgebra}, for example, may be understood as concrete application of the classical  greedy algorithm for matroid optimization to the problem of finding a $\ic$-maximal basis in $\K(T)$.  The elements of this  argument are not new.  The basic elements were recorded at least as early as 1956 \cite{ptak1956}, and have been revisited frequently over the following decades, e.g.  \cite{BakerBlog,egervary1959,farahat1957,kurepa1967,ptak1962}, though to our knowledge none has recognized that the problem being solved was one of matroid optimization.

Let us say that an orbit $I$ is \emph{maximal with respect to inclusion} if there exists no orbit $J$ such that $I \su J$ and $I\neq J$.

\begin{corollary}[Uniqueness]  Suppose that $(I_1, \ld, I_m)$ and $(J_1, \ld, J_n)$ are pairwise disjoint families of maximal orbits for which
\begin{align*}
I_1 \cup \cd \cup I_m&& and &&  J_1 \cup \cd \cup J_n
\end{align*}
are Jordan bases.  Then  there exists a bijection $\fk: \{1, \ld, m\} \to \{1, \ld, n\}$ such that $|I_p| = |J_{\fk(p)}|$ for all $p$.
\end{corollary}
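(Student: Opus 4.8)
The plan is to extract, from a Jordan basis, a numerical invariant that does not depend on the chosen decomposition into maximal orbits — namely the sequence of ranks $\rk(\Im(T^k)) = \rk(T^k\mc)$ — and then show that this data determines the multiset of orbit lengths. First I would recall from Corollary~\ref{cor:jordanconditions} (or directly from Lemma~\ref{lem:kerimgen}) that the existence of either Jordan basis forces $T^m$ to be complementary for all $m$, so the kernel and image filtrations are genuinely ``freely generated'' and their ranks are intrinsic to $T$, independent of any basis. In particular the integers $r_k := \rk(T^k\mc)$ (with $r_0 = \rk(\mc)$ and $r_k = 0$ for $k \ge n$) are invariants of $T$ alone.

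Next I would count. Given a pairwise disjoint family of maximal orbits $(I_1,\ldots,I_m)$ whose union $J$ is a Jordan basis, let $a_\ell$ denote the number of indices $p$ with $|I_p| = \ell$; these are the multiplicities we wish to pin down. The key computation is that $T^k J$ meets each orbit $I_p$ in exactly $\max(|I_p| - k, 0)$ nonzero elements, and — because $J$ is a basis that freely generates the image filtration (Lemma~\ref{lem:kerimgen}) — the set $T^k J \setminus \cl(0)$ is independent and spans $\Im(T^k)$, hence is a basis of the minor $\mc|_{\Im(T^k)}$ (using complementarity of $T^k$). Therefore
\begin{equation*}
r_k \;=\; \sum_{p=1}^m \max(|I_p| - k,\, 0) \;=\; \sum_{\ell > k} (\ell - k)\, a_\ell .
\end{equation*}
This identity holds verbatim for the second family $(J_1,\ldots,J_n)$ with its own multiplicities $b_\ell$, since the right-hand side $r_k$ depends only on $T$. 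Taking second differences, $a_\ell = r_{\ell-1} - 2r_\ell + r_{\ell+1} = b_\ell$ for every $\ell \ge 1$, so the two families have the same number of orbits of each length. In particular $m = \sum_\ell a_\ell = \sum_\ell b_\ell = n$, and any bijection $\fk$ matching orbits of equal length within each length-class has the required property $|I_p| = |J_{\fk(p)}|$.

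The one point that genuinely requires care — and which I expect to be the main obstacle — is the claim that $T^k J \setminus \cl(0)$ is a \emph{basis} of $\Im(T^k)$, not merely a spanning or independent set: this is where maximality of the orbits and complementarity of $T^k$ both enter. Spanning follows from Lemma~\ref{lem:kerimgen}; independence follows because $J$ is independent and applying a strong map that is complementary cannot create dependencies among the images of an independent set disjoint from $\K(T^k)$ (the argument is exactly the one used in the proof of Proposition~\ref{prop:complementaryiffjordan} to show orbits of minimal bases are independent). One must also check the bookkeeping that an element $T^k e$ of an orbit $I_p = \{e, Te, \ldots, T^{|I_p|-1}e\}$ is nonzero precisely when $k < |I_p|$, which is immediate from the definition of an orbit together with $T^{|I_p|}e \in \cl(0)$. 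Once these are in hand, the displayed rank identity and its second difference close the argument with no further work.
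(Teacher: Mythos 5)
Your proof is correct, but it takes a genuinely different route from the one in the paper. The paper's argument is almost immediate from the preceding classification: by Proposition~\ref{prop:complementaryiffjordan} (and Corollary~\ref{cor:jordanconditions}), every Jordan basis is the orbit of a $\kc$-minimal basis $B$ of $\mc/\Im(T)$, and the orbit length of $b\in B$ is exactly $\kc(b)$; since any two $\fc$-minimal bases of a filtration have the same multiset of weights (they all generate the filtration, so $|B\cap\kc_{\le k}|=\rk(\kc_{\le k})$ is fixed), the orbit-length multiset is determined. Your argument instead goes directly to the invariants $r_k = \rk(\Im(T^k))$, derives the counting identity $r_k = \sum_{\ell>k}(\ell-k)a_\ell$ from the fact that the nonzero part of $T^kJ$ is a basis of $\Im(T^k)$ (Lemma~\ref{lem:kerimgen}), and recovers the multiplicities by second differences. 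Both proofs ultimately rest on the rank data of one of the two canonical filtrations of $T$; yours is the ``image-side'' version, the paper's is the ``kernel-side'' version phrased through minimal bases. What the paper's route buys is brevity, since all the work was front-loaded into the classification theorem; what yours buys is self-containment and an explicit inversion formula for the multiplicities $a_\ell$, which the paper never writes down. Your identification of the delicate point — that $T^kJ$ minus loops is a \emph{basis}, not just a spanning set, of $\Im(T^k)$ — is exactly right, and you correctly locate its justification in Lemma~\ref{lem:kerimgen} (which gives spanning via the complementarity inequality) together with independence inherited from the Jordan basis being independent.
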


\begin{proof}
If $\cup_p I_p$ is a Jordan basis then
\begin{align*}
I_p = \orb(\psi(p)) && p \in \{1, \ld, m\}
\end{align*} 
for some $\kc$-minimal basis $B$ in $\mc /\I( T)$ and some  bijection $\psi: \{1,\ld, m\} \to B$.   Thus the number of orbits of given length in each Jordan basis is uniquely determined.
\end{proof}

\section{Graded Canonical Forms}
\label{sec:gradcanform}

A \emph{$\Z$-grading} on a matroid $\mc$ is a function $\hc$ that assigns a flat of $\mc$ to each integer $p$, subject to the condition that
\begin{align*}
\mathrm{cl} \left(\bigcup_p \hc_p \right ) = \mc
&& and &&
\sum_p \rk(\hc_p) = \rk(\mc).
\end{align*}
Recalling that $\cl(0)$ denotes the possibly empty family of \emph{loops} in $\mc$,  we write $\ag$ for the integer-valued function on $\cup_p\hc_p - \cl(0)$ such that  
\begin{align*}
\ag(e) = p &&e \in \hc_p - \cl(0).
\end{align*}

 \begin{example} \label{ex:lineargrades} If $\mc = (V, \ic)$, where $V$ is a $\field$-linear vector space and $\ic$ is the family of $\field$-linearly independent subsets of $V$, then the $\Z$-gradings of $\mc$ are the $\Z$-indexed families of subspaces $\hc$ such that $V$ is the internal direct sum of $\{\hc_p : p \in \Z\}$.
 \end{example} 
 
 A map $T: \mc \to \mc$ is \emph{graded of degree $k$} if 
 $$
 T\hc_p \su \hc_{p+k}
 $$ 
 for all $p$.  Unless otherwise indicated, we will write \emph{graded} for graded of degree one.  Here and throughout the remainder of this discussion we will assume that $\mc$ has finite rank, so that all graded  graded maps on $\mc$ are nilpotent.

A subset $I \su \mc$ is \emph{graded} if $I \su \cup_p \hc_p$.  To every orbit $J_q$ in a graded Jordan basis $J$ corresponds an integer interval, $$
\Supp(J_q) = \{p : J_q \cap \hc_p \neq \emptyset\}.
$$ 
The associated multiset 
$$
\{\Supp(J_q): q = 1, \ld, m\},
$$
where $J_1, \ld, J_m$ are the  orbits that compose $J$,   is  the \emph{barcode} of $J$.  

Proposition \ref{prop:uniquejordan} states that the barcode of a graded Jordan basis for $T$ is uniquely determined by $T$.

\begin{proposition}
\label{prop:uniquejordan}
If $I_1, \ld, I_m$ and $J_1, \ld, J_n$ are the orbits that compose two graded Jordan bases, then there exists a bijection $\fk: \{1, \ld, m\} \to \{1, \ld, n\}$ such that 
$$
\Supp(I_p) = \Supp(J_{\fk(p)})
$$
for each $p$ in $\{1, \ld, m\}$.
\end{proposition}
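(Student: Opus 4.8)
The plan is to reduce the graded-barcode uniqueness statement to the ungraded uniqueness already established, by counting, for each integer pair $(a,b)$ with $a \le b$, the number of orbits in a graded Jordan basis whose support is exactly the interval $\{a, a+1, \ldots, b\}$, and showing this number depends only on $T$. The key observation is that for a graded map of degree one, if $J_q = \{e, Te, \ldots, T^\ell e\}$ is an orbit with $e \in \hc_a$ (equivalently $\ag(e) = a$), then $T^i e \in \hc_{a+i}$ for all $i$, so $\Supp(J_q) = \{a, a+1, \ldots, a+\ell\}$ is automatically a contiguous integer interval, determined entirely by the starting grade $a = \ag(e)$ and the length $\ell = |J_q| - 1$. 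Thus the barcode of $J$ is the multiset $\{\,[\ag(e), \ag(e) + |\orb(e)| - 1] : e \in B\,\}$, where $B$ is the $\kc$-minimal basis of $\mc / \I(T)$ whose orbit is $J$ (using Corollary \ref{cor:jordanconditions}, part 3, together with the fact that a graded map is nilpotent).

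First I would fix the two graded Jordan bases and, via Corollary \ref{cor:jordanconditions}, write each as the orbit of a $\kc$-minimal basis $B$, resp.\ $B'$, of $\mc / \I(T)$; call the corresponding bijections $\psi \colon \{1,\ldots,m\} \to B$ and $\psi' \colon \{1,\ldots,n\} \to B'$, so $I_p = \orb(\psi(p))$ and $J_r = \orb(\psi'(r))$. The ungraded uniqueness corollary already gives a bijection matching orbits of equal \emph{length}; what remains is to upgrade ``equal length'' to ``equal support interval,'' i.e.\ to also match the \emph{left endpoint} $\ag$ of each orbit. The natural refinement is to stratify by grade: for a fixed integer $a$, restrict attention to those orbits starting in grade $a$. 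The number of orbits of $J$ starting in grade $a$ with length $\ell+1$ should equal $\rk\big((\hc_a \cap \kc_{\le \ell+1}) / (\hc_a \cap \kc_{\le \ell})\big)$ computed in an appropriate minor, or more cleanly: the elements of $B$ lying in $\hc_a$ (after the contraction by $\I(T)$) of $\kc$-weight exactly $\ell+1$ are counted by the rank of a fixed subquotient of $\mc / \I(T)$ built from $\hc_a$ and the kernel filtration. Since that rank is a numerical invariant of $T$, $\hc$, and the filtrations $\kc, \ic$ — none of which depend on the choice of Jordan basis — the count is the same for $B'$.

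The step I expect to be the main obstacle is making precise the claim that, for a $\kc$-minimal basis $B$ of $\mc/\I(T)$, the cardinality of $\{\, b \in B : b \in \hc_a,\ \kc(b) = \ell \,\}$ is a $T$-invariant. The subtlety is that $\mc / \I(T)$ and the grading $\hc$ interact through contraction, and one must verify that the graded structure descends: concretely, that $\hc_a / \I(T)$ is still a flat with a well-defined rank in $\mc/\I(T)$, and that $B$, being $\kc$-minimal, meets each sublevel set $\kc_{\le \ell}$ of $\mc/\I(T)$ in a basis (Lemma \ref{lem:genlem}), so that $|B \cap \hc_a \cap \kc_{\le \ell}|$ can be read off as a rank of a fixed minor. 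Once this bookkeeping is in place, the telescoping over $\ell$ and over $a$ is routine, and the desired bijection $\fk$ is assembled grade-by-grade by matching, within each grade $a$ and each length $\ell$, the (equinumerous) families of orbits — any bijection respecting these two statistics works, and it automatically satisfies $\Supp(I_p) = \Supp(J_{\fk(p)})$ since for graded orbits the support is determined by starting grade and length.
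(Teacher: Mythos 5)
Your proposal is correct and takes essentially the same route as the paper's proof: both reduce the statement via Corollary~\ref{cor:jordanconditions} to the characterization of a graded Jordan basis as the orbit of a $\kc$-minimal basis of $\mc/\I(T)$ lying in $\cup_p \hc_p$, and then observe that the barcode is determined by rank invariants of $T$, the grading, and the kernel filtration. The paper compresses the final counting step into the single claim ``the orbits of any two such bases will determine identical barcodes,'' where you spell out the stratification by starting grade and $\kc$-weight that justifies it; the bookkeeping you flag as the ``main obstacle'' is exactly what the paper leaves implicit, and it is resolved, as you anticipate, by Lemma~\ref{lem:genlem} applied in the minor $\cup_p(\hc_p/\I(T))$.
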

\begin{proof}
Every Jordan basis is the orbit of a $\kc$-minimal basis of  $\mc/ \I( T )$.  A graded Jordan basis, therefore, is the orbit of a $\kc$-minimal basis in  
$$
(\cup_p \hc_p)/\I(T) = \cup_p (\hc_p / \I(T)).
$$  
The orbits of any two such bases will determine identical barcodes. 
\end{proof}



The remainder of this section will be devoted to a class of graded nilpotent maps with a particularly simple combinatorial structure.
Fix filtrations $\zc$, $\bc$ on a finite-rank matroid $\mc$.  Assume that the elements of these filtrations are  closed, and that 
$\bc_{ p} \su \zc_{ p}$ for $p \in \Z.$  
Define 
$$
\hc_p = \zc_p \sslash \bc_p,
$$ 
and let $\nc$ be the matroid union $\cup_p \hc_p$.   For each  $e$ in the ground set of $\mc$, write $e\der p$ for the copy of $e$ in $\hc_p \su \nc$.  Finally, let $T: \nc \to \nc$ be the function sending $e\der p $ to $e \der {p+1}$.
Given this data, it is natural to ask when  $T$ engenders a Jordan basis, or equivalently, when  the powers of $T$ are complementary.

\begin{lemma}
If $m$ is a nonnegative integer, then $T^m$ is complementary if and only if $(\zc_p, \bc_{p+m})$ is modular, for every $p$.
\end{lemma}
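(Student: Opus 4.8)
The statement relates a complementarity condition on the powers of $T$ to modularity of the pairs $(\zc_p, \bc_{p+m})$. The natural strategy is to unwind the definition of complementarity for $T^m$ in terms of ranks of the minors $\hc_p = \zc_p \sslash \bc_p$, and then to show these ranks behave additively exactly when the relevant modular laws hold. First I would fix $m$ and compute $\K(T^m)$ and $\Im(T^m)$ inside $\nc = \cup_p \hc_p$. Since $T$ sends the copy $e\der p$ of $e$ in $\hc_p$ to $e\der{p+1}$, iterating $m$ times moves $\hc_p$ into $\hc_{p+m}$; so $\K(T^m)$ consists (within each grade $\hc_p$) of those elements whose image after $m$ steps is a loop, i.e. those $e\der p$ with $e \in \bc_{p+m}$ (after closure), while $\Im(T^m)$ restricted to grade $\hc_p$ is the copy of $\zc_{p-m}$ (modulo $\bc_p$). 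The key point is that the rank of $\hc_p$ in $\nc$ is $\rk(\zc_p) - \rk(\bc_p)$ since $\bc_p$ is a flat and $\zc_p\sslash\bc_p$ is a contraction, and that $\nc$ splits as an internal direct sum of the $\hc_p$ so ranks add over $p$.

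Next I would apply the characterization from the excerpt: by \eqref{eq:TSineq} and the discussion that follows \eqref{eq:kernelimcomplementary}, $T^m$ is complementary if and only if strict equality holds in $\rk(\K(T^m)) + \rk(\Im(T^m)) \le \rk(\nc)$ (complementarity on the full ground set is equivalent to complementarity, by that discussion). So the plan is: express $\rk(\K(T^m))$, $\rk(\Im(T^m))$, and $\rk(\nc)$ as sums over $p$ of contributions from $\hc_p$, and reduce the global equality to a grade-by-grade equality. Inside grade $\hc_p = \zc_p \sslash \bc_p$: the kernel-of-$T^m$ part is the copy of $(\zc_p \cap \bc_{p+m})\sslash \bc_p$ — equivalently, since $\bc_p$ is a flat, the rank of $\K(T^m)$ in grade $p$ is $\rk(\zc_p \cap \bc_{p+m}) - \rk(\bc_p)$ (using $\bc_p \su \bc_{p+m}$ and $\bc_p \su \zc_p$) — and the image-of-$T^m$ part in grade $p$ has rank $\rk(\zc_{p-m}\sslash\bc_p)$, but one must be careful: the image lands in $\hc_p$ as the copy of the image of $\zc_{p-m}$ under the composite $\hc_{p-m}\to\hc_p$, whose rank is $\rk(\zc_{p-m}/(\zc_{p-m}\cap\bc_p)) = \rk(\zc_{p-m} \cup \bc_p) - \rk(\bc_p)$. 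Summing the kernel contribution in grade $p$ and the image contribution in grade $p$ (which is the image coming from grade $p-m$) and comparing to $\rk(\hc_p) = \rk(\zc_p) - \rk(\bc_p)$, the desired equality in grade $p$ becomes
\[
\rk(\zc_p \cap \bc_{p+m}) - \rk(\bc_p) + \rk(\zc_{p-m}\cup \bc_p) - \rk(\bc_p) = \rk(\zc_p) - \rk(\bc_p),
\]
which after using the filtration telescoping and reindexing the image sum by $p \mapsto p+m$ should collapse to the assertion that $\rk(\zc_p \cap \bc_{p+m}) + \rk(\zc_p \cup \bc_{p+m}) = \rk(\zc_p) + \rk(\bc_{p+m})$ for all $p$, i.e. modularity of $(\zc_p, \bc_{p+m})$.

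**Main obstacle.** The bookkeeping of \emph{which grade an element's image lands in} and \emph{reindexing the telescoping sums so the kernel term in grade $p$ and the image term in grade $p+m$ pair up} is where the argument can go wrong — in particular, boundary effects at the ends of the filtrations (where $\zc_p$ or $\bc_p$ stabilizes to $\emptyset$ or to the ground set) and the need to verify that $\nc$ genuinely decomposes as a rank-additive union of the $\hc_p$, so that a global rank identity forces the per-grade identities rather than merely their sum. I would handle this by first establishing, as a lemma, that for a contraction $\zc_p\sslash\bc_p$ with $\bc_p$ a flat contained in $\zc_p$, one has $\rk(\hc_p) = \rk(\zc_p)-\rk(\bc_p)$, and then noting that because each inequality \eqref{eq:TSineq} holds grade-by-grade (the matroid $\nc$ being a disjoint union on ground set, with $T$ respecting the grading), global equality is equivalent to equality in every grade — at which point the modular law for $(\zc_p,\bc_{p+m})$ drops out of the rank identity above. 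The converse direction (modularity $\Rightarrow$ complementarity) is then just reading the same chain of equalities backwards, invoking Lemma \ref{lem:modularisomorphism} to identify $\zc_p/\bc_{p+m}$ with $\zc_p/(\zc_p\cap\bc_{p+m})$ where convenient.
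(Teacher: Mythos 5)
Your proposal is correct and matches the paper's argument: compute $\K(T^m)$ and $\Im(T^m)$ grade-by-grade in $\nc$, express all three ranks as sums over $p$, and observe that the only slack lies in the image term, which is tight exactly when $(\zc_p,\bc_{p+m})$ is modular. The paper sidesteps the reindexing you flag as the ``main obstacle'' by indexing the image sum by the \emph{source} grade rather than the target grade, writing $\rk(\Im(T^m)) = \sum_p \rk(\zc_p/\bc_{p+m})$, so the kernel term $\rk((\zc_p\cap\bc_{p+m})/\bc_p)$ and the image term are already paired by the same indices $(\zc_p,\bc_{p+m})$; your displayed per-grade identity (which mixes $\bc_{p+m}$ in the kernel with $\zc_{p-m}$ in the image) is not an equality one should expect to hold, and the cleanest route is to do the reindexing first and then compare termwise, exactly as you anticipate.
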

\begin{proof}
Let us write $\zc \der p$ and $\bc \der p$ for the filtrations on $\hc_p$ engendered by $\zc$ and $\bc$.  Since the sublevel sets of $\zc$ and $\bc$ are closed, one has
\begin{align*}
\K(T^m)  = \bigcup_p \left (\zc_p \der p \cap \bc_{p + m} \der p \right ) &&
\Im(T^m)  = \bigcup_p \zc_p \der{p + m}.
\end{align*}
The ranks of $\K(T^m)$ and $\Im(T^m)$ in $\nc$ are therefore given by the left and right-hand sums below.
\begin{align*}
\sum_p \rk \left ((\zc_p \cap \bc_{p+m}) / \bc_p \right)  &&
\sum_p \rk \left (\zc_p / \bc_{p+m} \right).
\end{align*}
The identity below follows from the inclusion of $\bc_p$ into  the intersection of  $\zc_p$ and $\bc_{p+m}$.  The subsequent estimate is a consequence of submodularity, and holds with strict equality if and only if $(\zc_p, \bc_{p+m})$ is modular.
\begin{align*}
\rk((\zc_p \cap \bc_{p+m} )/ \bc_p) &= \rk(\zc_p \cap \bc_{p+m}) - \rk(\bc_p) \\
\rk(\zc_p / \bc_{p+m}) &\le \rk(\zc_p) - \rk(\zc_p \cap \bc_{p+m}).
\end{align*}
Since the rank of $\nc$ is $\sum_p  (\rk(\zc_p) - \rk(\bc_p))$, complementarity  holds if and only if  strict equality holds in both estimates, for all $p$.
\end{proof}

Since  $(\zc_{p+m}, \bc_p)$ is trivially modular for every nonnegative $m$, we have shown the following.

\begin{proposition}  \label{prop:jordanforses}
Operator $T$ is Jordan if and only if $\zc \cup \bc$ is modular.
\end{proposition}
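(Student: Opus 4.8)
The statement to prove is Proposition~\ref{prop:jordanforses}: the operator $T$ on $\nc = \cup_p \hc_p$ is Jordan if and only if the union of filtrations $\zc \cup \bc$ is modular. My plan is to reduce this directly to the preceding lemma, which says $T^m$ is complementary if and only if $(\zc_p, \bc_{p+m})$ is modular for every $p$, together with Corollary~\ref{cor:jordanconditions}, which tells us that a nilpotent strong map is Jordan (has a Jordan basis) exactly when all its powers $T^m$ are complementary. So the heart of the matter is the bookkeeping identity: the family $\{(\zc_p, \bc_{p+m}) : p \in \Z,\ m \ge 0\}$ carries exactly the modularity content of the pair of filtrations $\zc \cup \bc$.

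\textbf{Key steps.} First I would invoke Corollary~\ref{cor:jordanconditions} to replace ``$T$ is Jordan'' with ``$T^m$ is complementary for all $m \ge 0$'' --- but I should first confirm $T$ is a nilpotent strong map, which is the hypothesis in force (the ambient $\mc$ has finite rank, all graded maps on it are nilpotent, and $T$ sends $e\der p$ to $e\der{p+1}$, which is readily checked to be $\vee$-complete because the $\zc_p$ and $\bc_p$ are closed). Second, I would apply the preceding lemma to rewrite ``$T^m$ complementary'' as ``$(\zc_p, \bc_{p+m})$ modular for all $p$.'' Third --- the only real content --- I would observe that $(\zc_{p+m}, \bc_p)$ is trivially modular for every $m \ge 0$ because $\bc_p \su \zc_p \su \zc_{p+m}$, so the submodular inequality $\rk(\zc_{p+m} \cup \bc_p) + \rk(\zc_{p+m} \cap \bc_p) \le \rk(\zc_{p+m}) + \rk(\bc_p)$ collapses to an equality (both sides equal $\rk(\zc_{p+m}) + \rk(\bc_p)$ since the union is $\zc_{p+m}$ and the intersection is $\bc_p$). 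Hence the only nontrivial constraints among all pairs $(\zc_i, \bc_j)$ are those with $i \le j$, i.e.\ exactly the pairs $(\zc_p, \bc_{p+m})$ with $m \ge 0$. Therefore: $T^m$ complementary for all $m \ge 0$ $\iff$ $(\zc_p, \bc_{p+m})$ modular for all $p$ and all $m \ge 0$ $\iff$ every pair in $\zc \cup \bc$ is modular $\iff$ $\zc \cup \bc$ is modular. Chaining these equivalences gives the proposition.

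\textbf{Main obstacle.} There is no deep obstacle here --- the proposition is essentially a corollary of the lemma it follows, plus Corollary~\ref{cor:jordanconditions}. The one point requiring a moment's care is the indexing sweep: verifying that as $(p,m)$ ranges over $\Z \times \Z_{\ge 0}$, the pairs $(\zc_p, \bc_{p+m})$ sweep out precisely the ``upper-triangular'' pairs $(\zc_i, \bc_j)$ with $i \le j$, and that the remaining pairs (those with $i > j$, including all $(\zc_{p+m}, \bc_p)$) are automatically modular by nesting. The paper has in fact already recorded the key triviality observation (``$(\zc_{p+m}, \bc_p)$ is trivially modular for every nonnegative $m$'') in the sentence immediately preceding the proposition, so the proof can be quite short: it suffices to assemble the lemma, that observation, and Corollary~\ref{cor:jordanconditions}.
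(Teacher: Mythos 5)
Your proposal is correct and reproduces the paper's (essentially unstated) proof: the paper simply says ``we have shown the following'' after the preceding lemma and the observation that $(\zc_{p+m},\bc_p)$ is trivially modular, and you correctly assemble exactly those two ingredients together with Corollary~\ref{cor:jordanconditions}. The only nit is that modularity of the family $\zc \cup \bc$ also involves the intra-filtration pairs $(\zc_i,\zc_j)$ and $(\bc_i,\bc_j)$, which you do not explicitly mention, but these are nested and hence trivially modular by the same reasoning you use for the reverse cross-pairs.
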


In light of the preceding observation, it is reasonable to suppose that a basis that generates $\zc$ and $\bc$  may bear some special relation to the Jordan bases of $T$.  For convenience, define the \emph{orbit} of a subset $S \su E$ to be the orbit of $\psi(S)$, where $\psi$ is the map that sends each $e \in \mc$ to $e\der {\zc(e)}$ in $\nc$.

\begin{proposition}
\label{prop:fgminimalquotientjordanbases}
The Jordan bases of $T$ are the orbits in $\nc$ of the $\zc$-$\bc$-minimal bases in $\mc$.
\end{proposition}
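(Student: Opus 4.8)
The plan is to reduce the statement to Corollary~\ref{cor:jordanconditions} by unwinding the construction of $\nc$, $T$, and $\psi$. First I would dispatch the degenerate case: if $\zc\cup\bc$ is not modular, then $T$ has no Jordan basis by Proposition~\ref{prop:jordanforses}, while Proposition~\ref{prop:bimodularmeansfree} shows $\zc\cup\bc$ is not freely generated, so no basis of $\mc$ generates both $\zc$ and $\bc$, and hence by Lemma~\ref{lem:genlem} no $\zc$-$\bc$-minimal basis exists; both sides of the claimed identity are then empty. So assume henceforth that $\zc\cup\bc$ is modular. Then every pair $(\zc_p,\bc_{p+m})$ with $m\ge 0$ is modular, so by the lemma preceding Proposition~\ref{prop:jordanforses} the power $T^m$ is complementary for every $m\ge 0$ (in particular $T$ is Jordan).

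Next I would apply Corollary~\ref{cor:jordanconditions}(3) to the nilpotent strong map $T\colon\nc\to\nc$: its Jordan bases are exactly the orbits of the $\kc$-minimal bases of $\nc/\Im(T)$, where $\kc$ is the kernel filtration of $T$ on $\nc$. By Lemma~\ref{lem:genlem}, a $\kc$-minimal basis of $\nc/\Im(T)$ is the same thing as a basis of $(\nc/\Im(T))/\kc$. On the other side, since $\zc\cup\bc$ is modular it is freely generated (Proposition~\ref{prop:bimodularmeansfree}), so the $\zc$-$\bc$-minimal bases of $\mc$ --- which by Lemma~\ref{lem:genlem} are precisely the bases of $\mc$ generating both $\zc$ and $\bc$ --- are, by Proposition~\ref{prop:generatingbases}, exactly the bases of $\mc/(\zc\cup\bc)$. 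Since the orbit of a subset $S\su E$ means by definition the orbit of $\psi(S)$, it now suffices to produce a matroid isomorphism
\[
\psi\colon\ \mc/(\zc\cup\bc)\ \lr{\sim}\ (\nc/\Im(T))/\kc
\]
restricting on ground sets to $e\mapsto e\der{\zc(e)}$; the two families of bases will then correspond, and applying $\orb$ to each gives the proposition.

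For the isomorphism I would use the formulas $\Im(T^m)=\bigcup_p\zc_p\der{p+m}$ and $\K(T^m)=\bigcup_p(\zc_p\cap\bc_{p+m})\der p$ from the proof of the lemma preceding Proposition~\ref{prop:jordanforses}. The first gives $\Im(T)=\bigcup_q\zc_{q-1}\der q$, so, since $\nc=\bigoplus_q\hc_q$, an element $e\der q$ of $\nc$ avoids $\Im(T)$ exactly when $e\in\zc_q-\zc_{q-1}$, i.e.\ $\zc(e)=q$; hence $|\nc/\Im(T)|=\{\,e\der{\zc(e)}:e\in E\,\}=\psi(E)$, which matches $|\mc/(\zc\cup\bc)|=E$ by Lemma~\ref{lem:setunion}. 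The second formula shows that transporting $\kc$ along $\psi$ produces the filtration of $E$ by the sublevel sets of $e\mapsto\bc(e)-\zc(e)$, a nonnegative function since $\bc_p\su\zc_p$. It then remains to match direct-sum decompositions: $\mc/(\zc\cup\bc)$ splits over pairs $p\le q$ into the bifiltration minors $(\zc_p\cap\bc_q)/(\zc_{p-1}\cup\bc_{q-1})$, while $(\nc/\Im(T))/\kc=\bigoplus_m \kc_m/\kc_{m-1}$ splits further over the grading of $\nc$ into summands again indexed by such pairs, with $m=q-p$. One checks that $\psi$ carries the $(p,q)$-summand on the left to the corresponding summand on the right: modularity of $(\zc_p,\bc_q)$ lets Lemma~\ref{lem:modularisomorphism} replace $\zc_p/\bc_q$ by $\zc_p/(\zc_p\cap\bc_q)$, so that the relevant minor of $\hc_p=\zc_p\sslash\bc_p$ has exactly the independent sets of the matching minor of $\mc$.

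I expect this last step --- tracking how deleting and contracting the graded pieces $\zc_{q-1}\der q$ and $(\zc_q\cap\bc_{q+m})\der q$ inside $\nc$ translate into the bifiltration minors of $\mc$ --- to be the one genuinely laborious part; everything else is a formal appeal to results already in hand. Modularity of $\zc\cup\bc$ enters precisely here, turning a chain of submodular inequalities into equalities and thereby forcing the two families of minors to coincide rather than merely to be rank-comparable.
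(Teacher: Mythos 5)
Your reduction via Corollary~\ref{cor:jordanconditions}(3) is a legitimate and attractive alternative to the paper's direct argument, and your handling of the non-modular degenerate case is a nice addition the paper leaves implicit. However, the central claim --- that $\psi\colon e\mapsto e\der{\zc(e)}$ restricts to a matroid isomorphism $\mc/(\zc\cup\bc)\to(\nc/\Im(T))/\kc$ --- is false, and without it the rest of the plan does not go through.

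The trouble is concentrated on elements $e$ with $\zc(e)=\bc(e)$. For such $e$ the image $\psi(e)=e\der{\zc(e)}$ is always a loop of $\nc$, since $e\in\bc_{\zc(e)}$ forces $e$ to be a loop of $\hc_{\zc(e)}=\zc_{\zc(e)}\sslash\bc_{\zc(e)}$; but the same $e$ lives in the bifiltration minor $(\zc_p\cap\bc_p)/(\zc_{p-1}\cup\bc_{p-1})$ of $\mc/(\zc\cup\bc)$, which in general has positive rank. So $\psi$ sends non-loops to loops, and even the ranks of the two matroids disagree. A concrete counterexample: take $\mc=U_{2,3}$ on $\{a,b,c\}$, $\zc=(\emptyset,\{a\},E)$, $\bc=(\emptyset,\emptyset,\{b\},E)$. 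Here $\zc\cup\bc$ is modular, $\mc/(\zc\cup\bc)$ has rank $2$ (with $a,b$ free, $c$ a loop, unique basis $\{a,b\}$), but $(\nc/\Im(T))/\kc$ has rank $1$ (only $a\der 1$ is a non-loop). In particular $\psi$ carries the free element $b$ to the loop $b\der 2$, so it is not an isomorphism, and your step ``the two families of bases correspond under $\psi$'' fails.

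The proposition is still true in this example --- $\orb(\psi(\{a,b\}))=\orb(\{a\der1,b\der2\})=\{a\der1,a\der2\}=\orb(\{a\der1\})$, and $\{a\der1\}$ is the unique $\kc$-minimal basis of $\nc/\Im(T)$ --- but not for the reason you gave. The two families of orbits coincide even though the two families of bases do not: loops contribute nothing to orbits, so adding or deleting elements with $\zc(e)=\bc(e)$ from a minimal basis changes the set but not its orbit. To salvage your strategy you would need to prove the weaker, asymmetric statement that for every $\zc$-$\bc$-minimal basis $B$ of $\mc$, the set $\psi(B)$ with its loops removed is a $\kc$-minimal basis of $\nc/\Im(T)$, and that every $\kc$-minimal basis of $\nc/\Im(T)$ arises this way. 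This is essentially the case analysis the paper runs directly: its backward direction shows that the $B$ pulled back from a Jordan basis $J$ misses exactly the summands of $\mc/(\zc\cup\bc)$ indexed by $\zc(e)=\bc(e)$, and is therefore merely independent, not a basis; one must then extend $B$ to a basis, which does not change the orbit. That extra extension step is exactly the content the ``isomorphism'' claim would have spared you, and it cannot be avoided.
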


\begin{proof}
If $B$ freely generates $\zc$ and $\bc$ in $\mc$, and if $J$ is the orbit of $B$, then $J \cap \hc_p = B \der p_{\zc \le p < \bc}$ freely generates $\hc_p = \zc_{ p} \sslash \bc_{ p}$ for all $p$.  Therefore $J$ is a Jordan basis.

Conversely, suppose that $J$ is a Jordan basis, and let 
$$
B = \bigcup_p  \{e : e\der p \in J \}.
$$  
Since $\nc$ is the matroid union of the $\hc_p$, $J$ freely generates both $\Im(T^m) \cap \hc_p$ and $\Ker(T^m) \cap \hc_p$ for all $m$ and $p$.  Thus $B$ freely generates $\zc$ and $\bc$ on the minor $\zc_p/\bc_p$.  Consequently, if $\zc'$ and $\bc'$ are the restrictions of $\zc$ and $\bc$, respectively, to $\zc_p - \bc_p$, then $B$ freely generates $(\zc'_{q+1} \cap \bc'_{p+1}) / \bc_p$ and $\zc'_{q}/\bc_p$ for all  $q$.  By modularity, it  generates
$(\zc_{q+1}' \cap \bc_{p+1}') / (\zc_q' \cup \bc_p)$.
When $q <  p$ one has $\zc'_{q+1} \cap \bc_{p+1}' = \zc_{q+1} \cap \bc_{p+1}$ and $\zc_q' = \zc_q$, so  this minor agrees with
\begin{align}
(\zc_{q+1} \cap \bc_{p+1}) / (\zc_q \cup \bc_p).  \label{eq:zbmaxmin}
\end{align}
When $p \le q$ the ground set of \eqref{eq:zbmaxmin} contains only elements $e$ for which $\bc(e) = p+1 \le q+1 = \zc(e)$.  Since $e\der p$ is a loop for every such $e$, it follows that $B$ intersects the ground set of \eqref{eq:zbmaxmin} trivially when $p \le q$.

In conclusion, $B$ is the disjoint union of some independent sets in the matroid $\mc/(\zc \cup \bc)$, and may thus be extended to a basis that generates $\zc$ and $\bc$.  The orbit of this set will be a Jordan basis containing $J$.
\end{proof}

In closing, let us return to the linear regime.  Suppose that $\zc_{p}$ and $\bc_{ p}$ are linear filtrations on a vector space $W$, and let $V_p$ denote the linear quotient space $\zc_p/\bc_p$.   The inclusion maps $\zc_p \to \zc_{p+1}$  induce linear maps $V_p \to V_{p+1}$, which collectively determine a graded linear operator on $\oplus_p V_p$.  Let us denote this map by $Q$.  There is a canonical set function $\phi:\cup_p \hc_p \to \cup_p V_p$,  which may be described as the rule that assigns   $e \in \hc_p$ to the equivalence class of $e$ in $V_p$.   For each $p$ there is a commutative square
\vspace{.4cm}
\[
\xymatrix@R=3pc@C=3pc@M=0.5pc{
	\ar[];[rr]^T  \ar[];[d]
	\ar[rr];[drr]  \ar[d];[drr]^Q
	\hc_p  && \hc_{p+1} \\
	V_p && V_{p+1}
}
\vspace{.35cm}
\]
whose vertical maps are given by $\phi$.  Since  $S \su \hc_p$ is independent in $\nc$ iff  $\fk(S)$ is linearly independent in $V_p$, it follows that $J \su \nc$ is a matroid theoretic Jordan basis of $T$ if and only if $\fk(J)$ is a linear Jordan basis of  $Q$.  Thus the following.

\begin{proposition}
\label{prop:gradedquotientbases}
The  graded Jordan bases of $Q$ are the orbits of the $\zc$-$\bc$ minimal bases in $W$.
\end{proposition}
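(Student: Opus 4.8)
The plan is to reduce this statement entirely to Proposition \ref{prop:fgminimalquotientjordanbases}, by verifying that the canonical set function $\phi \colon \cup_p \hc_p \to \cup_p V_p$ translates the matroid-theoretic Jordan structure on $\nc$ into the linear Jordan structure on $\oplus_p V_p$, and that it intertwines $T$ with $Q$. In other words, I would show that $\phi$ is a ``representation'' of $\nc$ in the sense of the background chapter (Section \ref{sec:indrankclose}), restricted to each homogeneous piece, and then use commutativity of the displayed square to transport orbits and bases back and forth.

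First I would establish the local dictionary, one grade at a time. For fixed $p$, the definition $\hc_p = \zc_p \sslash \bc_p$ means that a subset $S \su \hc_p$ is independent in $\nc$ precisely when $\phi(S)$ is $\field$-linearly independent in $V_p = \zc_p/\bc_p$; this is exactly the content of the contraction construction described in Section on Minors, since $V_p$ is the linear quotient whose kernel (as an index map out of $\zc_p$) is $\bc_p$. Thus $\phi$ carries bases of $\hc_p$ to bases of $V_p$ and loops of $\hc_p$ (the elements $e\der p$ with $\bc(e) \le p < \zc(e)$ failing, i.e.\ $e \in \bc_p$) to zero. Next, the commutative square in the excerpt gives $\phi \com T = Q \com \phi$ as partial set maps; since $T$ sends $e\der p \mapsto e\der{p+1}$ and $Q$ is the graded operator induced by the inclusions $\zc_p \hookrightarrow \zc_{p+1}$, a single diagram chase shows that $\phi(T^m S) $ spans the same linear subspace as $Q^m \phi(S)$ for every $S$ and every $m$, and that a matroid orbit $\{e\der p, Te\der p, \ld, T^k e\der p\}$ maps under $\phi$ onto a nonzero-truncated linear orbit $\{\phi(e\der p), Q\phi(e\der p), \ld\}$ of the same combinatorial shape (the truncation at ``first zero'' being the same on both sides, precisely because $\phi$ detects loops exactly).

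With this in hand, the two directions are immediate. If $J \su \nc$ is a matroid-theoretic Jordan basis of $T$, write $J$ as a disjoint union of $T$-orbits; applying $\phi$ grade by grade and using that $\phi$ restricts to a representation on each $\hc_p$, the image $\phi(J)$ is a disjoint union of $Q$-orbits that is independent and full-rank in $\oplus_p V_p$, hence a linear Jordan basis of $Q$. Conversely, any linear Jordan basis of $Q$ can be lifted, element by element, through $\phi$ (choosing a preimage in each $\hc_p$) to a matroid-theoretic Jordan basis of $T$; independence and spanning transfer back because $\phi$ is, on each homogeneous component, a bijection on independence data. Composing this equivalence with Proposition \ref{prop:fgminimalquotientjordanbases} --- which identifies the matroid-theoretic Jordan bases of $T$ with the orbits (in $\nc$) of the $\zc$-$\bc$-minimal bases of $\mc$ --- and observing that the ``orbit'' operation on a $\zc$-$\bc$-minimal basis $B$ corresponds, under $\phi$, exactly to taking the $Q$-orbit of the corresponding linear basis, yields the claim.

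The main obstacle, such as it is, is bookkeeping rather than depth: one must be careful that ``orbit'' in $\nc$ (which drops elements that become loops) and ``orbit'' of a linear operator (which drops elements that become zero) are genuinely matched by $\phi$ at every power $T^m$, not merely at $m=0$ or $m=1$ --- i.e.\ that $e\der{p+m}$ is a loop in $\nc$ iff $Q^m \phi(e\der p) = 0$ in $V_{p+m}$. This follows because $e\der{p+m}$ is a loop exactly when $e \in \bc_{p+m}$, while $Q^m\phi(e\der p)$ is the class of $e$ in $\zc_{p+m}/\bc_{p+m}$, which vanishes for the same reason; since the filtration sublevel sets $\zc_p, \bc_p$ are assumed closed, there is no subtlety about closures interfering. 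Once that single point is checked, everything else is the routine transfer of independence and spanning through a representation, exactly as in the final paragraph of the excerpt preceding the proposition.
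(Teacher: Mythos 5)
Your proposal is correct and takes essentially the same route as the paper: the paper also defines the canonical set function $\phi$, notes that $S \su \hc_p$ is independent in $\nc$ iff $\phi(S)$ is linearly independent in $V_p$, uses the displayed commutative square to transfer $T$-orbits to $Q$-orbits, and then cites Proposition~\ref{prop:fgminimalquotientjordanbases}. You have simply made explicit the loop-matches-zero bookkeeping that the paper leaves to the reader, and the verification you give of that point (closedness of the $\bc_p$ guaranteeing $e\der{p+m}$ is a loop iff $e \in \bc_{p+m}$ iff $Q^m\phi(e\der p)=0$) is exactly right.
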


This fact is reflected by a celebrated result in homological persistence, the so-called \emph{Elder Rule}.  If one identifies each element of $\mc/(\zc \cup \bc)$ with its orbit under $T$, then the Elder rule coincides exactly with the matroid greedy algorithm applied to $\mc$ with weight functions $\zc$ and $\bc$.

\begin{corollary}
The Elder rule returns a Krull-Schmidt decomposition of (finitely supported, pointwise finite dimensional) homological persistence module.
\end{corollary}
\begin{proof}
Follows directly from Proposition \ref{prop:gradedquotientbases}.
\end{proof}

\section{Generalized Canonical Forms}
\label{sec:gencanform}

Let $\field[x]$ denote the ring of polynomials over ground field $\field$.  Recall that this object consists of a $\field$-linear vector space freely generated by a basis of formal symbols $\{1, x, x^2, \ld \}$, and  a binary operation $\field[x] \times \field[x] \to \field[x]$ sending $(p,q)$ to $p \cdot q$, where $\cdot$ is the standard multiplication of polynomials.  A polynomial  is  \emph{irreducible} over $\field$ if it cannot be expressed as the product of two polynomials of positive degree.  We will write $(p)$ for the \emph{ideal} generated by $p$, which may be formally expressed 
$
\{q\cdot p : q \in \field[x]\}.
$

\begin{remark}  As $(p)$ is a linear subspace of $\field[x]$ we may form the quotient space $\field[x]/(p)$ by the usual coset construction.  It is typical, in any such construction, to write $[p]$ for the equivalence class of a vector $p$ in the quotient space.  In order to avoid an excessive number of brackets, however, we will bypass this convention, writing $p$ for $[p] \in \field[x]/(p)$.  
\end{remark}

Since left-multiplication with $x$ carries every element of $(p)$ to a polynomial in the same set, the linear map $p \mapsto x \cdot p$ determines an operator $T$ on the quotient space $\field[x]/(q)$.   If $q = a_0 + a_1 x + \cd + a_d x^d$ then $\{1, x, \ld, x^{d-1}\}$ is a basis for $\field[x]/(q)$, as is simple to check.  The matrix representation of $T$ with respect to this basis has form
\vspace{0.2cm}
\[
\Com(q) =
\left (
\arraycolsep=8pt\def\arraystretch{.8}
\begin{array}{ccccc}
	0 & 0 & \cd & 0 & -a_0 \\
	1 & 0 & \cd & 0 & -a_1 \\
	0 & 1 & \cd & 0 & -a_2 \\
	   &    &  \vdots & & \\
	0 & 0 & \cd & 0 & -a_{d-2} \\
	0 & 0 & \cd & 1 & -a_{d-1}
\end{array}
\right)\vspace{0.4cm}
\]
If $q$ is irreducible then this matrix has full rank, since $a_0$ does not vanish.  If $a$ is any positive integer, then 
\[
\{q^{a-1},xq^{a-1}, \ld, x^{d-1}q^{a-1},q^{a-2}, x q^{a-2}, \ld, x^{d-1} q^{a-2}, \ld, 1, x, \ld x^{d-1}\}.
\]
is a basis for $\field[x]/(q^a)$, and in this basis multiplication with $x$ has the following matrix form
\vspace{0.2cm}

\[
J(q,a) =
\left (
\begin{array}{cccccc}
\Com(q) & M & 0 & \cd & 0 & 0 \\
0 & \Com(q) & M & \cd & 0 & 0 \\
&&& \vdots && \\
0 & 0 & 0 & \cd & \Com(q) & M \\
0 & 0 & 0 & \cd & 0 & \Com(q)
\end{array}
\right ) \vspace{0.6cm}
\]
where $M$ is the square matrix with 1 in the top right entry and zeros elsewhere.  We call this array a \emph{generalized Jordan block}.  Since $\Com(x + a_0) = (-a_0)$, the notion of a generalized Jordan block specializes to that of a classical Jordan block, when $q$ is a linear polynomial.    The following is a standard  in linear algebra.

\begin{theorem}[Generalized Jordan Canonical Form] \label{thm:generalizedcanonicalform}
If $T$ is a linear operator on a finite dimensional vector space $V$ over an arbitrary field \emph{$\field$}, then there exists a basis of $V$ with respect to which $T$ has matrix form 
$$
\mathrm{diag}(J(p_1, a_1), \ld, J(p_m, a_m)),
$$
where $p_1, \ld, p_m$ are  polynomials irreducible over \emph{$\field$}, and $a_1,  \ld, a_m$ are positive integers.  Such a presentation is unique up to a permutation of generalized Jordan blocks.
\end{theorem}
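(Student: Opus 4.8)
The plan is to reduce the Generalized Jordan Canonical Form to the Structure Theorem for finitely generated modules over the principal ideal domain $\field[x]$, and then identify the resulting cyclic summands with generalized Jordan blocks via the combinatorial machinery developed in this chapter. First I would endow $V$ with the structure of a $\field[x]$-module by letting $x$ act as $T$; since $V$ is finite-dimensional it is a torsion module, so the Structure Theorem gives a decomposition $V \cong \bigoplus_{i=1}^m \field[x]/(p_i^{a_i})$ as $\field[x]$-modules, where the $p_i$ are irreducible and the $a_i$ are positive integers, and this list of primary components is unique up to reordering. Translating back, $V$ decomposes as an internal direct sum of $T$-invariant subspaces $V_i$, each isomorphic to $\field[x]/(p_i^{a_i})$ with $T|_{V_i}$ corresponding to multiplication by $x$.

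Next I would show that on each cyclic piece $\field[x]/(q^a)$ (with $q$ irreducible of degree $d$) the operator ``multiply by $x$'' has matrix $J(q,a)$ in the basis exhibited in the excerpt. This is the computational heart but it is routine: one checks that the listed set $\{q^{a-1}, xq^{a-1}, \ldots, x^{d-1}q^{a-1}, q^{a-2}, \ldots, 1, x, \ldots, x^{d-1}\}$ is a $\field$-basis (a dimension count plus the observation that $q^{a-1}, \ldots, q^0$ generate a filtration of $\field[x]/(q^a)$ whose successive quotients have the powers $1, x, \ldots, x^{d-1}$ as bases — this is exactly a $\Z$-grading in the sense of \S\ref{sec:gradcanform}), and then one computes $x \cdot x^j q^{a-k}$ in this basis. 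Within a single block of $\Com(q)$ the action of $x$ is the companion-matrix action, using the relation $q = 0$ in the quotient to rewrite $x^d q^{a-1}$; the off-diagonal $M$ arises precisely because $x \cdot x^{d-1} q^{a-k} = x^d q^{a-k}$, and $x^d q^{a-k}$ differs from $(x^d - q)q^{a-k} = -(a_0 + \cdots + a_{d-1}x^{d-1})q^{a-k}$ by the term $q^{a-k} = q \cdot q^{a-k-1}$, which is the leading basis vector of the next block down. Assembling the blocks gives $\mathrm{diag}(J(p_1,a_1), \ldots, J(p_m,a_m))$ as the matrix of $T$ on $V$ in the concatenated basis.

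For uniqueness I would argue that any basis putting $T$ in block form $\mathrm{diag}(J(p_1,a_1), \ldots, J(p_m,a_m))$ realizes $V$ as an internal direct sum of $T$-invariant subspaces, the $i$-th of which is $\field[x]$-isomorphic to $\field[x]/(p_i^{a_i})$ (one recognizes the block $J(q,a)$ as a cyclic $\field[x]$-module generated by the last basis vector, annihilated exactly by $q^a$, since $q(T)$ applied to the generator climbs one block of $\Com(q)$ at a time). Hence the multiset $\{(p_i, a_i)\}$ is the multiset of elementary divisors of the $\field[x]$-module $V$, which is an invariant of $V$ by the uniqueness clause of the Structure Theorem; this forces any two such presentations to agree up to a permutation of the blocks.

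The main obstacle is the middle step: verifying cleanly that the displayed basis of $\field[x]/(q^a)$ is genuinely a basis and that $x$ acts by $J(q,a)$, because the bookkeeping of the two indices (the power of $q$ labelling which block, and the power of $x$ labelling the position within a block) together with the reduction $x^d \equiv x^d - q \pmod{q}$ is where sign and indexing errors creep in. The grading perspective of \S\ref{sec:gradcanform} — treating the powers of $q$ as a filtration whose associated graded pieces each carry the companion-matrix action — is the conceptual device I would use to organize this computation and to keep the off-diagonal $M$ terms transparent.
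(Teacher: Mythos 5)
The paper does not prove this theorem: it introduces it with ``The following is a standard in linear algebra'' and immediately moves on, invoking the decomposition $V \cong \bigoplus_{i} \field[x]/(p_i^{a_i})$ as a given in the very next paragraph. Your proof supplies exactly the standard argument the paper defers to---regard $V$ as a torsion $\field[x]$-module via $x\mapsto T$, apply the structure theorem for finitely generated modules over a PID to get the primary decomposition, verify that the displayed set is a $\field$-basis of $\field[x]/(q^a)$ (by a dimension count) on which multiplication by $x$ acts by $J(q,a)$, and read off uniqueness from the uniqueness of elementary divisors---so it is the proof the paper has in mind and is correct. One small caveat: the parenthetical identification of the filtration by powers of $q$ with a $\Z$-grading in the sense of \S\ref{sec:gradcanform} overstates slightly (a filtration is not a grading until one chooses complements), but this is not load-bearing; the basis claim follows from the dimension count alone, and the grading is only a mnemonic for organizing where the $M$ blocks appear.
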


A block-diagonal matrix of the form $\tm{diag}(J(p_1, a_1), \ld, J(p_m, a_m))$  is said to have \emph{generalized Jordan Canonical Form}.  The corresponding basis is  a \emph{generalized Jordan basis}.   We will refer to the set of basis vectors that index a generalized Jordan block as a \emph{Jordan orbit}.   As an application of the classification of nilpotent Jordan bases for matroids, let us classify the generalized Jordan bases of $\field$-linear maps.

To begin, let $T$ be any operator on a finite-dimensional $\field$-linear vector space $V$.   By Theorem \ref{thm:generalizedcanonicalform}, there exist irreducible polynomials $p_1, \ld, p_m$, positive integers $a_1, \ld, a_m$ and a linear isomorphism $\Phi$ from $V$ to
$$
U = \bigoplus_{i=1}^m \field[x] /(p_i ^{a^i})
$$ 
such that $\Phi T = X \Phi$, where $X$ is the linear map on $U$ induced by left-multiplication with $x$.  The generalized Jordan bases of $T$ are exactly those of form $\Phi\inv(J)$, where $J$ is a generalized Jordan basis of $X$, so we may deduce most of what we need to know about general operators from the special case $T = X$.

For convenience, let us denote by $p$ the map $U \to U$  induced by left-multiplication with $p$.   Under this convention
\begin{align}
{\mathrm {\tb{K}}}\left (p^{\dim V} \right) = \oplus_{i \in I} \field[x]/ \left (p^{a_i} \right)\label{eq:polyform}
\end{align}
where $I = \{i : p_i = p\}$,  for each $p$.  Every Jordan orbit is contained in a subspace of form $\K(p^{\dim V})$, and every subspace of this form is  invariant under $X$.  Thus every generalized Jordan basis may be expressed as a disjoint union
$\cup_p J_p$,
where $p$ runs over irreducible polynomials and $J_p$ is a generalized Jordan basis for the restriction of $X$ to $\K(p^{\dim V})$. 

 Let us fix an irreducible polynomial $p$ with degree $d$, and assume temporarily that
$$
U =  \K \left (p^{\dim V} \right).
$$  
%
If $E_k = \Im(p^k)$, then for each nonnegative integer $k$ the quotient module
$$
U_k = E_k / E_{k+1}
$$  
is a direct sum of simple modules isomorphic to $\field[x]/(p)$.  We define a subset $S \su E_k$ to be \emph{independent} if $\pi_k S$ generates a submodule of length $|S|$ in $U_k$, where $\pi_k$ is the quotient map $E_k \to U_k$.  
Let us denote the family of all such sets by $\ic_k$.

\begin{lemma} The pair
$$
\nc_k = (E_k, \ic_k).
$$
is a matroid independence system.
\end{lemma}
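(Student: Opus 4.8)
The plan is to identify $\nc_k$ with the linear matroid represented over the residue field $\mathbb{L} = \field[x]/(p)$ by the projection $\pi_k \colon E_k \to U_k$. First I would record that $\mathbb{L}$ is a field, since $p$ is irreducible. Because every composition factor of $U_k$ is isomorphic to $\field[x]/(p)$, the polynomial $p$ annihilates $U_k$, so the $\field[x]$-action on $U_k$ factors through the quotient map $\field[x] \to \mathbb{L}$; consequently $U_k$ is an $\mathbb{L}$-vector space, and its $\field[x]$-submodules are exactly its $\mathbb{L}$-linear subspaces. In particular, the submodule of $U_k$ generated by a finite set $X$ is its $\mathbb{L}$-span.

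Next I would observe that for any submodule $N \su U_k$ the composition length of $N$ equals $\dim_{\mathbb{L}} N$, since each simple subquotient is a one-dimensional $\mathbb{L}$-space. Applying this to $N = \langle \pi_k(S) \rangle$ for a finite $S \su E_k$, the length of the submodule generated by $\pi_k(S)$ equals the $\mathbb{L}$-dimension of the $\mathbb{L}$-span of $\pi_k(S)$, which is at most $|\pi_k(S)| \le |S|$, with equality to $|S|$ precisely when $\pi_k$ restricts to an injection on $S$ and $\pi_k(S)$ is $\mathbb{L}$-linearly independent. Hence $S \in \ic_k$ if and only if $\pi_k(S)$ is a set of exactly $|S|$ linearly independent vectors of $U_k$.

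This exhibits $\ic_k$ as the family of subsets of $E_k$ that map to linearly independent sets under the index function $\pi_k$; that is, $\nc_k$ is the linear matroid with linear representation $\pi_k \colon E_k \to U_k$ over $\mathbb{L}$. That such a system is a matroid --- i.e., the verification of the Steinitz Exchange Axiom for $\ic_k$ --- is the Steinitz exchange lemma recalled in Chapter \ref{ch:matroids}, applied to the field $\mathbb{L}$. This completes the proof.

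There is no serious obstacle here; the one point that wants care is that independence in $\nc_k$ is phrased through the length of the generated submodule rather than directly through linear independence of the image, so I would spell out the equivalence --- in particular noting that the length condition already forces $\pi_k|_S$ to be injective, so that no two distinct elements of an independent set are collapsed in $U_k$.
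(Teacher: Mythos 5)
Your proof is correct, but it takes a more concrete route than the paper's. The paper's proof is a one-line appeal: ``Both the Steinitz Exchange Axiom and closure under inclusion follow from the Krull--Schmidt theorem for semisimple modules of finite length.'' You instead observe that, since $p$ annihilates $U_k$, the $\field[x]$-action on $U_k$ factors through the residue field $\mathbb{L} = \field[x]/(p)$, so $U_k$ is literally an $\mathbb{L}$-vector space; the length of the submodule generated by $\pi_k(S)$ is then the $\mathbb{L}$-dimension of its span, and the paper's independence condition unwinds to linear independence of $\pi_k|_S$ over $\mathbb{L}$. This exhibits $\nc_k$ as a linear matroid represented over $\mathbb{L}$, and the classical Steinitz exchange lemma finishes. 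In effect you are unpacking the paper's Krull--Schmidt appeal into its elementary form: for semisimple modules all of whose composition factors are a fixed simple $\Sigma$, Krull--Schmidt reduces to the basis theorem for vector spaces over $\operatorname{End}(\Sigma)$, which here is $\mathbb{L}$ since $\field[x]/(p)$ has $\operatorname{End}(\field[x]/(p)) \cong \mathbb{L}$. The paper's version is shorter and stays at the level of module theory; yours is longer but self-contained and makes the reduction to ordinary linear algebra explicit, which also dispatches the small point you flag about $\pi_k|_S$ having to be injective. Both arguments are sound.
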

\begin{proof}  Both the Steinitz Exchange Axiom and closure under inclusion follow from the Krull-Schmidt theorem for semisimple modules of finite length.  
\end{proof}

If $\nc$ is the matroid union $\cup_k \nc_k$, and $e \der k$ is the copy of polynomial $e$ that lies in the ground set of $\nc_k$, then one may define an operator $P:\nc \to \nc$ by
$$
P \left ( e \der k \right ) = \left( p \cdot  e \right ) \der {k+1}.
$$
 As $P^m$ is complementary for all $m$, it has a Jordan basis.  We claim that the generalized Jordan bases of $X$ are exactly the sets of form
\begin{align}
X^0 J \cup \cd \cup X^{d-1} J  \label{eq:xcup}
\end{align}
where $J$ is a Jordan basis of $P$.

To see that this is the case, let $J$ be such a basis, and for convenience put $\pi_k j = 0$ for all $j \notin E_k$.  A set $I \su E_k$ freely generates $\nc_k$ as a matroid iff $\pi_k I$ freely generates $U_k$ as a module, so the nonzero elements of $\cup_k \pi_k J$ freely generate $\oplus_k U_k$ as a module.  
For each $j$ and $k$ so that $\pi_k j \neq 0$, the submodule generated by $\pi_k j$ in $U_k$ is the linear span of  $\{\pi_k X^0j, \ld,\pi_kX^{d-1}j \}$, so the nonzero elements of
$$
\bigcup_k \bigcup_{m=0}^{d-1} \pi_k X^m  J
$$
freely generate $\oplus_k U_k$ as a $\field$-linear vector space.  It follows easily that \eqref{eq:xcup} is a basis for $U$.   Evidently, it is a generalized Jordan basis.


To describe the generalized Jordan bases of an arbitrary operator $T$, one need only synthesize across irreducible polynomials.   For a given operator $T$ on vector space $V$, let us define $U \der p \su V$ to be the module on ground set $p^{\dim(V)}(T)$ on which $x$ acts by
$$
x \cdot v = p(T) \cdot v.
$$
Set $E \der p_k = x^k U\der p = p^k(T) U \der p$, and put
$$
U_k \der p = E \der p _k/ E \der p_{k+1}.
$$ 
Let $\ic_k \der p$ be the family of independent sets on $E \der p_k$ given by the module structure of $U \der p_k$, as described above, and  $\nc_k \der p = (E_k\der p, \ic_k \der p)$ be the corresponding matroid.  Write $P \der p$ for the operator on $\nc \der p = \cup_k \nc_k \der p$ that sends each  $e \in \nc_k \der p$ to the element  $x\cdot e \in \nc_{k+1}\der p$.

Finally, let $\nc  = \cup_{p} \nc \der p$ be the matroid union of $\nc_k \der p$ over all irreducible polynomials $p$, and  $P$ be the operator on $\nc$ determined by the maps $P \der p$.     If the \emph{generalized Jordan orbit} of a vector $i \in U \der p$ is the union $T^0 I \cup \cd \cup T^{\deg (p)-1} I$, where $I$ is the usual orbit of $i$ under  $x$ in $U \der p$, then we have shown the following.

\begin{proposition}
If $T$, $\nc$, and $P$ are  as above, then the generalized Jordan bases of $T$ are  the generalized orbits of the $\kc$-minimal bases of $\nc/ P \nc$, where $\kc$ is the unique integer-valued weight function so that \emph{$\kc_{\le m} = \Ker(P^m)$}.
\end{proposition}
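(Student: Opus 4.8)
The plan is to derive the proposition by synthesizing, across irreducible polynomials, the single-polynomial analysis already carried out in the discussion preceding the statement. First I would record the reduction noted there: the linear isomorphism $\Phi$ with $\Phi T = X\Phi$ carries the $X$-structure on $U = \bigoplus_i \field[x]/(p_i^{a_i})$ onto the $T$-structure on $V$, hence restricts to a bijection between generalized Jordan bases of $T$ and of $X$ and transports the matroid $\nc$, the strong map $P$, and the weight function $\kc$ built from $T$ to those built from $X$. So it suffices to prove the statement for $T = X$ acting on $U$.

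Second, I would set up the direct-sum decomposition along the distinct irreducible divisors $p$. By \eqref{eq:polyform}, $U$ is the internal direct sum of the $X$-invariant subspaces $\K(p^{\dim V})$, and every generalized Jordan orbit lies inside one of them; correspondingly, $\nc = \cup_p \nc\der p$ is a matroid union over pairwise-disjoint ground sets, hence a direct sum of matroids, $P$ acts block-diagonally with block $P\der p$ on $\nc\der p$, and $\Ker(P^m) = \bigcup_p \Ker((P\der p)^m)$, so $\kc$ restricts on the $p$-block to the weight function $\kc\der p$ characterized by $\kc\der p_{\le m} = \Ker((P\der p)^m)$. Two elementary consequences follow: a basis of $\nc/P\nc = \nc/\Im(P)$ is $\kc$-minimal iff it is a disjoint union $\cup_p B_p$ of $\kc\der p$-minimal bases $B_p$ of $\nc\der p/\Im(P\der p)$, and the generalized orbit of $\cup_p B_p$ is the disjoint union of the generalized orbits of the $B_p$. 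Moreover, as recorded above, $(P\der p)^m$ is complementary for every $m$, so clause (3) of Corollary \ref{cor:jordanconditions} applies to each $P\der p$: its Jordan bases are exactly the orbits of its $\kc\der p$-minimal bases of $\nc\der p/\Im(P\der p)$.

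Third, I would invoke the single-polynomial classification: for fixed $p$ of degree $d$, the generalized Jordan bases of $X$ restricted to $\K(p^{\dim V})$ are exactly the sets $X^0 J \cup \cd \cup X^{d-1} J$ with $J$ a Jordan basis of $P\der p$. Unwinding the definition of the generalized Jordan orbit — apply $X^0, \dots, X^{d-1}$ to each element of the $P\der p$-orbit of a basis vector — identifies $X^0 J \cup \cd \cup X^{d-1} J$, for $J = \orb_{P\der p}(B_p)$, with the generalized orbit of the $\kc\der p$-minimal basis $B_p$. Combining with the second step over all $p$: every generalized Jordan basis of $X$ decomposes as a disjoint union of generalized Jordan bases of the $X$-restrictions to the $\K(p^{\dim V})$, hence as a disjoint union of generalized orbits of $\kc\der p$-minimal bases, hence as a single generalized orbit of a $\kc$-minimal basis of $\nc/P\nc$; and the argument reverses. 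Transporting back through $\Phi$ gives the proposition for $T$.

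The main obstacle is the bookkeeping concentrated in the second and third steps: verifying that $\nc = \cup_p \nc\der p$ is genuinely a direct sum and that $P$, $\kc$, and the generalized-orbit operation all respect this decomposition; and supplying the direction of the single-polynomial classification that the preceding text asserts but establishes only one half of, namely that \emph{every} generalized Jordan basis of $X$ on $\K(p^{\dim V})$ has the form $X^0 J \cup \cd \cup X^{d-1} J$ — which one obtains by reversing the construction, selecting from each Jordan orbit the subchain cut out by the powers of $p(X)$ to recover a Jordan basis of $P\der p$. None of these steps is deep, but each requires unwinding the layered definitions of $U\der p$, $E\der p_k$, $U\der p_k$, $\nc\der p_k$, $P\der p$, and the generalized Jordan orbit; that definitional unwinding — not any new idea — is the real content.
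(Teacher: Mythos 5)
Your proposal reconstructs the argument the paper only gestures at (the proposition is introduced with ``we have shown the following,'' with no separate proof), and it matches the intended route: reduce to $X$ on $U$, split across irreducible divisors via \eqref{eq:polyform}, invoke the single-polynomial claim that generalized Jordan bases of $X$ on $\K(p^{\dim V})$ are the sets \eqref{eq:xcup} for $J$ a Jordan basis of $P$, and then convert ``Jordan basis of $P$'' into ``orbit of a $\kc$-minimal basis of $\nc/P\nc$'' using clause (3) of Corollary \ref{cor:jordanconditions} together with the stated fact that the powers of $P$ are complementary. So in substance you have followed the paper.

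You are also correct that the paper leaves a real gap: the text preceding the proposition only establishes one implication of the ``exactly'' in the \eqref{eq:xcup} claim, namely that the generalized orbit of a Jordan basis of $P$ is a generalized Jordan basis of $X$; the converse -- that every generalized Jordan basis of $X$ on $\K(p^{\dim V})$ has this form -- is asserted but never argued. Your suggested repair (given a generalized Jordan basis, extract from each generalized Jordan block the subchain indexed by powers of $p(X)$, i.e.\ positions $0, d, 2d, \dots$, and check that the resulting set is a Jordan basis of $P$) is the right idea, and the verification that this extraction freely generates each $U_k$ is exactly dual to the computation the paper does perform. The remaining bookkeeping you flag -- that $\nc = \cup_p \nc\der p$ is a genuine direct sum of matroids on disjoint ground sets, that $P$ is block-diagonal, and that $\kc$ restricts correctly on each block -- is straightforward from the definitions and from $\Ker(P^m) = \bigcup_p \Ker((P\der p)^m)$, which in turn follows because the $\K(p^{\dim V})$ are $X$-invariant and complementary. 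Carrying out these checks turns the paper's implicit ``synthesis across irreducible polynomials'' into an actual proof; nothing further is needed.
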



\part{Algorithms}

\chapter{Algebraic Foundations}
\label{ch:exchangeformulae}

This chapter lifts several classical ideas from the domain of matrix algebra and linear matroid representations to that of \emph{abelian} and \emph{preadditive categories}.  For readers unfamiliar with the language of category theory, the terms \emph{object} and \emph{morphism} may be replaced  by $\field$\emph{-linear vector space} and $\field$\emph{-linear map}, respectively.  The term \emph{map} is occasionally used in place of  morphism.   A \emph{monomorphism} is an injection and an \emph{epimorphism} is a surjection.   An \emph{endomorphism} on $W$ is a morphism $W \to W$, and an \emph{automorphism} is an invertible endomorphism.  With these substitutions in place, the phrases \emph{in a preadditive category} and \emph{in an abelian category} may be stricken altogether.

\section{Biproducts} 
\label{sec:biproducts}

A \emph{product structure} on an object $W$  is a family $\lk$ of maps $f:W \to \D\op (f)$, with the property  that  $\times \lk  : W \to \times_{f \in \lk} \D\op(f )$  is invertible.  A \emph{coproduct structure}  $W$ is a family $\uk$ of maps  $f: \D(f) \to W$, such that  $\oplus \uk: \oplus_{f \in \uk} \D(f ) \to W$ is an isomorphism. 

To every product structure $\lk$ corresponds a unique \emph{dual} coproduct structure $\lk^{\flat}$ and  bijection $\flat: \lk \to \lk^\flat$ such that $f^\flat g = \dk_{f  g}$.    Likewise, to every coproduct structure $\uk$ corresponds a unique dual product structure $\uk^{ \sharp}$ and bijection $\sharp: \uk \to \uk^{\sharp}$ such that $f  g^\sharp = \dk_{f g}$.   We refer to an unordered pair consisting of a product structure and its dual  as  a \emph{biproduct structure}.

A \emph{complement} to a family $\lk$ of morphisms out of $W$ is a map $g$ such that $\lk \cup \{g\}$ is a product structure.  A \emph{complement} to a family $\uk$ of morphisms into $W$ is a morphism $g$  such that $\uk \cup \{g\}$ is a coproduct structure.


\begin{remark}  
The following mnemonic may be helpful in recalling the distinction between $\flat$ and $\sharp$.  The former  denotes a gravitational drop in tone,  with $\flat$ connoting maps  directed  {into} $W$.  Conversely, sharp denotes a rise in tone, suggesting upward, outward motion, with $\sharp$ connoting maps leaving $W$.
\end{remark}

\subsubsection{Examples}

\begin{example}  
Let $\uc$ be any finite family of $\field$-linear spaces.  For each $V \in \uc$, let $\pi_V$ denote the projection  $\oplus_\uc U \to V$ that vanishes on $\oplus_{U \neq V} U$, and let $\iota_V$ denote the inclusion $V \to \oplus_\uc U$.  The pair of $\uc$-indexed families $\{\pi, \ik\}$ is the \emph{canonical biproduct structure} on $\oplus_{\uc} U$ generated by $\vc$.
\end{example}

\begin{example}  
Suppose that the set $\uc$ in the preceding example is a complementary family of subspaces in some $\field$-linear space  $W$.  The structure defined by replacing $\oplus_\uc U$ with $W$ in the description of $\pi_V$ and $\iota_V$ is called the \emph{canonical  biproduct structure} on $W$ generated by $\vc$.
\end{example}

\begin{example}[Primal]  
\label{ex:comp} 
An arbitrary family of injective linear maps into $W$ 
is a coproduct structure iff  $\{\I(f) : f \in \lk\}$ is a complementary family of subspaces in $W$.    
\end{example}

\begin{example}[Dual]  
If $\lk$ is a family of {surjective} linear maps out of  $W$, then $\lk$ is a product structure iff the family of kernels $\kc = \{\K(f) : f \in \lk\}$ is \emph{cocomplementary}, meaning that  $\cap_\kc L = 0$ and $\cap_\uc L \neq 0$ when $\uc \subsetneq \kc$.
\end{example}

\begin{example}  \label{ex:canonical} Let $\lk$ be the unique index function $\lk : I \to \Hom(\field, \field^I)$ such that
$$
\lk_i(1) = \chi_i
$$
for each $i \in I$.  Then $\lk$ is a coproduct structure on $\field^I$.  The dual product structure is the function  $\lk^\sharp : I \to \Hom(\field^I, \field) $ such that
\begin{align*}
\lk_i^\sharp( w) = w_i
\end{align*}
for  $w \in \R^n$.  We call $\lk$ the \emph{canonical indexed coproduct structure} on $\field^I$.  The set $\{\lk_i : i \in I\}$ is the canonical  \emph{unindexed} coproduct structure.  By a slight abuse of language, we will use the term \emph{canonical coproduct structure} to refer to either of these constructions.
\end{example}

\section{Idempotents}

An endomorphism $e: W \to W$ is \emph{idempotent} if $e^2 = e$.  Equivalently, $e$ is an idempotent iff
\begin{align*}
e|_{\I(e)} = 1_{\I (e)} && e|_{\K(e)} = 0.
\end{align*}
Given complementary subobjects $K$ and $I$, we write $e^K_I$ for the idempotent with kernel $K$ and image $I$.  We refer to this morphism as \emph{projection onto $I$ along $K$}.

  If $\wk$ is a (co)product structure, then to each $f \in \wk$  we associate the endomorphism
  $$
  e_f = f^\flat f ^\sharp.
  $$
  Evidently, $e_f = e^{\K(f^\sharp)}_{\I(f^\flat)}$.  To emphasize dependence on $\wk$, we will sometimes write $e^\wk_f$ for $e_f$.   If $\wk = \{f,g\}$ is a coproduct structure, then $e^\wk_f = e^{\I(g)}_{\I(f)}$.  Similarly, if $\wk$ is a product structure then $e^\wk_f = e^{\K(g)}_{\K(f)}$.
  
  \begin{proposition}  If $\wk$ is a coproduct structure, then
  \begin{align}
  \sum_{f \in \wk} e_f = 1.  \label{eq:complementaryidempotents}
  \end{align}
  \end{proposition}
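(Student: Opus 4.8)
The plan is to verify the identity after composing with each member of $\wk$ and then upgrade this to an equality of endomorphisms by joint epicity. First I would fix $g\in\wk$ and compute $\bigl(\sum_{f\in\wk}e_f\bigr)g=\sum_{f\in\wk}f^\flat(f^\sharp g)=\sum_{f\in\wk}f^\flat\,\dk_{fg}=g$, where the middle equality is the defining relation $f^\sharp g=\dk_{fg}$ of the dual product structure $\wk^\sharp$ (so only the term $f=g$ survives, and $g^\flat=g$ since $\wk$ is a coproduct structure). Thus $\bigl(\sum_{f\in\wk}e_f\bigr)g=1_W\,g$ for every $g\in\wk$.

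Next I would record that $\wk$ is jointly epic, and conclude. Indeed $\oplus\wk\colon\bigoplus_{f\in\wk}\D(f)\to W$ is an isomorphism --- this is the definition of a coproduct structure --- and every $g\in\wk$ equals $(\oplus\wk)\circ\iota_g$, where $\iota_g\colon\D(g)\to\bigoplus_{f}\D(f)$ is the canonical inclusion; since the canonical inclusions are jointly epic in $\bigoplus_f\D(f)$ (for vector spaces this is the elementary identity $v=\sum_f\iota_f(v_f)$, and in general it is part of the construction of the biproduct), so is $\wk$. Hence the two maps $\sum_{f\in\wk}e_f$ and $1_W$ out of $W$, agreeing after composition with every $g\in\wk$, must coincide. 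The same computation can be packaged more symmetrically via conjugation by $\Phi=\oplus\wk$: from $\Phi\iota_f=f$ and $f^\sharp=\pi_f\Phi\inv$ (the latter obtained by checking $\{\pi_g\Phi\inv\}_{g\in\wk}$ is a product structure dual to $\wk$ and invoking uniqueness of the dual product structure) one gets $e_f=f^\flat f^\sharp=\Phi(\iota_f\pi_f)\Phi\inv$, so $\sum_{f\in\wk}e_f=\Phi\bigl(\sum_{f\in\wk}\iota_f\pi_f\bigr)\Phi\inv=\Phi\Phi\inv=1_W$, the identity $\sum_f\iota_f\pi_f=1$ being the corresponding (elementary) statement for the canonical biproduct.

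I do not expect a serious obstacle: the proposition is really the biproduct relation $\sum\iota_f\pi_f=1$ re-expressed for an abstract coproduct structure, and once the two definitional facts $f^\sharp g=\dk_{fg}$ and $(\oplus\wk)\circ\iota_g=g$ are in hand the rest is formal. The single step that needs more than bookkeeping is the passage from ``agrees with $1_W$ on all of $\wk$'' to ``equals $1_W$'' --- i.e.\ joint epimorphy of $\wk$, or equivalently, in the conjugation version, the identification $f^\sharp=\pi_f\Phi\inv$ via uniqueness of the dual product structure --- and even that is immediate in the vector-space model.
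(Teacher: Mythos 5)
Your proof is correct and is essentially the paper's own argument up to duality: the paper invokes (without loss of generality) the product case and uses that $\times\wk$ is an isomorphism, hence $\wk$ is jointly monic, so it suffices to check $g\bigl(\sum e_f\bigr)=g$ by postcomposition with each $g\in\wk$; you stay in the coproduct case, use that $\oplus\wk$ is an isomorphism so $\wk$ is jointly epic, and check $\bigl(\sum e_f\bigr)g=g$ by precomposition. The two are the same one-line computation followed by the same "test against each structure map" reduction, just read on opposite sides.
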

\begin{proof}
For convenience, assume  $\wk$ to be a product structure.  Since $\times \wk$ is an isomorphism, 
$$
(\times \wk) a = (\times \wk) b
$$
iff $a = b$.   Equivalently, $a = b$ iff $ga = gb$ for all $g\in \wk$.  Since $g  \left( \sum e_f \right ) = g = g  1$ for all $g \in \wk$,  the desired conclusion follows.
\end{proof}

The chief application of idempotents, for us, will be to generate new complements from old.   The instrument that executes such operations is Lemma \ref{lem:complementaryprojection}, which says that projection along a subspace $L$ yields a linear isomorphism between any two of its complements.  See Examples \ref{rb1} and \ref{rb2}  for  illustration.

\begin{lemma} \label{lem:complementaryprojection} Let  $\wk\op = \{f\op, g\op\}$ and $\wk = \{f, g\}$ be product and coproduct structures on $W\op$ and $W$, respectively. 
\begin{enumerate}
\item If  $h: \D(h) \to W$ and either of the two families
\begin{align*}
\{f,h\} && and && \{f,e^\wk_g h\}
\end{align*}
is a coproduct structure, then so is the other.
\item If  $h\op: W \op \to \D\op(h\op)$ and either of the two families
\begin{align*}
\{f\op,h\op\} && and && \{f\op,e^{\wk\op}_{g\op} h\op\}
\end{align*}
is a product structure, the so is the other.
\end{enumerate}
\end{lemma}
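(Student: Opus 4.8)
The plan is to prove part (1) and then obtain part (2) by formal dualization, since the hypotheses of the two parts are related by the $op$-symmetry of product and coproduct structures (swap every morphism for its reverse, and every product structure for a coproduct structure). So I will concentrate on part (1), where $\wk = \{f,g\}$ is a coproduct structure on $W$ and $h\colon \D(h)\to W$ is a morphism.

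First I would unwind the notation. Writing $e = e^\wk_g$, recall from the text that for a two-element coproduct structure $\wk=\{f,g\}$ one has $e^\wk_g = e^{\I(f)}_{\I(g)}$, i.e.\ $e$ is the projection onto $\I(g)$ along $\I(f)$, and by Proposition \eqref{eq:complementaryidempotents} (applied to $\wk$) we have $e^\wk_f + e^\wk_g = 1$, so $1 - e = e^\wk_f = e^{\I(g)}_{\I(f)}$ is the projection onto $\I(f)$ along $\I(g)$. The key algebraic facts I will use are therefore: $ef = 0$ (since $\I(f) = \K(e)$) and $(1-e)|_{\I(f)} = 1_{\I(f)}$, equivalently $(1-e)f = f$, equivalently $ef = 0$ again; and symmetrically $eg = g$. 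Now I would reduce the statement to a purely algebraic claim: by Example \ref{ex:comp} (Primal), a family of injective maps into $W$ is a coproduct structure iff the images form a complementary family of subspaces; so, assuming $f$ and $h$ are injective (if $h$ is not injective, neither family can be a coproduct structure and there is nothing to prove — though I should double-check whether $eh$ injective needs an argument, see below), I need to show
\[
W = \I(f)\oplus \I(h) \quad\Longleftrightarrow\quad W = \I(f)\oplus \I(eh),
\]
as internal direct sums, and that $eh$ is again a monomorphism in the relevant direction.

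The heart of the argument is the observation that the automorphism $\phi = 1 - e + eh\cdot(\text{something})$ is the wrong thing; instead the clean approach is: the map $(1-e) + e\cdot$ nothing — let me restate. Consider that an element $w\in W$ lies in $\I(f) + \I(h)$ iff $w = f a + h b$ for some $a,b$; apply $e$ to get $e w = e h b$ (using $ef=0$), and apply $1-e$ to get $(1-e)w = fa + (1-e)hb$. Conversely, given $w = fa' + e h b'$, apply $e$: $ew = ehb'$. This shows that $\I(f)\oplus\I(h) = W$ precisely when every $w$ decomposes with the $\I(f)$-part determined after subtracting $eh(\cdot)$, which is exactly the condition $\I(f)\oplus\I(eh) = W$. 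More structurally: the assignment $b\mapsto$ (coefficient in the complement) identifies $W/\I(f)$ with both $\I(h)$ and $\I(eh)$ via the quotient map, because $h$ and $eh = h - (1-e)h$ differ by a map $(1-e)h$ landing in $\K(e)=\I(f)$, hence induce the \emph{same} map $\D(h)\to W/\I(f)$. So $\{f,h\}$ is a coproduct structure iff the induced map $\D(h)\to W/\I(f)$ is an isomorphism iff $\{f, eh\}$ is a coproduct structure — this is the cleanest route and avoids any element-chasing beyond what the diagram gives. I would phrase it via the short exact sequence $0\to \I(f)\to W\to W/\I(f)\to 0$ and the splitting lemma alluded to in the introduction.

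The main obstacle I anticipate is the bookkeeping around whether $eh$ is a monomorphism and whether the constructions are literally inverse (not just "isomorphic on the nose"): the text's definition of coproduct structure requires $\oplus\wk$ to be an isomorphism, so I must check that $\{f,eh\}$, when $\{f,h\}$ is a coproduct structure, actually has $\oplus\{f,eh\}$ invertible, which amounts to verifying both that $eh$ is mono and that the images $\I(f),\I(eh)$ are independent and span. Independence: if $fa + ehb = 0$, apply $e$ to get $ehb = 0$ (mod what $e$ does to $fa$, namely kills it), then $fa = 0$ so $a = 0$, and then $ehb = 0$; now $ehb=0$ with $h b \in \I(h)$ and $eh b = hb - (1-e)hb$, and $(1-e)hb\in\I(f)$, so $hb\in\I(f)$, forcing $hb=0$ by independence of $\I(f),\I(h)$, hence $b=0$ by injectivity of $h$ — so $eh$ is mono and the sum is direct. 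Spanning: given $w$, write $w = fa+hb$; then $w = fa + (1-e)hb + ehb = f(a + \text{correction}) + ehb$ since $(1-e)hb \in \I(f)$, establishing $W = \I(f)+\I(eh)$. The reverse implication is the symmetric computation replacing $h$ by $eh$ and noting $e(eh) = eh$, so $ee h$-version collapses. Once part (1) is done, part (2) follows by applying it in the opposite category, which interchanges $f\leftrightarrow f\op$, $g\leftrightarrow g\op$, products $\leftrightarrow$ coproducts, and $e^\wk_g \leftrightarrow e^{\wk\op}_{g\op}$; I would state this dualization explicitly rather than re-run the computation.
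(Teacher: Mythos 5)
The paper does not include a proof of this lemma; it is stated bare, followed only by illustrative Examples~\ref{rb1} and \ref{rb2}. Your argument is correct, and your central observation --- that $h$ and $e^\wk_g h$ differ by $(1 - e^\wk_g)h = e^\wk_f h$, which factors through $f$, hence induce the same morphism to the cokernel of $f$ --- is exactly the right idea. However, the proof compresses considerably if you invoke the Exchange Lemma (Lemma~\ref{lem:exchange}), which the text explicitly positions as the instrument for questions of this kind: with $\wk = \{f,g\}$ a coproduct structure, the pair $\{f,k\}$ is a coproduct structure iff $g^\sharp k$ is invertible. Since $e^\wk_g = g^\flat g^\sharp$ and $g^\sharp g^\flat = 1$, one computes
\[
g^\sharp\bigl(e^\wk_g h\bigr) = g^\sharp g^\flat g^\sharp h = g^\sharp h,
\]
so the two invertibility conditions are literally identical and part (1) follows in one line; part (2) is formally dual, as you say. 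This also eliminates the monomorphism and spanning bookkeeping you flag as the ``main obstacle'': the invertibility criterion already packages all of that. Your route through the quotient $W/\I(f)$ is the same fact in disguise --- $g^\sharp$ is a cokernel to $f$ by the lemma at the end of \S\ref{sec:kernels} --- but you reach it via subspace decompositions and element-level verification rather than the one-step matrix-entry computation, which lengthens the argument and confines the element-chasing portions to vector spaces without gaining any generality.
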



\newcounter{rtaskno}
\DeclareRobustCommand{\rtask}[1]{%
   \refstepcounter{rtaskno}%
   \thertaskno\label{#1}}

\subsubsection{Example \rtask{rb1}: Complements  in $\R^2$}
\label{ex:complementsinr2}

Let $\lk = \{\lk_1, \lk_2\} \su \Hom(\R, \R^2)$ be the canonical unindexed coproduct structure on $\R^2$.  Recall from Example \ref{ex:canonical} that this means $\lk_1$, $\lk_2$ are the morphisms $\R \to \R^2$ such that
\begin{align*}
\lk_1(1) & = (1, 0)  && \lk_2(1) = (0,1).
\end{align*}
The idempotents  $e_{\lk_1}$ and $e_{\lk_2}$ are then the orthogonal projection maps onto the first and second coordinate axes, respectively.  

If $\uk = \{\uk_1, \uk_2\} \su \Hom(\R, \R^2)$ is defined by
\begin{align*}
\uk_1(1) & = (1, 0)  && \uk_2(1) = (1,1).
\end{align*}
then 
$$
e_{\lk_2} \uk_2 = \lk_2
$$
complements $\lk_1$.  Thus Lemma \ref{lem:complementaryprojection} confirms what was clear from the outset: that the family  $\uk = \{\lk_1, \uk_2\}$ is a coproduct structure.

\subsubsection{Example \rtask{rb2}: Complements in $\R^n$}
\label{ex:complementsinrn}

If $\lk = \{\lk_1, \lk_2\}$ is any coproduct structure on $\R^n$ then the  subspaces 
\begin{align*}
L_1 = \I(\lk_1)  && and && L_2 = \I(\lk_2)
\end{align*}
are complementary.  Suppose that $U_2$ is a  subspace of $\R^n$ complementary to $L_1$, and let $\uk_2$ denote the inclusion $U_2 \to \R^n$.     Since $\K(e_{\lk_2}) = \I(\lk_1)$ intersects $U_2$ trivially, one may deduce
$$
L_2 = e_{\lk_2} U_2
$$
by dimension counting.  Thus $\{\lk_1, e_{\lk_2} \uk_2\}$ is a coproduct structure.   Alternatively, one could have arrived at this conclusion by noting  that $\{\lk_1, \uk_2\}$ is a coproduct structure, and applying Lemma \ref{lem:complementaryprojection}.

\section{Arrays}

Let $T$ be any map $V \to W$.  We will frequently compose elements of (co)product structures with maps of this form, and in such cases musical symbols may be dropped without ambiguity.  For instance, if $\uk$ is a product structure on $V$ and $f \in \uk$, then one may write
$$
T f
$$
without ambiguity, since $Tf^\flat$ is well defined and $Tf ^ \sharp$ is not.  This convention has the beneficial consequence of  reducing unwieldy notation.

If $\ak$ is a family of maps out of $W$ and $\bk$ is a family of maps into $V$, the \emph{matrix representation} of $T$ with respect to $(\ak, \bk)$ is an $\ak \times \bk$ array, also denoted $T$,  defined by
$$
T(a,b) =  a T b.
$$
In keeping with convention, when $\ak$ and $\bk$ are (co)products, we write $T(\ak, \bk)$ for $T(\ak^\sharp, \bk^\flat)$.

There is a natural multiplication on matrix representations, which agrees with composition.  Recall that to every (co)product $\uk$ on the domain of $T$ corresponds a $\uk$-indexed family of idempotents $e_f = f^\flat f ^\sharp$ such that $\sum_\uk e_f = 1$.  Thus for any composable $T$ and $U$ one has
\[
(TU)(a,b) = \sum_{f } a  T e_f U b = \sum_{f } T(a,f)U(f,b).
\]
When $\lk$ and $\uk$ are composed of linear maps $\field \to \D\op(T)$ and $\field \to \D(T)$, respectively,  $T(\lk, \uk)$ is the standard matrix representation of $T$ with respect to bases $\{f(1) : f \in \lk\}$ and $\{g(1) : g \in \uk\}$.

\setcounter{rtaskno}{0}

\subsubsection{Example \rtask{rc1}: Endomorphisms on $\R^2$}
\label{sec:arraysr2}

Let $\lk = \{\lk_1, \lk_2\} \su \Hom(\R, \R^2)$ be the canonical unindexed coproduct structure on $\R^2$.  Recall from Example \ref{ex:canonical} that this means $\lk_1$, $\lk_2$ are the morphisms $\R \to \R^2$ such that
\begin{align*}
\lk_1(1) & = (1, 0)  && \lk_2(1) = (0,1).
\end{align*}
Let $\uk = \{\uk_1, \uk_2\} \su \Hom(\R, \R^2)$ be any other coproduct structure, and define $(u_{ij})$ by
\begin{align*}
\uk_1(1) & = (u_{11}, u_{21})  && \uk_{2}(1) = (u_{12},u_{22}).
\end{align*}
If  we identify $\Hom(\R, \R)$ with $ \R$ via $ f \leftrightarrow f(1)$, then  
$$
\begin{array}{c}
\\
\multirow{2}{*}{$1(\lk, \uk) \;\;\; \; = \; \;$ } \\
\\
\end{array}
\begin{array}{c|cc|}
\multicolumn{1}{c}{} &\;\; \uk_1 \;\;& \multicolumn{1}{c}{\;\;\uk_2\;\;}\\ \cline{2-3}
 \lk_1^\sharp &u_{11} & u_{12}  \\ 
\lk_2^\sharp& u_{21} & u_{22} \\\cline{2-3}
\end{array} \hspace{.5cm}
\vspace{.5cm}
$$
Likewise, if  $(u^{ij}) = (u_{ij}) \inv \in  GL_2(\R)$, then 
$$
\begin{array}{c}
\\
\multirow{2}{*}{$1(\lk, \uk) \;\;\; \; = \; \;$ } \\
\\
\end{array}
\begin{array}{c|cc|}
\multicolumn{1}{c}{} &\;\; \lk_1 \;\;& \multicolumn{1}{c}{\;\;\lk_2\;\;}\\ \cline{2-3}
 \uk_1^\sharp &u^{11} & u^{12}  \\ 
\uk_2^\sharp& u^{21} & u^{22} \\\cline{2-3}
\end{array} \hspace{.5cm}
\vspace{.5cm}
$$
Thus if $u^i = (u^{i 1}, u^{i 2})$, then 
$$
\uk_i^\sharp(w) = \nr{u^i, w},
$$
where $\nr{\; \cdot \;, \; \cdot \;}$ is the standard inner product on $\R^2$.

\subsubsection{Example \rtask{rc2}: Endomorphisms on $\R^n$}

The observations in Example \ref{rc1}  extend naturally to $\R^n$.  Let $\lk$ be the canonical unindexed coproduct structure on $\R^n$.  Let $\uk = \{\uk_1, \ld, \uk_n\} \su \Hom(\R, \R^n)$ be a second coproduct structure, and put 
$$
u_{q} =  \uk_q(1) = (u_{1q}, \ld, u_{nq}).
$$
If one defines $(u^{pq}) = (u_{pq})\inv $, then
\begin{align*}
1(\lk, \uk) = (u_{pq}) && and &&  1 (\uk ,\lk) = (u^{pq}).
\end{align*}
The righthand identity states $\uk_p^\sharp \lk_q = u^{pq}$ for all $p, q \in \{1 ,\ld, n\}$.  Since
$$
w = (w_1 \lk_1 + \cd+ w_n \lk_n)(1)
$$ for all $w \in \R^m$, it follows that 
$$
\uk_p^\sharp(w) = \nr{u^p, w}
$$
for all $p \in \{1, \ld, n\}$, where $u^p = (u^{p1}, \ld, u^{pn})$.
%

\begin{figure}
    \centering
    \begin{subfigure}{0.4\textwidth}
            \begin{align*}
            \left(
            \arraycolsep=9.25pt\def\arraystretch{1.55}
            \begin{array}{ccc}
            - & u^1 & - \\
            - & u^2 & - \\
            & \vdots  & \\
            - & u^n & -
            \end{array}
            \right)
            \end{align*}
     \label{fig:ul}
    \end{subfigure} 
    \hspace{1cm}
    \begin{subfigure}{0.4\textwidth}
            \begin{align*}
            \arraycolsep=4pt\def\arraystretch{1.85}
            \left(
            \begin{array}{ccccc}
            \vspace{-.7cm} \\
            | & | & & | \\
            u_1 & u_2 & \cd  & u_n \\
            | & | \vspace{.23cm}& & | 
            \end{array}
            \right)
            \end{align*}
        \label{fig:lu}
    \end{subfigure} \vspace{0cm}
    \caption{Arrays associated to the coproduct structures $\lk$ and $\uk$.     \emph{Left:} $1(\uk, \lk)$.  The  $p$th row of this array is the unique tuple $u^p$ such that  $\nr{u^p, w} = \uk_p^\sharp (w)$ for all $w \in \R^n$.  \emph{Right:} $1(\lk, \uk)$.  The $p$th column of this array is the tuple $u_p = \uk_p(1)$.   The matrix products $1(\uk, \lk) 1(\lk, \uk) = 1(\uk, \uk)$  and $1(\lk, \uk) 1(\uk, \lk) = 1(\lk, \lk)$ are   Dirac delta functions on $\uk \times \uk$ and $\lk \times \lk$, respectively.  }\label{fig:cobarrays}
\end{figure}



%
%
%

\section{Kernels}
\label{sec:kernels}

A morphism $k$ is \emph{kernel} to a morphism $f$ if the following  are equivalent for every  $g$ such that $fg$ is well defined:
\begin{enumerate}
\item $fg = 0$.
\item There exists exactly one morphism $h$ so that $kh = g$.  Equivalently, there exists exactly one  $h$ such that the following diagram commutes.
\end{enumerate} 
\vspace{-.4cm}

\[
\xymatrix@R=2.5pc@C=5.5pc@M=.7pc{
\ar[];[dr]_{k}   \ar@{.>}[rr];[]_h  \ar[rr];[dr]^g
\bullet && \bullet  \\
& \bullet
}
\]
\vspace{-.4cm}

\begin{lemma}  \label{lem:ker}  A morphism  $k$ is kernel to $f$ iff $k$ is a monomorphism and 
$$
\I(k) = \K(f).
$$
\end{lemma}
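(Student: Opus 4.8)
The statement is the standard identification of categorical kernels with monomorphisms whose image equals the set-theoretic kernel, phrased so that it works in an abelian category (but, per the excerpt's convention, one may read everything in $\field$-vector spaces). The plan is to prove both implications directly from the universal property, using the equivalence $fg=0 \iff \I(g)\su\K(f)$ that holds automatically in vector spaces (and in any abelian category via the usual image/kernel factorization).

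First I would prove the ``only if'' direction. Suppose $k$ is a kernel to $f$. Taking $g=k$ in the defining property, the identity $fk=0$ is forced (condition (2) holds with $h=1$, so condition (1) must hold), hence $\I(k)\su\K(f)$. For the reverse inclusion, let $\iota:\K(f)\to\D(f)$ be the inclusion; then $f\iota=0$, so by the universal property there is $h$ with $kh=\iota$, which gives $\K(f)=\I(\iota)\su\I(k)$. Thus $\I(k)=\K(f)$. For injectivity of $k$: if $kh_1=kh_2$ then, setting $g=kh_1$, both $h_1$ and $h_2$ witness condition (2) for this $g$; uniqueness forces $h_1=h_2$, so $k$ is a monomorphism. (Equivalently, $fk=0$ and uniqueness applied to $g=0$ shows $kh=0\implies h=0$.)

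Next I would prove the ``if'' direction. Assume $k$ is a monomorphism with $\I(k)=\K(f)$, and let $g$ be any morphism with $fg$ defined. If $fg=0$ then $\I(g)\su\K(f)=\I(k)$, so every element of the domain of $g$ maps under $g$ into $\I(k)$; since $k$ is injective onto its image, there is a unique $h$ with $kh=g$ (define $h$ elementwise as $k^{-1}$ followed by $g$; in the abelian-category setting, factor $g$ through the mono $k$ using that $\I(g)\su\I(k)$, uniqueness coming from $k$ being mono). Conversely, if such an $h$ exists with $kh=g$, then $fg=fkh=0$ because $fk=0$ (which holds since $\I(k)=\K(f)$). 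So conditions (1) and (2) are equivalent, and $k$ is a kernel to $f$.

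The only mild obstacle is keeping the argument ``element-free'' enough to be honest in a general abelian category: the step ``$\I(g)\su\I(k)$ and $k$ mono $\Rightarrow$ $g$ factors uniquely through $k$'' should be stated as the standard fact that a monomorphism is the kernel of its cokernel and that $\I(k)=\K(f)$ makes $f$ annihilate $k$, so $f$ factors through $\mathrm{coker}(k)$; then $fg=0$ lets $g$ factor through $\K(\mathrm{coker}(k))\cong\I(k)$. For readers using the vector-space dictionary, this is just the remark that $g$ lands in $\K(f)=\I(k)$ and $k$ restricts to a bijection onto its image. I would present the vector-space version in the main text and note parenthetically that the same diagram chase works verbatim in any abelian category.
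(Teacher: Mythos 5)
The paper states Lemma \ref{lem:ker} without proof, treating it as a standard fact, so there is no in-text argument to compare against; your proposal supplies the expected proof from the universal property, and the ``if'' direction is correct as written.

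In the ``only if'' direction there is a small circularity worth fixing. You first derive $fk=0$ by taking $g=k$ and observing that $h=1$ witnesses condition (2). But condition (2) as stated in the paper requires a \emph{unique} $h$ with $kh=g$, and exhibiting $h=1$ does not give uniqueness unless you already know $k$ is a monomorphism. You then argue monomorphism by taking $g=kh_1$, but for condition (2) to apply there you need $f(kh_1)=0$, i.e.\ the identity $fk=0$ that you were in the middle of establishing. The repair is simply to reorder: take $g=0$ first, where condition (1) holds automatically ($f\cdot 0=0$ needs nothing), so condition (2) gives $kh=0\implies h=0$, i.e.\ $k$ is a monomorphism in the preadditive setting. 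Now take $g=k$: since $k$ is mono, $h=1$ is the unique solution of $kh=k$, so condition (2) holds and the biconditional forces $fk=0$, hence $\I(k)\su\K(f)$. The reverse inclusion via the inclusion $\iota:\K(f)\to\D(f)$ then goes through as you wrote it. Your closing parenthetical nearly says this, but its phrase ``$fk=0$ and uniqueness applied to $g=0$'' wrongly suggests the $g=0$ step depends on $fk=0$; it does not, and dropping that dependence is exactly what breaks the circle.
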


The notion of a kernel has a natural dual: we say that $k\op: \D\op(f\op) \to \D\op(k\op)$ is \emph{cokernel} to $f\op$ if the following are equivalent for every $g\op$ for which the composition $g \op f\op$ is  defined.
\begin{enumerate}
\item $g\op f\op = 0$.
\item There exists exactly one morphism $h\op$  such that the following diagram commutes
\end{enumerate}
\vspace{-.4cm}

\[
\xymatrix@R=2.5pc@C=5.5pc@M=.7pc{
\ar[dr];[]^{k\op}   \ar@{.>}[];[rr]^{h\op}  \ar[dr];[rr]_{g\op}
\bullet && \bullet  \\
& \bullet
}
\]
\vspace{-.5cm}

that is, such that $g\op = k\op h\op$.

\begin{lemma} \label{lem:coker} A morphism $k\op$ is cokernel to $f\op$ iff $k\op$ is an epimorphism and 
$$
\I(f\op) = \K(k\op) .
$$
\end{lemma}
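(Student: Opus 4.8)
The statement is exactly Lemma~\ref{lem:ker} read in the opposite category, so the plan is to mirror that argument, interchanging the roles of ``monomorphism / image'' and ``epimorphism / kernel.'' Since the identifications $\I$ and $\K$ here are the set-theoretic ones, I would prove the two implications directly rather than trying to formalize the self-duality.

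For the ``if'' direction, assume $k\op$ is an epimorphism and $\I(f\op)=\K(k\op)$; write $W=\D\op(f\op)$, so that $f\op\colon V\to W$ and $k\op\colon W\to\D\op(k\op)$. First note $k\op f\op=0$, which is immediate from $\I(f\op)=\K(k\op)$. Now fix $g\op$ with $g\op f\op$ defined. If $g\op f\op=0$, then $g\op$ vanishes on $\I(f\op)=\K(k\op)$, so (using that $k\op$ is onto) the assignment $h\op(k\op w)=g\op(w)$ is a well-defined morphism out of $\D\op(k\op)$ satisfying $g\op=h\op k\op$; its uniqueness is forced by $k\op$ being epi. Conversely, if $g\op=h\op k\op$ for some $h\op$, then $g\op f\op=h\op(k\op f\op)=0$. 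Hence conditions (1) and (2) are equivalent, i.e.\ $k\op$ is cokernel to $f\op$.

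For the ``only if'' direction, let $k\op\colon W\to Q$ be cokernel to $f\op$. I would compare it with the canonical projection $q\colon W\to W/\I(f\op)$: this $q$ is an epimorphism with $\K(q)=\I(f\op)$, hence a cokernel of $f\op$ by the part just proved. Two cokernels of the same morphism satisfy the same universal property, so the usual initiality argument produces a unique isomorphism $\phi$ with $k\op=\phi q$. It then follows that $k\op$, being an isomorphism composed with an epimorphism, is an epimorphism; that $k\op f\op=\phi(q f\op)=0$; and that $\K(k\op)=\K(\phi q)=\K(q)=\I(f\op)$, the middle equality because $\phi$ is injective. This is the asserted characterization.

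The only mildly delicate point is this last comparison step — passing from the abstract universal property of $k\op$ to the concrete equalities $\I(f\op)=\K(k\op)$ and ``$k\op$ epi'' — and routing it through the explicit cokernel $q$ together with uniqueness of universal objects is what keeps it painless; everything else is the bookkeeping of Lemma~\ref{lem:ker} in a mirror.
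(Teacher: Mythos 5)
The paper states Lemma~\ref{lem:coker} (and its twin, Lemma~\ref{lem:ker}) without proof, treating them as elementary; there is therefore no proof in the source against which to compare your approach line by line. Your argument is sound and is the natural one: a direct set-theoretic verification in the category of $\field$-linear spaces, which is precisely the reading the paper recommends for readers unfamiliar with category-theoretic vocabulary. The ``if'' direction is complete and correct as written.

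The one place worth tightening is the ``only if'' direction. You invoke ``the usual initiality argument'' to produce the comparison isomorphism $\phi$ with $k\op = \phi q$, and only afterward deduce that $k\op$ is an epimorphism and that $k\op f\op = 0$. But to even apply the universal property of $q$ to $g\op = k\op$ (which is how one obtains one half of the comparison $\phi$), you already need $k\op f\op = 0$ --- so as written the order of deductions is slightly circular. The paper's definition of cokernel is a biconditional (``$g\op f\op = 0$ iff there exists a unique $h\op$ with $g\op = h\op k\op$''), and this biconditional already yields both facts directly: taking $g\op = 0$, the uniqueness clause forces any $h\op$ with $h\op k\op = 0$ to vanish, i.e.\ $k\op$ is epi; taking $g\op = k\op$, the identity $h\op = 1$ satisfies $k\op = h\op k\op$ and is unique because $k\op$ is epi, so condition (2) holds, hence condition (1), i.e.\ $k\op f\op = 0$. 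With both facts in hand, the initiality argument applies exactly as you describe and the remainder of your deductions ($\K(k\op) = \K(q) = \I(f\op)$ because $\phi$ is injective) are correct. Reordering so that these two observations precede the comparison with $q$ closes the gap.
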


Biproducts provide natural examples of kernel and cokernel maps.

\begin{lemma}  If $\{f,g\}$ is a coproduct structure, then
\begin{enumerate}
\item $g$ is kernel to $f^\sharp$.
\item $g^\sharp$ is cokernel to $f$.
\end{enumerate}
\end{lemma}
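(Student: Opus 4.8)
The plan is to reduce both assertions to the intrinsic characterizations of kernel and cokernel morphisms recorded in Lemmas~\ref{lem:ker} and~\ref{lem:coker}. By the former, $g$ is kernel to $f^\sharp$ exactly when $g$ is a monomorphism with $\I(g)=\K(f^\sharp)$; by the latter, $g^\sharp$ is cokernel to $f$ exactly when $g^\sharp$ is an epimorphism with $\K(g^\sharp)=\I(f)$. So the proof breaks into three small pieces: $g$ is mono, $g^\sharp$ is epi, and the image/kernel identities $\I(g)=\K(f^\sharp)$ and $\I(f)=\K(g^\sharp)$.

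First I would unwind the definition: a coproduct structure $\{f,g\}$ on $W$ is one for which $\oplus\{f,g\}\colon \D(f)\oplus\D(g)\to W$, $(x,y)\mapsto f(x)+g(y)$, is an isomorphism. Composing this isomorphism with the canonical summand inclusion exhibits $g$ as a composite of a split monomorphism and an isomorphism, so $g$ is mono; the same goes for $f$. Dually, the biproduct relation $g^\sharp g=1_{\D(g)}$ (and $f^\sharp f=1_{\D(f)}$) exhibits $g^\sharp$ as a retraction, hence an epimorphism. This settles the ``mono'' and ``epi'' halves.

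The substantive step is the identity $\I(g)=\K(f^\sharp)$. The inclusion $\I(g)\subseteq\K(f^\sharp)$ is immediate from the biproduct relation $f^\sharp g=\dk_{fg}=0$ (since $f\neq g$): $f^\sharp$ annihilates $\I(g)$. For the reverse inclusion I would use the completeness identity $e_f+e_g=1_W$ of~\eqref{eq:complementaryidempotents}, with $e_f=ff^\sharp$ and $e_g=gg^\sharp$: if $f^\sharp w=0$, then $e_fw=f(f^\sharp w)=0$, hence $w=e_gw=g(g^\sharp w)\in\I(g)$. This gives $\I(g)=\K(f^\sharp)$, and Lemma~\ref{lem:ker} yields part~(1). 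Part~(2) is the mirror image: the identical argument with the roles of $f$ and $g$ interchanged produces $\I(f)=\K(g^\sharp)$, and Lemma~\ref{lem:coker} then gives that $g^\sharp$ is cokernel to $f$. Conceptually, all of this simply records that under the induced identification $W\cong\I(f)\oplus\I(g)$, the maps $f^\sharp$ and $g^\sharp$ are the two coordinate projections, so their kernels are the complementary summands $\I(g)$ and $\I(f)$.

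There is no genuine difficulty here; the only thing to watch is the musical-notation bookkeeping --- forming each composite $f^\sharp g$, $g^\sharp g$, $ff^\sharp$ in the order that makes it typecheck, and remembering that $\dk_{ff}$ is an identity while $\dk_{fg}$ vanishes. Once the three biproduct relations $f^\sharp g=0$, $g^\sharp g=1$, and $ff^\sharp+gg^\sharp=1$ are in hand, each of the three verifications is a one- or two-line computation.
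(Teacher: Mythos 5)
Your proof is correct and follows the same route as the paper: both appeal to the intrinsic characterizations in Lemmas~\ref{lem:ker} and~\ref{lem:coker}, reducing the claim to showing $g$ is mono with $\I(g)=\K(f^\sharp)$ and $g^\sharp$ is epi with $\K(g^\sharp)=\I(f)$. The paper states these facts without further justification, whereas you fill in the one-line verifications (retraction/section from $g^\sharp g=1$ and $f^\sharp g=0$, and the reverse inclusion via the idempotent decomposition $e_f+e_g=1$), which is an appropriate level of detail for a complete argument.
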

\begin{proof}  The first assertion follows from Lemma \ref{lem:ker}, since $g$ is a monomorphism  and $\I(g) = \K(f^\sharp)$.  The second assertion follows from Lemma \ref{lem:coker}, similarly.
\end{proof}

\subsection{The Splitting Lemma}
\label{subsec:splittinglemma}

The relationship between complements and kernel morphisms is essential to the current discussion.  This relationship, as formalized by Lemma \ref{lem:splitting} (the splitting lemma), represents a bridge between the matroid-theoretic notion of \emph{complementarity} (read: matroid duality) and the category-theoretic notion of a biproduct (read: linear duality).    

While the formulation of Lemma \ref{lem:splitting} is specially suited to the task at hand, several others are standard in mathematics.  Students of linear algebra know that
$$
V \cong \I(T) \oplus \K(T)
$$
for every linear $T: V \to W$.  Students of homological algebra know that  for any diagram 
$$
0 \lr{} A \lr{k}B\lr{f} C \lr{}  0
$$
where $k$ is kernel to $f$,  {every} right-inverse to $f$ is a complement to $k$.  This is known as the \emph{splitting lemma}.    Both results are implied by Lemma \ref{lem:splitting}.  The proof, being neither difficult nor specially enlightening, is omitted.

\begin{lemma}[Splitting Lemma]  
\label{lem:splitting}
Suppose that 
$$
f \op f = 1,
$$
where $f\op$ and $f$ are composable morphisms in a preadditive category.  
\begin{enumerate}
\item If $k$ is kernel to $f\op$, then $\{f,k\}$ is a coproduct structure.
\item If $k\op$ is cokernel to $f$, then $\{f\op, k\op\}$ is a product structure.
\end{enumerate}
\end{lemma}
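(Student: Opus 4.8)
The plan is to exhibit the dual structure explicitly and reduce everything to one idempotent identity. Write $f\colon A \to B$ and $f\op\colon B \to A$, so the hypothesis reads $f\op f = 1_A$, and set $e = f f\op$. The first step is to observe that $e$ is idempotent:
\[
e^2 = f(f\op f)f\op = f\,1_A\,f\op = f f\op = e .
\]
Hence $1_B - e$ is idempotent as well, and the same cancellation gives $(1_B - e)f = f - f(f\op f) = 0$ and $f\op(1_B - e) = f\op - (f\op f)f\op = 0$. Everything below is assembled from these identities.

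For part~(1), I would invoke Lemma~\ref{lem:ker}: as $k$ is kernel to $f\op$, it is a monomorphism with $\I(k) = \K(f\op)$, so in particular $f\op k = 0$. Since $f\op(1_B - e) = 0$, the universal property of the kernel $k$ supplies a unique morphism $k\op\colon B \to \D(k)$ with $k k\op = 1_B - e$. The next step is to record the five relations
\[
f\op f = 1_A, \qquad f\op k = 0, \qquad k\op f = 0, \qquad k\op k = 1, \qquad f f\op + k k\op = 1_B .
\]
The first two are already in hand, the last is $e + (1_B - e) = 1_B$, and the middle two follow by post-composing with the monomorphism $k$ and cancelling, using $k(k\op f) = (1_B - e)f = 0$ and $k(k\op k) = (1_B - e)k = k - f(f\op k) = k$. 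These relations say precisely that
\[
\oplus\{f,k\}\colon \D(f)\oplus\D(k)\to B \qquad \text{and} \qquad \times\{f\op,k\op\}\colon B\to\D(f)\times\D(k)
\]
are two-sided inverses of one another: one composite equals $f f\op + k k\op = 1_B$, and the other sends $(x,y)$ to $\bigl(f\op(fx+ky),\,k\op(fx+ky)\bigr)=(x,y)$. Thus $\oplus\{f,k\}$ is an isomorphism, which is exactly the assertion that $\{f,k\}$ is a coproduct structure; its dual product structure is $\{f\op,k\op\}$.

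Part~(2) is the formal dual, and I would obtain it either by repeating the argument with arrows reversed or by remarking that it is part~(1) read in the opposite category. Explicitly: since $k\op$ is cokernel to $f$, Lemma~\ref{lem:coker} makes it an epimorphism with $\K(k\op) = \I(f)$, so $k\op f = 0$; because $(1_B - e)f = 0$, the universal property of the cokernel $k\op$ supplies a unique $k\colon \D\op(k\op)\to B$ with $k k\op = 1_B - e$; and the same five relations drop out after cancelling the epimorphism $k\op$ on the right in place of the monomorphism $k$ on the left. As before $\times\{f\op,k\op\}$ and $\oplus\{f,k\}$ are mutually inverse, so $\{f\op,k\op\}$ is a product structure.

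I do not anticipate a genuine obstacle: the mathematical content is the single computation $e^2 = e$ together with the universal properties packaged in Lemmas~\ref{lem:ker} and~\ref{lem:coker}. The only points that demand attention are the bookkeeping of domains and codomains under the superscript-$op$ convention --- in part~(1) the kernel produces a new map \emph{out of} $B$, whereas in part~(2) the cokernel produces a new map \emph{into} $B$ --- and the essentially definitional fact that the displayed algebraic relations characterise a (co)product structure in the sense of \S\ref{sec:biproducts}.
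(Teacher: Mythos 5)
Your proof is correct, and it is the standard biproduct-relations argument: set $e = ff\op$, observe $e^2 = e$, factor $1_B - e$ through the kernel to obtain $k\op$, and verify the five identities $f\op f = 1$, $f\op k = 0$, $k\op f = 0$, $k\op k = 1$, $ff\op + kk\op = 1_B$ that characterise $\{f,k\}$ and $\{f\op,k\op\}$ as a dual (co)product pair. The paper deliberately omits its own proof (``being neither difficult nor specially enlightening''), so there is no authorial argument to compare against; what you have written is exactly the argument the paper leaves to the reader. One small stylistic remark: the sentence verifying that $\times\{f\op,k\op\}\circ\oplus\{f,k\}$ is the identity is phrased in element notation, which is harmless for vector spaces but in a general preadditive category should be read as the equivalent observation that the four composites $\pi_i\circ\times\{f\op,k\op\}\circ\oplus\{f,k\}\circ\iota_j$ reproduce the Kronecker delta --- which your displayed relations already supply.
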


\begin{remark}  Since $\K(h) = \K(\phi h)$ for any isomorphism $\phi$, the condition $f\op f = 1$ may be replaced in Lemma \ref{lem:splitting} by the condition that $f\op f$ be invertible.
\end{remark}

\subsection{The Exchange Lemma}

If  the purpose of the splitting lemma is to understand duality, then the purpose of duality, for us, is to understand \emph{exchange}.   A pair of product structures $\lk$ and $\wk$  are related by an \emph{exchange on} $(\ak, \bk)$ if
\begin{align}
\ak = \wk - \lk  && and &&  \bk = \lk - \wk.  \label{eq:exchangenondegeneracy}
\end{align}
An ordered tuple  $(\ak, \bk)$ is a \emph{(nontrivial) exchange pair for} $\wk$ if   \eqref{eq:exchangenondegeneracy} holds for some disctinct product structure $\lk \neq \wk$.    Exchanges and exchange pairs  for coproduct structures are defined similarly.  

If  $\ak = \{a\}$ and $\bk = \{b\}$ are singletons then the pair $(\{a\},\{b\})$ and the operation $\wk \mapsto \lk$ are  said to be \emph{elementary}.   To avoid proliferation of parentheses, we will often write 
$$
\wk - \ak \cup \bk
$$
for the product structure $\lk = (\wk - \ak) \cup \bk$ related to $\wk$ by \eqref{eq:exchangenondegeneracy}.

%
%




Exchange operations are fundamental both to modern methods in matrix algebra and to matroid theory.  One of the simplest questions concerning this subject is the following: Given $\wk$,   when is $(\ak, \bk)$ an exchange pair?  This question reduces to determining when $(a,b)$ is an \emph{elementary} exchange pair, since, for example, if $\wk$ is a coproduct then $\wk - \ak \cup \bk$ is a coproduct iff 
$$
(\wk - \ak)  \cup \{\oplus \bk\}
$$
is a coproduct.   

The question of determining exchange pairs is therefore answered exhaustively by Lemma \ref{lem:exchange}, the \emph{Exchange Lemma}.  This result is mathematically equivalent to the splitting lemma, and retains much of its flavor.  Like the splitting lemma, it has several variations,  the Steinitz Exchange Lemma prominent among them.  This correspondence outlines the foundational overlap between homological algebra matroid theory.  As with the splitting lemma, the (not terribly difficult or instructive) proof is left as an exercise to the reader.

\begin{lemma}[Exchange Lemma] 
\label{lem:exchange}  
Let $\{f,g\}$ and $\{f\op, g\op\}$ be product and coproduct structures, respectively.  
\begin{enumerate}
\item An unordered pair $\{f, h\}$ is a product iff $g ^\sharp h$ is invertible.
\item An unordered pair $\{f\op, h\op \}$ is a coproduct iff $h\op \left ( g \op \right)^\flat$ is invertible.
\end{enumerate}
\end{lemma}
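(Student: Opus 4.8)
The plan is to deduce the Exchange Lemma directly from the Splitting Lemma (Lemma~\ref{lem:splitting}) by constructing, from a candidate replacement morphism, a one-sided inverse to an appropriate composite, and then reading off the (co)product structure. I will prove statement~(2) in full; statement~(1) is the formal dual, obtained by passing to the opposite category (or simply by reversing all arrows and swapping $\flat$ with $\sharp$), so I would only remark on that.

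\medskip
\noindent\textbf{Setup for (2).} Let $\{f\op,g\op\}$ be the given coproduct structure on some object $W$, so $f\op:\D(f\op)\to W$ and $g\op:\D(g\op)\to W$, with dual product structure $\{(f\op)^\sharp,(g\op)^\sharp\}$ satisfying $(f\op)^\sharp f\op = 1$, $(g\op)^\sharp g\op = 1$, $(f\op)^\sharp g\op = 0$, $(g\op)^\sharp f\op = 0$. Suppose $h\op:\D(h\op)\to W$ is a morphism. The claim is that $\{f\op,h\op\}$ is a coproduct structure if and only if the composite $h\op\,(g\op)^\flat$ --- wait, the statement reads $h\op\bigl(g\op\bigr)^\flat$, but since $g\op$ is already a map \emph{into} $W$, the musical symbol on the second factor should be read via the convention of \S\ref{sec:biproducts}: the only composite that typechecks is $(g\op)^\sharp h\op$, an endomorphism of $\D(h\op)$ built from $h\op$ followed by the projection-type map dual to $g\op$. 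I would first pin down this reading explicitly, matching it against statement~(1) where $g^\sharp h$ appears, so that the two cases are genuinely dual.

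\medskip
\noindent\textbf{The forward direction.} Assume $\{f\op,h\op\}$ is a coproduct structure, with dual product structure $\{(f\op)^\sharp,(h\op)^\sharp\}$ relative to \emph{this} decomposition (note: $(f\op)^\sharp$ here differs from the one for $\{f\op,g\op\}$ unless $\I(g\op)=\I(h\op)$). The idempotent $e^{\{f\op,g\op\}}_{g\op}$ is projection onto $\I(g\op)$ along $\I(f\op)$; the idempotent $e^{\{f\op,h\op\}}_{h\op}$ is projection onto $\I(h\op)$ along $\I(f\op)$. By Lemma~\ref{lem:complementaryprojection}(2), projection along $\I(f\op)$ restricts to an isomorphism between any two complements of $\I(f\op)$, in particular between $\I(h\op)$ and $\I(g\op)$; chasing this isomorphism through $h\op$ and the cokernel-type map associated to $g\op$ exhibits $(g\op)^\sharp h\op$ as a composite of invertibles, hence invertible.

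\medskip
\noindent\textbf{The reverse direction (the main obstacle).} Assume $(g\op)^\sharp h\op$ is invertible; call its inverse $\phi$. I want to produce a right inverse to some map out of $W$ whose kernel/cokernel identifies $h\op$, then quote the Splitting Lemma. Concretely, set $u\op := \phi\,(g\op)^\sharp : W \to \D(h\op)$; then $u\op h\op = \phi\,(g\op)^\sharp h\op = 1$, so $h\op$ is a split mono with retraction $u\op$. By the Splitting Lemma~\ref{lem:splitting}(1) (applied with $f := h\op$, $f\op := u\op$, so $f\op f = 1$), $\{h\op, k\op\}$ is a coproduct structure for \emph{any} $k\op$ that is kernel to $u\op$. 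It remains to check that $f\op$ itself is kernel to $u\op$ --- equivalently, by Lemma~\ref{lem:ker}, that $\I(f\op)=\K(u\op)$: the inclusion $\I(f\op)\subseteq\K(u\op)$ follows from $u\op f\op = \phi(g\op)^\sharp f\op = \phi\cdot 0 = 0$, and the reverse inclusion follows by a rank/dimension count using that $\{f\op,g\op\}$ is already a coproduct, hence $(g\op)^\sharp$ has kernel exactly $\I(f\op)$, and $u\op$ differs from $(g\op)^\sharp$ only by the isomorphism $\phi$. This is the step I expect to require the most care --- getting the musical-symbol bookkeeping and the ``kernel of $u\op$ equals image of $f\op$'' verification exactly right --- but it is the crux that lets the Splitting Lemma do the work. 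Statement~(1) then follows by the evident duality, which I would state without repeating the argument.
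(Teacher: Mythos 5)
Your proof is correct. The paper leaves the Exchange Lemma's proof as an exercise, calling it ``mathematically equivalent to the splitting lemma,'' and your reduction to Lemma~\ref{lem:splitting} via the retraction $u\op := \phi\,(g\op)^\sharp$ realizes exactly that equivalence. Two small points of bookkeeping. First, the phrase ``rank/dimension count'' for the inclusion $\K(u\op)\subseteq\I(f\op)$ is out of place in a general abelian category, but this is harmless because the clause you write next already is the categorical argument: precomposition with the isomorphism $\phi$ leaves the kernel unchanged, so $\K(u\op)=\K\bigl((g\op)^\sharp\bigr)=\I(f\op)$, the last equality holding because $f\op$ is kernel to $(g\op)^\sharp$ for the coproduct $\{f\op,g\op\}$. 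Second, your reading of the condition in statement~(2) as invertibility of $(g\op)^\sharp h\op$ is the right one --- it is the composite that is well-defined before $\{f\op,h\op\}$ is known to be a coproduct, and it matches how the lemma is deployed in \S\ref{subsec:exchange} and \S\ref{sec:lu}. As a minor simplification, the forward direction admits a one-line computation that bypasses Lemma~\ref{lem:complementaryprojection}: if $\{f\op,h\op\}$ is a coproduct then $h\op(h\op)^\sharp + f\op(f\op)^\sharp = 1$, hence $(g\op)^\sharp h\op\cdot(h\op)^\sharp g\op = (g\op)^\sharp g\op - (g\op)^\sharp f\op(f\op)^\sharp g\op = 1$, and symmetrically $(h\op)^\sharp g\op\cdot(g\op)^\sharp h\op = 1$; thus $(h\op)^\sharp g\op$ is an explicit two-sided inverse of $(g\op)^\sharp h\op$.
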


To build some familiarity with kernel maps, as well as the splitting and exchange lemmas, let us compute some examples.


\setcounter{rtaskno}{0}   

\subsubsection{Example \rtask{r1}: Kernels in $\R^2$}
\label{ex:rker}

 Let $\lk$ be the canoncial indexed coproduct structure on $\R^2$, and let 
$$
T: \R^2 \to \R^2
$$
be any endomorphism.  If $a_{ij} = \lk_i^\sharp T \lk_j$, then
$$
\begin{array}{c}
\\
\multirow{2}{*}{$T(\lk, \lk) \;\;\; \; = \; \;$ } \\
\\
\end{array}
\begin{array}{c|cc|}
\multicolumn{1}{c}{} &\;\; \lk_1 \;\;& \multicolumn{1}{c}{\;\;\lk_2\;\;}\\ \cline{2-3}
 \lk_1^\sharp &a_{11} & a_{12}  \\ 
\lk_2^\sharp& a_{21} & a_{22} \\\cline{2-3}
\end{array} \begin{array}{c}
\\
\multirow{2}{*}{.} \\
\\
\end{array}
 \hspace{.1cm}
\vspace{.5cm}
$$

Suppose that $a_{11} \neq 0$, and let $k$ be the map $\R \to \R^2$ such that
$$
\begin{array}{c}
\\
\multirow{2}{*}{$1(\lk, k) \;\;\; \; = \; \;$ } \\
\\
\end{array}
\begin{array}{c|c|}
\multicolumn{1}{c}{} & \multicolumn{1}{c}{\;\;k\;\;}\\ \cline{2-2}
 \lk_1^\sharp &-a_{11}\inv a_{12}   \\ 
\lk_2^\sharp& 1\\\cline{2-2}
\end{array} 
\begin{array}{c}
\\
\multirow{2}{*}{.} \\
\\
\end{array}
 \hspace{.1cm}
\vspace{.5cm}
$$
Since
$$
\begin{array}{c}
\\
\multirow{2}{*}{$T(\lk, \lk) 1(\lk, k) \;\;\; \; = \; \;$ } \\
\\
\end{array}
\begin{array}{c|c|}
\multicolumn{1}{c}{} & \multicolumn{1}{c}{\;\;k\;\;}\\ \cline{2-2}
 \lk_1^\sharp &0   \\ 
\lk_2^\sharp& a_{22} - a_{21}a_{11}\inv a_{12}\\\cline{2-2}
\end{array} \hspace{.5cm}
\vspace{.5cm}
$$
one has, in particular, that
$$
\lk_1^\sharp T k = 0.
$$
As the kernel of $\lk_1^\sharp T$ has dimension 1, it follows that
$$
\I(k) = \K(\lk_1^\sharp T).
$$
Since, in addition, $k$ is a monomorphism, we have that $k$ is  kernel to $\lk_1 ^\sharp T$.

\subsubsection{Example \rtask{r2}: Preadditive kernels}
\label{ex:gker}

The construction described in Example \ref{r1} is in fact very general.  Let $W$ and $W\op$ be any two objects with coproduct structures  $\lk = \{\lk_1, \lk_2\}$ and $\lk\op =(\lk_1\op, \lk_2\op)$, respectively, and fix
$$
T: W \to W\op.
$$
For convenience, let $a_{ij} = \lk_i \op T \lk_j$.

\begin{proposition}  \label{prop:kernelformula}  If $a_{11}$ is invertible, then in any preadditive category
$$
k = \lk_2 -     \lk_1 a_{11} \inv a_{12} 
$$
is kernel to $\lk_1\op T$.
\end{proposition}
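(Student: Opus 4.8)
The plan is to use the characterization of kernels supplied by Lemma~\ref{lem:ker}: a morphism $k$ is kernel to $\lk_1\op T$ iff $k$ is a monomorphism and $\I(k) = \K(\lk_1\op T)$. So there are exactly two things to establish: that $k$ is monic, and that its image coincides with the kernel of $\lk_1\op T$. Since the category is only preadditive (not abelian), I cannot argue by dimension counting as in Example~\ref{r1}; instead I would produce an explicit left inverse for $k$ and an explicit factorization witnessing the two inclusions of the image/kernel identity, which is the source of ``iff'' in Lemma~\ref{lem:ker}.

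First I would show $k$ is a monomorphism. Writing $k = \lk_2 - \lk_1 a_{11}\inv a_{12}$, I observe that $\lk_2^\sharp k = \lk_2^\sharp \lk_2 - \lk_2^\sharp \lk_1 a_{11}\inv a_{12} = 1 - 0 = 1$, using $\lk_i^\sharp \lk_j = \dk_{ij}$ from the dual (co)product structure. A morphism with a left inverse is split monic, hence monic; this disposes of the first condition with no appeal to exactness.

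Next, the key identity $\I(k) = \K(\lk_1\op T)$. For $\I(k) \su \K(\lk_1\op T)$, I would compute $\lk_1\op T k$ directly: by bilinearity and the definition $a_{ij} = \lk_i\op T \lk_j$,
\[
\lk_1\op T k = \lk_1\op T \lk_2 - \lk_1\op T \lk_1 a_{11}\inv a_{12} = a_{12} - a_{11} a_{11}\inv a_{12} = 0,
\]
so $\lk_1\op T$ kills $k$, giving one inclusion. For the reverse inclusion I would use the coproduct structure $\lk$ on $W$ to write any morphism $g$ into $W$ with $\lk_1\op T g = 0$ as $g = \lk_1 \lk_1^\sharp g + \lk_2 \lk_2^\sharp g$ (this is equation~\eqref{eq:complementaryidempotents} applied to $g$). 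Set $g_1 = \lk_1^\sharp g$ and $g_2 = \lk_2^\sharp g$. Then $0 = \lk_1\op T g = a_{11} g_1 + a_{12} g_2$, so $g_1 = -a_{11}\inv a_{12} g_2$, whence $g = \lk_1(-a_{11}\inv a_{12} g_2) + \lk_2 g_2 = (\lk_2 - \lk_1 a_{11}\inv a_{12}) g_2 = k g_2$. Thus every such $g$ factors through $k$, which (together with the already-proven monomorphism property) exactly says $k$ satisfies the defining property of a kernel in the preadditive setting. In particular $\K(\lk_1\op T) = \I(k)$, and Lemma~\ref{lem:ker} applies.

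The only subtlety — the ``main obstacle'' — is being careful not to smuggle in abelian-category reasoning: in the preadditive world I should phrase the reverse inclusion as ``every $g$ with $\lk_1\op T g = 0$ factors uniquely through $k$'' rather than as an equality of subobjects proven by rank, and uniqueness of the factor follows immediately from $k$ being monic. Once the factorization $g = k g_2$ is in hand, uniqueness is automatic, so the verification of the kernel universal property is complete. Everything else is the routine $\dk_{ij}$ bookkeeping for the biproduct structure $\lk$, which I would not belabor.
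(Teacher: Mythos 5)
Your proposal is correct and takes essentially the same approach as the paper: you verify $\lk_1\op T k = 0$ by direct computation, use $1 = e_{\lk_1} + e_{\lk_2}$ to factor any $g$ annihilated by $\lk_1\op T$ through $k$, and observe $\lk_2^\sharp k = 1$ to get monicity (hence uniqueness of the factorization). The one cosmetic difference is that you open by appealing to Lemma~\ref{lem:ker} before pivoting to the universal-property verification; the paper skips that detour and checks the universal property directly, which is also the cleaner move in a bare preadditive category where ``$\I(k) = \K(f)$'' is not an available formulation.
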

\begin{proof}
The matrix representation of $T$ with respect to $\lk\op$ and $\lk$ is 
$$
\begin{array}{c}
\\
\multirow{2}{*}{$T(\lk\op, \lk) \;\;\; \; = \; \;$ } \\
\\
\end{array}
\begin{array}{c|cc|}
\multicolumn{1}{c}{} &\;\; \lk_1 \;\;& \multicolumn{1}{c}{\;\;\lk_2\;\;}\\ \cline{2-3}
 \lk_1\op &a_{11} & a_{12}  \\ 
\lk_2\op& a_{21} & a_{22} \\\cline{2-3}
\end{array} \hspace{.5cm}
\vspace{.7cm}
$$
by definition, and it is  simple to check that
$$
\begin{array}{c}
\\
\multirow{2}{*}{$1(\lk, k) \;\;\; \; = \; \;$ } \\
\\
\end{array}
\begin{array}{c|c|}
\multicolumn{1}{c}{} & \multicolumn{1}{c}{\;\;k\;\;}\\ \cline{2-2}
 \lk_1^\sharp &-a_{11}\inv a_{12}   \\ 
\lk_2^\sharp& 1\\\cline{2-2}
\end{array} 
\begin{array}{c}
\\
\multirow{2}{*}{.} \\
\\
\end{array}
 \hspace{.1cm}
\vspace{.5cm}
$$

As in Example \ref{r1}, we have
$$
\begin{array}{c}
\\
\multirow{2}{*}{$T(\lk\op, \lk) 1(\lk, k) \;\;\; \; = \; \;$ } \\
\\
\end{array}
\begin{array}{c|c|}
\multicolumn{1}{c}{} & \multicolumn{1}{c}{\;\;k\;\;}\\ \cline{2-2}
 \lk_1^\sharp &0   \\ 
\lk_2^\sharp& a_{22} - a_{21}a_{11}\inv a_{12}\\\cline{2-2}
\end{array} 
\begin{array}{c}
\\
\multirow{2}{*}{.} \\
\\
\end{array}
 \hspace{.1cm}
\vspace{.5cm}
$$
In particular, $\lk\op T k = 0$.  In the category finite-dimensional  $\field$-linear spaces and maps between them,  one can confirm that $k$ is  a kernel to $\lk\op T$ by checking
$$
\I(k) = \K(\lk_1\op T),
$$
via dimension counting.  To establish the result for  arbitrary preadditive categories, however, we will check the definition directly.

Recall that $e_f = f^\flat f^\sharp$ is the idempotent projection operator generated by a morphism $f$ in a (co)product structure $\wk$, and that $\sum_{f \in \wk} e_f = 1$.  If $h$ is any morphism such that  $0 = \lk_1\op T h$, then
\begin{align*}
0 =  \lk_1\op T (e_{\lk_1} + e_{\lk_2} ) h  = a_{11} \lk_1^\sharp h + a_{12} \lk_2^\sharp h,
\end{align*}
whence
$$
\lk_1^\sharp h = -a_{11}\inv a_{12} \lk_2^\sharp h,
$$
and therefore
$$
h = (\lk_2 - \lk_1 a_{11}\inv a_{12} ) \lk_2^\sharp h .
$$
To wit, the diagram

\[
\xymatrix@R=2.5pc@C=5.5pc@M=.7pc{
\ar[];[dr]_{k}   \ar@{.>}[rr];[]_{\lk_2^\sharp h}  \ar[rr];[dr]^h
\bullet && \bullet  \\
& \bullet
}
\]

commutes.  Therefore $h$ factors through $k$ when  $\lk_1\op Th = 0$.  Since $\lk_2^\sharp k = 1$, we have that $k$ is a monomorphism.  Thus the factorization is unique. The desired conclusion follows.
\end{proof}

\subsubsection{Example \rtask{r3}: Cokernels in $\R^2$}
\label{sec:cokerinr2}

As in Example \ref{r1}, let $\lk$ be the canonical unindexed coproduct structure on $\R^2$, and fix
$$
T: \R^2 \to \R^2
$$
If $a_{ij} = \lk_i^\sharp T \lk_j$, then
$$
\begin{array}{c}
\\
\multirow{2}{*}{$T(\lk, \lk) \;\;\; \; = \; \;$ } \\
\\
\end{array}
\begin{array}{c|cc|}
\multicolumn{1}{c}{} &\;\; \lk_1 \;\;& \multicolumn{1}{c}{\;\;\lk_2\;\;}\\ \cline{2-3}
 \lk_1^\sharp &a_{11} & a_{12}  \\ 
\lk_2^\sharp& a_{21} & a_{22} \\\cline{2-3}
\end{array} \begin{array}{c}
\\
\multirow{2}{*}{.} \\
\\
\end{array}
 \hspace{.1cm}
\vspace{.5cm}
$$

Suppose that $a_{11} \neq 0$, and let $k\op$ be the map $\R \to \R^2$ such that
$$
\begin{array}{c}
\\
\multirow{2}{*}{$1(k\op,\lk) \;\;\; \; = \; \;$ } \\
\\
\end{array}
\begin{array}{c|cc|}
\multicolumn{1}{c}{} & \lk_1& \multicolumn{1}{c}{\;\;\lk_2\;\;}\\ \cline{2-3}
k\op& -a_{21} a_{11}\inv & 1\\  \cline{2-3}
\end{array} 
\begin{array}{c}
\\
\multirow{2}{*}{.} \\
\\
\end{array}
 \hspace{.1cm}
\vspace{.5cm}
$$
Since
$$
\begin{array}{c}
\\
\multirow{2}{*}{$1(k\op,\lk)T(\lk,\lk) \;\;\; \; = \; \;$ } \\
\\
\end{array}
\begin{array}{c|cc|}
\multicolumn{1}{c}{} & \lk_1& \multicolumn{1}{c}{\;\;\lk_2\;\;}\\ \cline{2-3}
k\op& 0 & a_{22}-a_{21} a_{11}\inv a_{12}\\  \cline{2-3}
\end{array} 
\begin{array}{c}
\\
\multirow{2}{*}{.} \\
\\
\end{array}
 \hspace{.1cm}
\vspace{.5cm}
$$
one has, in particular, that $k\op T \lk_1 =0$.  As the image of $T\lk_1$ has dimension 1, it follows that
$$
\I(T \lk_1)  = \K(k\op).
$$
Since, in addition, $k$ is an epimorphism, we have that $k\op$ is a cokernel to $T \lk_1$.

\subsubsection{Example \rtask{r4}: Preadditive cokernels}
\label{ex:gcoker}

Like the kernel in Example \ref{r1},  the cokernel constructed in Example \ref{r3}  has a natural analog for arbitrary two-element corpoducts.  Let $W$ and $W\op$ be any two objects with coproduct structures  $\lk = \{\lk_1, \lk_2\}$ and $\lk\op =(\lk_1\op, \lk_2\op)$, respectively, and fix
$$
T: W \to W\op.
$$
As before, set $a_{ij} = \lk_i \op T \lk_j$.

The proof of Proposition \ref{prop:cokernelformula} entirely dual to that of Proposition \ref{prop:kernelformula}.  The details are left as an exercise to the reader.

\begin{proposition} \label{prop:cokernelformula}   If $a_{11}$ is invertible, then in any preadditive category
$$
k\op = \lk_2\op -     a_{11} \inv a_{21} \lk_1\op
$$
is cokernel to $T\lk_1$.
\end{proposition}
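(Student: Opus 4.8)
The plan is to reproduce, \emph{mutatis mutandis}, the direct verification in the proof of Proposition~\ref{prop:kernelformula}, interchanging the domain and codomain of $T$ and replacing ``factors through $k$ on the left'' by ``factors through $k\op$ on the right.'' I read the asserted morphism as the map out of $W\op$ given by $k\op = (\lk_2\op)^\sharp - a_{21}a_{11}\inv(\lk_1\op)^\sharp$, with $a_{21}a_{11}\inv$ the composite $\D(\lk_1\op)\to\D(\lk_2\op)$ (i.e.\ $a_{11}\inv$ followed by $a_{21}$, the only order whose sources and targets agree). By the definition of cokernel — and, as motivation, Lemma~\ref{lem:coker} — it suffices to show that for every $g\op$ with domain $W\op$ the conditions $g\op\,T\lk_1 = 0$ and ``$g\op = h\op k\op$ for a unique $h\op$'' are equivalent.

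First I would record the matrix $T(\lk\op,\lk)$, so that $(\lk_i\op)^\sharp T\lk_j = a_{ij}$, and then compute
$$
k\op\,T\lk_1 = (\lk_2\op)^\sharp T\lk_1 - a_{21}a_{11}\inv(\lk_1\op)^\sharp T\lk_1 = a_{21} - a_{21}a_{11}\inv a_{11} = 0 .
$$
This is the exact dual of the identity $\lk_1\op Tk = 0$ in the proof of Proposition~\ref{prop:kernelformula}, and it supplies the easy implication: if $g\op = h\op k\op$ then $g\op\,T\lk_1 = h\op(k\op\,T\lk_1) = 0$.

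For the reverse implication the one load-bearing move is to insert the idempotent partition of unity \emph{on the codomain} $W\op$ — namely $\lk_1\op(\lk_1\op)^\sharp + \lk_2\op(\lk_2\op)^\sharp = 1$, which is \eqref{eq:complementaryidempotents} for the coproduct $\lk\op$ — rather than on the domain $W$ as in the kernel case. Given $g\op\,T\lk_1 = 0$, inserting this between $g\op$ and $T\lk_1$ yields $g\op\lk_1\op\,a_{11} + g\op\lk_2\op\,a_{21} = 0$, hence $g\op\lk_1\op = -\,g\op\lk_2\op\,a_{21}a_{11}\inv$; substituting into $g\op = g\op\lk_1\op(\lk_1\op)^\sharp + g\op\lk_2\op(\lk_2\op)^\sharp$ collapses the right side to $g\op\lk_2\op\bigl((\lk_2\op)^\sharp - a_{21}a_{11}\inv(\lk_1\op)^\sharp\bigr) = g\op\lk_2\op\,k\op$, so $h\op := g\op\lk_2\op$ is a right factor of $g\op$ through $k\op$. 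Uniqueness is dual to the step ``$\lk_2^\sharp k = 1$, so $k$ is a monomorphism'': from $(\lk_i\op)^\sharp\lk_j\op = \dk_{ij}$ one gets $k\op\lk_2\op = (\lk_2\op)^\sharp\lk_2\op - a_{21}a_{11}\inv(\lk_1\op)^\sharp\lk_2\op = 1$, so $k\op$ is a split epimorphism, hence right-cancellable, and the factorization is unique. Together with the previous paragraph this establishes the cokernel property.

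I do not anticipate a genuine obstacle: the argument is formally dual to one already in hand. The only points needing care are bookkeeping ones — inserting the partition of unity on $W\op$ rather than $W$, treating $k\op$ consistently as a morphism \emph{out of} $W\op$ so that the symbol-dropping conventions for composing with $T$ apply as intended, and composing $a_{21}a_{11}\inv$ in the order dictated by its source and target. In the category of $\field$-linear spaces one could instead appeal to Lemma~\ref{lem:coker} directly, checking that $k\op$ is onto (it has the section $\lk_2\op$) and that $\K(k\op) = \I(T\lk_1)$ by a dimension count using the invertibility of $a_{11}$ (so that $T\lk_1$ is injective and $\dim\I(T\lk_1) = \dim\D(\lk_1) = \dim\D(\lk_1\op) = \dim\K(k\op)$), and then observe, as elsewhere in this chapter, that the preadditive statement follows from the direct verification above.
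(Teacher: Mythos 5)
Your proof is correct and carries out exactly what the paper leaves to the reader (the paper merely declares Proposition~\ref{prop:cokernelformula} ``entirely dual'' to Proposition~\ref{prop:kernelformula} and defers the details as an exercise): you insert the partition of unity $\lk_1\op(\lk_1\op)^\sharp + \lk_2\op(\lk_2\op)^\sharp = 1$ on the codomain $W\op$ rather than the domain $W$, and use the section $\lk_2\op$ to get $k\op\lk_2\op = 1$, hence right-cancellability. Your reordering of the composite to $a_{21}a_{11}\inv$ is also the right call: as printed, $a_{11}\inv a_{21}$ does not typecheck in a general preadditive category, whereas the concrete $\R^2$ computation in Example~\ref{r3} and the Schur-complement formula $\sk = a_{22} - a_{21}a_{11}\inv a_{12}$ both confirm that $a_{21}a_{11}\inv$ is the intended expression.
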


\subsection{Idempotents}

The maps $k$ and $k\op$ described in the preceding sections bear a simple description in terms of idempotents.  For a first example, take morphism $k$.  By design, the image of $k$ is the kernel of $\lk\op_1 T$.  There is a canonical projection operator that acts as identity on this subspace, and that vanishes on the image of $\lk_1$.  We will show (Proposition \ref{prop:idemker}) that  $k$ is merely the composition of $\lk_2$ with this projection.  

Dually, $k\op$ is a morphism that vanishes on the image of $T \lk_1$.  There is canonical projection  that vanishes on this subspace, and that acts as identity on the kernel of $\lk_1\op$.  We claim that $k\op$ is merely the composition of $\lk_2\op$ with this operator.   Proposition \ref{prop:idemker} formulates these assertions symbolically. 


\begin{proposition} \label{prop:idemker}  Let  $k$ and $k\op$ be the kernel and cokernel maps constructed in Examples \ref{r2} and \ref{r4}.  Then
\begin{align*}
k = e^{\I(\lk_1)}_{\K(\lk_1\op T)} \lk_2 && and && k\op = \lk_2\op e_{\K(\lk_1\op)}^{\I(T \lk_1)} .
\end{align*}
\end{proposition}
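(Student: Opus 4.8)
The plan is to verify each identity by checking that both sides agree after composition with the components of the coproduct structure $\lk = \{\lk_1, \lk_2\}$ (for the first identity) and with the components of its dual product structure (for the second). Recall from Example~\ref{r2} that $k = \lk_2 - \lk_1 a_{11}\inv a_{12}$, and from Example~\ref{r4} that $k\op = \lk_2\op - a_{11}\inv a_{21}\lk_1\op$. Write $e = e^{\I(\lk_1)}_{\K(\lk_1\op T)}$ for the idempotent with kernel $\I(\lk_1)$ and image $\K(\lk_1\op T)$; since $\I(k) = \K(\lk_1\op T)$ was established in Example~\ref{r2}, this $e$ is well-defined precisely when $\I(\lk_1)$ and $\I(k)$ are complementary, which the Splitting Lemma (Lemma~\ref{lem:splitting}) guarantees: one checks $\lk_1\op T k = 0$ implies, together with $\lk_2^\sharp k = 1$, that $\{\lk_1, k\}$ is a coproduct structure, so $e = e^\wk_{k}$ for $\wk = \{\lk_1, k\}$, i.e. $e = k \cdot (\text{the dual projection onto } k)$.

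For the first identity, I would compute $e\lk_2$ by expanding through the decomposition $1 = e_{\lk_1} + e_{k}$ associated to the coproduct structure $\wk = \{\lk_1, k\}$, where $e_{\lk_1} = \lk_1 (\lk_1)^{\sharp_\wk}$ and $e_k = k\, k^{\sharp_\wk}$ with $\sharp_\wk$ the dual structure to $\wk$. Since $e = e_k$ annihilates $\I(\lk_1)$ and restricts to the identity on $\I(k)$, one has $e\lk_2 = k\,k^{\sharp_\wk}\lk_2$, so it suffices to show $k^{\sharp_\wk}\lk_2 = 1$. From $1(\lk, k)$ computed in Example~\ref{r2} — namely $\lk_1^\sharp k = -a_{11}\inv a_{12}$ and $\lk_2^\sharp k = 1$ — and the defining relation $f\, g^{\sharp_\wk} = \dk_{fg}$ for the biproduct $\wk$, a short computation in the $2\times 2$ change-of-basis matrix between $\lk$ and $\wk$ yields $k^{\sharp_\wk}\lk_2 = 1$. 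Alternatively, and more cleanly, since $\{\lk_1, k\}$ is a coproduct structure and $\lk_2 = e_{\lk_1}\lk_2 + e_k \lk_2$ with $e_{\lk_1}\lk_2 = \lk_1 \lk_1^{\sharp_\wk}\lk_2$, one identifies $k^{\sharp_\wk}\lk_2$ directly as the second coordinate of $\lk_2$ in the basis $\{\lk_1, k\}$, which is $1$ because $\lk_2 = k + \lk_1 a_{11}\inv a_{12}$. Hence $e\lk_2 = k \cdot 1 = k$, as desired. The second identity follows by the dual argument: using Lemma~\ref{lem:splitting}(2) and the cokernel relation $k\op T\lk_1 = 0$ from Example~\ref{r4}, $\{\lk_1\op, k\op\}$ is a product structure on $W\op$; writing $e\op = e^{\I(T\lk_1)}_{\K(\lk_1\op)}$ one has $\lk_2\op e\op = \lk_2\op (k\op)^{\flat_{\wk\op}} k\op$, and the relation $\lk_2\op = k\op + a_{11}\inv a_{21}\lk_1\op$ gives $\lk_2\op(k\op)^{\flat} = 1$, so $\lk_2\op e\op = k\op$.

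The main obstacle is bookkeeping: one must be careful that the idempotent $e$ appearing in the statement is genuinely $e_k$ for the coproduct structure $\{\lk_1, k\}$ — this requires invoking the Splitting Lemma to know $\{\lk_1, k\}$ is a coproduct structure in the first place (not merely that $\I(\lk_1)$ and $\K(\lk_1\op T)$ are complementary subobjects in a set-theoretic sense, which would only make sense in $\field$-linear categories). Once that identification is made, everything reduces to reading off entries of the $2\times 2$ transition matrices already computed in Examples~\ref{r2} and~\ref{r4}, together with the elementary fact $\sum_{f\in\wk} e_f = 1$ and the duality relations defining $\flat$ and $\sharp$. There is no deep content; the value of the statement is conceptual, recasting the explicit kernel and cokernel formulas as honest projections, and the proof should be short once the biproduct structures $\{\lk_1, k\}$ and $\{\lk_1\op, k\op\}$ are named.
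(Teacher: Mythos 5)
Your proposal is correct and follows essentially the same line as the paper: identify $e^{\I(\lk_1)}_{\K(\lk_1\op T)}$ with $k k^{\sharp\sharp}$ via the Splitting Lemma, and then reduce to the observation that $k^{\sharp\sharp}\lk_2 = 1$. The paper obtains this last fact by inverting the change-of-basis array $1(\lk,\{\lk_1,k\})$, whereas you read it off directly from the rearranged identity $\lk_2 = k + \lk_1 a_{11}\inv a_{12}$; these are the same computation.
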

\begin{proof}
We will argue the first identity;  the proof of the second is essentially dual.  From Example \ref{r2} we have that 
$$
\begin{array}{c}
\\
\multirow{2}{*}{$1(\lk, \{\lk_1, k\}) \;\;\; \; = \; \;$ } \\
\\
\end{array}
\begin{array}{c|cc|}
\multicolumn{1}{c}{} &\;\; \lk_1 \;\;& \multicolumn{1}{c}{\;\;k\;\;}\\ \cline{2-3}
 \lk_1^\sharp &1 & -a_{11}\inv a_{12}  \\ 
\lk_2^\sharp& 0 & 1 \\\cline{2-3}
\end{array} 
\begin{array}{c}
\\
\multirow{2}{*}{.} \\
\\
\end{array}
\hspace{.5cm}
\vspace{.7cm}
$$
Since $1(\lk, \{\lk_1, k\}) 1(\{\lk_1, k\},\lk) = 1(\lk, \lk)$, it follows that
$$
\begin{array}{c}
\\
\multirow{2}{*}{$1( \{\lk_1, k\}, \lk) \;\;\; \; = \; \;$ } \\
\\
\end{array}
\begin{array}{c|cc|}
\multicolumn{1}{c}{} &\;\; \lk_1 \;\;& \multicolumn{1}{c}{\;\;\lk_2\;\;}\\ \cline{2-3}
 \lk_1^{\sharp\sharp} &1 & a_{11}\inv a_{12}  \\ 
k^{\sharp\sharp}& 0 & 1 \\\cline{2-3}
\end{array} 
\begin{array}{c}
\\
\multirow{2}{*}{} \\
\\
\end{array}
\hspace{.6cm}
\vspace{.7cm}
$$
where $\sharp\sharp$ is the sharp operator on $\{\lk_1, k\}$, as distinguished from that of $\lk$.  
Thus
$$
\begin{array}{c}
\\
\multirow{2}{*}{$(kk^{\sharp \sharp})(\lk, \lk) \;\;\; \; = \; \;$ } \\
\\
\end{array}
\begin{array}{c|cc|}
\multicolumn{1}{c}{} &\;\; \lk_1 \;\;& \multicolumn{1}{c}{\;\;\lk_2\;\;}\\ \cline{2-3}
 \lk_1^{\sharp} &0 & a_{11}\inv a_{12}  \\ 
\lk_2^{\sharp}& 0 & 1 \\\cline{2-3}
\end{array} 
\begin{array}{c}
\\
\multirow{2}{*}{.} \\
\\
\end{array}
\hspace{.5cm}
\vspace{.7cm}
$$
In particular we have 
$$
(k k^{\sharp \sharp}) \lk_2 = \lk_2 - \lk_1 a_{11}\inv a_{12} = k.
$$
Since $kk^{\sharp \sharp}  = e^{ \{\lk_1, k\}}_k = e^{\I(\lk_1)}_{\K(\lk_1\op T)}$, the desired conclusion follows.
\end{proof}

\subsection{Exchange}
\label{subsec:exchange}

Let us consider the dual structures $\lk^\sharp$ and  $\left (\lk\op \right ) ^\flat$.   Since by hypothesis
$$
(\lk_1\op T) \lk_1 = a_{11} = \lk_1\op ( T \lk_1)
$$
is an isomorphism, the exchange lemma provides that
\begin{align*}
\uk = (\lk_1\op T, \lk_2^\sharp) && and && \uk\op = (T \lk_1,\lk_2^{op \; \flat})
\end{align*}
are ordered product and coproduct structures, respectively.   Since $k$ was designed as a kernel to $\lk_1\op T$ and $k\op$ was designed as a kernel to $T \lk_1$, we naturally have
\begin{align*}
\uk_1 k = 0 && and &&  k\op \uk\op_1 = 0.
\end{align*}
Since moreover
\begin{align*}
\uk_2 k  = \lk_2^\sharp \left( \lk_2\ -     \lk_1 a_{11} \inv a_{12} \right)   = 1  &&
k\op \uk_2\op  = ( \lk_2\op -     a_{11} \inv a_{21} \lk_1\op)    \lk_2^{op \;\flat} = 1
\end{align*}
if follows that 
\begin{align*}
k = \uk_2^\flat && and &&k\op = \uk_2^{op \; \sharp}.
\end{align*}
To wit, $k$ and $k\op$ are  \emph{dual morphisms} in the biproduct structures generated by
\begin{align*}
\lk^\sharp - \{\lk_1^\sharp\} \cup \{\lk_1 \op T\} && and &&
 \lk^{op \; \flat} - \{\lk_1^{op \; \flat} \} \cup \{T \lk_1\},
\end{align*}
respectively.

%
%
%
%
%

\section{The Schur Complement}

Let $A$ be an array with block structure
$$
A = 
\left( 
\begin{array}{cc}
a_{11} & a_{12}\\
a_{21} & a_{22}
\end{array}
\right).
$$
We allow  entries $a_{ij}$ to be either block submatrices with coefficients in a ground field $\field$, or maps of form $\lk_i T \uk_j$, where $\lk$ and $\uk$ are (co)product structures on the (co)domain of a morphism $T$.   

If $a_{11}$ is invertible, then the \emph{column clearing operation} on $a_{11}$ is the operation $A \mapsto  AU$, where
$$
U = 
\left( 
\begin{array}{cc}
1 & -a_{11}\inv a_{12}\\
0 & 1
\end{array}
\right).
$$
Similarly, a \emph{column pivot} on $a_{11}$ is an operation of form $A \mapsto A \tilde U$, where
$$
\tilde U = 
\left( 
\begin{array}{cc}
a_{11}\inv & -a_{11}\inv a_{12}\\
0 & 1
\end{array}
\right).
$$
Dually, the \emph{row clearing operation} on $a_{11}$ is the operation  $A \mapsto L A$, where
$$
L = 
\left( 
\begin{array}{cc}
1 & 0\\
-a_{21}a_{11}\inv & 1
\end{array}
\right)
$$
and the \emph{row pivot} is an operation of form $A \mapsto \tilde L A$, where
$$
\tilde L = 
\left( 
\begin{array}{cc}
a_{11}\inv & 0\\
-a_{21}a_{11}\inv & 1
\end{array}
\right).
$$
We group row pivots and row clearing operations under the common heading of a \emph{row operation}.  Likewise, we refer to both column clearing operations and column pivots as \emph{column operations}.

The name ``clearing operation'' finds motivation in the following  matrix identities, where  $*$ serves as a placeholder for  arbitrary block submatrices.
\begin{align*}
AU = 
\left( 
\begin{array}{cc}
a_{11} & 0\\
a_{21} & a_{22} - a_{21}a_{11}\inv a_{12}
\end{array}
\right) 
			\hspace{.5cm}
A \tilde U = 
\left( 
\begin{array}{cc}
1 & 0\\
* & a_{22} - a_{21}a_{11}\inv a_{12}
\end{array}
\right)  
\\
\\
LA = 
\left( 
\begin{array}{cc}
a_{11} & a_{12}\\
0 & a_{22} - a_{21}a_{11}\inv a_{12}
\end{array}
\right) 
			\hspace{.5cm}
\tilde L A   = 
\left( 
\begin{array}{cc}
1 & *\\
0 & a_{22} - a_{21}a_{11}\inv a_{12}
\end{array}
\right) 
\end{align*}

It is clear that each row (respectively, column) operation on $a_{11}$ produces an array with an invertible block in position $(1,1)$.  Thus any finite sequence of row and column operations may be performed on the upper-lefthand block of $A$.

If this sequence includes at least one row, one column, and one pivot operation, then the resulting array will have form 
\[
\left( 
\begin{array}{cc}
1 & 0\\
0 & a_{22} - a_{21}a_{11}\inv a_{12}
\end{array}
\right) .
\]
If the sequence includes at least one row and one column operation but no pivots, then the resulting array will have form
\[
\left( 
\begin{array}{cc}
a_{11} & 0\\
0 & a_{22} - a_{21}a_{11}\inv a_{12}
\end{array}
\right) .
\]

The  submatrix that appears in all six of the preceding arrays, denoted
$$
\sk = a_{22} - a_{21} a_{11}\inv a_{12}
$$
has a special name in matrix algebra: the \emph{Schur complement}.

The Schur complement is an object of fundamental  importance in mathematics.  It is basic to modern algebra, analysis, geometry, probability, graph theory, combinatorics, and optimization, and plays a commensurate role in most branches of of the sciences and engineering.  
Specific subfields  touched by the Schur complement include functional analysis (via Fredholm operators), conditional independence, matroid theory, geometric bundle theory, convex duality, and semidefinite programing.  For historical details see the excellent introductory text \emph{The Schur Complement and its Applications}, by Zhang \  \cite{zhang2006schur}. Of particular interest to our story, the Schur complement encapsulates LU factorization, hence to a tremendous body of work in computational linear algebra.  

We have  seen that the Schur complement appears  as the result of row and column operations.  Let us give several new characterizations.  As in Examples \ref{r2} and \ref{r4} of \S\ref{sec:kernels}, let 
\begin{align*}
\lk = \{\lk_1, \lk_2\} &&and &&\lk\op =(\lk_1\op, \lk_2\op),
\end{align*}
be coproduct structures on objects $W$ and $W^{op}$, respectively, and fix
$$
T: W \to W\op.
$$
Let $k$ be the kernel to $\lk_1\op T$ obtained by composing $\lk_2$ with projection onto the nullspace along the image of $\lk_1$.  In symbols,
$$
k = e^{\I(\lk_1)}_{\K(\lk_1\op T)} \lk_2.
$$
Dually, let $k\op$ be the cokernel to $T \lk_1$ obtained by precomposing $\lk_2\op$ with projection onto the nullspace of $\lk_1\op$ along the image.  In symbols,
$$
k\op = \lk_2\op e_{\K(\lk_1\op)}^{\I(T \lk_1)} 
$$
We have seen (Propositions  \ref{prop:kernelformula}, \ref{prop:cokernelformula}, and \ref{prop:idemker}) that
\begin{align}
k = \lk_2 - \lk_1 a_{12} a_{11}\inv  && and && k\op  = \lk_2\op - \lk_1\op a_{11}\inv a_{21}.   \label{eq:kerform}
\end{align}
We have also noted that 
\begin{align*}
\begin{array}{c}
\\
\multirow{2}{*}{$1(\{\lk_1\op,k\op\},\lk) \; \; =  $} \\
\\
\end{array} &
\begin{array}{c|cc|}
\multicolumn{1}{c}{} & \lk_1& \multicolumn{1}{c}{\;\;\lk_2\;\;}\\ \cline{2-3}
\lk_1\op & 1 & 0 \\
k\op& -a_{21} a_{11}\inv & 1\\  \cline{2-3}
\end{array} 
\begin{array}{c}
\\
\multirow{2}{*}{ } \\
\\
\end{array}
\\
\end{align*}
and
\begin{align*}
\\
\begin{array}{c}
\\
\multirow{2}{*}{$1(\lk, \{\lk_1, k\}) \;\; = $ } \\
\\
\end{array}&
\begin{array}{c|cc|}
\multicolumn{1}{c}{} &\;\; \lk_1 \;\;& \multicolumn{1}{c}{\;\;k\;\;}\\ \cline{2-3}
 \lk_1^\sharp &1 & -a_{11}\inv a_{12}  \\ 
\lk_2^\sharp& 0 & 1 \\\cline{2-3}
\end{array} 
\begin{array}{c}
\\
\multirow{2}{*}{.} \\
\\
\end{array}
\end{align*}
whence
$$
\begin{array}{c}
\\
\multirow{2}{*}{$T(\{\lk\op_1, k\op\}, \{\lk_1, k\}) \;\;\; \; = \; \;$ } \\
\\
\end{array}
\begin{array}{c|cc|}
\multicolumn{1}{c}{} &\;\; \lk_1 \;\;& \multicolumn{1}{c}{\;\;\lk_2\;\;}\\ \cline{2-3}
 \lk_1\op &a_{11} & 0  \\ 
k\op& 0 & \sk \\\cline{2-3}
\end{array} 
\begin{array}{c}
\\
\multirow{2}{*}{.} \\
\\
\end{array}
\hspace{.5cm}
\vspace{.7cm}
$$
and
\begin{align*}
\begin{array}{c}
\\
\multirow{2}{*}{$T(\lk\op, \{\lk_1, k\}) \;\;\; \; = \; \;$ } \\
\\
\end{array} &
\begin{array}{c|cc|}
\multicolumn{1}{c}{} &\;\; \lk_1 \;\;& \multicolumn{1}{c}{\;\;k\;\;}\\ \cline{2-3}
 \lk_1\op &a_{11} & 0  \\ 
\lk_2\op& a_{21} & \sk \\\cline{2-3}
\end{array} 
\begin{array}{c}
\\
\multirow{2}{*}{,} \\
\\
\end{array}
\\
\\
\begin{array}{c}
\\
\multirow{2}{*}{$T(\{\lk\op_1, k\op\}, \lk) \;\;\; \; = \; \;$ } \\
\\
\end{array}&
\begin{array}{c|cc|}
\multicolumn{1}{c}{} &\;\; \lk_1 \;\;& \multicolumn{1}{c}{\;\;\lk_2\;\;}\\ \cline{2-3}
 \lk_1\op &a_{11} & a_{12}  \\ 
k\op& 0 & \sk \\\cline{2-3}
\end{array} 
\begin{array}{c}
\\
\multirow{2}{*}{.} \\
\\
\end{array}
\hspace{.5cm}
\vspace{.7cm}
\end{align*}

Thus $\sk$ may be characterized as any of 
\begin{align*}
k\op T \lk_2 && k\op T k && \lk_2 T k
\end{align*}
where $k$ is 
\begin{align*}
 \lk_2 - \lk_1 a_{12} a_{11}\inv, && e^{\I(\lk_1)}_{\K(\lk_1\op T)} \lk_2, 
\end{align*}
or the dual to $\lk_2^\sharp$ in $\{\lk_1\op T, \lk_2^\sharp\}^\flat$, and 
 $k\op$ is
\begin{align*}
 \lk_2\op - \lk_1\op a_{11}\inv a_{21}, &&\lk_2\op e_{\K(\lk_1\op)}^{\I(T \lk_1)},
 \end{align*}
or the dual to $\lk_2^{op \;\flat}$ in $\{T \lk_1, \lk_2^{\op \; \flat}\}^\sharp$.

\subsection{Diagramatic Complements}

The following characterization of $\sk$ will not enter our later discussion directly, however it is of basic algebraic interest.  Some knowledge of the language of category theory is assumed.

Let $D$ be the category on two objects and three morphisms, exactly one of which is not an endomorphism.  For each preadditive category ${\mathcal C}$, let $[D,{\mathcal C}]$ be the preadditive category of diagrams $D \to {\mathcal C}$.  

If
\begin{align*}
\uk\op = (\lk_1\op, k\op) && and && \uk = (\lk_1,k),
\end{align*}
then it may be checked directly that the diagrams

\begin{align*}
\xymatrix@R=3pc@C=3pc@M=0.5pc{
\ar[rr];[]_{a_{11}}  \ar[d];[]^{\uk^{op}_1}  \ar[drr];[rr]_{\uk_1^\sharp}  \ar[drr];[d]^T
\bullet & & \bullet \\
\bullet & & \bullet \\
}
&&
\xymatrix@R=3pc@C=3pc@M=0.5pc{
\ar[rr];[]_{\sk}  \ar[d];[]^{\uk_2^{op}}  \ar[drr];[rr]_{\uk_2^\sharp}  \ar[drr];[d]^T
\bullet & & \bullet \\
\bullet & & \bullet \\
}
\end{align*}

\noindent determine a product structure on the the diagram $D \to {\mathcal C}$ that sends the nonidentity morphism to $T$.  
The dual coproduct structure is given by

\begin{align*}
\xymatrix@R=3pc@C=3pc@M=0.5pc{
\ar[rr];[]_{a_{11}}  \ar[];[d]_{\uk^{op\; \flat}_1}  \ar[rr];[drr]^{\uk_1}  \ar[drr];[d]^T
\bullet & & \bullet \\
\bullet & & \bullet \\
}
&&
\xymatrix@R=3pc@C=3pc@M=0.5pc{
\ar[rr];[]_{\sk}  \ar[];[d]_{\uk^{op\; \flat}_2}  \ar[rr];[drr]^{\uk_2}  \ar[drr];[d]^T
\bullet & & \bullet \\
\bullet & & \bullet \\
}
\end{align*}

\begin{remark}  While there has been at least one category-theoretic treatment of the subject \cite{smith1983schur} the interpretation of a Schur complement as a literal complement to $\ak$ in $T$ in the category $[D,{\mathcal C}]$ has, to our knowledge, gone unremarked in the published literature.   
\end{remark}


\section{M\"obius Inversion}
\label{sec:mobius}

Let $A$ be an array of form $T(\wk, \wk)$, where $T$ is an endomorphism and $\wk$ is a (co)product structure on the domain of $T$.   We write $\pc^m$ for the set of all sequences of form
\begin{align*}
p : \{0,\ld, m\} \to \wk  
\end{align*}
and $\pc$ for $\cup_{m \ge 0} \pc^m$.  Given any family of sequences ${\mathcal Q}$, we write 
$$
{\mathcal Q}(i,j) = \{q \in {\mathcal Q} : q(0) = i, \; q(\ell(q)) = j\}
$$
where  $\ell(q) = \max( \D(q))$ is the last integer on which $q$ is defined.

We  define $A(p) = 1$ when $m = 0$ and
\[
A(p) =  A(p_{0}, p_{1}) A(p_1, p_2) \cd A(p_{m-1}, p_m)
\]
when $m > 0$.  Consequently
\begin{align}
A^{m}(i,j) = \sum_{\pc^m(i,j)} A(p)  
\label{eq:pathproduct}
\end{align}
for  nonnegative $m$.  
\begin{remark}
Identity \eqref{eq:pathproduct} is vacuous when $m \in \{0,1\}$.   It is the  {definition} of matrix multiplication when $m = 2$.
\end{remark} 

\begin{remark}
Identity \eqref{eq:pathproduct}  remains valid if one replaces $\pc^m(i,j)$ with
$$
\pc_A^m(i,j) = \{p \in \pc^m(i,j) : A(p) \neq 0\}.
$$
%
\end{remark}  

Given any pair of binary relations $\sim_I$ and $\sim_J$ on $I$ and $J$, respectively, let us say that a map $p: I \to J$  \emph{increases monotonically}  if 
$$
i \sim_I j \implies p(i) \sim_J p(j).
$$
The following is a conceptually helpful characteristic of $\pc_A^m$.

\begin{lemma} If
$$
p: \{0,\ld, m\} \to I
$$
and $A(p) \neq 0$, then $p$ increases monotonically with respect to the transitive closure of $\Supp(A)$ and the canonical order on $\{0,\ld, m\}$.
\end{lemma}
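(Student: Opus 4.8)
The plan is to unwind the definition of $A(p)$ and observe that its non-vanishing forces every one-step factor to be non-zero; this immediately puts consecutive values of $p$ into $\Supp(A)$, and monotonicity then follows by concatenating these one-step links into a chain. First I would recall that, by definition, $A(p) = A(p_0,p_1)\,A(p_1,p_2)\cdots A(p_{m-1},p_m)$ is a composite of morphisms (equivalently, a product of matrix entries $A(i,j) = iTj$), and that in any preadditive category a composite vanishes as soon as one of its factors vanishes, since $0\circ g = f\circ 0 = 0$ for the zero morphism. Taking the contrapositive and inducting on the number of factors, $A(p)\neq 0$ forces $A(p_{t-1},p_t)\neq 0$ for every $t\in\{1,\ldots,m\}$, which is to say $(p_{t-1},p_t)\in\Supp(A)$ for each such $t$.

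Next I would fix $a\le b$ in $\{0,\ldots,m\}$. When $a<b$, the subsequence $p_a,p_{a+1},\ldots,p_b$ is a chain whose every consecutive pair lies in $\Supp(A)$ by the previous paragraph, so $(p_a,p_b)$ lies in the transitive closure of $\Supp(A)$ by definition of transitive closure. (The degenerate case $a=b$ is vacuous under the strict order $<$, and is covered by the reflexive part if one works with the reflexive-transitive closure; either convention is consistent with the statement.) Since $a<b$ was arbitrary, this says precisely that $p$ increases monotonically from the canonical order on $\{0,\ldots,m\}$ to the transitive closure of $\Supp(A)$, as claimed.

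I do not anticipate a genuine obstacle: the only point requiring care is the very first one, namely making explicit that $A(p)$ is a bona fide composite of morphisms (this uses the matrix-multiplication identity recorded earlier, $A^m(i,j) = \sum_{\pc^m(i,j)} A(p)$, together with the idempotent decomposition $\sum_{f\in\wk} e_f = 1$) and that non-vanishing of a composite propagates to each of its factors in a preadditive category. Once that is in hand, the conclusion is just the definition of transitive closure applied to the chain $p_0,p_1,\ldots,p_m$.
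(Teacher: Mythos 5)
The paper states this lemma without proof, offering it as a ``conceptually helpful characteristic,'' so there is no in-text argument to compare against; your proof fills in the intended (and indeed short) reasoning correctly. The core observation---that a composite in a preadditive category vanishes as soon as one factor vanishes, so $A(p)\neq 0$ forces each $A(p_{t-1},p_t)\neq 0$, placing each consecutive pair in $\Supp(A)$ and hence any $(p_a,p_b)$ with $a<b$ in the transitive closure by chaining---is exactly right.

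Two small remarks. First, you do not actually need to invoke the matrix-multiplication identity $A^m(i,j)=\sum_{\pc^m(i,j)}A(p)$ or the idempotent decomposition $\sum_f e_f = 1$ to see that $A(p)$ is a bona fide composite: the paper \emph{defines} $A(p)$ as the product $A(p_0,p_1)\cdots A(p_{m-1},p_m)$, so that is immediate. Those facts are used to justify the sum-over-paths formula, not to interpret an individual $A(p)$. Second, your parenthetical about the $a=b$ case is well spotted: the paper distinguishes transitive closure from reflexive closure, and as stated (with ``transitive closure'' and ``the canonical order,'' which on $\{0,\ldots,m\}$ is naturally read as $\le$), the reflexive instances $a=b$ are not literally covered unless one either reads ``canonical order'' as the strict order $<$ or takes the reflexive-transitive closure. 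This is a genuine looseness in the paper's phrasing, and flagging it, rather than papering over it, is the right call.
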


The main result of this section is a generalization of the classical \emph{M\"obius inversion formula} of number theory and combinatorics \cite{rota1964}.  Lemma \ref{lem:mobiusunit} expresses one part of this extension.




%
%
%
%
%

\begin{lemma} 
\label{lem:mobiusunit} 
Suppose $A = T(\wk,\wk)$, where $T$ is an endomorphism in a preadditive category and $\wk$ is a finite (co)product structure on  $\D(T)$.  If $A = 1 - t$ and $t$ has strictly acyclic support, then
\begin{align}
A\inv (i,j) = \sum_{\pc_t(i,j)} t(p)  \label{eq:strictlyunitalmobiusinversion}
\end{align}
for all $i,j \in \wk$.
\end{lemma}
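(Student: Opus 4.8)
The plan is to exploit the path-sum formula \eqref{eq:pathproduct} together with the finiteness of $\wk$ and the acyclicity of $\Supp(t)$. First I would observe that since $A = 1 - t$, one has formally $A^{-1} = \sum_{m \ge 0} t^m$, provided this series terminates. The key point is that it does terminate: because $\wk$ is finite and $\Supp(t)$ is strictly acyclic, the transitive closure of $\Supp(t)$ is a strict partial order on $\wk$, and by the monotonicity lemma any $p \in \pc_t^m(i,j)$ is strictly monotone with respect to this order; hence its length $m$ is bounded by $|\wk|$, so $t^m = 0$ for $m > |\wk|$. This makes $1 + t + t^2 + \cdots + t^{|\wk|}$ a genuine two-sided inverse to $1 - t$ in the ring of $\wk \times \wk$ arrays over the relevant endomorphism ring (the computation $(1-t)(1 + t + \cdots + t^N) = 1 - t^{N+1} = 1$ is formal and valid in any preadditive setting).

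Next I would assemble the entrywise statement. Applying \eqref{eq:pathproduct} to $t$ in place of $A$ gives $t^m(i,j) = \sum_{\pc_t^m(i,j)} t(p)$ (using the second remark after \eqref{eq:pathproduct}, which restricts the path sum to paths with nonzero weight). Summing over $m$ from $0$ to $|\wk|$ and using that $\pc_t(i,j) = \bigcup_{m \ge 0} \pc_t^m(i,j)$ is a finite disjoint union — disjoint because a path records its own length, finite because the lengths are bounded as above — yields
\[
A^{-1}(i,j) = \sum_{m \ge 0} t^m(i,j) = \sum_{m \ge 0} \sum_{\pc_t^m(i,j)} t(p) = \sum_{\pc_t(i,j)} t(p),
\]
which is exactly \eqref{eq:strictlyunitalmobiusinversion}. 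One should be slightly careful about the $m=0$ term: $\pc_t^0(i,j)$ is nonempty only when $i = j$, in which case it contributes the empty-product value $1$, matching the $1$ in $A = 1 - t$; this is consistent with the convention $A(p) = 1$ for length-zero $p$ recorded just before \eqref{eq:pathproduct}.

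The main obstacle — really the only substantive point — is justifying the \emph{termination} of the path sum, i.e. that ``strictly acyclic support'' forces $t^{|\wk|+1} = 0$. This is where the monotonicity lemma does the work: a nonzero path weight $t(p)$ requires each consecutive pair $(p_{k-1}, p_k)$ to lie in $\Supp(t)$, so $p$ is strictly increasing in the transitive closure of $\Supp(t)$, which is a strict order precisely because the support is strictly acyclic; a strictly increasing sequence in a finite poset of size $|\wk|$ has at most $|\wk|+1$ terms. Everything else is bookkeeping with the already-established identity \eqref{eq:pathproduct} and the elementary fact that $1 - t$ is invertible when $t$ is nilpotent. I would also remark that the hypothesis that $\wk$ is a (co)product structure is used only insofar as it makes $T(\wk,\wk)$ a well-defined square array with an associative multiplication agreeing with composition, as set up in the section on arrays; no further structure is needed.
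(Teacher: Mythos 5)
Your proposal is correct and follows essentially the same route as the paper: show $t$ is nilpotent of order at most $|\wk|$ via acyclicity of $\Supp(t)$ (the paper invokes Lemma~\ref{lem:acyclic} directly where you combine the monotonicity observation with strictness), conclude $(1-t)^{-1}=\sum_{m\ge 0}t^m$, and translate entrywise via \eqref{eq:pathproduct}. The extra bookkeeping you supply — disjointness of the $\pc_t^m(i,j)$, the $m=0$ convention — is sound but is implicit in the paper's shorter argument.
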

\begin{proof}
If $A$ has $N$ rows and columns, then $t^{N} = 0$, by Lemma \ref{lem:acyclic}.   Thus  $(1-t) \inv =   \sum_{k = 0}^{\infty} t^k$.  The desired conclusion follows from \eqref{eq:pathproduct}.
\end{proof}

\begin{remark}  
In the special case where $A = \chi_{\preceq}$ for some partial order $\preceq$ on $I$, identity \eqref{eq:strictlyunitalmobiusinversion} recovers the classical {M\"obius inversion formula}.
\end{remark}


We are now prepared to state and prove the main result.  For convenience, let 
\[
A[p] =   A(p_{0}, p_{0})\inv A(p_{0}, p_1) A(p_{1}, p_{1})\inv\cd A(p_m,p_m)\inv.
\]
Under the hypotheses of Lemma \ref{lem:mobiusunit}, one has 
$$
t(p) =  (-1)^{\ell(p)+1} A[p],
$$ 
so Theorem \ref{thm:mobiusgeneral} is a strictly more general.

\begin{theorem}  
\label{thm:mobiusgeneral}  
Let $A = T(\wk,\wk)$, where $T$ is an automorphism in a preadditive category and $\wk$ is a finite (co)product structure on  $\D(T)$. If $A$ has acyclic support, then
\[
A\inv(i,j) = \sum_{\pc_t(i,j)}   (-1)^{\ell(p)+1}A[p] ,
\]
where $t$ is the strictly-triangular part of $A$.
\end{theorem}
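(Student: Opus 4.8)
The statement generalizes Lemma \ref{lem:mobiusunit} from the strictly-unital case $A = 1-t$ to an arbitrary automorphism $A$ with acyclic support. The plan is to reduce the general case to the unital one by extracting the diagonal. Write $A = D - t$ where $D$ is the ``diagonal part'' of $A$, i.e.\ the endomorphism with $D(i,i) = A(i,i)$ and $D(i,j) = 0$ for $i \neq j$, and $t$ is the strictly-triangular part. The first step is to verify that $D$ is itself an automorphism: since $A$ has acyclic support, after reindexing $\wk$ so that $\Supp(A)$ is upper-triangular, $A$ is triangular with diagonal blocks $A(i,i)$, and invertibility of a triangular matrix (or endomorphism) forces each diagonal block $A(i,i)$ to be invertible. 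Hence $D\inv$ exists, with $D\inv(i,i) = A(i,i)\inv$.

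Next I would factor $A = D(1 - D\inv t)$. Set $s = D\inv t$; then $s$ has strictly acyclic support (left-multiplication by the diagonal $D\inv$ does not enlarge the support pattern), so Lemma \ref{lem:mobiusunit} applies to $1 - s$, giving
\[
(1-s)\inv(i,j) = \sum_{\pc_s(i,j)} s(p).
\]
Then $A\inv = (1-s)\inv D\inv$, so $A\inv(i,j) = \sum_f (1-s)\inv(i,f)\, D\inv(f,j) = (1-s)\inv(i,j)\, A(j,j)\inv$ since $D\inv$ is diagonal. The remaining task is bookkeeping: expand $s(p) = s(p_0,p_1)\cdots s(p_{m-1},p_m)$ with each $s(p_{k-1},p_k) = A(p_{k-1},p_{k-1})\inv A(p_{k-1},p_k)$, multiply on the right by $A(p_m,p_m)\inv = A(j,j)\inv$, and recognize the result as exactly $A[p]$ as defined just before the theorem. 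Combined with the sign identity $t(p) = (-1)^{\ell(p)+1}A[p]$ already noted in the text (here with $t$ playing the role of $s$ in the abstract formula), this yields
\[
A\inv(i,j) = \sum_{\pc_t(i,j)} (-1)^{\ell(p)+1} A[p].
\]

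One subtlety to handle carefully is the indexing set of the sum: the abstract formula uses $\pc_s(i,j)$, but the theorem states $\pc_t(i,j)$. These coincide because $\Supp(s) = \Supp(D\inv t) = \Supp(t)$ (diagonal rescaling by invertible entries preserves the zero pattern), so $s(p) \neq 0$ iff $t(p) \neq 0$, and a path contributes to one sum iff it contributes to the other. I should also note the edge case of length-zero paths: $\pc^0(i,i) = \{(i)\}$ contributes $A[p] = A(i,i)\inv$ with sign $(-1)^{0+1} = -1$... so actually I must be careful — the length-$0$ path should contribute $+A(i,i)\inv$ when $i=j$. Checking: $t(p)$ for the length-$0$ path at $i$ in $1-s$ is, by the convention $A(p)=1$ for $m=0$, actually the term from $t^0 = 1$, contributing $\delta_{ij}\cdot 1$ to $(1-s)\inv$; after multiplying by $A(j,j)\inv$ this gives $\delta_{ij} A(i,i)\inv$. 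The claimed sign convention $t(p) = (-1)^{\ell(p)+1}A[p]$ is stated in the text for $\ell(p) \geq 1$; for $\ell(p)=0$ one reads $s(p)=1$ directly, matching $(-1)^{0}A[p]$ with $A[p]=A(i,i)\inv$ only if the text intends $\ell(p)+1$ to be even there, i.e.\ the exponent is really $\ell(p)-1$ or the $m=0$ term is treated separately. I would follow the text's evident intent and treat the $m=0$ term via the $A(p)=1$ convention, so the main obstacle is purely notational reconciliation rather than mathematical — the substantive content is the reduction $A = D(1-s)$ plus Lemma \ref{lem:mobiusunit}, which is short.
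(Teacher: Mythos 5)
Your argument matches the paper's proof exactly: both factor $A = d(1 - d^{-1}t)$ with $d$ the diagonal part, invoke Lemma \ref{lem:mobiusunit} to expand $(1 - d^{-1}t)^{-1}$ over $\pc_t(i,j)$, and right-multiply by $d^{-1}$ to unwind the bookkeeping into $A[p]$. Your closing worry about the length-zero path is a false alarm once you recall that the paper defines $\ell(p) = |\D(p)|$, so a one-element sequence $p\colon\{0\}\to\wk$ has $\ell(p)=1$ (not $0$) and the sign $(-1)^{\ell(p)+1}=+1$ comes out right without any special casing.
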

\begin{proof}  
Let $d$ and $t$ be the unique diagonal and off-diagonal arrays, respectively, such that  $A = d - t$.   If $B = d \inv A $ and $s = d \inv t$, then by definition 
$$
B(p_{k-1},p_k) = A(p_{k-1},p_{k-1})\inv A(p_{k-1},p_k).
$$  
The terms on the righthand side of $(A\inv d )(i,j)= B\inv (i,j) = \sum_{\pc_t(i,j)} s(p)$ are of the form
\[
(-1)^{\ell(p)-1}    A(p_{0}, p_{0})\inv A(p_{0}, p_1) \cd   A(p_{m-1},p_{m-1})\inv    A(p_{m-1}, p_m).
\]
Right-multiplication with $d \inv$ yields the desired result.
\end{proof}

\begin{example}
Let $A$ be the following matrix $4 \times 4$ matrix with coefficients in $\R$,
\vspace{.5cm}

$$
\left(
\arraycolsep=10pt\def\arraystretch{1}
\begin{array}{cccc}
1&0 & 0 & 0 \\
0& 1 & 0 & 0\\
1& 1& 1 & 0\\
1 & 1& 1& 1
\end{array}
\right).\vspace{.5cm}
$$
One has
$$
\pc_A(1,4) = \{(1,4), (1,3,4)\},
$$
so $A \inv (1,4) = 0$.  Likewise
\begin{align*}
\pc_A(2,4) = \{(2,4),(2,34)\} &&\pc_A(1,2) = \emptyset  &&\pc_A(3,4) = \{(3,4)\} 
\end{align*}
whence
\begin{align*}
A\inv(2,4) = 0 && A \inv(1,2) = 0 && A\inv(3,4) = -1.
\end{align*}
Altogether, $A\inv$ has form
\vspace{.5cm}

$$
\left(
\arraycolsep=8pt\def\arraystretch{1}
\begin{array}{cccc}
1&0 & 0 & 0 \\
0& 1 & 0 & 0\\
-1& -1& 1 & 0\\
0 & 0& -1& 1
\end{array}
\right).\vspace{.7cm}
$$
\end{example}

\chapter{Exchange Formulae}

This section is devoted to the calculation of certain algebraic and relational identities involved in structure exchange.  It is recommended that, on a first pass, the reader skim the definitions and results of \S\ref{sec:relations} before passing directly to the following chapter.  The facts cataloged in \S\ref{sec:exchangerelations} may be accessed as needed for later portions of the discussion.

\section{Relations}
\label{sec:relations}

A \emph{relation} on a set product $I \times J$ is a subset of $I \times J$.  A \emph{binary relation} on $I$ is a relation on $I \times I$. It is standard to write $i R j$ when $(i,j) \in R$.

We say that  $R$ is \emph{reflexive} if $i R i$ for all $i \in I$, \emph{antisymmetric} if $i = j$ when both $i R j$ and $j R i$, and \emph{transitive} if $i R k$ whenever $i R j$ and $j R k$.  A  relation with all three properties is  a \emph{partial order}.  A \emph{linear order} is a partial order for which every pair of elements is comparable, that is, when $i R j$ or $j R i$ for all $i, j \in I$.  The \emph{transitive (reflexive) closure} of $R$ is the least transitive (reflexive) relation that contains $R$.

The following is simple to verify for finite sets.  Recall that $S$ \emph{extends} $R$, or $R$  \emph{extends to} $S$ if $R\su S$.  A set that meets any of the conditions in Lemma \ref{lem:acyclic} is  \emph{acyclic}. An acyclic relation is \emph{strict}  if it  contains no pair of form $(i,i)$.    
\begin{lemma} 
\label{lem:acyclic}  
For any binary relation $R$, the following are equivalent.
\begin{enumerate}
\item $R$ extends to a partial order on $I$.
\item $R$ extends to a linear order on $I$.
\item The transitive closure of $R$ is antisymmetric.
\end{enumerate}
If $R$ is strictly acyclic and $i_p R i_{p+1}$ for $0 \le p \le m$, then $I$ has cardinality strictly greater than $m$.
\end{lemma}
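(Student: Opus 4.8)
The plan is to treat the three-way equivalence first and then the final cardinality statement, throughout using that $I$ is finite. For $(1)\Rightarrow(2)$: given a partial order $P$ with $R\subseteq P$, I would extend $P$ to a linear order by a topological-sort induction on $|I|$ — pick a $P$-maximal element $x$ (it exists because $I$ is finite and nonempty), apply the inductive hypothesis to $P$ restricted to $I\setminus\{x\}$ to get a linear order $L'$ there, and set $L=L'\cup\{(y,x):y\in I\}$, i.e.\ place $x$ on top; one checks $R\subseteq P\subseteq L$. For $(2)\Rightarrow(3)$: if $R\subseteq L$ for a linear order $L$, then the transitive closure $T$ of $R$ still satisfies $T\subseteq L$ since $L$ is already transitive, and any subrelation of the antisymmetric relation $L$ is antisymmetric.

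For $(3)\Rightarrow(1)$: let $T$ be the transitive closure of $R$, antisymmetric by hypothesis, and put $P=T\cup\{(i,i):i\in I\}$. Then $P$ is reflexive by construction, transitive (check the four cases according to whether each of the two composed pairs lies on the diagonal — in every case one lands back in $T$ or on the diagonal), and antisymmetric (a failure of antisymmetry with $a\neq b$ cannot involve a diagonal pair, so it would already contradict antisymmetry of $T$); and $R\subseteq T\subseteq P$, so $P$ is a partial order extending $R$.

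For the last sentence: since $R$ is acyclic it extends to a linear order $\le$ on $I$ by the equivalence just established, and since $R$ is \emph{strict} the hypothesis $i_pR i_{p+1}$ forces $i_p\neq i_{p+1}$, hence $i_p<i_{p+1}$ in the strict part of $\le$ for each relevant $p$. Transitivity of the strict part of a linear order then yields $i_0<i_1<\cdots$, so the elements of the chain are pairwise distinct; a chain of $m+1$ consecutive $R$-steps thus displays $m+2$ distinct elements of $I$, and in particular $|I|>m$. (Alternatively, argue straight from the transitive closure $T$ of $R$: strictness of $R$ makes $T$ irreflexive — a $T$-cycle of length $\ge 2$ would, via antisymmetry, collapse to an edge $(i,i)\in R$ — and $i_pTi_q$ for all $p<q$ then rules out any repetition.)

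The only genuinely fiddly point, and hence the main obstacle, is $(3)\Rightarrow(1)$: verifying that forming the reflexive closure of the transitive closure preserves both transitivity and antisymmetry. One should also keep in mind that $(1)\Rightarrow(2)$ is precisely where finiteness of $I$ is used (via existence of a maximal element and the induction). The remaining implications and the cardinality bound are routine bookkeeping.
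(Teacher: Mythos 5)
The paper does not actually include a proof of this lemma — it simply remarks that the equivalences are ``simple to verify for finite sets'' and moves on — so there is nothing to compare your argument against. Your proof is correct: the three implications $(1)\Rightarrow(2)\Rightarrow(3)\Rightarrow(1)$ are handled with the standard topological-sort induction, the ``subrelation of an antisymmetric relation'' observation, and the reflexive-closure check, and the cardinality statement follows cleanly from the strict chain $i_0 < i_1 < \cdots < i_{m+1}$ in the extending linear order (in fact you get the slightly sharper bound $|I| \ge m+2$). You are also right to flag that finiteness of $I$ is used precisely in $(1)\Rightarrow(2)$; for an infinite poset one would need the Szpilrajn extension theorem (i.e., a Zorn's-lemma argument) in place of the induction, but the paper's stated convention of finite ground sets makes your elementary induction the natural choice here.
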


%



A \emph{partial matching} on $R \su I \times J$ is a subset $\d \su R$ such that
\begin{align*}
\# \{(i,j) \in \d: i = i_0\}  \le 1&& and &&\# \{(i,j) \in \d: j=j_0\} \le 1
\end{align*}
for all $i_0 \in I$ and all $j_0 \in J$.  To every partial matching we may associate a set $\d \der \sharp = \{i : (i,j) \in \d\}$, called the \emph{domain} of $\d$,  and a set $\d \der \flat = \{j : (i,j) \in d\}$, called the \emph{image}.    A \emph{perfect matching} is a partial matching with $\d \der{\sharp} = I$ and $\d \der {\flat} = J$.

Every partial matching determines a unique function  $(\flat)$ sending  $i \in \d\der {\sharp}$ to the unique  $i \der \flat $ such that $(i, i \der \flat)\in \d$.  Symmetrically, there is a unique function  $(\sharp)$  such that $(j \der \sharp, j) \in \d$ for all $j$.  These maps are mutually inverse bijections, and we refer to each as a \emph{pairing function}.

\begin{example}  
The \emph{diagonal} of a set product $I \times I$ is the set 
$$
\ad(I \times I) = \{(i,i) : i  \in I\}.
$$   
The diagonal is a partial matching on $I \times I$.  Each of the two pairing functions on $\ad(I \times I)$ is  the identity map on $I$.
\end{example}

A partial matching on $R$ determines a pair of \emph{induced relations}.  The induced relation on $I$, denoted $R_\d$, is the transitive reflexive closure of
\[
\{(i,j)  : (i,j \der \flat) \in \d\}.
\]
in $I \times I$.  The induced relation on $J$, denoted $R^\d$, is the transitive reflexive closure of
\[
\{(i,j)  : (i \der \sharp,j) \in \d\}
\]
in $J \times J$.

The \emph{composition} of relations $R$ and $S$ is the set $RS = \{(i,k) : i R j, \; j S k\}$.  We will be interested, in future discussion, in compositions of form $S = R_\d R R^\d$.

\begin{proposition} \label{prop:holdacyclic} If $S = R_\d RR^\d$, then $R_\d = S_\d$ and $R^\d = S^\d$. 
\end{proposition}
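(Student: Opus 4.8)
The plan is to prove both equalities by double inclusion, using only that $R \su S$ (which must be checked first) together with the fact that $R_\d$ and $R^\d$ are closures of generating relations depending monotonically on the ambient relation. The geometric picture guiding the argument is that $R_\d$ and $R^\d$ are the ``row-to-row'' and ``column-to-column'' reachability relations of the bipartite digraph on $I \sqcup J$ whose arcs are the pairs of $R$ (oriented $I \to J$) and the reversed pairs of $\d$ (oriented $J \to I$); passing from $R$ to the longer relation $S$ only adds arcs that are already realized by paths of the old digraph, so reachability is unchanged.

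First I would record $R \su S$. Since $R_\d$ and $R^\d$ are reflexive we have $\ad(I\times I) \su R_\d$ and $\ad(J \times J) \su R^\d$, whence $R = \ad(I\times I)\,R\,\ad(J\times J) \su R_\d R R^\d = S$ by monotonicity of relational composition. In particular $\d \su R \su S$, so $\d$ is a partial matching on $S$ with the same pairing functions, and $S_\d$, $S^\d$ are well defined. Monotonicity of the generators then gives $R_\d \su S_\d$ and $R^\d \su S^\d$ at once.

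The substance is the reverse inclusion $S_\d \su R_\d$; since $R_\d$ is already transitive and reflexive, it suffices to show that every generator of $S_\d$ lies in $R_\d$. Such a generator comes from a matched row $j$ and an arc $(i, j\der\flat) \in S$, and $(i, j\der\flat) \in S = R_\d R R^\d$ unpacks as $i \mathrel{R_\d} a$, $a \mathrel R c$, $c \mathrel{R^\d} j\der\flat$ for some $a \in I$, $c \in J$. The key lemma is: if $a \mathrel R c$ and $c \mathrel{R^\d} c'$ with $c'$ matched, then $a \mathrel{R_\d} (c')\der\sharp$, proved by induction on the length of the $R^\d$-path witnessing $c \mathrel{R^\d} c'$. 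The base case $c = c'$ is immediate from the definition of $R_\d$; in the inductive step one peels off the first $R^\d$-generator $(c, c_1)$, which exhibits $c$ as a matched column with $c\der\sharp \mathrel R c_1$, combines $a \mathrel R c$ with the definition of $R_\d$ to obtain $a \mathrel{R_\d} c\der\sharp$, and then applies the inductive hypothesis along $c\der\sharp \mathrel R c_1$, $c_1 \mathrel{R^\d} c'$ together with transitivity of $R_\d$. Taking $c' = j\der\flat$ yields $a \mathrel{R_\d} j$, and then $i \mathrel{R_\d} a \mathrel{R_\d} j$ places the original generator in $R_\d$. The inclusion $S^\d \su R^\d$ follows by the mirror-image argument (swap $I$ with $J$, $R_\d$ with $R^\d$, and $\sharp$ with $\flat$), and combining the four inclusions gives $R_\d = S_\d$ and $R^\d = S^\d$.

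I expect the only real difficulty to be bookkeeping in the inductive lemma: tracking at each stage which vertices are matched so the pairing functions are legitimately applied, and confirming that peeling off an arc genuinely shortens the path while emitting a bona fide generator of $R_\d$. Beyond manipulating relations, compositions, and the matching's pairing functions, no new ingredient is required.
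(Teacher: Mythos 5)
Your proof is correct and follows essentially the same route as the paper's: establish $R \su S$ to get $R_\d \su S_\d$, then unpack a generator $(i,j)$ of $S_\d$ via $i \mathrel{R_\d} a \mathrel{R} c \mathrel{R^\d} j\der\flat$ and observe that because $j\der\flat$ is a matched column, the entire $R^\d$-path from $c$ to $j\der\flat$ can be transferred into $R_\d$. The paper does this by relabeling so the matching is diagonal on a set $K$ and then terse­ly invoking that an $R^\d$-chain terminating in $K$ lies entirely in $K$ (hence in $R^\d_\d \su R_\d$); your inductive lemma makes that implicit step explicit in the $\sharp/\flat$ notation, which is a fair trade of brevity for transparency.
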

\begin{proof}
Let us restrict to the first identity, as the second is similar.  Relabeling elements as necessary, assume without loss of generality that there exists a set
$$
K =  \d \der \sharp = \d \der \flat
$$
such that $\d  = \ad (K \times K)$.   Then $R_\d$ and $R^\d$ are the transitive reflexive closures of the left and righthand intersections, respectively, in
\begin{align*}
R \cap (I \times K) && R \cap (K \times K) && R \cap (K \times J).
\end{align*}
Let $R^\d_\d$ denote the transitive reflexive closure in $K$ of the center intersection.

It is evident that $R_\d \su S_\d$, so it suffices to show the reverse inclusion.  For this, it is enough to establish that $R_\d$ contains $(R_\d R R^\d) \cap I \times K$.  A pair $(i,l)$ belongs to this set iff there exist $j,k$ so that $i(R_d)j$, $jRk$, and $k(R^d)l$.  Since $l \in K$, one must in fact have $(k,l) \in  R^\d_\d \su R_\d$, hence $(j,k) \in R_\d$.  The desired conclusion follows.
%
%
%
%
%
\end{proof}

We say that a partial matching is \emph{acyclic} on $R$ if $R^\d$ and $R_\d$ are acyclic.   Corollary \ref{cor:clearingpattern} will enter later discussion as a simple means to relate the sparsity structures of arrays engendered by an LU decomposition with a variety of partial orders on their respective row and column indices.

\begin{corollary}   \label{cor:clearingpattern}
A pairing is acyclic on $R$ iff it is acyclic on $R_dR R^d$.
\end{corollary}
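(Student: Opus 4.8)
The plan is to derive this directly from Proposition \ref{prop:holdacyclic}. First I would check that $\d$ is in fact a partial matching on $S = R_\d R R^\d$: since $R_\d$ and $R^\d$ are transitive \emph{reflexive} closures, each contains the diagonal relation on its ground set, so $R \subseteq R_\d R R^\d = S$, and hence $\d \subseteq R \subseteq S$. The defining conditions for a partial matching constrain $\d$ alone, so nothing further needs verification; in particular the induced relations $S_\d$ and $S^\d$ are well defined.

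Next I would invoke Proposition \ref{prop:holdacyclic} with this choice of $S$, which gives $S_\d = R_\d$ and $S^\d = R^\d$. Now one unwinds the definitions: $\d$ is acyclic on $R$ exactly when both $R_\d$ and $R^\d$ are acyclic, and $\d$ is acyclic on $S$ exactly when both $S_\d$ and $S^\d$ are acyclic. Since $R_\d = S_\d$ and $R^\d = S^\d$, these are the same assertion, and the equivalence follows.

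I do not anticipate any real obstacle here: the genuine combinatorial work — the relabelling argument and the chase through the transitive reflexive closures showing that passing to $R_\d R R^\d$ does not alter the induced relations — has already been carried out in the proof of Proposition \ref{prop:holdacyclic}. The only point needing a moment's attention is confirming $\d \subseteq S$, so that ``acyclic on $S$'' is even a meaningful phrase, and this is immediate from reflexivity of the induced relations.
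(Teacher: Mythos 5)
Your proof is correct and follows the same route as the paper, which simply cites Proposition~\ref{prop:holdacyclic} and unwinds the definition of an acyclic matching. The additional sanity check that $R \subseteq R_\d R R^\d$ (via reflexivity of the induced relations), so that $\d$ is genuinely a partial matching on $S$, is a detail the paper leaves implicit but adds nothing beyond what the cited proposition already delivers.
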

\begin{proof}  This is an immediate consequence of \ref{prop:holdacyclic}.
\end{proof}

\section{Formulae}  
\label{sec:exchangerelations}


This section is devoted to the calculation of matrix identities involved in structure exchange.   
It is suggested that the reader  skip  this content,  returning as needed where the various exchange formulae arise.

\subsubsection{Relations}

Recall that in \S\ref{sec:relations} we defined a partial matching on a relation $R$.  For economy, let us now define a partial matching on an \emph{array} $A$ to be a partial matching $\d$  on $\Supp(A)$  such that $A(\d\der \sharp, \d\der \flat)$ is invertible.  By abuse of notation, we will use the same symbols that denote the \emph{sets}  $\d \der \sharp$ and $\d \der \flat$ for the \emph{product and coproduct maps} $\times \d \der \sharp$ and $\oplus \d \der \flat$, respectively.

It will be shown in \S\ref{sec:lu} that if $\d$ is a partial matching and $A = T(\lk, \uk)$ for some some coproduct (not product) structure $\lk$ and some product (not coproduct) structure $\uk$, then the left- and right-hand families
\begin{align*}
\lk - \left( \d  \der \sharp \right )^\flat \cup T \left (\d \der \flat \right) ^\flat  && \uk - \left(\d \der \flat \right )^\sharp \cup \left( \d \der \sharp \right)^\sharp  T
\end{align*}
form coproduct and product structures, respectively.    Denote these  by $\lk[\d]$ and $\uk[\d]$, for brevity.   

Recall that in order to avoid an excess of superscripts we  sometimes drop the sharp and flat notation when composing  maps, for instance\ writing
$$
T \d \der \flat
$$
for $T \left (\d \der \flat \right) ^\flat$.  There is no risk of ambiguity in this practice, as  $T \left (\d \der \flat \right) ^\sharp$ is not defined.  

 If $g$ is the dual to $T \d \der \flat $ in $\lk[\d]$, then $gT \d \der \flat  = 1$ by definition, so every partial matching on $T(\lk, \uk)$ determines a partial matching $\d_0$ on $T(\lk[\d], \uk)$.   We call the operation $(\lk, \uk) \mapsto (\lk[\d], \uk[\d_0])$ a \emph{row-first pivot} on $d$.  The row-first pivot defined in Section \ref{sec:lu} is the special case of this operation, when $d = \{(a,b)\}$.

A remark on notation: while the support of $1(\lk, \lk[\d])$ is formally a relation on $\lk \times \lk[\d]$, it may be naturally realized  as an relation on $\lk \times \lk$ by means of the associated pairing functions. 

\begin{lemma}  If $\d$ is a partial matching on $A = T(\lk, \uk)$ and $R = \Supp(A)$, then 
\begin{enumerate} 
\item $R_\d$ is the transitive symmetric closure of $\Supp(1(\lk, \lk[\d]))$
\item $R^\d$ is the transitive symmetric closure of $\Supp(1(\uk[\d_0], \uk))$
\end{enumerate}
under the canonical identification
\begin{align*}
\lk \times \lk \leftrightarrow \lk \times \lk[d] && \uk \times \uk \leftrightarrow \uk[\d_0] \times \uk
\end{align*}
given by the pairing functions on $\d$.
\end{lemma}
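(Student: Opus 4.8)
The plan is to prove both parts by writing down the two change-of-basis arrays $1(\lk,\lk[\d])$ and $1(\uk[\d_0],\uk)$ explicitly from the formulas for $\lk[\d]$ and $\uk[\d_0]$, reading off their supports, transporting those supports to $\lk\times\lk$ and $\uk\times\uk$ along the pairing functions of $\d$, and comparing the outcome with the base relations whose transitive reflexive closures define $R_\d$ and $R^\d$.

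For (1) I would partition the index set of $\lk$ as $(\lk-\d\der\sharp)$ together with $\d\der\sharp$. The coproduct structure $\lk[\d]$ retains $\lk_i$ for $i\notin\d\der\sharp$ and replaces each $\lk_a$ with $a\in\d\der\sharp$ by $T u_{a\der\flat}^\flat$, so under $\flat$ it is indexed by the same set as $\lk$. A one-line computation gives $1(\lk,\lk[\d])(\lk_i,\ell')=\lk_i^\sharp\ell'$, which is $\dk_{ii'}$ on a column $\ell'=\lk_{i'}$ with $i'\notin\d\der\sharp$ and is $\lk_i^\sharp T u_b^\flat=A(i,b)$ on the column $\ell'=T u_b^\flat$ sitting in slot $b\der\sharp$. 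Hence the support of $1(\lk,\lk[\d])$, transported to $\lk\times\lk$, is the union of all pairs $(i,i)$ with the pairs $(i,j)$ for which $j\in\d\der\sharp$ and $(i,j\der\flat)\in R$. Since $(j,j\der\flat)\in\d\su R$, the latter set already contains every $(j,j)$ with $j\in\d\der\sharp$, so adjoining the remaining diagonal pairs changes nothing, and the transitive reflexive closure of this support is exactly that of $\{(i,j):j\in\d\der\sharp,\ (i,j\der\flat)\in R\}$, i.e.\ $R_\d$.

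For (2) the argument is formally dual, but must first pass through $\d_0$. The dual $g$ of $T\d\der\flat$ in $\lk[\d]$ is, by inverting the block $A(\d\der\sharp,\d\der\flat)$ (invertible because $\d$ is a matching on $A$), supported over $\d\der\sharp$ with coefficient array $A(\d\der\sharp,\d\der\flat)\inv$; substituting this into $\uk[\d_0]=\uk-(\d_0\der\flat)^\sharp\cup(\d_0\der\sharp)^\sharp T$ and computing $1(\uk[\d_0],\uk)$ as above shows it is block unitriangular --- identity on the diagonal, with off-diagonal block the Schur-type fill-in $A(\d\der\sharp,\d\der\flat)\inv\,A(\d\der\sharp,\uk-\d\der\flat)$, precisely the $\ak\inv\bk$ block from the single-pivot case in \S\ref{sec:lu}. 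Expanding each entry of $A(\d\der\sharp,\d\der\flat)\inv$ as a path sum over $\Supp(A(\d\der\sharp,\d\der\flat))$ via Lemma \ref{lem:mobiusunit}, each nonzero entry of this fill-in is witnessed by a walk inside $\d\der\flat$ followed by a single step through $R$; transported along the pairings, $\Supp(1(\uk[\d_0],\uk))$ then lies in, and generates under transitive reflexive closure, the base relation $\{(i,j):i\in\d\der\flat,\ (i\der\sharp,j)\in R\}$ whose closure is $R^\d$.

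The hard part will be (2): one must identify $\d_0$, and hence $\uk[\d_0]$, correctly --- this genuinely needs the full block inverse $A(\d\der\sharp,\d\der\flat)\inv$, not any single entry --- and then translate the path expansion of that inverse into chains in the base relation of $R^\d$ while keeping the three pairing identifications (the $\flat$ and $\sharp$ of $\d$, and the induced pairing on $\d_0$) consistent throughout. Part (1), by contrast, is essentially just reading the block form of $1(\lk,\lk[\d])$ off the definition of $\lk[\d]$. Once established, this lemma is exactly what makes Proposition \ref{prop:holdacyclic} and Corollary \ref{cor:clearingpattern} applicable to the sparsity patterns of an LU decomposition.
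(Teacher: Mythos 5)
Part (1) is correct: your computation of $1(\lk,\lk[\d])$, its support, and the transport along the pairing to $\lk\times\lk$ all check out, and you have implicitly (and correctly) corrected two typos in the source — the base relation for $R_\d$ should read $(i,j\der\flat)\in R$ rather than $\in\d$ (as the proof of Proposition \ref{prop:holdacyclic} makes clear), and ``transitive symmetric'' should be ``transitive reflexive.''

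Part (2), however, has a genuine gap, and in fact the lemma is \emph{false} as literally stated with $\uk[\d_0]$. You compute $1(\uk[\d_0],\uk)$ correctly: off the diagonal it is supported on $\d\der\flat\times(\uk-\d\der\flat)$ with entries $[A(\d\der\sharp,\d\der\flat)^{-1}A(\d\der\sharp,g)]_b$. But this Schur-type fill-in can \emph{vanish by cancellation} at positions which lie in $R^\d$. Take $\lk=\{a_1,a_2\}$, $\uk=\{b_1,b_2,g\}$, $\d=\{(a_1,b_1),(a_2,b_2)\}$, and $A=\left(\begin{smallmatrix}1&1&1\\0&1&1\end{smallmatrix}\right)$. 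Then $A(\d\der\sharp,\d\der\flat)^{-1}A(\d\der\sharp,g)=\left(\begin{smallmatrix}1&-1\\0&1\end{smallmatrix}\right)\left(\begin{smallmatrix}1\\1\end{smallmatrix}\right)=\left(\begin{smallmatrix}0\\1\end{smallmatrix}\right)$, so $(b_1,g)\notin\Supp(1(\uk[\d_0],\uk))$. Moreover this support has all off-diagonal pairs in $\d\der\flat\times(\uk-\d\der\flat)$, so it admits no nontrivial chaining and its transitive reflexive closure adds nothing but the diagonal; in particular $(b_1,g)$ is never recovered. Yet $(b_1,g)\in R^\d$ because $A(b_1\der\sharp,g)=A(a_1,g)=1\neq 0$. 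So your claim that the support ``generates under transitive reflexive closure the base relation'' fails; you asserted it but gave no argument, and no argument can exist. (A secondary point: your invocation of Lemma \ref{lem:mobiusunit} needs acyclicity of the transported block $A(\d\der\sharp,\d\der\flat)$, which is not a hypothesis here; the containment $\Supp(A(\d\der\sharp,\d\der\flat)^{-1})\su R^\d$ is true in general but needs a strongly-connected-component block-triangularization argument, not M\"obius inversion.)

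The resolution is that the statement is misprinted: part (2) should read $1(\uk[\d],\uk)$, not $1(\uk[\d_0],\uk)$. This is exactly what the proof of Corollary \ref{cor:potchkai} uses: ``If $R$ is the support of $T(\lk,\uk)$ then $R^\d$ is the transitive reflexive closure of the support of $1(\uk[\d],\uk)$.'' With this correction, part (2) becomes exactly as short as part (1) and requires no $\d_0$, no block inverse, and no M\"obius inversion: the row of $1(\uk[\d],\uk)$ indexed (after transport by $\flat$) by $b\in\d\der\flat$ is simply $A(b\der\sharp,\cdot)$, so the off-diagonal support is literally the base relation $\{(b,g):b\in\d\der\flat,\ (b\der\sharp,g)\in R\}$. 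The relationship between $\Supp(1(\uk[\d],\uk))$ and $\Supp(1(\uk[\d_0],\uk))$ — which is an inclusion up to transitive closure, not an equality — is then what Corollary \ref{cor:potchkai} provides. So the ``hard part'' you anticipated for (2) is an artifact of the typo; once corrected, (2) is the formal mirror of (1) and the only genuine content in the cluster is Corollary \ref{cor:potchkai}.
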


\begin{proposition}  
Let $\d$ be a partial matching on $T(\lk, \uk)$ and $\d_0$ the associated pairing on $T(\lk[\d], \uk)$.  If $\d$ is a cyclic, then $\d_0$ is acyclic, also.
\end{proposition}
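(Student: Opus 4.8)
The plan is to push the statement down to the pure relation algebra of \S\ref{sec:relations}, exploiting that the row pivot $(\lk,\uk)\mapsto(\lk[\d],\uk)$ is realised by left multiplication with a base‑change automorphism whose support is governed by the M\"obius inversion formula of \S\ref{sec:mobius}. Write $A=T(\lk,\uk)$, $R=\Supp(A)$, and $B=T(\lk[\d],\uk)$, $S=\Supp(B)$; by hypothesis $\d$ is acyclic on $R$, and I must show $S_{\d_0}$ and $S^{\d_0}$ are acyclic. First I would unwind the definition of $\lk[\d]$ just far enough to identify $\d_0$ with $\d$: since $\lk[\d]$ arises from $\lk$ by deleting the maps indexed by $\d\der\sharp$ and adjoining, for each column $c\in\d\der\flat$, one new map built from $T$ applied to $c$, and since $\d_0$ re‑pairs each such new row‑map with the column $c$ it came from, the pairing bijections of $\d$ — which identify the index set of $\lk[\d]$ with that of $\lk$ — carry $\d_0$ exactly onto $\d$. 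Working henceforth under this identification, $S$ is a relation on $\lk\times\uk$, $\d_0=\d$, and the goal becomes that $S_\d$ and $S^\d$ are acyclic.

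The crux is the inclusion $S\su R_\d R$. By the multiplication rule for matrix representations one has $B=1(\lk[\d],\lk)\,A$, hence $S\su\Supp(1(\lk[\d],\lk))\,R$, so it suffices to show $\Supp(1(\lk[\d],\lk))\su R_\d$. Here I would invoke the preceding Lemma, which identifies $R_\d$ — under the same pairing — with the transitive reflexive closure of $\Supp(1(\lk,\lk[\d]))$; in particular $R_\d$ contains $\Supp(1(\lk,\lk[\d]))$ and is transitively closed. Since $\d$ is acyclic on $R$, the relation $R_\d$ is acyclic, so $1(\lk,\lk[\d])$, viewed as $\phi(\lk,\lk)$ for the base‑change automorphism $\phi$ of $W$ carrying $\lk$ onto $\lk[\d]$, has acyclic support; Theorem \ref{thm:mobiusgeneral} then expresses the entries of its inverse $1(\lk[\d],\lk)=\phi\inv(\lk,\lk)$ as signed sums of path products along $\Supp(1(\lk,\lk[\d]))$, so $\Supp(1(\lk[\d],\lk))$ lies inside the transitive reflexive closure of $\Supp(1(\lk,\lk[\d]))$, i.e.\ inside $R_\d$. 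Combining the two inclusions yields $S\su R_\d R$.

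With this in hand the proof closes by a short descent. Because $R^\d$ is reflexive, $R_\d R\su R_\d R R^\d$, and because $\d$ is acyclic on $R$, Corollary \ref{cor:clearingpattern} gives that $\d$ is acyclic on $R_\d R R^\d$. The operations $R\mapsto R_\d$ and $R\mapsto R^\d$ are monotone in $R$, and any sub‑relation of an acyclic relation is acyclic (Lemma \ref{lem:acyclic}); consequently acyclicity of a pairing on a relation descends to every sub‑relation that still contains the pairing. Applying this along the chain $\d\su S\su R_\d R\su R_\d R R^\d$ shows that $\d$ is acyclic on $S$, that is, $\d_0$ is acyclic on $\Supp(T(\lk[\d],\uk))$, as required.

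The step I expect to be the real obstacle — and the one deserving care in the write‑up — is the combination of the opening identification with the appeal to the preceding Lemma: making precise, from the notation for $\lk[\d]$ and the defining relation $g\,T\d\der\flat=1$, that $\d_0$ becomes $\d$ under the pairing, and verifying that $1(\lk,\lk[\d])$ genuinely carries the acyclic support the preceding Lemma attributes to it, so that Theorem \ref{thm:mobiusgeneral} legitimately bounds the support of the inverse base change. Everything downstream is composition of relations already packaged in \S\ref{sec:relations}, and costs essentially nothing.
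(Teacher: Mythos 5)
Your proof is correct and follows essentially the same route as the paper's: identify $\d_0$ with $\d$ under the pairing bijections, factor $T(\lk[\d],\uk)=1(\lk,\lk[\d])\inv\,T(\lk,\uk)$, bound $\Supp(1(\lk[\d],\lk))$ inside $R_\d$ via M\"obius inversion, deduce $\Supp(T(\lk[\d],\uk))\su R_\d R\su R_\d R R^\d$, and conclude from Proposition~\ref{prop:holdacyclic} (equivalently Corollary~\ref{cor:clearingpattern}) with a descent to sub-relations. You have merely spelled out in more detail the $\d_0\leftrightarrow\d$ identification and the M\"obius step that the paper treats as immediate.
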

\begin{proof}  
That $\d$ is a pairing on $T(\lk[\d], \uk)$ holds by fiat.  To see that $\d$ is acyclic, let $R$ denote the support of $T(\lk, \uk)$, and note
\begin{align}
T(\lk[\d], \uk) = 1(\lk[\d], \lk) T(\lk, \uk) = 1( \lk,\lk[\d])\inv T(\lk, \uk).  
\label{eq:holdacyclic}
\end{align}
The transitive closure of $\Supp(1 (\lk, \lk[\d]))$ is the acyclic induced relation $R_\d$, so the righthand side of \eqref{eq:holdacyclic} lies in $R_\d R$, by M\"obius inversion.  Proposition \ref{prop:holdacyclic} implies that $\d$ is acyclic in $R_\d RR^\d$, and so in \eqref{eq:holdacyclic} \emph{a foriori}.
\end{proof}


\begin{corollary} 
\label{cor:potchkai} 
Any transitive relation that extends the support of $1(\uk[\d], \uk)$ extends the support of $1(\uk[\d_0], \uk)$, also.
\end{corollary}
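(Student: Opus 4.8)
The plan is to reduce the corollary to one inclusion of relations and then prove that inclusion by factoring a change-of-coordinates array. Write $R=\Supp(1(\uk[\d],\uk))$ and $R'=\Supp(1(\uk[\d_0],\uk))$, each read as a relation on $\uk\times\uk$ through the pairing functions attached to $\d$ and $\d_0$, and let $\overline R$ denote the transitive reflexive closure of $R$. Any transitive relation containing $R$ also contains $\overline R$, so the corollary is equivalent to the bare inclusion $R'\su\overline R$: the whole statement is a comparison of the sparsity of the direct column exchange $\uk[\d]$ with that of the column exchange $\uk[\d_0]$ carried out after the row exchange $\lk[\d]$.

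First I would use that transition arrays compose, $1(\uk[\d_0],\uk)=1(\uk[\d_0],\uk[\d])\cdot 1(\uk[\d],\uk)$, so that $R'\su\Supp\!\big(1(\uk[\d_0],\uk[\d])\big)\cdot R$; the crux is then to pin down the correction factor $1(\uk[\d_0],\uk[\d])$. Relabelling as in the proof of Proposition~\ref{prop:holdacyclic} so that $\d$ is the diagonal matching on a set $K$, the structures $\uk[\d]$ and $\uk[\d_0]$ are both obtained from $\uk$ by trading the maps indexed by $K$ for new ones: the new maps of $\uk[\d]$ are the $K$-rows of $A:=T(\lk,\uk)$, while those of $\uk[\d_0]$ are the $K$-rows of the row-pivoted array $A':=1(\lk[\d],\lk)\,A$, which by the construction of the induced pairing $\d_0$ has identity block over $K\times K$. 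Expressing each new functional of $\uk[\d_0]$ in terms of the new functionals of $\uk[\d]$, I expect to obtain that $1(\uk[\d_0],\uk[\d])$ is block diagonal — the identity off the $K$-block, and $A(\d\der\sharp,\d\der\flat)$ (or its inverse, depending on convention) on it. This identification, which amounts to recognizing the passage between the two column exchanges as multiplication by the pivot block, is the step I expect to be the main obstacle, since it requires careful bookkeeping of the pairing functions and of which operator — $T$ or its row-pivot — feeds each structure.

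With that identification in hand, I would finish formally. By the support-of-inverse estimate $\Supp(M^{-1})\su\overline{\Supp(M)}$ valid for any invertible array (the relational content of Theorem~\ref{thm:mobiusgeneral}), and since $\overline{\Supp(M)}=\overline{\Supp(M^{-1})}$, the convention is immaterial and $\Supp\!\big(1(\uk[\d_0],\uk[\d])\big)\su\overline{\Supp\big(A(\d\der\sharp,\d\der\flat)\big)}$. But the new maps of $\uk[\d]$ are by construction the full $\d\der\sharp$-rows of $A$, so $\Supp\big(A(\d\der\sharp,\d\der\flat)\big)$ sits inside $R$ under our identification, whence $\Supp\!\big(1(\uk[\d_0],\uk[\d])\big)\su\overline R$. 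Composing, $R'\su\overline R\cdot R\su\overline R$, because $\overline R$ is transitive and contains $R$; this is the inclusion $R'\su\overline R$ we wanted. As a consistency check, by the lemma preceding the corollary $\overline{R'}$ equals the induced relation $R^\d$, while $\overline R$ typically contains it strictly — so the asymmetry between $\uk[\d]$ and $\uk[\d_0]$ in the statement is real, not an artifact of the argument.
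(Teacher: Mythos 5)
Your argument is correct and takes a genuinely different route from the paper's. The paper reaches the conclusion through the relational machinery of \S\ref{sec:relations}: it identifies $R^\d$ with the transitive reflexive closure of $\Supp(1(\uk[\d],\uk))$ (where $R=\Supp(T(\lk,\uk))$), and then invokes Proposition~\ref{prop:holdacyclic} ($R^\d=S^\d$ for $S=R_\d R R^\d$) to see that whatever extends $R^\d$ also extends the support of $1(\uk[\d_0],\uk)$. You bypass that abstraction entirely and argue at the level of arrays: factor $1(\uk[\d_0],\uk)=1(\uk[\d_0],\uk[\d])\,1(\uk[\d],\uk)$, pin down the correction factor, and bound its support. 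The key computational step you flag as the main obstacle is indeed right and is worth recording: writing $a_{11}=A(\d\der\sharp,\d\der\flat)$, $a_{12}$ for the complementary block, and relabelling so $K=\d\der\flat$, one has
$$
1(\uk[\d_0],\uk[\d]) \;=\; 1(\uk[\d_0],\uk)\,1(\uk[\d],\uk)^{-1}
\;=\;
\left(\begin{array}{cc}1 & a_{11}^{-1}a_{12}\\ 0 & 1\end{array}\right)
\left(\begin{array}{cc}a_{11}^{-1} & -a_{11}^{-1}a_{12}\\ 0 & 1\end{array}\right)
\;=\;
\left(\begin{array}{cc}a_{11}^{-1} & 0\\ 0 & 1\end{array}\right),
$$
so the passage from the direct column exchange to the row-pivoted one is indeed block-diagonal multiplication by the pivot-block inverse. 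From there, M\"obius inversion gives $\Supp(a_{11}^{-1})\su\overline{\Supp(a_{11})}\su\overline{R}$ (using $\Supp(a_{11})\su R$), and the composition step closes the argument. What the paper's route buys is economy (it reuses Proposition~\ref{prop:holdacyclic}, which was proved for other purposes) and applicability at the level of abstract relations; what yours buys is an explicit identification of the transition array that the paper never writes down, and which is the mechanism underlying the sparsity claim.

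Two small cautions. First, the parenthetical ``valid for any invertible array'' slightly overstates Theorem~\ref{thm:mobiusgeneral}, which is proved under an acyclicity hypothesis; in context this is harmless, since the corollary sits immediately after a proposition assuming $\d$ acyclic, and $\Supp(a_{11})$ inherits acyclicity from $R^\d$, so the cited theorem does apply here. Second, your closing ``consistency check'' takes the lemma preceding the corollary at face value (it writes $\uk[\d_0]$), and concludes $\overline{R'}=R^\d$ while $\overline{R}$ typically contains it strictly. But the paper's own proof of the corollary opens by asserting that $R^\d$ is the transitive reflexive closure of $\Supp(1(\uk[\d],\uk))$, i.e.\ $\overline{R}=R^\d$ --- which strongly suggests the lemma contains a typo ($\uk[\d]$ was intended) and that it is $\overline{R'}$ which typically sits strictly \emph{inside} $R^\d=\overline{R}$. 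Your main proof in fact establishes exactly this corrected picture; only the side remark has the identification backwards. Neither point affects the soundness of the argument you give.
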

\begin{proof}  
If $R$ is the support of $T(\lk, \uk)$ then $R^\d$ is the transitive reflexive closure of the support of  $1(\uk[\d], \uk)$.  Proposition \ref{prop:holdacyclic} implies that any transitive relation extending $R^\d$ extends $(R_\d RR^\d)^{\d_0}$, also. The latter extends $1(\uk[\d_0], \d)$, and the desired conclusion follows.
\end{proof}

\begin{proposition}  
An invertible array with acyclic support has exactly one perfect matching.
\end{proposition}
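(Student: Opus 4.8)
The plan is to triangularise $A$ with the linear order supplied by acyclicity --- which at once pins down the diagonal as the only possible perfect matching --- and then to extract from invertibility that every diagonal entry is nonzero. Since ``acyclic'' is a property of binary relations, $A$ is indexed by a single finite set, say $A = T(\wk,\wk)$ with $\wk$ finite; write $W_i$ for the (nonzero) object indexed by $i \in \wk$, so that $A(i,j) : W_j \to W_i$ and $\bigoplus_i W_i$ is the object on which $T$ acts. By Lemma \ref{lem:acyclic}, $\Supp(A)$ extends to a linear order $\preceq$ on $\wk$, so after relabelling
\[
\Supp(A) \su \{(i,j) \in \wk \times \wk : i \preceq j\};
\]
that is, $A$ is triangular with respect to $\preceq$.

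The uniqueness half is combinatorial. A perfect matching is a permutation $\sigma$ of $\wk$ whose graph lies in $\Supp(A)$, hence $i \preceq \sigma(i)$ for every $i$; iterating gives $i \preceq \sigma(i) \preceq \sigma^2(i) \preceq \cdots$, and since $\sigma$ has finite order this chain cycles back to $i$, so antisymmetry of $\preceq$ forces $\sigma(i) = i$. Thus the diagonal $\ad(\wk \times \wk)$ is the unique candidate for a perfect matching.

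For existence it remains to verify $\ad(\wk\times\wk) \su \Supp(A)$, i.e.\ that $A(i,i) \neq 0$ for all $i$; the diagonal is then automatically a partial matching of $A$ in the sense of \S\ref{sec:exchangerelations}, since the submatrix it selects is $A$ itself, which is invertible. I would argue by contradiction: supposing $A(k,k) = 0$, set $F = \bigoplus_{i \preceq k} W_i$ and $F' = \bigoplus_{i \prec k} W_i$. Triangularity makes every column of $A$ indexed by some $j \preceq k$ supported on rows $\preceq k$, and the column indexed by $k$ omits row $k$ because $A(k,k) = 0$; hence $A$ carries $F$ into $F'$. Since $A$ is invertible it is a monomorphism, so $A|_F : F \to F'$ is monic --- impossible, as $F = F' \oplus W_k$ with $W_k \neq 0$ and a finite-length object admits no monomorphism into a proper subobject of itself.

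The hard point is this last step: uniqueness drops out as soon as the triangular form is in hand, so the real content of the proposition is that invertibility forbids a vanishing diagonal entry. I expect the length (equivalently rank) count above to be the cleanest argument in the finite-dimensional setting relevant here; alternatively one can invoke Theorem \ref{thm:mobiusgeneral} to see that $\Supp(A\inv)$ is again $\preceq$-triangular, and then read $1_{W_i} = A\inv(i,i)\, A(i,i)$ off $A\inv A = 1$ to conclude $A(i,i) \neq 0$.
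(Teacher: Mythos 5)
Your proof is correct, and it takes a genuinely different route from the paper's. The paper argues by a greedy recursion: take a linear order $\preceq$ extending $\Supp(A)$, observe that the column indexed by the $\preceq$-minimum is supported on exactly one row (and is nonzero since $A$ is a monomorphism), note that this pair is forced into any perfect matching, then delete that row and column and recurse on the resulting strictly smaller array, which is still acyclic and still invertible (a Schur-complement fact, since the deleted column's only off-diagonal entries vanish). Your version triangularizes once and cleanly splits ``exactly one'' into its two halves: uniqueness is the purely combinatorial permutation-cycle argument, and existence is the claim that every diagonal entry is nonzero, which you settle by a length count on monomorphisms --- the abelian-category rendering of ``an upper-triangular matrix with a zero diagonal entry is singular.'' Both routes rest on the same linear order, but yours does the bookkeeping all at once rather than one pair at a time, and the separation of the combinatorial (uniqueness) and algebraic (existence) content is arguably cleaner than the paper's interleaved recursion.

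One caution on the alternative you float at the end: invoking Theorem \ref{thm:mobiusgeneral} to read $1_{W_i} = A\inv(i,i)\,A(i,i)$ from a triangular $A\inv$ is circular here, since the proof of that theorem factors $A = d - t$ and multiplies by $d\inv$, so it already presupposes that the diagonal entries of $A$ are invertible --- exactly what your existence step needs to establish. The length-count argument you give as the primary route is the right one to use. A very minor wording point there: to conclude $A(F)\su F'$ you also need that every column indexed by $j\prec k$ misses row $k$; this is immediate from triangularity ($\Supp$ of that column lies in $\{i : i\preceq j\}\su\{i : i \prec k\}$), but your sentence only explicitly says that column $k$ misses row $k$.
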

\begin{proof}
Every invertible acyclic array has a column $c$ supported on a single row, $r$.   This row-column pair  belongs to every perfect matching.  The rows and columns complementary to $r$ and $c$  index a strictly smaller invertible acyclic array, and it may argued similarly that this, too, has a row-column pair that must be contained in any perfect matching.  The desired conclusion follows by a simple induction.
\end{proof}

\begin{proposition}  
If  $A$ is invertible with acyclic support, then the transitive closures of $\Supp(A)$ and $\Supp(A\inv)$ are identical.
\end{proposition}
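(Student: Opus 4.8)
The plan is to read this statement as a direct corollary of M\"obius inversion (Theorem~\ref{thm:mobiusgeneral}). Write $I$ for the common index set and present $A$ in the form $T(\wk,\wk)$ for $\wk$ the canonical coproduct structure on $\D(T)$, so that Theorem~\ref{thm:mobiusgeneral} applies. Before invoking it I would record the triangularity of $A$: since $\Supp(A)$ is acyclic, Lemma~\ref{lem:acyclic} supplies a linear order $\preceq$ on $I$ with $A(i,j)\neq 0\Rightarrow i\preceq j$, so $A$ is \emph{upper triangular} with respect to $\preceq$. Reading off the $\preceq$-maximal index shows its row of $A$ is supported on the diagonal alone, and an easy induction --- this is exactly the content of the preceding proposition identifying the unique perfect matching of an invertible acyclic array with $\ad(I\times I)$ --- shows every diagonal entry $A(i,i)$ is invertible. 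Hence $\ad(I\times I)\su\Supp(A)$, and the transitive closure of $\Supp(A)$ coincides with its reflexive--transitive closure, which is a partial order $\preceq_A$ extending $\Supp(A)$.

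Next I would feed this into M\"obius inversion. Theorem~\ref{thm:mobiusgeneral} gives $A\inv(i,j)=\sum_{p\in\pc_t(i,j)}(-1)^{\ell(p)+1}A[p]$, where $t$ is the strictly-triangular part of $A$; in particular $A\inv(i,j)\neq 0$ forces $\pc_t(i,j)\neq\emptyset$, i.e.\ the existence of a chain $i=p_0,p_1,\ld,p_m=j$ with $A(p_{k-1},p_k)\neq 0$ for each $k$. Thus $(i,j)$ lies in the reflexive--transitive closure of $\Supp(A)$, that is $(i,j)\in\preceq_A$; when $m=0$ this reads $i=j$, and $(i,i)\in\Supp(A)\su\preceq_A$ by the previous step. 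Consequently $\Supp(A\inv)\su\preceq_A$, and passing to transitive closures, the transitive closure of $\Supp(A\inv)$ is contained in the transitive closure of $\Supp(A)$.

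For the opposite inclusion I would argue by symmetry. The array $A\inv$ is again invertible, and $\Supp(A\inv)\su\preceq_A$ sits inside a partial order, so by Lemma~\ref{lem:acyclic} it too is acyclic; moreover its diagonal entries are invertible, since acyclicity of $\Supp(A)$ removes every nontrivial closed walk from the M\"obius sum and hence $A\inv(i,i)=A(i,i)\inv$. Therefore the two steps above apply verbatim to $A\inv$ in place of $A$ (whose inverse is $A$), yielding that the transitive closure of $\Supp(A)$ is contained in the transitive closure of $\Supp(A\inv)$. Combining the two inclusions gives the asserted equality.

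The substantive work is carried entirely by Theorem~\ref{thm:mobiusgeneral} --- the conversion of a nonzero entry of $A\inv$ into a path through $\Supp(A)$ --- so the only point needing care is the bookkeeping around the diagonal: one must know that an invertible array with acyclic support has invertible diagonal entries (so that ``transitive closure'' and ``reflexive--transitive closure'' agree for both $A$ and $A\inv$). In the linear case this is immediate, and in a general preadditive setting it follows from the triangularization above together with the perfect-matching proposition; I expect it to be a routine lemma rather than a genuine obstacle.
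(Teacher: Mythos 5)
Your proof is correct and follows the same line as the paper's: M\"obius inversion (Theorem~\ref{thm:mobiusgeneral}) gives the containment of the transitive closure of $\Supp(A\inv)$ in that of $\Supp(A)$, and a symmetric argument applied to $A\inv$ gives the reverse. Where the paper dispatches the second inclusion with a bare ``by symmetry'' (and even states the first inclusion as $\Supp(A\inv)\su\Supp(A)$, which is false without passing to transitive closures), you correctly observe that the symmetric argument requires knowing $A\inv$ is itself invertible with acyclic support and invertible diagonal, and you supply these --- a real gap in the paper's two-sentence proof that your write-up closes.
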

\begin{proof}  
One has $\Supp(A \inv) \su \Supp(A)$ by M\"obius inversion.  The reverse inclusion holds by symmetry.
\end{proof}

\subsubsection{Schur Complements}

Propositions \ref{prop:schur1}, \ref{prop:schurexchange}, and \ref{prop:jordanexchange} are a elementary consequences of the splitting lemma (Lemma \ref{lem:splitting}).  The proofs are left as exercises to the reader.

\begin{proposition} \label{prop:schur1}  If $\lk_0\op T \lk_0$ is an isomorphism, then its Schur complement in $T(\lk\op, \lk)$ is the block   indexed by $(\lk\op_1,\lk_1)$ in the matrix representation of $T$ with respect to $(\lk\op,\lk)[\lk\op_0,\lk_0]$.
\end{proposition}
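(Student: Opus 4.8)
The plan is to make the notation $(\lk\op,\lk)[\lk_0\op,\lk_0]$ concrete and then read off the claimed block from the kernel and cokernel formulae already established. Write $a_{ij}=\lk_i\op T\lk_j$, so that $T(\lk\op,\lk)$ is the block array $\left(\begin{smallmatrix}a_{00}&a_{01}\\a_{10}&a_{11}\end{smallmatrix}\right)$ with Schur complement $\sk=a_{11}-a_{10}a_{00}\inv a_{01}$, and set $\d=\{(\lk_0\op,\lk_0)\}$. Specializing the construction of \S\ref{sec:exchangerelations} to this singleton matching, the row-first pivot replaces $\lk\op$ by the coproduct structure $\{\,T\lk_0,\ \lk_1\op\,\}$ on $\D\op(T)$ and replaces $\lk$ by the coproduct structure $\{\,\lk_0,\ k\,\}$ on $\D(T)$, where $k=\lk_1-\lk_0 a_{00}\inv a_{01}$ is the kernel map of Proposition \ref{prop:kernelformula}; the hypothesis that $a_{00}$ is invertible is precisely what the Exchange Lemma (Lemma \ref{lem:exchange}) requires for these families to be (co)product structures again.

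The key step is to identify the musical duals indexing the $(\lk_1\op,\lk_1)$ position of the pivoted array. In the product structure dual to $\{\,T\lk_0,\ \lk_1\op\,\}$, the partner of $\lk_1\op$ is the cokernel map $k\op=\lk_1\op-a_{10}a_{00}\inv\lk_0\op$ of Proposition \ref{prop:cokernelformula}: the two equations defining a dual pair, $k\op(T\lk_0)=a_{10}-a_{10}a_{00}\inv a_{00}=0$ and $k\op\lk_1\op=1$, hold by inspection. Dually, in the product structure dual to $\{\,\lk_0,\ k\,\}$ the partner of $\lk_0$ is $a_{00}\inv\lk_0\op T$, since $a_{00}\inv\lk_0\op T\lk_0=1$ and $\lk_0\op Tk=a_{01}-a_{00}a_{00}\inv a_{01}=0$. (This is nothing but the instance of the Splitting Lemma (Lemma \ref{lem:splitting}) carried out in \S\ref{subsec:exchange}, given that $k$ was built as a kernel to $\lk_0\op T$ and $k\op$ as a cokernel to $T\lk_0$.) With these identifications, the four blocks of $T$ relative to $(\lk\op,\lk)[\lk_0\op,\lk_0]$ become one-line compositions: the $(\lk_0\op,\lk_0)$ block is $a_{00}\inv\lk_0\op T\lk_0=1$; the $(\lk_1\op,\lk_0)$ block is $k\op(T\lk_0)=0$ because $k\op$ is cokernel to $T\lk_0$; the $(\lk_0\op,\lk_1)$ block is $a_{00}\inv\lk_0\op Tk=0$ because $k$ is kernel to $\lk_0\op T$; and the $(\lk_1\op,\lk_1)$ block is $k\op Tk$, which is one of the three expressions for $\sk$ derived in Chapter \ref{ch:exchangeformulae}. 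The equality of this block with $\sk$ is the statement of the proposition (and the pivoted array turns out to be $\left(\begin{smallmatrix}1&0\\0&\sk\end{smallmatrix}\right)$).

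The only real obstacle is the bookkeeping of the middle paragraph: unwinding the $[\d]$ notation and verifying that the musical partners in the pivoted structures are exactly the maps $k$ and $k\op$ of Propositions \ref{prop:kernelformula}--\ref{prop:cokernelformula}. That step is purely formal --- it is the Splitting Lemma in disguise --- so once it is done the proposition falls out by a single composition, which is why it is billed as an elementary consequence of that lemma. In writing it up I would first record the two identifications as a short lemma (or simply point to \S\ref{subsec:exchange}) and then carry out the four block computations in one display.
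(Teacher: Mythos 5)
Your proof is correct and follows precisely the route the paper indicates: the paper states this as an elementary consequence of the Splitting Lemma and leaves it as an exercise, and your argument unwinds the row-first pivot $(\lk\op,\lk)[\lk\op_0,\lk_0]$ into the coproduct structures $\{T\lk_0,\lk_1\op\}$ and $\{\lk_0,k\}$, identifies the relevant musical partners as $k$ and $k\op$ via the Exchange/Splitting Lemmas (as in \S\ref{subsec:exchange}), and reads off $k\op Tk=\sk$ from the Schur-complement section. The four block computations are right, including the fact that a pivot (as opposed to a mere exchange) normalizes the $(\lk\op_0,\lk_0)$ block to $1$.
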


\begin{proposition} 
\label{prop:schurexchange} 
Let $\d$ be a partial matching on  $T(\lk, \uk)$, and let $f$ be any element of $\uk$ not removed by the operation $\uk \mapsto \uk[\d_0]$.  Then the dual to $f$ in $\uk[\d_0]^\flat$ is 
$$
e_k f^{\flat}
$$
where $e_k$ is idempotent projection onto the null space of $\d \der \sharp T$ along $\d \der \flat$, and $h^\sharp$ is the dual to $f$ in $\uk$.  Likewise, if $f\op$ is any element of $\lk$ not removed by $\lk \mapsto \lk[\d]$, then the dual to $f\op$ in $\lk[\d]^\sharp$ is
$$
f^{op \; \sharp}  e_{k \op}
$$
where $e_{k\op}$ is the natural opposite to $e_k$   (that is, the  idempotent so that $e_{k^{op}} T \d\der{\flat} = 0$   and $h e_{k^{op}} = h$ iff $h$ vanishes on the image of $T \d \der \flat$), and where $f^{op\; \flat}$ is the dual to $f\op$ in $\lk$.   Under these assumptions
\begin{align}
 f ^{op \; \flat} e_{k\op} T f^{\sharp} = f ^{op \; \flat} e_{k\op} T e_k f^{\sharp} = f ^{op \; \flat} T e_k f^{\sharp} .   \label{eq:schurformula}
\end{align}
The matrix representation of $T$ with respect to $(\lk,\uk)[\d]$ has form
$$
\diag(1, \sk)
$$
where $\sk$ is the Schur complement of the invertible submatrix indexe $\d\der \sharp \cup \d \der \flat$.  The elements of $\sk$ are given by \eqref{eq:schurformula}.
\end{proposition}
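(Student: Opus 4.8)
The plan is to reduce everything to the two-element situation of Examples \ref{r2} and \ref{r4}, Proposition \ref{prop:idemker} and the subsection ``Exchange'' of \S\ref{sec:kernels} by \emph{aggregating} the indices matched by $\d$, and then to \emph{disaggregate} at the very end. Write $K=\d\der\sharp$ and $L=\d\der\flat$. The first step is a routine check from the definitions (together with the lumping discussion of \S\ref{sec:biproducts}) that $\{\,\oplus(\lk-K),\ \oplus K\,\}$ is a two-element coproduct structure on $W\op$ and $\{\,\times(\uk-L),\ \times L\,\}$ a two-element product structure on $W$. With respect to the induced block decomposition the distinguished corner block is precisely the invertible submatrix $T(\d\der\sharp,\d\der\flat)$ carried by $\d$, so Propositions \ref{prop:kernelformula}, \ref{prop:cokernelformula} and \ref{prop:idemker} apply to this two-element data \emph{verbatim}: they produce a kernel $k$ to $\d\der\sharp\,T$ which Proposition \ref{prop:idemker} identifies with the composite of the unmatched block of $\uk$ with the idempotent $e_k$ projecting onto $\K(\d\der\sharp\,T)$ along $\I(\d\der\flat)$, together with the dual cokernel $k\op$ to $T\,\d\der\flat$ expressed through the opposite idempotent $e_{k\op}$ named in the statement.

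Next I would run the ``Exchange'' subsection on this two-element data to identify $k$ as the dual of the aggregated unmatched block of $\uk$ inside the structure underlying $\uk[\d_0]$, and $k\op$ as the dual of the unmatched block of $\lk$ inside $\lk[\d]$. The disaggregation step is then pure bookkeeping: the dual of an aggregate of (co)product-structure maps is the aggregate of their duals, so restricting $k$ to the single summand indexed by an unmatched $f$ returns $e_k\,f^{\flat}$, which is therefore the dual of $f$ in $\uk[\d_0]$; symmetrically one extracts $f^{op\,\sharp}\,e_{k\op}$ as the dual of an unmatched $f\op$ in $\lk[\d]$. That disposes of the first two assertions.

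For the three-way identity \eqref{eq:schurformula} I would argue only from the image/kernel data of the two complementary idempotents: $1-e_k$ is the projection with image $\I(\d\der\flat)$, so the composite $T\,(1-e_k)$ has image $\I(T\,\d\der\flat)$, which $e_{k\op}$ annihilates by construction; hence $e_{k\op}\,T\,(1-e_k)=0$, and the first equality follows after precomposing with $f^{op\,\flat}$ and postcomposing with $f^{\sharp}$. The second equality follows by the symmetric argument, with the roles of $e_k$ and $e_{k\op}$ interchanged. Finally the block form: the pair $(\lk,\uk)[\d]$ is obtained by a row-first \emph{pivot} --- not a mere clearing --- so, exactly as in the Schur-complement computation of \S\ref{sec:kernels} read off block-by-block, the matched corner becomes an identity block, the two off-diagonal blocks vanish because $k$ and $k\op$ were manufactured as a kernel and a cokernel, and the one surviving block is $k\op\,T\,k$, whose entries are the expressions in \eqref{eq:schurformula}. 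I expect the real difficulty to lie not in any of the algebra --- which is entirely that of the two-element case --- but in the disciplined tracking of the musical operators $\sharp$ and $\flat$ through the aggregation, and of the passage between the matchings $\d$ and $\d_0$, so that the two-element reductions are seen to reconstruct the $n$-element statement index-for-index.
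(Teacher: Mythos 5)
The paper offers no proof of Proposition~\ref{prop:schurexchange}: it is explicitly left as an exercise, with the remark that the result is ``an elementary consequence of the splitting lemma.''  So there is nothing in the text to compare against directly, but your plan is a sound and natural elaboration of that hint, and I believe it closes the gap.  Your aggregation step is valid: since $\oplus\lk$ and $\times\uk$ are isomorphisms, the two-block families $\{\oplus(\lk-\d\der\sharp),\oplus\d\der\sharp\}$ and the dual product on the domain are genuine two-element (co)product structures, and the distinguished corner of the aggregated array is $T(\d\der\sharp,\d\der\flat)$, invertible by the definition of a partial matching on $T(\lk,\uk)$.  This puts you squarely in the hypotheses of Propositions~\ref{prop:kernelformula}, \ref{prop:cokernelformula}, \ref{prop:idemker} and the exchange discussion of~\S\ref{subsec:exchange}.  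The disaggregation step --- that the dual of a single unmatched $f$ is obtained by precomposing the aggregated kernel $k=e_k\bigl(\oplus(\uk-\d\der\flat)^\flat\bigr)$ with the canonical inclusion of the $f$-summand, returning $e_k f^\flat$ --- is correct and is the kind of ``pure bookkeeping'' you anticipate, though I would spell it out since it is where one actually uses that $e_k$ is applied \emph{after} the summand map.  Your idempotent argument for~\eqref{eq:schurformula} is the right one and is indeed the most economical route: $e_{k\op}T(1-e_k)=0$ because $\I(1-e_k)=\I(\oplus\d\der\flat)$ maps under $T$ into $\K(e_{k\op})$, while $(1-e_{k\op})Te_k=0$ because $\I(e_k)=\K\bigl((\times\d\der\sharp)T\bigr)$ lands under $T$ inside $\K(\times\d\der\sharp)=\I(e_{k\op})$.  (Calling this ``the symmetric argument'' is fine, but note it is not a literal role-swap of $e_k$ and $e_{k\op}$; it uses the image of $e_k$ rather than the kernel of $e_{k\op}$, so you should write it out.)  Your observation that the $(1,1)$ block becomes the identity because $(\lk,\uk)[\d]$ is a \emph{pivot} and not a mere clearing is exactly the right point to make, and it is what distinguishes this proposition from the plain block-clearing identity.

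One caution on the target statement itself: the musical superscripts in the published formula \eqref{eq:schurformula} do not compose under the paper's right-to-left convention (e.g.\ $Tf^\sharp$ with $f^\sharp:W\to y$ does not precompose with $T:W\to W\op$), and the phrase ``where $h^\sharp$ is the dual to $f$ in $\uk$'' introduces an undeclared $h$.  Comparing with the characterizations $k=e_k\lk_2$ and $k\op=\lk_2^{op\,\sharp}e_{k\op}$ from Proposition~\ref{prop:idemker}, the entries of $\sk$ should read $f^{op\,\sharp}e_{k\op}Te_k f^\flat$ (and the two degenerate variants).  Your proof plan is pitched at the conceptual level and is insensitive to this slip, but when you carry out the disaggregation you will have to place $\sharp$ and $\flat$ correctly, so this is precisely the ``disciplined tracking of musical operators'' you flag as the real difficulty --- and you are right to flag it, since the published statement itself is unreliable on this point.
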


\begin{proposition} \label{prop:jordanexchange} Suppose that $\{f,g,h\}$ is a coproduct structure on $W$.  If $T^2 = 0$ and $g^\sharp T f$ is invertible, then $(f,g)$ is a Jordan pair, and $e_k h$ complements $Tf$ in $K(g^\sharp T)$, where $e_k$ is idempotent projection onto the kernel of $g^\sharp T$ along $f$.  With respect to the ordered coproduct structure $(e_k h, f, Tf)$, $T$ has matrix representation
\vspace{.2cm}

$$
\left(
\arraycolsep=7pt\def\arraystretch{.7}
\begin{array}{ccc}
\tk & 0 & 0 \\
0 & 0 & 0 \\
0 & 1 & 0.
\end{array}
\right)
$$
\vspace{.2cm}

\noindent where $e_{k\op}$ is the natural dual to $e_k$  (that is, the  idempotent so that $e_{k^{op}} T f = 0$   and $h e_{k^{op}} = h$ iff $h$ vanishes on the image of $Tf$), $h^\sharp$ is the dual to $h$ in $\{f,g,h\}$, and $\tk$ is any of the following
\begin{align*}
h^\sharp e_{k^{op}} T h && h ^\sharp e_{k^{op}} T e_k h && h^\sharp T e_k h.
\end{align*}
That is, $\tk$ is the submatrix indexed by $(e_k h, e_k h)$ in the Schur complement of $g^\sharp T f$.
\end{proposition}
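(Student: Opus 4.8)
The plan is to deduce the whole statement from the Splitting Lemma (Lemma~\ref{lem:splitting}) and the Exchange Lemma (Lemma~\ref{lem:exchange}), exploiting the vanishing identities that come for free: $g^\sharp f = g^\sharp h = h^\sharp f = 0$ (because $f,g,h$ lie in a single coproduct structure) and $g^\sharp T^2 f = 0$ (because $T^2 = 0$).

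First I would set $p := (g^\sharp T f)\inv g^\sharp T$. Since $g^\sharp T f$ is invertible, $p f = 1$, so Lemma~\ref{lem:splitting} produces a coproduct structure $\{f,k\}$ with $k$ kernel to $p$, and $\I(k) = \K(p) = \K(g^\sharp T)$; moreover $e_k = 1 - f(g^\sharp T f)\inv g^\sharp T$ is then exactly idempotent projection onto $\K(g^\sharp T)$ along $\I(f)$, as required. The assertion that $(f,g)$ is a Jordan pair is just the Exchange Lemma applied to $\{f,g,h\}$: invertibility of $g^\sharp(Tf)$ is precisely the condition permitting the exchange of $g$ for $Tf$.

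The crux is to show $\{e_k h,\, Tf\}$ is a coproduct structure on $\K(g^\sharp T)$; composing with $\{f,k\}$ then gives that $(e_k h, f, Tf)$ is a coproduct structure on $W$, which is the first claim. Here I would note that $g^\sharp T^2 f = 0$ lets $Tf$ corestrict to a morphism into $\K(g^\sharp T)$, that $s := (g^\sharp T f)\inv g^\sharp$ (restricted to $\K(g^\sharp T)$) left-inverts it, and hence by Lemma~\ref{lem:splitting} that $\{Tf, j\}$ is a coproduct structure on $\K(g^\sharp T)$, where $j$ is kernel to $s$. Then $g^\sharp(e_k h) = 0$ (immediate from the vanishing identities), so $e_k h$ factors as $e_k h = j\cdot\bar h$ with $\bar h\colon \D(h)\to\K(s)$, and $\bar h$ is invertible with inverse $h^\sharp$ restricted to $\K(s)$ (using $h^\sharp f = 0$, $h^\sharp h = 1$, and that $e_k$ restricts to the identity on $\K(g^\sharp T)$); the Exchange Lemma then promotes $\{Tf, j\}$ to $\{Tf, e_k h\}$. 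I expect this to be the main obstacle: over a general preadditive category one cannot simply count dimensions, so the complementarity of $e_k h$ and $Tf$ inside $\K(g^\sharp T)$ has to be routed entirely through the $\sharp/\flat$ calculus of the Splitting and Exchange Lemmas, and that bookkeeping is the only delicate part.

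Everything after that is formal matrix bookkeeping of the kind already rehearsed in this chapter. With respect to $(e_k h, f, Tf)$: the column indexed by $Tf$ vanishes since $T^2 f = 0$; the column indexed by $f$ is $(0,0,1)^{\mathsf T}$ since $Tf\in\I(Tf)$; and for the column indexed by $e_k h$, the element $T e_k h$ lies in $\I(T)\su\K(g^\sharp T) = \I(e_k h)\oplus\I(Tf)$ (using $T^2 = 0$ and the previous step), so its $f$-coordinate is $0$, while expanding $T e_k h = e_k h\cdot a + Tf\cdot c$ and applying $g^\sharp$ gives $(g^\sharp T f)\,c = g^\sharp T e_k h = 0$ (the last equality because $e_k h$ is valued in $\K(g^\sharp T)$), forcing $c = 0$; the surviving $(1,1)$ entry is $\tk := (e_k h)^\sharp T(e_k h)$ by definition. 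Finally, the three alternative expressions for $\tk$ and its reading as a block of the Schur complement $\sk$ of $g^\sharp T f$ follow precisely as in the Schur-complement computations of \S\ref{sec:kernels} and Proposition~\ref{prop:schurexchange}, using $(e_k h)^\sharp = h^\sharp e_{k\op}$ on $\K(g^\sharp T)$ together with $e_{k\op} Tf = 0$ and the defining property of $e_{k\op}$ to insert or delete the projections $e_k$, $e_{k\op}$ without changing the value.
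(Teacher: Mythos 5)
Your proposal is correct. The paper does not in fact spell out a proof of this proposition — it states only that Propositions \ref{prop:schur1}, \ref{prop:schurexchange}, and \ref{prop:jordanexchange} ``are elementary consequences of the splitting lemma'' and leaves the details as an exercise — and your argument fills in that exercise along exactly the line the paper signals: construct $\{f,k\}$ via Lemma \ref{lem:splitting} from the split mono $f$ against $(g^\sharp T f)\inv g^\sharp T$, corestrict $Tf$ to $\K(g^\sharp T)$ using $T^2=0$, split again to get $\{Tf,j\}$, and promote $j$ to $e_k h$ by Lemma \ref{lem:exchange}. You correctly isolate the only step with real content — showing $j^\sharp e_k h = \bar h$ is invertible without recourse to dimension counting — and the facts you cite ($h^\sharp f = 0$, $h^\sharp h = 1$, $e_k$ fixing $\K(g^\sharp T)$) together with the defining vanishings $g^\sharp j = 0$ and $e_k f = 0$ do yield $\bar h\,(h^\sharp j) = 1_{\K(s)}$ by direct computation (expand $h h^\sharp = 1 - e_f - e_g$ and kill the two correction terms), so the argument goes through in a general preadditive setting. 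The matrix computation and the three expressions for $\tk$ via $(e_k h)^\sharp = h^\sharp e_{k\op}$ are then the routine bookkeeping you describe.
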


%
%

\subsubsection{M\"obius Inversion}

\begin{proposition} 
\label{prop:schurmobius} 
Let $\d$ be an acyclic partial matching on $T(\lk, \uk)$.  If $A = 1( \lk,\lk[\d])$ and  $C = T(\lk[\d], \uk[\d_0])$, then
\begin{align*}
C(f,g) =
\begin{cases}
\dk_{f \der \flat ,g} &  f \in \d \der \sharp \;  or \; g \in \d \der \flat  \\
\sum_{h \in \lk} \sum_{\pc_A(f,h)} (-1)^{\ell(p)+1} A[p] T(h,g) & {\rm otherwise}.
\end{cases}
\end{align*}
\end{proposition}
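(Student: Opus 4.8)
The plan is to split along the two cases of the statement, and in both to trade the row-first pivot for a change-of-basis matrix whose inverse is then expanded by the generalized M\"obius formula. First, by Proposition~\ref{prop:schurexchange}, the array $C=T((\lk,\uk)[\d])$ has block form $\diag(1,\sk)$, where the identity block sits on the rows and columns matched by $\d$ (re-indexed through the pairing functions of $\d$) and $\sk$ is the Schur complement of the invertible submatrix $T(\lk,\uk)(\d\der\sharp,\d\der\flat)$. Transporting this decomposition through the pairing identifications of $\lk[\d]$ with $\lk$ and of $\uk[\d_0]$ with $\uk$, this says precisely that $C(f,g)=\dk_{f\der\flat,g}$ whenever $f\in\d\der\sharp$ or $g\in\d\der\flat$ (reading $f\der\flat$ as $f$ for $f\notin\d\der\sharp$), which is the first case; it also records that when $f\notin\d\der\sharp$ and $g\notin\d\der\flat$ the entry $C(f,g)$ is exactly the Schur-complement entry $\sk(f,g)$. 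So all that remains is to identify $\sk(f,g)$ with the path sum. (If one prefers to avoid Proposition~\ref{prop:schurexchange}, the same block structure can instead be read off directly from the explicit descriptions of $\lk[\d]$ and $\uk[\d_0]$ via the exchange lemma.)

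For the second case, observe that the operation $\uk\mapsto\uk[\d_0]$ deletes from $\uk$ the product maps indexed by $\d\der\flat$ and adjoins new ones, but leaves $\uk_g$ untouched for every $g\notin\d\der\flat$; hence for such $g$ one has $C(f,g)=T(\lk[\d],\uk)(f,g)$, the relevant column not having moved. Now, since $\lk$ is a (co)product structure on the codomain of $T$, we have $\sum_{h\in\lk}e_h=1$, so the multiplication rule for matrix representations gives $T(\lk[\d],\uk)=1(\lk[\d],\lk)\,T(\lk,\uk)$; and because composing the two change-of-basis matrices yields a Dirac delta, $1(\lk[\d],\lk)=1(\lk,\lk[\d])\inv=A\inv$. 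Therefore
\[
C(f,g)\;=\;\sum_{h\in\lk}A\inv(f,h)\,T(\lk,\uk)(h,g)
\]
for all $f\notin\d\der\sharp$ and $g\notin\d\der\flat$.

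It then remains to expand $A\inv(f,h)$. Regarded through the pairing functions of $\d$ as an array on $\lk\times\lk$, $A=1(\lk,\lk[\d])$ is the matrix of an automorphism of the codomain of $T$; by the lemma identifying the transitive reflexive closure of $\Supp(1(\lk,\lk[\d]))$ with the induced relation $R_\d$, and since $\d$ is acyclic, $A$ has acyclic support (and $A(f,f)=1$ because $f$ is unmatched). Theorem~\ref{thm:mobiusgeneral} now gives
\[
A\inv(f,h)\;=\;\sum_{\pc_A(f,h)}(-1)^{\ell(p)+1}\,A[p],
\]
the sum being over the paths living in the strictly triangular part of $A$. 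Substituting this into the previous display yields the second case, completing the proof.

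The hard part is not a single estimate but the bookkeeping of the pairing identifications: spelling out exactly how the ground sets of $\lk[\d]$ and $\uk[\d_0]$ are relabeled by those of $\lk$ and $\uk$, checking that under this relabeling $1(\lk[\d],\lk)$ is genuinely $A\inv$ and that the matched block of $C$ realizes the prescribed $\dk_{f\der\flat,g}$, and verifying that the hypotheses of Theorem~\ref{thm:mobiusgeneral} (finiteness of $\lk$, invertible diagonal, acyclic support) transfer correctly to $A$ via Proposition~\ref{prop:holdacyclic} and the structure lemmas of \S\ref{sec:exchangerelations}. Each step is routine, but the argument must be written with care to keep the signs and the row/column indices straight.
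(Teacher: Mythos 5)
Your proposal follows the paper's own route: factor $T(\lk[\d], \uk)$ as $1(\lk[\d],\lk)\,T(\lk,\uk)=A\inv\,T(\lk,\uk)$, expand $A\inv$ via the generalized M\"obius theorem, and handle the matched block via the Schur-complement machinery of Proposition~\ref{prop:schurexchange}. The one step whose justification needs repair is the claim that $C(f,g)=T(\lk[\d],\uk)(f,g)$ for unmatched $f,g$ ``because the relevant column did not move.'' While the product map $\uk_g$ is indeed unchanged, its dual $(\uk_g)^\flat$ in $\uk[\d_0]^\flat$ is \emph{not} the same as $(\uk_g)^\flat$ in $\uk^\flat$: the flat operator depends on the entire product structure, and by Proposition~\ref{prop:schurexchange} the two flats differ by composition with an idempotent $e_k$ projecting onto the null space of $\d\der\sharp\, T$ along $\d\der\flat$. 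What actually makes the entry invariant is that the (unmatched, matched) block of $T(\lk[\d],\uk)$ already vanishes after the row pivot, so the corrective idempotent is invisible once bracketed by $(\lk[\d]_f)^\sharp T(\,\cdot\,)$. The paper's appeal to the splitting lemma at this step is precisely what certifies this, and your argument should cite it (or equation \eqref{eq:schurformula}) explicitly rather than rely on the unchanged-column heuristic; with that substitution the proof is sound and matches the paper's.
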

\begin{proof}  
The first case holds by fiat.  For the second, let $B = T(\lk[\d], \uk)$.  It is a simple consequence of the splitting lemma (Lemma \ref{lem:splitting}) that $C(f,g) = B(f,g)$ when $f \notin \d \der \sharp$ and $g \notin \d \der \flat$.  Consequently the second case follows from the observation  $B  = 1(\lk[\d], \lk) T(\lk, \uk) = 1(\lk, \lk[\d]) \inv T(\lk, \uk)$, plus M\"obius inversion.
\end{proof}

Let us say that a partial matching $\d$ is \emph{Jordan} if it is a proper Jordan pair in the sense of Section \ref{sec:jordanalgebra}.  The proof of the following statement is entirely analogous to that of Proposition \ref{prop:schurmobius}.

\begin{proposition}  \label{prop:jordancomplementmobiusformula}
Suppose $T^2 = 0$, and let $\d$ be an acyclic Jordan pairing on $T(\lk, \uk)$.  If   $A = 1(\lk[\d], \lk)$ and $C$ is the matrix representation of $T$ with respect to $\lk[[\d]]$, then
\begin{align*}
C(f,g) =
\begin{cases}
\dk_{f \der \flat ,g} &  \{f, g\} \cap (\d \der \sharp \cup \d \der \flat) \neq \emptyset \\
\sum_{h \in \lk} \sum_{\pc_A(f,h)} (-1)^{\ell(p)+1} A[p] T(h,g) & {\rm otherwise}.
\end{cases}
\end{align*}
\end{proposition}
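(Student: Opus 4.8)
The plan is to transcribe, step for step, the proof of Proposition \ref{prop:schurmobius}, with the Jordan exchange of Proposition \ref{prop:jordanexchange} playing the role that the Schur exchange of Proposition \ref{prop:schurexchange} plays there. The first case will demand nothing: by Proposition \ref{prop:jordanexchange} the matrix representation of $T$ with respect to $\lk[[\d]]$ is block diagonal, with one block the Schur-type complement $\tk$ sitting on the unmatched indices and the remaining blocks the nilpotent superdiagonal cells carried by the matched pairs. Reading off an entry whose row or column index meets $\d\der\sharp\cup\d\der\flat$ — after transporting that index between $\lk$ and $\lk[[\d]]$ along the pairing functions of $\d$ — one therefore gets exactly $\dk_{f\der\flat,g}$, the lone unit joining a matched column to its image and zero otherwise. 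It is worth noting that $T^2=0$ is precisely what makes the matched submatrix a genuine nilpotent Jordan block, so that this reduction is available.

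For the second case, fix $f,g$ with $\{f,g\}\cap(\d\der\sharp\cup\d\der\flat)=\emptyset$. Let $\lk[\d]$ be the structure obtained from $\lk$ by the basis-exchange half of the Jordan pivot alone (replace each matched generator by $T$ of its partner, but do not yet project the unmatched generators onto $\Ker(\d\der\sharp T)$), and let $B$ be the matrix of $T$ attached to $\lk[\d]$. Completing the pivot, i.e. passing from $\lk[\d]$ to $\lk[[\d]]$, composes with the idempotent $e_k$ of Proposition \ref{prop:jordanexchange}; by the splitting lemma (Lemma \ref{lem:splitting}) this alters only the entries lying in a matched row or column, so $C(f,g)=B(f,g)$. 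Since $B$ differs from $T(\lk,\lk)$ exactly by the transition array $A=1(\lk[\d],\lk)$ — under the identification of $\lk[\d]$ with $\lk$ furnished by the pairing functions — one has $B(f,g)=\sum_{h\in\lk}A\inv(f,h)\,T(h,g)$. Because $\d$ is acyclic the transition array $A$ has acyclic support (the relational lemmas of \S\ref{sec:relations}; cf. Proposition \ref{prop:holdacyclic} and Corollaries \ref{cor:clearingpattern} and \ref{cor:potchkai}), so Theorem \ref{thm:mobiusgeneral} applies and yields $A\inv(f,h)=\sum_{\pc_A(f,h)}(-1)^{\ell(p)+1}A[p]$. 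Substituting gives the asserted formula.

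The main obstacle is bookkeeping rather than substance: one must fix precisely which half-pivoted structure $\lk[\d]$ is, keep the pairing-function identifications $\lk\leftrightarrow\lk[\d]\leftrightarrow\lk[[\d]]$ straight on both the row and the column sides, and confirm that the projection-only second half of the Jordan pivot is invisible to entries indexed off $\d\der\sharp\cup\d\der\flat$ — all routine consequences of the splitting lemma and the relational lemmas of \S\ref{sec:relations}. Once these are in place the argument is a verbatim copy of the proof of Proposition \ref{prop:schurmobius}, which is why the statement is flagged as entirely analogous.
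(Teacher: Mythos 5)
Your proposal is correct and follows exactly the route the paper intends: it transcribes the proof of Proposition \ref{prop:schurmobius} into the Jordan setting, with the Jordan pivot of Proposition \ref{prop:jordanexchange} replacing the Schur pivot of Proposition \ref{prop:schurexchange} and the single coproduct $\lk$ replacing the row/column pair $(\lk,\uk)$ --- which is precisely what the paper means by ``entirely analogous.'' One minor remark: the proposition as stated sets $A = 1(\lk[\d],\lk)$, whereas your identity $B(f,g) = \sum_h A^{-1}(f,h)\,T(h,g)$ is the one that holds for $A = 1(\lk,\lk[\d])$ (the convention of Proposition \ref{prop:schurmobius}); you have silently corrected what is almost certainly a transposition typo in the statement, and it is worth saying so explicitly rather than reproducing it.
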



\chapter{Exchange Algorithms}  
\label{ch:elementaryexchange}

\section{LU Decomposition}
\label{sec:lu}

Let us recall Lemma \ref{lem:splitting} and its logical equivalent, Lemma \ref{lem:exchange}.

{
\renewcommand{\thetheorem}{\ref{lem:splitting}}
\begin{lemma}[Splitting Lemma]
Suppose that 
$$
f \op f = 1,
$$
where $f\op$ and $f$ are composable morphisms in a preadditive category.  
\begin{enumerate}
\item If $k$ is kernel to $f\op$, then $\{f,k\}$ is a coproduct structure.
\item If $k\op$ is cokernel to $f$, then $\{f\op, k\op\}$ is a product structure.
\end{enumerate}
\end{lemma}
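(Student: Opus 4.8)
The plan is to produce an explicit two-sided inverse for $\oplus\{f,k\}$, built from $f\op$ together with a retraction that the kernel $k$ secretly carries; part (2) then follows by passing to the opposite category.

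First I would record the elementary consequences of $f\op f = 1$. The endomorphism $e := f f\op$ of $\D\op(f)$ is idempotent, since $e^2 = f(f\op f)f\op = f f\op = e$, so $1-e$ is idempotent as well, and $f\op(1-e) = f\op - (f\op f)f\op = 0$. Because $k$ is a kernel to $f\op$, the morphism $1-e$ factors uniquely through $k$, say $1 - e = k k'$ with $k'\colon \D\op(f) \to \D(k)$. I would then check the four relations that make $\{f,k\}$ behave like the two legs of a biproduct: (i) $f\op f = 1$, by hypothesis; (ii) $f\op k = 0$, since $k$ is a kernel to $f\op$ (Lemma \ref{lem:ker}); (iii) $k' k = 1_{\D(k)}$, obtained from $k k' k = (1-e)k = k - f(f\op k) = k$ together with the fact that $k$ is a monomorphism (again Lemma \ref{lem:ker}); and (iv) $k' f = 0$, obtained from $k k' f = (1-e)f = f - f(f\op f) = 0$ together with monicity of $k$. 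The crucial point is step (iii): here the bare \emph{existence} of a kernel for $f\op$ is upgraded to an honest splitting $1-e = k k'$ with $k' k = 1$, so that no ``idempotents split'' hypothesis on the ambient preadditive category is required.

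Next I would assemble these into the isomorphism. Put $g := \oplus\{f,k\}\colon \D(f)\oplus\D(k) \to \D\op(f)$, and let $h := \times\{f\op, k'\}\colon \D\op(f) \to \D(f)\oplus\D(k)$ be the morphism with components $f\op$ and $k'$. Computing $hg$ block by block against the canonical inclusions and projections of the biproduct $\D(f)\oplus\D(k)$, the diagonal blocks are $f\op f = 1$ and $k' k = 1$ while the off-diagonal blocks $f\op k$ and $k' f$ vanish by (ii) and (iv), so $hg = 1$; and $gh = f f\op + k k' = e + (1-e) = 1$. Hence $g$ is invertible, which is precisely the assertion that $\{f,k\}$ is a coproduct structure, proving (1). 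For (2), the hypothesis $f\op f = 1$ and the statement ``$k\op$ is cokernel to $f$'' in ${\mathcal C}$ read, in ${\mathcal C}\op$, as the hypothesis of (1) and ``$k\op$ is kernel to $f$''; since ``product structure'' in ${\mathcal C}$ is ``coproduct structure'' in ${\mathcal C}\op$, part (1) applied to ${\mathcal C}\op$ yields (2).

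I expect the only mildly delicate step to be (iii)/(iv), i.e.\ extracting the retraction $k'$ and the cross-vanishing relations from the kernel alone rather than invoking splitting of idempotents; once those four relations are in hand, checking that $g$ and $h$ are mutually inverse is a routine block computation. This matches the excerpt's own assessment that the argument, while a genuine bridge between biproducts and (co)kernels, is short and not especially enlightening.
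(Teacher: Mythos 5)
Your argument is correct, and it is the standard proof of the splitting lemma; the paper itself omits the proof, stating only that it is ``neither difficult nor specially enlightening,'' so there is nothing to compare against beyond noting your argument is the canonical one. One small streamlining worth recording: the five relations you establish --- $f\op f = 1$, $k' k = 1$, $f\op k = 0$, $k' f = 0$, and $f f\op + k k' = 1$ --- are precisely the axioms exhibiting $\D\op(f)$ \emph{itself} as a biproduct of $\D(f)$ and $\D(k)$ with injections $f,k$ and projections $f\op,k'$, so you do not even need to presuppose that a biproduct $\D(f)\oplus\D(k)$ exists elsewhere in the preadditive category before running the block computation; the isomorphism $\oplus\{f,k\}$ is then automatic by uniqueness of biproducts.
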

\addtocounter{theorem}{-1}
}
\begin{remark}  As noted in \S\ref{subsec:splittinglemma}, the criterion $f\op f = 1$ may be loosened to the condition that $f\op f$ be {invertible}.
\end{remark}






As previously observed, the main application of the splitting lemma in this discussion   is to clarify the relationship between certain dual structures.  Suppose, for example, that $\lk = \{\lk_f, \lk_g\}$ and $\uk = \{\uk_f, \uk_h\}$ are products.  How does $\lk_f^\flat$ relate to $\uk_f^\flat$ when $\lk_f = \uk_f$?    Recall that every $k$ in a (co)product $\wk$ engenders an idempotent $e_k = k^\flat k^\sharp$.  It is elementary to show that  $\lk_f^\flat = e_k \uk_f^\flat$ when  $\wk= \{\lk_f^\flat, k\}$ and $k$ is  kernel to $\lk_f$.   It can be shown similarly that $\lk_g^\flat = \lk_h^\flat$ when $\lk_g \lk_h^\flat = 1$.  If $\lk$ and $\uk$ are coproducts, then $\lk_f^\sharp = v^\sharp_f e_{k\op}$ whenever $k^{op}$ is cokernel to $\lk_f$; likewise  $\lk_g^\sharp = \lk_h^\sharp$, when $\lk_g^\sharp \lk_h = 1$.  These identities represent what are essentially a variety of different access points to the same underlying mathematical structure, and we group all  under the common heading of the splitting lemma.

The main application of duality, for this discussion, is to describe exchange.  Suppose $\lk$ and $\uk$ are coproduct and product structures, respectively, on the codomain and domain of an operator $T$.  An \emph{(elementary) exchange pair} for $\lk$ and $\uk$  is an ordered pair $(a,b) \in \lk \times \uk$ such that $a^\sharp T b^\flat$ is invertible.  By the Exchange Lemma,  $(a,b)$ is an exchange pair if and only if $\lk[a,b^\flat] = \lk - a \cup T b^\flat$ is a coproduct and $\uk[a^\sharp,b] = \uk - b \cup a^\sharp T$ is a product.  We call the operation that produces these structures  \emph{elementary exchange}.

{
\renewcommand{\thetheorem}{\ref{lem:exchange}}
\begin{lemma}[Exchange Lemma]
Let $\{f,g\}$ and $\{f\op, g\op\}$ be product and coproduct structures, respectively.  
\begin{enumerate}
\item An unordered pair $\{f, h\}$ is a product iff $g ^\sharp h$ is invertible.
\item An unordered pair $\{f\op, h\op \}$ is a coproduct iff $h\op \left ( g \op \right)^\flat$ is invertible.
\end{enumerate}
\end{lemma}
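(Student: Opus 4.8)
The plan is to deduce the Exchange Lemma from the Splitting Lemma (Lemma~\ref{lem:splitting}), of which it is the biproduct-theoretic restatement; once the ambient splitting is set up, the rest is routine bookkeeping, which is why the paper defers it. I will carry out part~(1); part~(2) follows by replacing every morphism with its opposite.

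Fix the object $W$ on which $\{f,g\}$ is a product structure. Its dual coproduct structure furnishes morphisms $f^{\flat},g^{\flat}$ into $W$ with $f f^{\flat}=1$, $g g^{\flat}=1$, $f g^{\flat}=0$, $g f^{\flat}=0$, and $e_{f}+e_{g}=1_{W}$ for the idempotents $e_{f}=f^{\flat}f$, $e_{g}=g^{\flat}g$ (this is \eqref{eq:complementaryidempotents} for the two-element structure $\{f,g\}$). In particular $f$ is a split epimorphism with section $f^{\flat}$, so $W=\I(f^{\flat})\oplus\K(f)$; moreover $g^{\flat}$ is a monomorphism (split by $g$) whose image $\I(g^{\flat})=\I(e_{g})=\I(1_{W}-e_{f})=\K(e_{f})=\K(f)$, so $g^{\flat}$ corestricts to an \emph{isomorphism} $\D\op(g)\xrightarrow{\ \sim\ }\K(f)$.

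Now let $h$ be any morphism out of $W$. Reading $\times\{f,h\}=(f,h)$ in the decomposition $W=\I(f^{\flat})\oplus\K(f)$ gives the block form $\bigl(\begin{smallmatrix}\iota & 0\\ * & h|_{\K(f)}\end{smallmatrix}\bigr)$, whose $(1,1)$ corner $\iota=f|_{\I(f^{\flat})}$ is invertible; hence $\{f,h\}$ is a product structure exactly when $h|_{\K(f)}$ is an isomorphism --- this is precisely the Splitting Lemma applied to the retraction $f$. Precomposing with the isomorphism $g^{\flat}\colon\D\op(g)\to\K(f)$ turns the condition ``$h|_{\K(f)}$ invertible'' into ``$h\circ g^{\flat}$ invertible,'' and $h\circ g^{\flat}$ is exactly the transition block the lemma denotes $g^{\sharp}h$. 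One can also bypass the Splitting Lemma: from $e_{f}+e_{g}=1$ write $h=(h f^{\flat})f+(h g^{\flat})g$, read off the factorization $\times\{f,h\}=\bigl(\begin{smallmatrix}1 & 0\\ h f^{\flat} & h g^{\flat}\end{smallmatrix}\bigr)\circ\times\{f,g\}$, and note that a lower block-triangular matrix with identity $(1,1)$ block is invertible iff its $(2,2)$ block $h g^{\flat}=g^{\sharp}h$ is --- so the composite $\times\{f,h\}$ is invertible iff $g^{\sharp}h$ is. The only genuine obstacle is notational discipline: matching $\sharp$ and $\flat$ to the correct dual structures and keeping composition order straight, so that $g^{\sharp}h$ is read as the morphism $\D\op(g)\to\D\op(h)$ built from $h$ and the dual of $g$. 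Part~(2) is the verbatim dual: $\{f\op,g\op\}$ a coproduct structure makes $f\op$ a split monomorphism retracted by its dual $(f\op)^{\sharp}$, $(g\op)^{\flat}$ corestricts to an isomorphism onto a complement of $\I(f\op)$, and the same (now upper block-triangular) argument, acting by precomposition, shows $\{f\op,h\op\}$ is a coproduct structure iff $h\op (g\op)^{\flat}$ is invertible.
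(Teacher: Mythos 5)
The paper omits the proof (it is ``left as an exercise'' and asserted to be ``mathematically equivalent to the splitting lemma''), so there is no written argument to compare against, but your proof is correct and is exactly the route the paper has in mind. Both of your versions---the conceptual one through the decomposition $W=\I(f^\flat)\oplus\K(f)$ and the Splitting Lemma, and the one-line factorization $\times\{f,h\}=\bigl(\begin{smallmatrix}1 & 0\\ hf^\flat & hg^\flat\end{smallmatrix}\bigr)\circ\times\{f,g\}$ through the invertible $\times\{f,g\}$---are sound, and your parsing of the paper's overloaded musical notation, reading $g^\sharp h$ as the composite $h\circ g^\flat\colon\D\op(g)\to\D\op(h)$, is the intended interpretation.
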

\addtocounter{theorem}{-1}
}

Individual  exchanges may be combined in an important way.  Suppose $\sharp$ and $\flat$ are the usual sharp and flat operators on $\lk$ and $\uk$, and let $\sharp \sharp$, $\flat \flat$ denote the corresponding operators on $\lk[a,b^\flat]$ and $\uk[a^\sharp,b]$, respectively.  The composition $(Tb^\flat)^{\sharp \sharp}T(b^\flat)$ is identity by definition, so the righthand side of
\begin{align}
 \lk[a,b^\flat]   &&    \uk[(Tb^\flat)^{\sharp \sharp},b]
\label{eq:rowfirstpivot}
\end{align}
is a product.   We write $(\lk,\uk)[a,b]$ for the pair  \eqref{eq:rowfirstpivot}, and refer to the operation 
$$
(\lk,\uk) \mapsto (\lk,\uk)[a,b]
$$ 
as  a \emph{row-first pivot} on $(a,b)$.  There is a natural mirror to this operation, the \emph{column-first pivot}, which maps $\uk$  to $\uk[a^\sharp, b]$ and  $\lk$  to $\lk[a, (a^\sharp T)^{\flat \flat}]$.  Column pivots will not enter our story directly, though an equivalent, antisymmetric story may be told exclusively in terms of these operations.

The following algorithm initiates with a pair $(\ak,\bk) = (\lk, \uk)$, where $\uk$ is a product on the domain of $T$ and $\lk$ is a coproduct on the codomain of $T$.     To avoid degeneracies, we stipulate that all elements of $\lk$ and $\uk$ have rank one, so that $\ak \times \bk$ fails to contain an exchange pair iff $(\times \ak ^\sharp) T(\oplus \bk^ \flat)= 0$.
\vspace{.55cm}


\begin{algorithm}
\begin{algorithmic}
\WHILE{ $\ak \times \bk$ contains an exchange pair}
\STATE   fix an exchange pair  $(a,b) \in \ak \times \bk$.
\STATE   $(\lk, \uk)  \leftarrow (\lk, \uk)[a,b]$
\STATE $(\ak, \bk) -= (a,b)$
\ENDWHILE
\end{algorithmic}
\caption{LU Decomposition}
\label{alg:BPLU}
\end{algorithm}

Let $\lk^p$ and $\uk^p$ denote the coproduct and product structures generated on the $p$th iteration of Algorithm \ref{alg:BPLU}, with $\infty$ denoting the final iteration.  For convenience, let us index $\lk$ and $\lk^\infty$ so that the elements of $\lk^p$ may be arranged into a a tuple  of form  $(\lk^\infty_1, \ld , \lk^\infty_p, \lk_{p+1}, \ld, \lk_m)$ for each $p$, and order the elements of $\uk^p$ into a tuple $(\uk^\infty_1, \ld, \uk^\infty_p, \uk_{p+1}, \ld, \uk_n)$, similarly.  
\begin{remark}
One benefit of working with indexed families is that there is no need for the special notation to differentiate between the various sharp (respectively, flat) operators.  Whereas previously one had to write double scripts $\sharp \sharp$ and $\flat \flat$, under this  convention one simply has $\uk_p^p = (\lk_p^p)^\sharp T$ and $\lk_p^p = T(\uk^{p-1}_p)^\flat$.
\end{remark}

\begin{lemma} 
\label{lem:diracLU} 
For any $r\ge p$, one has $(\uk^{r }_p)^\flat = (\uk^{p-1 }_p)^\flat$ and $ (\lk^{r}_p)^{ \sharp} = (\lk^{p}_p)^{\sharp}$.  Moreover,
\begin{align}
(\lk_q^p)^\sharp T (\uk_p^p)^\flat = \dk_{qp} && (\lk_p^p)^\sharp T (\uk_q^p)^\flat = \dk_{qp}   \label{eq:deltarowcol}
\end{align}
for any $p$, $q$.
\end{lemma}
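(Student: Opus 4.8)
The plan is to first fix notation consistent with the reindexing in the statement. At iteration $p$ the algorithm picks an exchange pair which we relabel $(\lk_p,\uk_p)$, where $\lk_p$ and $\uk_p$ are the (as yet unprocessed, hence original) elements occupying slot $p$, so $\lk_p=\lk^{p-1}_p$ and $\uk_p=\uk^{p-1}_p$; the row-first pivot then installs $\lk^p_p=T(\uk^{p-1}_p)^\flat$ in the coproduct and $\uk^p_p=(\lk^p_p)^\sharp T$ in the product (the latter $\sharp$ being the dual operator of $\lk^p$), and these become the finalized elements $\lk^\infty_p$, $\uk^\infty_p$. I will use only three elementary facts about the mechanics: (i) by the Exchange Lemma each pivot yields genuine (co)product structures, so each has a well-defined dual with its defining pairing relations; (ii) a single pivot deletes exactly one element from each structure and inserts exactly one, so any element distinct from the pivoted one — in particular every finalized $\lk^\infty_j,\uk^\infty_j$ with $j\le p$ and every original $\lk_q,\uk_q$ with $q>p$ — survives verbatim into $\lk^p,\uk^p$; and (iii) the identity $(\lk^p_p)^\sharp T(\uk^{p-1}_p)^\flat=1$ recorded just before \eqref{eq:rowfirstpivot}.

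I would dispatch the Dirac identities \eqref{eq:deltarowcol} first. The second one is immediate: since $\uk^p_p=(\lk^p_p)^\sharp T$ by construction, $(\lk^p_p)^\sharp T(\uk^p_q)^\flat=\uk^p_p(\uk^p_q)^\flat=\dk_{qp}$ by the defining property of the dual coproduct $(\uk^p)^\flat$. For the first, the key sublemma is the $r=p$ case of the stability claim, namely $(\uk^p_p)^\flat=(\uk^{p-1}_p)^\flat$; to prove it I would check that $(\uk^{p-1}_p)^\flat$ satisfies the defining equations of $(\uk^p_p)^\flat$ inside $\uk^p=\uk^{p-1}-\{\uk_p\}\cup\{\uk^p_p\}$ — it pairs to $1$ with $\uk^p_p$ by fact (iii), and to $0$ with every other element of $\uk^p$, since those all lie in $\uk^{p-1}$ and differ from $\uk_p$. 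Given this, $T(\uk^p_p)^\flat=T(\uk^{p-1}_p)^\flat=\lk^p_p$, so $(\lk^p_q)^\sharp T(\uk^p_p)^\flat=(\lk^p_q)^\sharp\lk^p_p=\dk_{qp}$ by the defining property of $(\lk^p)^\sharp$.

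For the stability claims I would run a single induction on $r\ge p$ carrying both $(\uk^r_p)^\flat=(\uk^{p-1}_p)^\flat$ and $(\lk^r_p)^\sharp=(\lk^p_p)^\sharp$ at once; the base $r=p$ is the sublemma above (the $\lk$ part being trivial). For $r\to r+1$, the pivot at iteration $r+1$ replaces $\uk_{r+1}$ by $\uk^\infty_{r+1}=(\lk^{r+1}_{r+1})^\sharp T$ and $\lk_{r+1}$ by $\lk^\infty_{r+1}=T(\uk^r_{r+1})^\flat$, leaving the slot-$p$ entries untouched since $p<r+1$; one then checks the inductive duals still solve their defining equations in the enlarged structures. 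Pairings with inherited elements are handled verbatim by the inductive hypothesis, so the only genuine work is the two cross-terms with the newly inserted elements: $(\uk^{p-1}_p)^\flat$ against $\uk^\infty_{r+1}$ gives $(\lk^{r+1}_{r+1})^\sharp T(\uk^{p-1}_p)^\flat=(\lk^{r+1}_{r+1})^\sharp\lk^{r+1}_p=\dk_{r+1,p}=0$ (using $T(\uk^{p-1}_p)^\flat=\lk^p_p=\lk^{r+1}_p$ and fact (ii)), and symmetrically $(\lk^p_p)^\sharp$ against $\lk^\infty_{r+1}$ gives $(\lk^p_p)^\sharp T(\uk^r_{r+1})^\flat=\uk^p_p(\uk^r_{r+1})^\flat=\dk_{p,r+1}=0$ (using $\uk^p_p=(\lk^p_p)^\sharp T$, $\uk^p_p=\uk^\infty_p=\uk^r_p$, and that $\uk^r_{r+1}=\uk_{r+1}$ is a distinct element of $\uk^r$).

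The main obstacle I anticipate is entirely bookkeeping. Every $\sharp$ and $\flat$ must be read relative to a specific (co)product structure, and the argument repeatedly identifies a dual computed in $\lk^p$ or $\uk^{p-1}$ with the same morphism reappearing as a dual in a later $\lk^r$ or $\uk^r$; the proof works precisely because a pivot perturbs only the one slot it processes, so all other defining pairings are inherited. Getting that invariant stated cleanly, and tracking which structure each musical symbol belongs to at each line, is where the care lies — there is no analytic or conceptual difficulty beyond it.
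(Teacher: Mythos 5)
Your proof is correct and follows essentially the same route as the paper's: establish $(\uk^p_p)^\flat = (\uk^{p-1}_p)^\flat$ from the identity $(\lk^p_p)^\sharp T(\uk^{p-1}_p)^\flat = 1$, use it to derive the Dirac identities, then propagate the invariance of the duals forward in $r$. Where the paper twice writes ``by the splitting lemma,'' you unpack that phrase into a direct check of the defining pairing equations $\uk_i \uk_j^\flat = \dk_{ij}$, $\lk_i^\sharp \lk_j = \dk_{ij}$ in the relevant (co)product structures, which is exactly the content being invoked.
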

\begin{proof}  
Since  $(\lk^{p}_p)^{ \sharp} T (\uk_p^{p-1})^{\flat} = 1$ by definition, the splitting lemma implies $(\uk_p^p)^\flat = (\uk_p^{p-1})^\flat$.  Thus $\lk_p^p = T(\uk_p^p)^\flat$ and $\uk_p^p = (\lk_p^p)^\sharp T$, whence \eqref{eq:deltarowcol}.  It follows from these identities that $(\lk_p^p)^\sharp$ and $(\uk_p^p)^\flat$ remain invariant under any increase in upper index, again by the splitting lemma.
\end{proof}

\begin{lemma} 
\label{lem:triangleformula} 
For any $p$ and $q$ one has
\begin{align}
\lk_p^\sharp \lk^\infty_q = (\lk_p^{q-1})^\sharp T (\uk_q^{q-1})^\flat &&  \uk_p^\infty \uk_q^\flat = (\lk_p^p)^\sharp T (\uk_q^{q-1})^\flat.
\end{align}
The left-hand operator vanishes when $p<q$;  the right-hand when $q<p$.
\end{lemma}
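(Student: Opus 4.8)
The strategy is to expand the products $\lk_p^\sharp \lk_q^\infty$ and $\uk_p^\infty \uk_q^\flat$ using the idempotent decomposition attached to an intermediate (co)product structure, and then to invoke the stability results of Lemma~\ref{lem:diracLU} to collapse each expansion to a single term. First I would recall that for any (co)product structure $\wk$ one has $\sum_{f\in\wk} f^\flat f^\sharp = 1$ (Proposition on complementary idempotents, \eqref{eq:complementaryidempotents}); applying this with $\wk = \uk^{q-1}$ inserted between $\lk_p^\sharp$ and the defining expression $\lk_q^\infty = \lk_q^q = T(\uk_q^{q-1})^\flat$ (using Lemma~\ref{lem:diracLU} to identify $\lk_q^\infty$ with $\lk_q^q$, since the $\sharp$-dual of $\lk_q$ stabilizes once the $q$-th exchange is performed). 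Dually, insert $\wk = \lk^p$ between $\uk_p^\infty = \uk_p^p = (\lk_p^p)^\sharp T$ and $\uk_q^\flat$.

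Next I would exploit the structure of the pivoting process: on iteration $r$, the operation $(\lk,\uk)\mapsto(\lk,\uk)[a_r,b_r]$ replaces one coproduct element by $T(b_r^\flat)$ and one product element by $(\ldots)^\sharp T$, so that for $p < q$ the element $\lk_p^\sharp = (\lk_p^{p})^\sharp$ has already been "frozen" by the time we reach iteration $q$, and more importantly the orthogonality relations \eqref{eq:deltarowcol} of Lemma~\ref{lem:diracLU} kill all cross terms. Concretely, $\lk_p^\sharp \lk_q^\infty = \lk_p^\sharp T(\uk_q^{q-1})^\flat$; now $\lk_p^\sharp = (\lk_p^{q-1})^\sharp$ for $p \le q-1$ by the stability clause of Lemma~\ref{lem:diracLU}, giving the left-hand identity. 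For the vanishing when $p < q$: $(\lk_p^{q-1})^\sharp$ annihilates the image of $(\uk_p^{q-1})^\flat$-complementary pieces, and $(\uk_q^{q-1})^\flat$ lies in the span of exactly those pieces indexed by $q, q+1, \dots$ which are orthogonal to $\lk_p^\sharp$ for $p$ already exchanged — this is precisely \eqref{eq:deltarowcol} read in the form $(\lk_p^{q-1})^\sharp T (\uk_q^{q-1})^\flat$ with the understanding that the matrix $T(\lk^{q-1},\uk^{q-1})$ has already been cleared in rows/columns $1,\dots,q-1$. A symmetric argument handles $\uk_p^\infty \uk_q^\flat = (\lk_p^p)^\sharp T(\uk_q^{q-1})^\flat$ and its vanishing for $q < p$, swapping the roles of the row and column structures.

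The main obstacle I anticipate is bookkeeping the indices carefully: one must be precise about \emph{which} iteration freezes $(\lk_p)^\sharp$ versus $(\uk_q)^\flat$, and about the fact that $\lk_q^\infty$ equals $\lk_q^q$ (not some later $\lk_q^r$), which requires the stability half of Lemma~\ref{lem:diracLU} in the direction "once exchanged, never altered." The cleanest route is probably to fix $p$ and $q$, assume WLOG $p \le q$ (the case $q \le p$ being dual), write out $T(\lk^{q-1}, \uk^{q-1})$ in block form with its top-left $(q-1)\times(q-1)$ block already diagonalized, and observe that row $p$ of this matrix, restricted to column $q$, is exactly the claimed expression $(\lk_p^{q-1})^\sharp T(\uk_q^{q-1})^\flat$; the vanishing for $p < q$ is then the statement that the row-clearing from the $p$-th pivot has zeroed out entry $(p,q)$ for all later $q$. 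Once the indexing is pinned down, each identity reduces to a one-line application of the splitting lemma (Lemma~\ref{lem:splitting}) via Lemma~\ref{lem:diracLU}, so no genuinely hard computation remains.
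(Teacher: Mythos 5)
Your central step --- ``$\lk_p^\sharp = (\lk_p^{q-1})^\sharp$ for $p \le q-1$ by the stability clause of Lemma~\ref{lem:diracLU}'' --- misreads that lemma. Lemma~\ref{lem:diracLU} states $(\lk_p^r)^\sharp = (\lk_p^p)^\sharp$ for $r \ge p$: the $\sharp$-dual of position $p$ is frozen \emph{from iteration $p$ onward}, not from iteration $0$. The symbol $\lk_p^\sharp$ in the lemma statement is the dual of $\lk_p$ in the \emph{original} structure $\lk = \lk^0$, and this is not equal to $(\lk_p^p)^\sharp$ or $(\lk_p^{q-1})^\sharp$ in general: each of the exchanges at iterations $1, \ldots, p$ replaces an element of the coproduct and thereby perturbs the dual of every other element. (Were $\lk_p^\sharp$ already equal to the frozen $(\lk_p^p)^\sharp$, the entries $\lk_p^\sharp \lk_q^\infty$ of $1(\lk, \lk^\infty)$ would be Kronecker deltas, and there would be nothing to prove.) What is actually true, and what the paper establishes, is the much weaker statement that $\lk_p^\sharp$ and $(\lk_p^{q-1})^\sharp$ agree when applied to the one morphism $\lk_q^\infty = T(\uk_q^{q-1})^\flat$, even though they differ as maps. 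The paper's route to this is the splitting lemma: writing $(\lk_p^r)^\sharp = (\lk_p^{r+1})^\sharp + (\lk_p^r)^\sharp \lk_{r+1}^\infty (\lk_{r+1}^{r+1})^\sharp$ via the idempotent expansion in $\lk^{r+1}$, one checks that $(\lk_{r+1}^{r+1})^\sharp$ annihilates $\lk_q^\infty$ for $r+1 < q$ (using Lemma~\ref{lem:diracLU} to push the index up to $q$ and then the delta formula), and chains from $r = 0$ up to $r = q-1$. Your proposal replaces this iteration with an asserted equality of maps that does not hold.

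There is also a scope gap. You propose to ``assume WLOG $p \le q$ (the case $q \le p$ being dual).'' Within a single identity there is no such reduction: $p < q$ is the vanishing regime and $p \ge q$ is the nontrivial regime, and both must be covered by the same chaining argument. The symmetry in the lemma is between the \emph{two} displayed identities (the $\lk^\sharp \lk^\infty$ one and the $\uk^\infty \uk^\flat$ one), not between the two halves of either one.

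Your treatment of the vanishing itself ($p < q$ forces the right-hand side $f$ to be zero, by increasing indices and applying \eqref{eq:deltarowcol}) matches the paper's and is fine. The gap is entirely in passing from $f = (\lk_p^{q-1})^\sharp T(\uk_q^{q-1})^\flat$ to $f = \lk_p^\sharp T(\uk_q^{q-1})^\flat$; that passage is the content of the lemma and needs the splitting-lemma iteration sketched above, not a stability claim about the sharps as maps.
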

\begin{proof}  
Let $f$ denote the righthand side of the lefthand identity.  When $p < q$ one may increase the upper indices in this expression by one without changing its value, and consequently $f$ vanishes, by \eqref{eq:deltarowcol}.    Since $\lk^{p-1}=(\lk^\infty_1, \ld , \lk^\infty_{p-1}, \lk_{p}, \ld, \lk_m)$, it follows from the splitting lemma that $f = \lk_p^\sharp T(\uk_q^{q-1})^\flat$ for any $p$ and $q$.  This suffices for the lefthand identity, as   $T(\uk_q^{q-1})^\flat = \lk^\infty_q$ by definition.  The  righthand identity and  may be argued similarly, as may its vanishing properties.
\end{proof}

In the language of arrays, one may interpret Lemmas \ref{lem:diracLU} and \ref{lem:triangleformula} to say that
\begin{align*}
1(\lk, \lk^p)& = [ \; \mk_1 \; | \; \cd \; | \; \mk_p \; | \; \chi_{p+1} \; | \; \cd \; | \; \chi_m \; ] \\
1(\uk^p, \uk) &= [ \; \nk_1 \; / \; \cd \; / \; \nk_p \; / \; \chi_{p+1} \; / \; \cd \; / \; \chi_n \; ]
\end{align*}
are lower and upper triangular, respectively,  where $\mk_p$ is the $p$th column of $T(\lk^{p-1}, \uk^{p-1})$   and $\nk_p$ is the $p$th row of $T(\lk^p, \uk^{p-1})$.  Moreover,
\begin{align*}
T(\lk^\infty, \uk^\infty) & = \diag(1, \ld, 1, 0, \ld, 0)
\end{align*}
with nonzero entries appearing on exchange pairs.  Thus
\[
T(\lk, \uk) = 1(\lk, \lk^\infty)  T(\lk^\infty, \uk ^\infty) 1(\uk^\infty, \uk)
\]
where $1(\lk, \lk^\infty)$ is lower triangular, $1(\uk^\infty, \uk)$ is upper-triangular, and $T(\lk^\infty, \uk^\infty)$ is a zero-one array with at most one nonzero entry per row and column.  If $\lk$ and $\uk$ are composed of maps into and out of $\field$, then the entries in these arrays are maps $f:\field \to \field$.   Under the standard identification  $f \leftrightarrow f(1)$,  the preceding identity corresponds exactly to an LU decomposition of  $T(\lk, \uk)$.

\begin{remark}  The preceding observations did not depend on our restriction to the category of linear maps on finite-dimensional vectors spaces.  Rather, they yield an LU decomposition for any biproduct of finitely many simple objects in a preadditive category.  The existence of such a factorization was remarked  by Smith \cite{smith1983schur}, though we are unaware of  combinatorial treatments of the subject.
\end{remark}

\section{Jordan Decomposition}  
\label{sec:jordanalgebra}

In what follows, readers unfamiliar with the language of category theory may replace the phrase \emph{endomorphism in an abelian category} with \emph{linear map $W \to W$}, with no loss of correctness.   
%
%


\begin{theorem}  \label{prop:jordansplit}   Let $f$, $g$, and $T$ be morphisms in an arbitrary abelian category.  If $T^{n+1} = 0$ and $g T^{n}f = 1$ then
\begin{align}
(k,  T^0f, \ld, T^{n}f)  
\label{eq:jordanpiece2}
\end{align}
is a coproduct structure, where $k$ is the inclusion of
$$
\K(gT^0) \cap \cd \cap \K(g T^n)
$$
into the domain of $T$.  
\end{theorem}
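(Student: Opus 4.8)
The plan is to exhibit an explicit complete family of pairwise orthogonal idempotents $\{e_{-1}, e_0, \dots, e_n\}$ on $W := \D(T)$ with $\sum_{j} e_j = 1_W$, such that $\I(e_j) = \I(T^j f)$ for $0 \le j \le n$ and $\I(e_{-1}) = \K(gT^0)\cap\dots\cap\K(gT^n)$. Since every idempotent splits in an abelian category, such a family yields a biproduct decomposition of $W$ — this is the converse of \eqref{eq:complementaryidempotents}, and is standard — and the theorem then follows by replacing, up to isomorphism, the canonical inclusion of $\I(e_j)$ by $T^j f$ and the inclusion of $\I(e_{-1})$ by $k$.

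Before building the idempotents I would record the arithmetic forced by the two hypotheses: $T^{n+1}=0$ gives $gT^{m}f = 0$ whenever $m > n$, while $gT^{n}f = 1$ is the one nonzero value; hence $gT^{n-j}\,T^{l}f = gT^{\,n-j+l}f$ equals $1_U$ when $l = j$ and vanishes when $l > j$. The idempotents are then defined recursively: put $\Pi_0 = 1_W$ and, given pairwise orthogonal idempotents $e_0, \dots, e_{j-1}$, set $\Pi_j = 1_W - (e_0 + \dots + e_{j-1})$, $p_j = gT^{n-j}\Pi_j$, and $e_j = (T^j f)\,p_j$. By induction on $j$ I would check, in this order: $\Pi_j$ is idempotent and orthogonal to each $e_i$ ($i<j$); $\Pi_j T^{l}f = T^{l}f$ for $l \ge j$ (because $e_i T^{l}f = 0$ for $i < j \le l$, available from an earlier stage); consequently $p_j T^{j}f = gT^{n-j}\Pi_j T^{j}f = gT^{n}f = 1_U$, so $T^j f$ is a split monomorphism, $e_j$ is idempotent, and $\I(e_j) = \I(T^j f)$; and finally $e_j e_i = e_i e_j = 0$ for $i<j$, together with $e_j T^{l}f = (T^j f)\,gT^{n-j}\Pi_j T^{l}f = (T^j f)\,gT^{\,n-j+l}f = 0$ for $l > j$. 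Each of these is a short manipulation using only the arithmetic above and the orthogonality already established. Setting $e_{-1} = 1_W - (e_0 + \dots + e_n)$ produces an idempotent orthogonal to every $e_j$, completing the family.

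From $\sum_j e_j = 1_W$ and the splitting of idempotents one obtains split monomorphisms $\iota_j$ with $\I(\iota_j) = \I(e_j)$ whose collection is a coproduct structure; since each $T^j f$ is a split mono with the same image as $\iota_j$, it differs from $\iota_j$ by an isomorphism of $U$ onto $\I(e_j)$, and coproduct structures are stable under precomposition with isomorphisms, so the $T^j f$ may replace the $\iota_j$. It remains to identify $\I(e_{-1})$ with $K := \K(gT^0)\cap\dots\cap\K(gT^n)$. A morphism into $W$ factors through $\I(e_{-1})$ iff it is fixed by $e_{-1}$, i.e.\ annihilated by every $e_j$ ($0\le j\le n$); using $\Pi_j \iota_{-1} = \iota_{-1}$ (resp.\ $\Pi_j k = k$), $p_j = gT^{n-j}\Pi_j$, and the fact that $T^j f$ is monic, this is equivalent to being annihilated by every $gT^{l}$, $0 \le l \le n$ — which is exactly the universal property of $K$ and its inclusion $k$. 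Hence $k$ and $\iota_{-1}$ factor through each other, so they agree up to isomorphism, and $\iota_{-1}$ may be replaced by $k$. This gives that $(k, T^0 f, \dots, T^n f)$ is a coproduct structure.

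The main obstacle is the interlocking induction of the second step: the four properties of $\{e_0,\dots,e_j\}$ must be proved in exactly the right order so that each invocation of ``$e_iT^{l}f = 0$'' is already available, and one must notice that the troublesome lower-triangular terms $e_iT^{l}f$ with $i > l$ are never needed. A secondary subtlety is that the intersection of kernels appearing in the statement is $\bigcap_l \K(gT^l)$ rather than $\bigcap_j \K(p_j)$; reconciling the two (via $\Pi_j \iota_{-1} = \iota_{-1}$ and the monicity of $T^jf$) is what ties $k$ to the idempotent $e_{-1}$. An inductive alternative that peels one chain element off at a time via Lemma \ref{lem:splitting} looks attractive, but the obvious $T$-invariant complement one would split off (e.g.\ $\K(gT^n)$) fails to lower the nilpotency degree, so some global idempotent bookkeeping of this kind seems unavoidable.
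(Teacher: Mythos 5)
Your proof is correct, and it takes a genuinely different route from the paper's. Your approach constructs explicitly a complete orthogonal system of idempotents $e_{-1}, e_0, \ldots, e_n$ (with retraction formulae $p_j = gT^{n-j}\Pi_j$) and then invokes the standard fact that such a system on a finite-length object in an abelian category induces a biproduct decomposition, identifying $\I(e_{-1})$ with $K$ by a universal-property argument. The paper instead builds the \emph{increasing} filtration $\K_0 \subseteq \K_1 \subseteq \cdots \subseteq \K_{n+1} = \D(T)$ of nested kernel objects $\K_p = \K(gT^p)\cap\cdots\cap\K(gT^n)$, applies the splitting lemma (Lemma \ref{lem:splitting}) once at each inclusion $\K_p \hookrightarrow \K_{p+1}$ — using the factorization of $T^{n-p}f$ through $\K_{p+1}$ to produce a splitting $g_p f_p = 1$ — and then reads the coproduct structure off the maximal paths of the resulting binary tree of splittings. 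Your closing remark dismisses this ``peel one factor off at a time'' alternative on the grounds that splitting off $\K(gT^n)$ would not lower the nilpotency degree of $T$; that worry is misplaced. The paper never restricts $T$ to a complement and never reduces a nilpotency degree — the splitting lemma is applied to the \emph{restrictions of $gT^p$ to the nested kernel objects}, where no induction on nilpotency is needed. So the peeling strategy does work, and is arguably more conceptual (it exhibits the kernel hierarchy directly and each step is a clean application of one lemma); your idempotent-calculus route trades that for explicit projector formulae, which is a reasonable gain if one intends to compute with the result, but it front-loads a nested induction and a bookkeeping burden that the filtration argument avoids.
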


\begin{proof}
Set
$\K_p = \K(g T^{p}) \cap \cd \cap \K(gT^{n}).$
As per custom for intersections over the empty set,  $\K_{n+1} = \D(T)$.    To each $p$ correspond unique morphisms $f_p$ and $g_p$ such that the following diagram commutes (the vertical arrow is inclusion).
$$
\xymatrix@R=2.5pc@C=5.5pc@M=.7pc{
&  \D(T)&\\
\ar[rr];[ur]_{T^{n-p}f}  \ar[ur];[]_{gT^{p}}  \ar[rr];[dr]^{f_p}  \ar[dr];[]^{g_p}  \ar[dr];[ur]_{} 
\D(f) &   &\D(f) \\
& \K_{p+1} &
}
$$
The lower lefthand map is simply a restriction of $gT^p$.  Its kernel (object) is  $\K(g T^{p}) \cap \K_{p+1} = \K_p$, so its kernel (map) may be taken to be the inclusion
$k_p: \K_p \to K_{p+1}$.

The lower two diagonal arrows compose to 1, by commutativity, so the splitting lemma provides a coproduct structure of form  $\{k_p,f_p\}$.   Every such structure corresponds to a pair of arrows in Figure \ref{fig:jordantree} sharing a common codomain.  This figure may be regarded informally as a directed tree rooted at $\K_{n+1}$.  Composition of morphisms gives a natural map from directed paths in the tree to morphisms in our abelian category.  The maps attained from \emph{maximal} paths -- those  that begin at a leaf and end at the root -- form a coproduct structure on $K_{n+1} = \D(T)$.  Said structure is precisely \eqref{eq:jordanpiece2}.
\begin{figure}
$$
\xymatrix@R=2.5pc@C=4.5pc@M=.5pc{
& \K_{n+1} &&&\\
 \D(f) \ar[ur]^{f_n}&&\ar[ul]_{k_n} \K_n& && \\
& \D(f)\ar[ur]^{f_{n-1}}&&\ar[ul]_{k_{n-1}} \K_{n-1}&\\
&& \D(f)\ar[ur]^{f_{n-2}}&&\ar[ul]_{k_{n-2}} \ddots
}
$$
\caption{Morphisms generating a coproduct structure on $\K_{n+1} = \D(T)$.}
\label{fig:jordantree}
\end{figure}
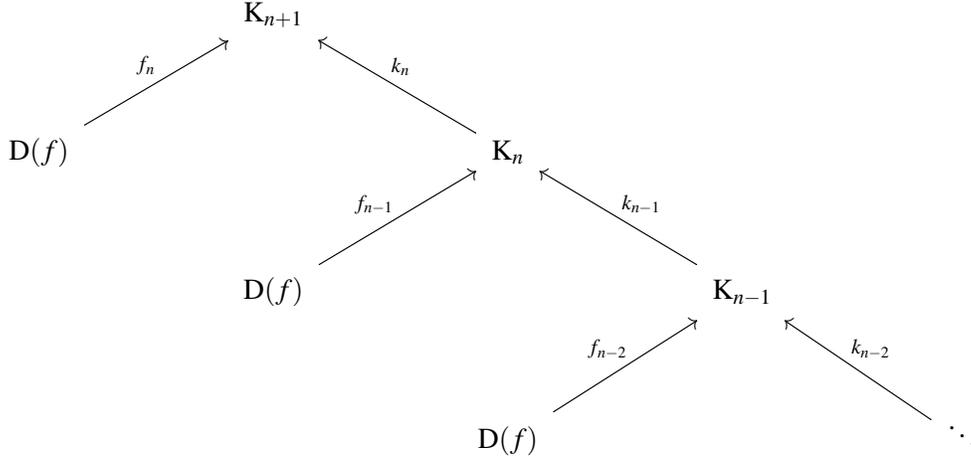
\end{proof}

\begin{corollary} 
\label{cor:jordansplit}  
If $h$ is the biproduct dual to $T^n f$ in \eqref{eq:jordanpiece2}, then
\begin{align}
(l,  T^0f, \ld, T^{n}f)  
\label{eq:jordanpiece}
\end{align}
is a coproduct structure, where $l$ is the inclusion of
$$
\K(hT^0) \cap \cd  \K(hT^n)
$$
into the domain of $T$.   The matrix representation of $T$ with respect to this structure has form \eqref{eq:jordanproof}.

\begin{align}
\left(
\arraycolsep=7pt\def\arraystretch{.7}
\begin{array}{cccccccc}
* & 0 & 0 &    \cd &0& 0 \\
0 & 0 & 0 &\cd& 0& 0 \\
0&1& 0&  \cd& 0&0\\
\vdots & \vdots &   \vdots & \ddots & \vdots &\vdots\\
0 & 0 & 0 & \cd & 0&0\\
0 & 0  & 0 & \cd  & 1 & 0
\end{array}
\right).
\label{eq:jordanproof}
\end{align}

\end{corollary}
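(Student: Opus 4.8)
The plan is to obtain the corollary by applying Theorem \ref{prop:jordansplit} a second time with $h$ in the role of $g$, and then to read the matrix of $T$ directly off the defining pairings of the resulting coproduct structure.

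First I would check the hypotheses of Theorem \ref{prop:jordansplit} for the triple $(f,h,T)$. The standing hypothesis gives $T^{n+1}=0$, and since $h$ is the biproduct dual of $T^{n}f$ in the coproduct structure \eqref{eq:jordanpiece2}, the pairing identity defining a dual (co)product structure yields $h\,T^{n}f=1$. Theorem \ref{prop:jordansplit} then applies verbatim with $g$ replaced by $h$, and its conclusion is that $(l,T^{0}f,\ldots,T^{n}f)$ is a coproduct structure, where $l$ is the inclusion of $\K(hT^{0})\cap\cdots\cap\K(hT^{n})$ into $\D(T)$. This is the first assertion of the corollary, and it also pins down the ordered structure against which the matrix is taken.

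Next I would evaluate the entries $T(x,y)=x^{\sharp}Ty$ as $x$ and $y$ run over $\{\,l,T^{0}f,\ldots,T^{n}f\,\}$. For $y=T^{j}f$ with $0\le j\le n-1$ one has $Ty=T^{j+1}f$, which is itself a member of the structure, so the dual pairing makes $x^{\sharp}Ty$ equal to $1$ when $x=T^{j+1}f$ and $0$ otherwise; this produces the chain of $1$'s just below the diagonal. For $y=T^{n}f$ one has $Ty=T^{n+1}f=0$, so that column is zero. The only column requiring an argument is $y=l$: here I would note that $hT^{p}(Tl)=hT^{p+1}l=0$ for every $p\in\{0,\ldots,n\}$ --- using $T^{n+1}=0$ when $p=n$, and the fact that $l$ factors through $\K(hT^{p+1})$ when $p<n$ --- so $Tl$ is annihilated by each of $hT^{0},\ldots,hT^{n}$ and therefore factors through $\K(hT^{0})\cap\cdots\cap\K(hT^{n})$, i.e.\ through $l$. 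Writing $Tl=l\,m$, it follows that $x^{\sharp}Tl=x^{\sharp}l\,m=0$ for every $x\neq l$, so only the $(l,l)$ entry $l^{\sharp}Tl$ survives; this is the $*$. Collecting these cases reproduces exactly the array \eqref{eq:jordanproof}.

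The hypothesis verification and the pairing computations are purely mechanical. The one step carrying any real content is the assertion that $Tl$ factors through $l$; its substance is the elementary abelian-category fact that a morphism killed by each of $hT^{0},\ldots,hT^{n}$ factors through the intersection of their kernels, which is by construction the subobject that $l$ includes. Once that is in hand, the displayed matrix is forced.
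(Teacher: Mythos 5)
Your proposal is correct and takes the same route as the paper: invoke Theorem \ref{prop:jordansplit} a second time with $h$ in place of $g$ (justified by $hT^{n}f=1$), and then read the matrix off the biproduct pairings. The paper compresses the second half into the remark that the matrix form ``follows almost immediately from the definition of a biproduct structure''; your column-by-column computation, and in particular the observation that $Tl$ factors through $l$ because $l$ is the kernel of $\times(hT^{0},\ldots,hT^{n})$ and $Tl$ is annihilated by each $hT^{p}$, is exactly the content that remark elides.
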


\begin{proof}
That \eqref{eq:jordanpiece} is a coproduct structure follows from Theorem \ref{prop:jordansplit}, since $h T^n f = 1$ by hypothesis.   That the matrix representation of $T$ has the desired form follows almost immediately from the definition of a biproduct structure.
\end{proof}

By analogy with linear spaces, let us say that a map $f$ \emph{complements}  $g$ if $\{f, g\}$ is a coproduct structure.  We refer to any map $k$ that realizes the conclusion to Theorem \ref{prop:jordansplit} as a \emph{Jordan complement} of $f$.


This proposition points to a general method for computing  Jordan complements for endomorphisms on finite semisimple objects in general, and on finite dimensional linear spaces in particular.   Suppose that $\wk$ is a coproduct structure whose elements are inclusions from simple objects (read: copies of the ground field) into $W$.  If $T^{n+1}$ vanishes and $T^n$ does not, then there exists an $f \in \wk$ such that  $T^n f \neq 0$.  For such $f$  there exists at least one 
$$
g \in \wk - \{f\}
$$
so that $g ^\sharp T f$ is an isomorphism --  otherwise  either (i) every element of $\wk^\sharp$  post-composes to zero with $T^n f$ (impossible, since  $T^n f$ would then have to vanish), or (ii) $f^\sharp$ is the sole exception to this rule (also impossible, since then $f^\sharp T f$ would be the only nonzero entry in column $f$ of the associated matrix representation, so that  $f^\sharp T^p f$ would be nonzero for all positive $p$;  in particular, $T$ would fail to be nilpotent).   The pair $(g,f)$ is then an exchange pair, and we may write
$$
\wk^1 = \wk \cup \{Tf\} - \{g\}
$$
for the coproduct obtained by exchanging $Tf$ for $g$.  The map $Tf$ indexes a column of $T(\wk^1, \wk^1)$, so provided that $n> 1$ we may find a
$$
g \in \wk^1 - \{f, Tf\}
$$ 
so that $g^{\sharp} T^2 f$ is invertible, and set
$$
\wk^2 = \wk^1 \cup \{T^2 f\} - \{g\}.
$$
This process repeats.  To see that on each iteration there exists a
$$
g \in \wk^p - \{T^0f, \ld, T^pf\}
$$
such that $g^\sharp T^p f$ is an isomorphism, we it helpful to visualize the sparsity pattern of $T(\wk^p, \wk^p)$.  Once one recognizes that the columns indexed by $T^0, \ld, T^{p-1} f$ are zero-one arrays concentrated on distinct elements of $\wk^p$, the desired conclusion becomes clear.     

For convenience, let us sum the elements of $\wk^n$ that do not take the form $T^p f$  into a single complement $g$, thus forming a coproduct
$$
(g,T^0f, \ld, T^n f).
$$
Let $h$ denote the dual to $T^n f$ in this structure.  If  $hT^{n-p}$ is dual to $T^n f$ for all $p$, then we are done, but such will not always be the case.  It will be true, however, that the composition
\[
(hT \times  \cd \times hT^{n})  \com  (T^{n-1} f \oplus \cd \oplus T^0 f)
\]
is  identity, since the matrix representation of $T$ with respect to $\wk^n$ agrees with \eqref{eq:jordanproof} in all but the first column.    Thus we may exchange $\{h T, \ld, h T^{n}\}$  for the duals to  $\{T^{0} f, \ld, T^{n-1} f\}$  in $(\wk^n)^\sharp$.  

  By the splitting lemma, the resulting coproduct structure will have form 
  $$
  (eg,T^0f, \ld, T^{n-1} f, e T^n f),
  $$ 
  where $e$ is projection onto 
$$
\K_1 = \K(hT^1 \times  \cd \times hT^{n})
$$
along $T^{0} f \oplus \cd \oplus T^{n-1} f$.  Since $T^n f$  factors through $\K(T) \su \K_1$, this structure agrees with
\begin{align}
(eg,T^0 f, \ld, T^{n} f).  \label{eq:themoney}
\end{align}
The matrix representation of $T$ with respect to \eqref{eq:themoney} will agree with \eqref{eq:jordanproof} in all but the second row of the first column.  This too must agree, however, since otherwise $T^{n+1}$ would fail to vanish.   Thus we have established the following.

\begin{corollary}
If $e$ is the idempotent operator on $W$ that realizes projection onto $K_1$ along $T^0f \oplus \cd \oplus T^{n-1} f$, then 
$eg $
is a Jordan complement to $f$. 
\end{corollary}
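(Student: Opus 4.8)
The plan is to carry out explicitly the last exchange sketched in the preceding discussion and then to recognise that the coproduct structure it produces places $eg$ in exactly the role of the map $k$ in Theorem~\ref{prop:jordansplit}. Write $h$ for the dual of $T^nf$ in the coproduct structure $(g,T^0f,\ld,T^nf)$ on $W$, so $hT^nf=1$ while $h$ kills $\I(g)$ and each $\I(T^pf)$ with $p<n$. The starting observation is the identity
\[
(hT\times\cd\times hT^n)\com(T^{n-1}f\oplus\cd\oplus T^0f)=1 ,
\]
whose $(i,j)$ entry is $hT^{i+n-j}f$: it vanishes for $i<j$ because $h$ kills $T^mf$ for $m<n$, equals $1$ for $i=j$, and vanishes for $i>j$ because $T^{n+1}=0$. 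By the splitting lemma (Lemma~\ref{lem:splitting}) this identity gives $W=\I(T^0f)\oplus\cd\oplus\I(T^{n-1}f)\oplus\K_1$ with $\K_1=\K(hT)\cap\cd\cap\K(hT^n)$, so the idempotent $e$ projecting onto $\K_1$ along $\I(T^0f)\oplus\cd\oplus\I(T^{n-1}f)$ is well defined; and, summing $\{T^0f,\ld,T^{n-1}f\}$ into a single element and invoking the exchange lemma (Lemma~\ref{lem:exchange}), the same identity shows that exchanging $\{hT,\ld,hT^n\}$ for the duals of $\{T^0f,\ld,T^{n-1}f\}$ in $(g,T^0f,\ld,T^nf)^\sharp$ yields a product structure $(g^\sharp,hT,\ld,hT^n,h)$.

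The heart of the argument is to read off the coproduct structure dual to $(g^\sharp,hT,\ld,hT^n,h)$. For $i\in\{1,\ld,n\}$ the element $hT^i$ is dual to $T^{n-i}f$, since $hT^i(T^{n-j}f)=hT^{i+n-j}f=\dk_{ij}$ while $g^\sharp$ and $h$ annihilate $T^{n-j}f$ (as $n-j<n$); the element $h$ stays dual to $T^nf$, and $eT^nf=T^nf$ because $\I(T^nf)\su\K(T)\su\K_1$; finally $eg$ is dual to $g^\sharp$. The last point is where care is needed: since $\I(g)$ and each $\I(T^pf)$ with $p<n$ lie in $\K(h)$, the subspace $\K(h)$ is $e$-invariant, so $\I(eg)=e(\I(g))\su\K(h)\cap\K_1=\K(hT^0)\cap\cd\cap\K(hT^n)$, whence $h(eg)=0$ and $(hT^i)(eg)=0$; and, writing $g-eg=(1-e)g$ as a map through $\I(T^0f)\oplus\cd\oplus\I(T^{n-1}f)$ (all of which $g^\sharp$ kills), $g^\sharp(eg)=g^\sharp g=1$. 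Hence the dual coproduct structure is, as an unordered family, $\{eg\}\cup\{T^0f,\ld,T^nf\}$.

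Therefore $(eg,T^0f,\ld,T^nf)$ is a coproduct structure; comparing the resulting direct-sum decomposition of $W$ with the one given by Theorem~\ref{prop:jordansplit} applied to $h$ (which satisfies $hT^nf=1$ and $T^{n+1}=0$) forces $\I(eg)=\K(hT^0)\cap\cd\cap\K(hT^n)$, so $eg$ realises the conclusion of that theorem and is a Jordan complement of $f$. The matrix of $T$ against this structure then agrees with \eqref{eq:jordanproof}: its only doubtful entry, the second of the first column, must vanish lest $T^{n+1}\neq0$. The single step demanding real attention is the identification that $eg$, and not $g$, is the member of the new coproduct structure paired with $g^\sharp$; this rests on the $e$-invariance of $\K(h)$ together with $\I(T^nf)\su\K_1$, everything else being routine manipulation of biproduct duals.
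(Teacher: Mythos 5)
Your proof is correct and follows essentially the same route the paper takes: verify the identity $(hT\times\cd\times hT^n)\com(T^{n-1}f\oplus\cd\oplus T^0f)=1$, use the Splitting and Exchange Lemmas to rotate $\{hT,\ld,hT^n\}$ in for the duals of $\{T^0f,\ld,T^{n-1}f\}$ in the product structure, and read off that the resulting coproduct is $(eg,T^0f,\ld,T^nf)$. You supply somewhat more detail than the paper at two points — the explicit check that $\K(h)$ is $e$-invariant (so that $eg$ really is the element paired with $g^\sharp$ after the exchange) and the observation that $eT^nf=T^nf$ via $\I(T^nf)\su\K(T)\su\K_1$ — both of which the paper compresses into a single appeal to the splitting lemma. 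Your closing step pins down $\I(eg)=\K(hT^0)\cap\cd\cap\K(hT^n)$ by invoking Theorem~\ref{prop:jordansplit} with $g=h$ and comparing complements, whereas the paper instead verifies that the resulting matrix has the form~\eqref{eq:jordanproof} (the only entry in doubt being forced to vanish by $T^{n+1}=0$); these are interchangeable wrap-ups, since your second-paragraph inclusion $\I(eg)\su\K(h)\cap\K_1$ already does the real work.
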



\begin{corollary}  
A map $k$  is a Jordan complement to $f$ iff $k$ complements $T^n f$ in the kernel of a morphism
 $H$ such that 
 $$
 H \com  (T f \oplus \cd \oplus T^{n} f) = 1.
 $$
\end{corollary}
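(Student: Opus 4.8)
The statement to prove is: a map $k$ is a Jordan complement to $f$ if and only if $k$ complements $T^n f$ in the kernel of some morphism $H$ satisfying $H \com (Tf \oplus \cd \oplus T^n f) = 1$. The plan is to prove the two implications separately, leveraging Theorem \ref{prop:jordansplit} and Corollary \ref{cor:jordansplit}, which together show that $k$ is a Jordan complement to $f$ exactly when $(k, T^0 f, \ld, T^n f)$ is a coproduct structure, i.e.\ when $k$ complements the family $\{T^0 f, \ld, T^n f\}$. So the content of the corollary is really a translation: complementing the \emph{whole} orbit $\{T^0f, \ld, T^n f\}$ is equivalent to complementing just $T^n f$ \emph{inside} the kernel of a suitable retraction $H$ of the truncated orbit $(Tf, \ld, T^n f)$.

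For the forward direction, suppose $k$ is a Jordan complement to $f$, so $(k, T^0 f, \ld, T^n f)$ is a coproduct structure by Theorem \ref{prop:jordansplit}. Let $h$ be the biproduct dual to $T^n f$ in this structure; by Corollary \ref{cor:jordansplit} and its displayed matrix form \eqref{eq:jordanproof}, the composite $(hT \times \cd \times hT^n) \com (T^{n-1} f \oplus \cd \oplus T^0 f)$ is the identity. Reindexing, set $H = hT \times \cd \times hT^n$; then $H \com (Tf \oplus \cd \oplus T^n f) = 1$ (the rows of $T$ in \eqref{eq:jordanproof} below the first are exactly the shift matrix, up to the first column, so this holds once one checks the first column contributes nothing to rows $2$ through $n+1$ — which it must, else $T^{n+1} \neq 0$). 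Now $T^n f$ factors through $\K(T) \su \K(H)$, and $e := $ projection onto $\K(H)$ along $T^0 f \oplus \cd \oplus T^{n-1} f$ is a well-defined idempotent by the splitting lemma; by the discussion preceding the corollary, $(eg, T^0 f, \ld, T^n f)$ is again a coproduct structure with $eg$ a Jordan complement, and crucially $\{T^n f, k\}$ must be a coproduct structure \emph{within} $\K(H)$ since both restrict there and span complementary pieces. This gives the ``only if''.

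For the converse, suppose $H \com (Tf \oplus \cd \oplus T^n f) = 1$ and $k$ complements $T^n f$ in $\K(H)$. The splitting lemma (Lemma \ref{lem:splitting}), applied to the retraction $H$ restricted appropriately, shows that $\K(H)$ decomposes as the internal sum of $\I(T^n f)$ and a complement; since $(Tf \oplus \cd \oplus T^{n-1} f)$ has image transverse to $\K(H)$ and $H$ retracts it, the family $(Tf, \ld, T^{n-1}f, k, T^n f)$ — equivalently, after reassembling, $(k, T^0 f, \ld, T^n f)$ — is a coproduct structure on $\D(T)$. Here the key identity is that $k$ complementing $T^n f$ in $\K(H)$, combined with $H$ being a retraction of the truncated orbit, forces $(k, Tf, \ld, T^n f)$ to be a coproduct on all of $\D(T)$; then prepending $T^0 f = f$ and using that $gT^n f = 1$-type nondegeneracy is inherited, one gets the full orbit coproduct, whence $k$ is a Jordan complement by Theorem \ref{prop:jordansplit}.

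The main obstacle I expect is bookkeeping the interplay between ``complement inside $\K(H)$'' and ``complement in $\D(T)$'': one must verify carefully that the splitting along $\K(H)$ versus $\I(H^{\flat})$-type complement is compatible with the orbit filtration, i.e.\ that $T^0 f \oplus \cd \oplus T^{n-1} f$ really is a complement to $\K(H)$ and that $T^n f$ lands in $\K(H)$. Both follow from the retraction identity $H \com (Tf \oplus \cd \oplus T^n f) = 1$ and nilpotence ($T^{n+1} = 0$ forces $H T^n f \cdot$-compatibility), but assembling these into the coproduct claim requires one more invocation of the splitting lemma than is entirely transparent; this is where I would spend the most care, possibly by explicitly writing the matrix of $T$ in the relevant structure and checking it matches \eqref{eq:jordanproof}.
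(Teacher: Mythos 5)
There is a concrete error in your forward step, and it points to an inconsistency in the statement as printed. If $H$ satisfies $H \com (Tf \oplus \cd \oplus T^n f) = 1$, then precomposing both sides with the $n$-th coproduct inclusion shows $H T^n f \neq 0$, so $T^n f$ does not factor through $\K(H)$ and the phrase ``$k$ complements $T^n f$ in $\K(H)$'' cannot hold for any $k$ at all. Your specific claim that $H = hT \times \cd \times hT^n$ verifies the retraction identity is also false: the $(i,j)$ entry of $H \com (Tf \oplus \cd \oplus T^n f)$ is $hT^{i+j}f$, which vanishes unless $i+j = n$; since $i,j \ge 1$, both the $n$-th row and $n$-th column are entirely zero, so the composite is not even invertible, let alone the identity. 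What the surrounding text actually records is $(hT \times \cd \times hT^n) \com (T^{n-1}f \oplus \cd \oplus T^0 f) = 1$, which features the powers $T^0, \ld, T^{n-1}$ rather than $T^1, \ld, T^n$. Read together with $\K_1 = \K(hT \times \cd \times hT^n)$, this makes it clear that the displayed condition should read $H \com (T^0 f \oplus \cd \oplus T^{n-1} f) = 1$; a correct proof should have diagnosed this discrepancy rather than asserted the false identity, and the closing paragraph of your proposal, where you assert that $T^n f$ landing in $\K(H)$ ``follows from the retraction identity,'' has the logic exactly backwards.

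Once the condition is corrected, the argument is much shorter and needs none of the exchange apparatus. If $k$ is a Jordan complement, so that $(k, T^0 f, \ld, T^n f)$ is a coproduct with duals $\bk_i = (T^i f)^\sharp$, set $H = \bk_0 \times \cd \times \bk_{n-1}$; by duality $H \com (T^0 f \oplus \cd \oplus T^{n-1}f) = 1$ and $\K(H) = \bigcap_{i=0}^{n-1} \K(\bk_i) = \I(k) \oplus \I(T^n f)$, so $k$ complements $T^n f$ in $\K(H)$ (note that this $H$ is built from $k$ itself, whereas you fix $H$ from an unrelated structure $(g, T^0 f, \ld, T^n f)$ and then try to verify the complement claim for an arbitrary $k$, which does not work). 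Conversely, if $H$ retracts $T^0 f \oplus \cd \oplus T^{n-1} f$ and $\K(H) = \I(k) \oplus \I(T^n f)$, the splitting lemma gives $\D(T) = \I(T^0 f) \oplus \cd \oplus \I(T^{n-1}f) \oplus \K(H) = \I(k) \oplus \I(T^0 f) \oplus \cd \oplus \I(T^n f)$, so $(k, T^0 f, \ld, T^n f)$ is a coproduct and $k$ is a Jordan complement by definition. Your converse paragraph also has a cardinality error independent of the typo: the families $(Tf, \ld, T^{n-1}f, k, T^n f)$ and $(k, T^0 f, \ld, T^n f)$ you describe as ``equivalent after reassembling'' have $n+1$ and $n+2$ members respectively; the first omits $T^0 f = f$, and a morphism cannot simply be ``prepended'' to a coproduct structure afterwards without destroying it.
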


Let us now describe a general algorithm to compute Jordan decompositions (not just Jordan complements) for a nilpotent morphisms. For convenience, denote the structure \eqref{eq:themoney} by $\wk[[f,h]]$.  We call $(f, h)$ a \emph{Jordan pair} in $T$, and refer to the operation 
$$
\wk \mapsto  \wk[[f, h]]
$$ 
as a \emph{Jordan exchange}.  

Informally, the decomposition algorithm works by splitting orbits off of Jordan complements.  On each iteration is given a coproduct structure $\wk$, expressed as a disjoint union of form $J_1 \cup \cd \cup J_p \cup I$, where $J_1, \ld, J_p$ are maximal (with respect to inclusion) orbits.  One selects an element $f \in \tm{argmax}_{i \in I}  |\orb(i)|$, and rotates first $\orb(f)$, and then the necessary duals to $\orb(f)$ into the biproduct structure.  The result is is a biproduct a strictly greater number of orbits and a strictly smaller complement, $I$.  Once this complement vanishes, the process terminates.

%
%

To aid in a formal description,  let us say that $(f,h)$ is  a \emph{proper Jordan pair} if (i) it is a Jordan pair, (ii)  $\orb(f)$ is not already a Jordan block in $\wk$, and (iii) among the pairs that meet criteria (i) and (ii),  $\orb(f)$  has maximal cardinality.
\vspace{.55cm}

\begin{algorithm}
\begin{algorithmic}
\WHILE{ $\wk$ has a proper Jordan pair }
\STATE   fix a proper Jordan pair for $\wk$.
\STATE   $\wk  \leftarrow \wk[[f,h]]$
\ENDWHILE
\end{algorithmic}
\caption{Jordan Decomposition}
\label{alg:BPJE}
\end{algorithm}

If $T^2 = 0$, then the structure produced on iteration $p$ of Algorithm \ref{alg:BPJE}  bears a simple relationship to that of step $p-1$.   Let us denote the $p$th Jordan pair by $(f_p, h_p)$, and the corresponding coproduct structure $\wk^p$.  In the 2-nilpotent regime only one element rotates directly into the coproduct on each iteration, namely $Tf_p$, and one into the product.   Since neither $f_p$ nor $Tf_p$ are modified by subsequent iterations Algorithm \ref{alg:BPJE} (a consequence of the splitting lemma) we may arrange the elements of $\wk^p$ into a tuple of form
\begin{align}
(f_1, \ld, f_p, \ld, T f_p, \ld, Tf_1).  
\label{eq:ordering}
\end{align}
\begin{proposition}  
Array $1(\wk, \wk^{\infty})$ is upper-triangular with respect to  \eqref{eq:ordering}.
\end{proposition}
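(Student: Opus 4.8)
The plan is to realize $1(\wk,\wk^\infty)$ as a telescoping product of the single--iteration change of basis matrices and to test each factor against one fixed ``slot'' indexing of $\{1,\dots,N\}$, $N=|\wk|$: slot $p$ will carry whatever element is destined to become $f_p$, slot $N-p+1$ whatever is destined to become $g_p$ (hence later $Tf_p$), and the intervening slots the ``leftover'' complement elements in any fixed order. With hindsight this indexing is exactly the ordering \eqref{eq:ordering}, and it is compatible with the bijection $\wk^{p-1}\to\wk^p$ induced by each Jordan exchange. Since $1(\mathcal A,\mathcal C)=1(\mathcal A,\mathcal B)\,1(\mathcal B,\mathcal C)$ for identity--morphism arrays and a product of upper--triangular matrices is upper triangular, it suffices to prove that each $1(\wk^{p-1},\wk^p)$ is upper triangular for this indexing.

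Next I would spell out the single Jordan exchange in the $2$--nilpotent regime. Write $h_p$ for the dual of $Tf_p$ in the coproduct $\big(\oplus(\wk^{p-1}-\{f_p,g_p\}),\,f_p,\,Tf_p\big)$ and $e_p=1-f_ph_pT$, so that $h_pTf_p=1$, $h_pe_p=h_p$, and $h_pTe_p=0$. By Corollary \ref{cor:jordansplit} and the discussion following it, $\wk^p=\{f_p\}\cup\{Tf_p\}\cup\{e_pw:w\in\wk^{p-1}-\{f_p,g_p\}\}$, with $g_p\mapsto Tf_p$, $f_p\mapsto f_p$, and $w\mapsto e_pw$ otherwise. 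Now read off the columns of $1(\wk^{p-1},\wk^p)$ in the basis $\wk^{p-1}$: the $f_p$--column is a unit vector $\chi_{f_p}$, and likewise the columns of the frozen elements $f_i,Tf_i$ with $i<p$ are unit vectors (one checks $e_p$ fixes these, using $h_pTf_i=h_p(Tf_i)=0$ and $T^2=0$); the column of $e_pw$ equals $\chi_w-(h_pTw)\,\chi_{f_p}$, whose only off--diagonal entry lives in slot $p$, which lies strictly above the slot of $w$ because every complement slot has index $>p$; and the $Tf_p$--column is precisely the $f_p$--column of $T(\wk^{p-1},\wk^{p-1})$, whose entries in the slots $N-p+2,\dots,N$ are $(Tf_{p-1})^\sharp Tf_p,\dots,(Tf_1)^\sharp Tf_p$.

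So the remaining point -- and I expect it to be the heart of the proof -- is that $Tf_p$ has no component on $Tf_1,\dots,Tf_{p-1}$ in the basis $\wk^{p-1}$, i.e.\ $(Tf_j)^\sharp Tf_p=0$ for every $j<p$, since those are exactly the slots below the slot $N-p+1$ of $Tf_p$. I would prove this in two inductive steps on $r\ge j$, using the splitting lemma (Lemma \ref{lem:splitting}) to see that the orbit $(f_j,Tf_j)$ becomes a biproduct summand at stage $j$ and that subsequent Jordan exchanges only alter the complementary summand. (1) $(Tf_j)^\sharp$ computed in $\wk^r$ equals $h_j$ for all $r\ge j$: this reduces to the claim that $h_j$ annihilates every complement element of $\wk^r$ (together with the frozen $f_i,Tf_i$, $i\neq j$, which it handles via (2)), with base case $h_j(e_jw)=h_j(w)=0$ for $w\in\wk^{j-1}-\{f_j,g_j\}$ and inductive step $h_j(e_{r+1}u)=h_j(u)-h_j(f_{r+1})(h_{r+1}Tu)=0$ because $u$ and $f_{r+1}$ are complement elements of $\wk^r$. (2) Every complement element of $\wk^r$ lies in $\K(h_jT)$: base case $h_jTe_j=h_jT-(h_jTf_j)(h_jT)=0$, inductive step $h_jT(e_{r+1}u)=h_jTu-(h_jTf_{r+1})(h_{r+1}Tu)=0$. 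Since $f_p$ is a complement element of $\wk^{p-1}$ and $p-1\ge j$, combining (1) and (2) yields $(Tf_j)^\sharp Tf_p=h_jTf_p=0$, as needed.

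Putting the pieces together, each $1(\wk^{p-1},\wk^p)$ is upper triangular, hence so is the product $1(\wk,\wk^\infty)=\prod_{p=1}^{m}1(\wk^{p-1},\wk^p)$, which is the assertion after relabelling slots by \eqref{eq:ordering}. The only subtlety that needs a comment is that the slot assignment -- which element ``is'' $f_p$ and which ``is'' $g_p$ -- is made retroactively once the algorithm has finished running; this is legitimate, since the proposition only claims triangularity with respect to some such ordering.
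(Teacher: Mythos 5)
Your proof is correct, and it follows the same high-level strategy as the paper --- telescope $1(\wk,\wk^\infty)=\prod_{p}1(\wk^{p-1},\wk^{p})$ and show each factor is upper-triangular --- but the way you establish triangularity of the individual factors is genuinely different and noticeably more explicit. The paper further factors each $1(\wk^{p-1},\wk^{p})$ as $1(\wk^{p-1},\uk^{p-1})\,1(\uk^{p-1},\wk^{p})$, where $\uk^{p-1}$ is the intermediate coproduct obtained by the elementary exchange rotating $Tf_p$ into $\wk^{p-1}$; each of these factors is identity away from one column, and upper-triangularity is read off from the (asserted) fact that the off-antidiagonal support of $T(\wk^{p-1},\wk^{p-1})$ concentrates in the central block $[p,n-p]\times[p,n-p]$. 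You instead analyze $1(\wk^{p-1},\wk^{p})$ column by column, reducing everything to the single vanishing claim $(Tf_j)^\sharp Tf_p=0$ for $j<p$, which you then prove by the twin inductions (1)--(2) on the duals $h_j$ and the idempotents $e_r$, using $h_{r+1}Tu$, $h_j T f_{r+1}$, and $T^2=0$. In effect you supply the inductive verification of the antidiagonal-concentration fact that the paper's proof treats as given (``the off-antidiagonal support ... concentrates on ...''), so your argument is a fully spelled-out, somewhat more elementary route to the same conclusion. Both proofs close by observing that a product of upper-triangular arrays is upper-triangular, and your closing remark about the ordering being assigned retroactively after the run is the correct reading of the statement.
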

\begin{proof}
Let $n$ be the dimension of $W$.  With respect to the given ordering, the off-antidiagonal support of $T(\wk^{p-1}, \wk^{p-1})$ concentrates on $[p, n-p] \times [p, n-p]$.  If $\uk^{p-1}$ is the coproduct produced by rotating $Tf_p$ into $\wk^{p-1}$, then the same assertion holds for the support of $T(\uk^{p-1}, \uk^{p-1})$.  Consequently array $1(\wk^{p-1}, \uk^p)$, which may be obtained by replacing the $p$th column of the identity array by the $p$th column of $T(\wk^{p-1}, \wk^{p-1})$, is upper-triangular.
The same holds for $1(\uk^p,\wk^{p+1})$, by a similar argument, and so, too for
$1(\wk^{p-1},\wk^{p}) = 1(\wk^{p-1}, \uk^p) 1(\uk^p,\wk^{p})$.   The desired conclusion follows by a simple induction.
\end{proof}

\section{Filtered Exchange}
\label{sec:filtexch}
%
%

We define the \emph{closure} of  a collection $\lk$ of maps into $W$ to be  the class $\cl(\lk)$ of all $g$ for which some diagram of form
\[
\xymatrix@R=2.5pc@C=5.5pc@M=.7pc{
\ar[];[dr]_{\oplus \lk}   \ar[rr];[]  \ar[rr];[dr]^g
\bullet && \bullet  \\
& W
}
\]
commutes.

Likewise,  the \emph{coclosure} of a collection $\uk$ of maps out of $W$ is the class $\cl(\uk)$  of all $g$ for which some diagram of form
\[
\xymatrix@R=2.5pc@C=5.5pc@M=.7pc{
\ar[dr];[]^{\times \lk }   \ar[];[rr]  \ar[dr];[rr]_g
\bullet && \bullet  \\
& W
}
\]
commutes.  Coclosures will not enter our discussion directly, however an equivalent dual story may be told in terms of this operations.

Given any family $\Omega$  of maps into $W$,  define $\cl_\Omega(\wk) = \Omega \cap \cl(\wk)$. We say  $\wk$  is \emph{closed}  if $\wk = \cl(\wk)$, and \emph{closed in $\Omega$} if $\wk = \cl_\Omega(\wk)$.  A set is \emph{independent} in $\Omega$ if no $f \in \wk$ lies in the closure of $\wk - \{f\}$.

In the special case where $\Omega = \Hom(\field, W)$, the  closure of $\wk$ in $\Omega$ is the family of  maps $\field \to W$ whose image lies in $\I(\oplus\wk )$.  Consequently, $\wk$ is independent in $\Omega$ iff $\{f(1) : f \in \wk\}$ is independent as a subset of $W$.  Thus  the independent sets of $\Omega$ are  the linearly independent subsets of $\Hom(\field, W)$.   The benefit of this alternate characterization is that it  describes independence in the language of function composition, hence the language of biproducts.   The following, for example, is an elementary consequence of the splitting lemma.

\begin{lemma} 
\label{lem:containedin}  
If $\lk$ is a coproduct structure on $W$ and $\wk \su \lk$, then $g \in \cl(\wk)$ iff $(\lk - \wk)^\sharp g = 0$.
\end{lemma}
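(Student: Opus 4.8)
The statement claims that, given a coproduct structure $\lk$ on $W$ and a subfamily $\wk \su \lk$, a map $g: \D(g) \to W$ lies in the closure $\cl(\wk)$ if and only if $(\lk - \wk)^\sharp g = 0$, where $(\lk - \wk)^\sharp$ abbreviates the product map $\times (\lk - \wk)^\sharp$. My plan is to prove the two implications separately, using the idempotent decomposition $\sum_{f \in \lk} e_f = 1$ with $e_f = f^\flat f^\sharp$ (Proposition \eqref{eq:complementaryidempotents}) as the central tool, together with the characterization of $\cl(\wk)$ via the factorization diagram given just before the lemma.

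First I would handle the ``if'' direction. Suppose $(\lk - \wk)^\sharp g = 0$, i.e. $f^\sharp g = 0$ for every $f \in \lk - \wk$. Applying the resolution of the identity, $g = 1_W \circ g = \sum_{f \in \lk} e_f g = \sum_{f \in \wk} f^\flat f^\sharp g$, since the terms indexed by $\lk - \wk$ vanish. Thus $g = (\oplus \wk) \circ h$, where $h$ is the map into $\oplus_{f \in \wk} \D(f)$ whose $f$-component is $f^\sharp g$; concretely $h$ is built from the maps $\{f^\sharp g\}_{f \in \wk}$ using the coproduct universal property, or equivalently $h = (\times_{f\in\wk} f^\sharp)\circ(\text{the canonical identification})$. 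This exhibits exactly the commuting triangle defining membership in $\cl(\wk)$, with top map $h$ and slanted map $\oplus\wk$. So $g \in \cl(\wk)$.

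For the ``only if'' direction, suppose $g \in \cl(\wk)$, so there is some $h$ with $g = (\oplus \wk) h$. For any $f \in \lk - \wk$ I want $f^\sharp g = 0$. Compute $f^\sharp g = f^\sharp (\oplus \wk) h$. Now $f^\sharp (\oplus \wk)$ is the map on $\oplus_{f' \in \wk}\D(f')$ whose component at $f' \in \wk$ is $f^\sharp f'^\flat = \dk_{f f'}$, which is zero for all $f' \in \wk$ since $f \notin \wk$. Hence $f^\sharp (\oplus \wk) = 0$, so $f^\sharp g = 0$; as $f$ was arbitrary in $\lk - \wk$, we get $(\lk - \wk)^\sharp g = 0$.

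The argument is essentially bookkeeping with the biorthogonality relations $f^\sharp g^\flat = \dk_{fg}$ for a coproduct structure and the completeness relation $\sum e_f = 1$; there is no serious obstacle. The one point requiring a little care is matching the ``slanted'' map $\oplus\wk$ appearing in the definition of $\cl(\wk)$ with the sum $\sum_{f\in\wk} f^\flat f^\sharp g$ — that is, confirming that $\oplus(\wk)$ composed with the map assembled from $\{f^\sharp g\}_{f\in\wk}$ equals $\sum_{f\in\wk} f^\flat(f^\sharp g)$, which is just the defining property of $\oplus$ applied componentwise. I would state this explicitly but not belabor it.
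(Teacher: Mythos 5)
Your proof is correct and is exactly the computation the paper has in mind when it calls the lemma ``an elementary consequence of the splitting lemma'': the forward direction uses the resolution of identity $\sum_{f\in\lk}e_f = 1$ to factor $g$ through $\oplus\wk$, and the converse uses biorthogonality $f^\sharp f'^{\flat}=\dk_{ff'}$, both of which flow from the coproduct structure. The paper does not spell out the argument, so there is nothing further to compare; your write-up is a faithful and complete account of it.
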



Let us apply this observation to a problem involving minimal bases.   Suppose are given a linear filtration $\fc$ on $\Omega$, and an $\fc$-minimal basis $\lk$.  Under what conditions will a second basis, $\uk$, be minimal as well?

Denote the relation $\{(f,g) \in \aw : \fc(f) \le \fc(g)\}$ by $\sim_\fc$, and recall that a \emph{partial matching} on  $T(\lk, \uk)$ is a  partial matching $\d$ on $R = \Supp(T(\lk, \uk))$ such that $T\left (\d \der \sharp, \d \der \flat \right)$ is invertible.   Every exchange pair $(f,g)$ determines a partial paring $d = \{(f,g)\}$, and while the support of $1(\lk, \lk[\d])$ is formally a relation on $\lk \times \lk[\d]$ we may naturally regard it as subset of $\lk \times \lk$, via the pairing functions.  The relation  $R_\d \su \lk \times \lk$ coincides with  the transitive reflexive transitive closure of 
$$
\Supp(1(\lk, \lk[d]))
$$
under this identification.  A similar interpretation holds for $R^\d$.

\begin{proposition} 
\label{prop:minimalpairing}  
If $R$ is the support of $1(\lk, \uk)$,  then $\uk$ is $\fc$-minimal iff $\sim_\fc$ extends $R_\d$ for some perfect matching $\d$ on $R$.
\end{proposition}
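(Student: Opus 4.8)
The plan is to characterize $\fc$-minimality of $\uk$ by combining two things established earlier: first, Lemma \ref{lem:genlem}, which says $\uk$ is $\fc$-minimal iff it generates the filtration $\fc$, i.e.\ $\uk_{\fc \le t}$ spans $\fc_{\le t}$ for every threshold $t$; and second, the combinatorial bookkeeping of exchange pairs via the induced relations $R_\d$, $R^\d$ developed in \S\ref{sec:relations} and \S\ref{sec:exchangerelations}. The key translation is Lemma \ref{lem:containedin}: for $g$ a map into $W$ and $\lk$ a coproduct structure, $g \in \cl(\wk)$ iff $(\lk - \wk)^\sharp g = 0$; applied to the standard basis $\lk$, this expresses ``$\uk_{\fc \le t}$ spans $\fc_{\le t}$'' purely in terms of vanishing of entries of the change-of-basis array $1(\lk,\uk)$ (equivalently $T(\lk,\uk)$ when we write $T = \oplus\uk$ regarded as a map whose columns are the $\uk$ elements expressed in $\lk$-coordinates).

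First I would fix a perfect matching $\d$ on $R = \Supp(1(\lk,\uk))$; such a matching exists because $1(\lk,\uk)$ is invertible. The pairing function $\flat\colon \lk \to \uk$ identifies each standard basis element $f$ with the element $f\der\flat$ of $\uk$ that ``replaces'' it. The relation $R_\d \su \lk \times \lk$ is the transitive reflexive closure of $\{(f, h) : (f, h\der\flat) \in \d\} = \Supp(1(\lk,\lk[\d]))$, and by M\"obius inversion (Theorem \ref{thm:mobiusgeneral}, or rather its specialization to unipotent triangular arrays) this transitive closure also equals the transitive closure of $\Supp(1(\lk[\d],\lk))$, i.e.\ of the inverse change of basis. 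The crucial algebraic fact I would extract is: after performing the perfect matching (a sequence of elementary exchanges), the array $1(\lk,\lk[\d])$ is ``triangular'' with respect to any linear order extending $R_\d$, and similarly $1(\uk[\d_0],\uk)$ is triangular with respect to $R^\d$ — exactly as in Lemma \ref{lem:triangleformula} and the LU discussion of \S\ref{sec:lu}.

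Now the two directions. For the ``if'' direction, assume $\sim_\fc$ extends $R_\d$ for some perfect matching $\d$. Fix a threshold $t$ and let $\wk = \uk_{\fc \le t}$. I want $\fc_{\le t} \su \cl(\wk)$, i.e.\ by Lemma \ref{lem:containedin} that $(\uk - \wk)^\sharp g = 0$ for every $g \in \fc_{\le t}$ — it suffices to check this on the generators $\lk_{\fc \le t}$ of $\fc_{\le t}$. Writing $f \in \lk_{\fc\le t}$ in the $\uk$-basis, its nonzero coordinates index elements $h\der\flat$ with $(f, h\der\flat)$ in the transitive closure of the support, hence $(f,h) \in R_\d$, hence $\fc(f) \le \fc(h)$; since $\fc(f) \le t$ this gives $\fc(h\der\flat) = \fc(h) \ge \fc(f)$... — wait, I need the inequality the other way, so here I would instead run the argument through $1(\uk,\lk)$ and $R^\d$: expressing $f$ in $\uk$-coordinates uses the row of $1(\uk,\lk)$ indexed appropriately, whose support sits inside $R^\d$ by M\"obius inversion, and $R^\d$ is likewise monotone for $\sim_\fc$ because $\d$ pairs $f$ with $f\der\flat$ and the relations $R_\d$, $R^\d$ are pulled back to the same subset of $\lk \times \lk \cong \uk \times \uk$ under the pairing functions. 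Thus every $\uk$-element appearing in the expansion of $f \in \fc_{\le t}$ has $\fc$-value $\le t$, so lies in $\wk$; hence $(\uk - \wk)^\sharp f = 0$ and $f \in \cl(\wk)$. As $t$ was arbitrary, $\uk$ generates $\fc$, so $\uk$ is $\fc$-minimal by Lemma \ref{lem:genlem}. For the ``only if'' direction, suppose $\uk$ is $\fc$-minimal, so $\uk_{\fc\le t}$ is a basis of $\fc_{\le t}$ for all $t$ (Lemma \ref{lem:genlem}). Then the change-of-basis array from $\lk$ to $\uk$, read off thresholds, is block-triangular with respect to $\fc$: an element $f$ with $\fc(f) = s$ lies in $\fc_{\le s} = \spann(\uk_{\fc \le s})$, so its $\uk$-expansion only involves elements of $\fc$-value $\le s$. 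One then builds a perfect matching $\d$ by choosing, within each $\fc$-level, a perfect matching of the invertible diagonal block (possible since restricting a basis-to-basis array to a graded piece is invertible, by the same rank count as in the proof of Lemma \ref{lem:genlem}); concatenating these gives a perfect matching $\d$ on all of $R$ for which $\Supp(1(\lk,\lk[\d]))$, and hence its transitive reflexive closure $R_\d$, is contained in $\{(f,g) : \fc(f) \le \fc(g)\} = \sim_\fc$.

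The main obstacle I anticipate is bookkeeping the two relations $R_\d$ and $R^\d$ consistently: the statement is phrased in terms of $R_\d$ only, and I must be careful that the monotonicity constraint "$\sim_\fc$ extends $R_\d$" really does control the $\uk$-side expansions (which naively involve $R^\d$ or the inverse array). This is exactly where Proposition \ref{prop:holdacyclic}/Corollary \ref{cor:clearingpattern} and the M\"obius inversion identity $\Supp(A\inv) \subseteq \Supp(A)$ for acyclic $A$ do the work — under the pairing functions $R_\d$ and $R^\d$ are realized as subsets of the same $\lk\times\lk$ and, for a perfect matching, the "LU factors" $1(\lk,\lk[\d])$ and $1(\uk[\d_0],\uk)$ are mutually transpose-like triangular blocks of the single invertible array, so extending $\sim_\fc$ over one is equivalent to extending it over the other. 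Making that equivalence precise, rather than the individual spanning computations, is the real content; everything else is the routine translation between "generates the filtration," "vanishing off-support entries," and "$\chi_{\sim_\fc}$ dominates the change-of-basis support."
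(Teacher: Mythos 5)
Your skeleton — Lemma \ref{lem:genlem} to reduce $\fc$-minimality to generating the filtration, Lemma \ref{lem:containedin} to translate that into a triangularity condition on $1(\lk,\uk)$ — is the paper's approach, and your ``only if'' direction (minimality $\implies$ a suitable $\d$ exists) matches the argument the paper actually spells out: reindex so the array is block-upper-triangular by $\fc$-grade, note the diagonal blocks are invertible, and take $\d$ inside them. The paper then dispatches the converse with ``may be shown similarly,'' and you have correctly noticed that it is not literally similar: trying to control the $\uk$-coordinates of $\lk$-elements forces you into the inverse array $1(\uk,\lk)$, the supports run the wrong way, and the inequality $\fc(f)\le\fc(h)$ comes out backwards. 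Flagging that obstacle is legitimate.

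But the detour you propose — routing through $R^\d$ and M\"obius inversion — is heavier than needed and, as sketched, close to circular: to control $\Supp(1(\uk,\lk))$ via M\"obius inversion you need acyclicity/triangularity of $1(\lk,\uk)$, which is essentially what you are trying to establish. The clean argument stays entirely in the forward array and never inverts anything. Suppose $\d$ is a perfect matching on $R$ with $R_\d \su\; \sim_\fc$. The observation you are missing is that the matching pins down the extended weight of every $\uk$-element. For $j \in \lk$, the row support of column $j\der\flat$ in $1(\lk,\uk)$ consists of those $i$ for which $(i,j)$ lies in the generating set of $R_\d$, hence $\fc(i)\le\fc(j)$; and since $(j,j\der\flat) \in \d \su R$, row $j$ itself belongs to that support. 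Therefore
\[
\fc(j\der\flat) \;=\; \max\{\fc(i) : (i,j\der\flat) \in R\} \;=\; \fc(j).
\]
With this in hand, $\uk_{\fc\le p} = \{j\der\flat : j\in\lk,\ \fc(j)\le p\}$ has exactly $n_p = |\lk_{\fc\le p}|$ elements; each lies in $\fc_{\le p}$ by definition of the extended weight function; and they are independent as a subfamily of the coproduct $\uk$. By dimension count $\uk_{\fc\le p}$ is a basis of $\fc_{\le p}$ for every $p$, so $\uk$ generates $\fc$, and Lemma \ref{lem:genlem} gives $\fc$-minimality. No $R^\d$, no M\"obius inversion, no $1(\uk,\lk)$.
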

\begin{proof}  
Let us assume for convenience that  $\lk$ and $\uk$ are ordered tuples, and reindex as necessary so that $\fc \com \lk$ and $\fc \com \uk$ are monotone-increasing functions of form 
$$
\{1, \ld, m\} \to \Z.
$$  
The first condition holds iff $\fc \com \lk = \fc \com \uk$.   Lemma \ref{lem:containedin} therefore implies
$\uk$ is minimal iff the first $n_p$ columns of $1(\lk,\uk)$ have support on the first $n_p$ rows, where $n_p = | \lk_{\fc \le p}|$.  This is equivalent to the condition that the array be block-upper triangular, with diagonal blocks of size $n_p - n_{p-1}$.  As $1(\lk, \uk)$  it  is invertible, these blocks must be, also.  Any perfect matching that draws its elements from the support of these diagonal blocks will extend to $\sim_\fc$.  This establishes one direction.  The converse may be shown similarly.
\end{proof}

Suppose now that $\lk$ and $\uk$ are bases (equivalently, coproduct structures) in $\Hom(\field, W)$ and $\Hom(\field, V)$.  Posit weight functions $\fc$ and $\gc$ on $\lk$ and $\uk$, respectively, and extend these  to the weights on $\Hom(\field, W)$ and $\Hom(\field, V)$ such that
\begin{align*}
\fc_{\le p} = \cl_\Omega(\lk_{\fc \le p}) &&
\gc_{\le p} = \cl_\Omega(\uk_{\gc \le p}).
\end{align*}
We say that   $(f,g)$ is \emph{$\fc$-$\gc$ minimal} if 
$$
(R_d, R^d) \su (\sim_\fc, \sim_\gc),
$$
where $\d = (f,g)$.  Equivalently, $(f,g)$ is $\fc$-$\gc$ minimal  if $\fc(f)$ is the minimum value taken by $\fc$ on the row support of $T(\lk, \{g\})$ and $\gc(g)$ is the maximum value taken by $\gc$ on the column support of $T(\{f\}, \uk)$.   It is vacuous that the exchange pairs in any of the preceding algorithms may be chosen to be minimal, since the  candidate pairs on each iteration are  the nonzero elements in block $N_1$ of a block-diagonal array of form $\diag(N_0,N_1)$.  
\vspace{.55cm}

\begin{algorithm}
\begin{algorithmic}
\WHILE{ $\ak \times \bk$ contains an exchange pair}
\STATE   fix a minimal exchange pair  $(a,b) \in \ak \times \bk$.
\STATE   $(\lk, \uk)  \leftarrow (\lk, \uk)[a,b]$
\STATE $(\ak, \bk) -= (a,b)$
\ENDWHILE
\end{algorithmic}
\caption{LU Decomposition (Filtered)}
\label{alg:BPLUfiltered}
\end{algorithm}

\begin{proposition}  
The bases $\lk^\infty$ and $\uk^\infty$ returned by Algorithm \ref{alg:BPLUfiltered} are $\fc$-minimal and $\gc$-minimal, respectively.
\end{proposition}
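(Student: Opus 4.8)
The plan is to unwind the bookkeeping of Algorithm \ref{alg:BPLUfiltered} and reduce the claim to Proposition \ref{prop:minimalpairing}. First I would recall that Algorithm \ref{alg:BPLUfiltered} differs from Algorithm \ref{alg:BPLU} only in that each exchange pair is chosen to be \emph{minimal} in the sense just defined, and that (as remarked just before the algorithm) the candidate exchange pairs on each iteration are precisely the nonzero entries of the block $N_1$ in a block-diagonal array $\diag(N_0, N_1)$, so a minimal choice always exists; hence the output $(\lk^\infty, \uk^\infty)$ is a pair consisting of a coproduct structure on $W$ and a product structure on $V$, exactly as in the unfiltered case. By symmetry (the antisymmetric ``column-first'' version of the story), it suffices to prove $\lk^\infty$ is $\fc$-minimal; the argument for $\uk^\infty$ being $\gc$-minimal is the mirror image.

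Next I would assemble the perfect matching witnessing minimality. Each iteration $p$ produces a pair $(a_p, b_p) \in \ak \times \bk$; collecting these gives a partial matching $\d = \{(a_p, b_p) : p\}$ on $\Supp(1(\lk, \uk))$ once we identify, via the pairing functions, the rotated elements $T b_p^\flat$ and $a_p^\sharp T$ back with $a_p$ and $b_p$. Because the algorithm terminates only when no exchange pair remains, and (as in the LU analysis of \S\ref{sec:lu}) $T(\lk^\infty, \uk^\infty) = \diag(1,\ldots,1,0,\ldots,0)$ with the $1$'s sitting exactly on the exchanged pairs, the matching $\d$ is in fact \emph{perfect} on the relevant index set (restricting to the subset of $\lk$, $\uk$ actually exchanged; the unexchanged indices carry zero columns/rows and play no role — or, in the square invertible case of interest, all indices are exchanged). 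The key point is then to show $R_\d \su\; \sim_\fc$, i.e. that the induced relation on $\lk$ respects the weight order; by Proposition \ref{prop:minimalpairing} this is equivalent to $\fc$-minimality of $\uk^\infty$ against $\lk$ — but here I want minimality of $\lk^\infty$ itself, so I would instead track the relation $1(\lk, \lk^\infty)$.

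The crux is the following monotonicity claim: each minimal exchange pair $(a_p, b_p)$ is chosen so that $\fc(a_p)$ is the minimum of $\fc$ over the row support of the current $p$th column $\mk_p = T(\lk^{p-1},\uk^{p-1})$ restricted to column $b_p$, and $\gc(b_p)$ is the maximum of $\gc$ over the corresponding row of $T(\lk^p, \uk^{p-1})$. Using Lemma \ref{lem:triangleformula} and Lemma \ref{lem:diracLU}, the columns of $1(\lk, \lk^\infty)$ are precisely these vectors $\mk_p$, so the support relation $\Supp(1(\lk,\lk^\infty))$ — equivalently its transitive reflexive closure $R_\d$ — sends each exchanged index to indices of no smaller $\fc$-weight. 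Feeding this into Lemma \ref{lem:containedin}, one gets that for every threshold $t$, the first $|\lk^\infty_{\fc \le t}|$ columns of $1(\lk, \lk^\infty)$ are supported on the first $|\lk_{\fc \le t}|$ rows, so $(\lk^\infty)_{\fc \le t}$ spans $\cl_\Omega(\lk_{\fc \le t}) = \fc_{\le t}$; since $\lk^\infty$ is a coproduct structure this forces $\fc \com \lk^\infty$ to have the same multiset of values as $\fc \com \lk$, whence $\lk^\infty$ is $\fc$-minimal. I expect the main obstacle to be verifying rigorously that the minimality of the chosen pair at step $p$, which is a statement about the \emph{current} array $T(\lk^{p-1},\uk^{p-1})$, actually propagates to a statement about the \emph{final} triangular factor $1(\lk, \lk^\infty)$ — this is exactly where Lemma \ref{lem:triangleformula}'s identification of $\mk_p$ with the $p$th column of the final factor, together with the vanishing of $\lk_q^\sharp \lk^\infty_p$ for $q < p$, does the work, and where one must be careful that later iterations do not disturb the support pattern established earlier (again a consequence of the splitting lemma, as in Lemma \ref{lem:diracLU}).
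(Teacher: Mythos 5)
Your proposal follows the same route the paper takes: both reduce to Proposition~\ref{prop:minimalpairing} by noting that $1(\lk,\lk^\infty)$ is triangular up to permutation, hence carries a unique perfect matching $\d$, and that $\sim_\fc$ extends $R_\d$ because every exchange pair was chosen minimal. The paper compresses the propagation step into the phrase \emph{by construction}; your unwinding of it via Lemmas~\ref{lem:triangleformula} and~\ref{lem:diracLU}, together with the flagged concern that later iterations not disturb the support pattern established earlier, fills in exactly what that phrase conceals.
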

\begin{proof}  
Let $R$ denote the support of $1(\lk, \lk^\infty)$.  Since $1(\lk, \lk^\infty)$ is triangular up to permutation, $R$ has a unique perfect matching $\d$.  Relation $\sim_\fc$ extends $R_\d$ by construction, so minimality for $\lk^\infty$  follows by Proposition \ref{prop:minimalpairing}.  Minimality for $\uk^\infty$ may be argued similarly.
\end{proof}

A similar result holds for the Jordan algorithm in the special case where $T^2 = 0$.  As with LU decomposition, we are guaranteed to be able to restrict our selection to minimal exchange pairs thanks to block-diagonal structure of $T(\wk, \wk)$.
\vspace{.55cm}

\begin{algorithm}
\begin{algorithmic}
\WHILE{ $\wk$ has a proper Jordan pair }
\STATE   fix an $(\fc, \fc)$-minimal proper Jordan pair for $\wk$.
\STATE   $\wk  \leftarrow \wk[[f,h]]$
\ENDWHILE
\end{algorithmic}
\caption{Jordan Decomposition (Filtered)}
\label{alg:BPJEfiltered}
\end{algorithm}

\begin{proposition}  The structure $\wk^\infty$ returned by Algorithm \ref{alg:BPJEfiltered} is $\fc$-minimal.
\end{proposition}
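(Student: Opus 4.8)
The plan is to transcribe, almost verbatim, the argument already given for Algorithm~\ref{alg:BPLUfiltered}. First I would invoke the proposition preceding Algorithm~\ref{alg:BPJEfiltered}: in the regime $T^2=0$ the change-of-structure array $1(\wk,\wk^\infty)$ is upper-triangular with respect to the ordering \eqref{eq:ordering}, hence triangular up to permutation. Since an invertible array with acyclic support carries exactly one perfect matching, $R:=\Supp(1(\wk,\wk^\infty))$ has a unique perfect matching $\d$, which pins each $f_p$ (resp.\ $Tf_p$) to the basis element it became. By Proposition~\ref{prop:minimalpairing}, applied with $\lk=\wk$ and $\uk=\wk^\infty$, it then suffices to show that $\sim_\fc$ extends the induced relation $R_\d$; granting this, Proposition~\ref{prop:minimalpairing} delivers $\fc$-minimality of $\wk^\infty$ immediately.

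For the remaining point I would argue by induction on the number of Jordan exchanges. Write $\wk^0=\wk,\wk^1,\dots,\wk^N=\wk^\infty$ for the successive structures, with $\wk^{p}=\wk^{p-1}[[f_p,h_p]]$ and $(f_p,h_p)$ an $(\fc,\fc)$-minimal proper Jordan pair; put $R_p=\Supp(1(\wk^{p-1},\wk^{p}))$ and $\d_p=(f_p,h_p)$. By the definition of $(\fc,\fc)$-minimality the one-step induced relations $(R_p)_{\d_p}$ lie inside $\sim_\fc$. Since $1(\wk^0,\wk^N)=1(\wk^0,\wk^1)\,1(\wk^1,\wk^2)\cdots1(\wk^{N-1},\wk^N)$, I would use M\"obius inversion in the form of Proposition~\ref{prop:jordancomplementmobiusformula} (which controls the support of each one-step array and its inverse by the corresponding induced order) together with Proposition~\ref{prop:holdacyclic} and Corollary~\ref{cor:clearingpattern} to show that the perfect matching $\d$ of the composite array is compatible with the $\d_p$ and that $R_\d$ is contained in the transitive closure of $\bigcup_p (R_p)_{\d_p}$. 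As $\sim_\fc$ is the reflexive, transitive preorder $\fc(\cdot)\le\fc(\cdot)$ and contains each $(R_p)_{\d_p}$, it contains that closure, hence $R_\d$.

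The routine ingredient is the observation, already recorded in the proof of the preceding proposition, that once $\orb(f_p)$ and the requisite duals have been rotated in they are fixed by all later Jordan exchanges (by the splitting lemma); this is precisely what makes the successive perfect matchings nest into the single matching $\d$ of $R$. The main obstacle is the bookkeeping across iterations: under the pairing functions identifying $\wk^{p-1}\times\wk^{p}$ with a relation on the common index set, one must verify that the composite support really does have its unique perfect matching's induced order sandwiched inside $\sim_\fc$. This amounts to iterating Proposition~\ref{prop:holdacyclic} (which gives $R_\d=S_\d$ for $S=R_\d R R^\d$) along the factorization of $1(\wk,\wk^\infty)$, and it is the one place where a careful — though unenlightening — calculation is required.
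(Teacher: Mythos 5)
Your proposal is correct and follows essentially the same route as the paper's proof: factor the change-of-structure array $1(\wk,\wk^\infty)$ into per-iteration pieces, observe that minimality of each chosen Jordan pair forces each factor's induced relation into $\sim_\fc$, propagate this to the product by M\"obius inversion (aided by Proposition~\ref{prop:holdacyclic}), and close with Proposition~\ref{prop:minimalpairing}. The paper states this more tersely, decomposing directly into elementary-exchange factors, but the structure of the argument is the same.
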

\begin{proof}  Array $1(\lk^\infty, \lk)$ is a product of factors of form $1(\lk[\d],\lk)$ and $1(\uk,\uk[\d])$.  Relation $\sim_\fc$ extends the induced relations of $1(\lk,\lk[\d])$ and $1(\uk[\d],\uk)$ by minimality, and a trivial application of M\"obius inversion shows that it extends the induced relations of the factors of $1(\lk^\infty, \lk)$, also.  The desired conclusion follows by Proposition \ref{prop:minimalpairing}.
\end{proof}

\section{Block exchange}
\label{sec:blockexch}


Recall that in Section \ref{sec:exchangerelations} we introduced \emph{set exchange}, the natural extension of elementary exchange where entire sets, rather than single elements, rotate into and out of a (co)product.

The analysis of preceding sections carries through with minimal modification for set operations, for instance returning arrays $1(\lk, \lk^\infty)$ and $1(\uk^\infty, \uk)$ that are \emph{block} lower and upper triangular, respectively,  for Algorithm \ref{alg:BPLU}, with respect to the natural  grouping of elements by iteration.   Let us note some special cases.

\subsubsection{Acyclic blocks}

Suppose that block-elimination is carried out exclusively for acyclic pairings.   Such pairings index triangular-up-to-permutation blocks in the associated arrays, so   $1(\lk, \lk^\infty)$ and $1(\uk^\infty, \uk)$ will be triangular up to permutation.  The same holds for the block form of Jordan elimination, though here one must, as usual, invoke Proposition \ref{prop:holdacyclic} in tandem with M\"obius inversion.

\subsubsection{Minimal blocks}

The filtered versions of Algorithms \ref{alg:BPLU} and \ref{alg:BPJE} have natural block-generalizations as well.  Here the minimality requirement for pairs is replaced by the (rather more direct)  criterion that the relations induced by $\d$ on $I\times I$ and $J \times J$ extend to $\sim_\fc$ and $\sim_\gc$, respectively.   Correctness may be argued exactly as for Algorithms \ref{alg:BPLUfiltered} and \ref{alg:BPJEfiltered}.

\subsubsection{Linear blocks}

The two preceding cases coincide when $\fc$ and $\gc$ determine linear orders on $\lk$ and $\uk$.  When such is the case, minimal pairings coincide exactly with acyclic pairings  \emph{by definition}.   When $\fc$ and $\gc$ do not induce linear orders, one may construct modified functions $\fc'$ and $\gc'$ for which $\sim_{\fc'}$ and $\sim_{\gc'}$ are linear orders contained in $\fc$ and $\gc$.  Any bases for these modified functions will be minimal with respect to $\fc$ and $\gc$, by Proposition \ref{prop:minimalpairing}.   In fact, one may pick a different $\fc'$ and $\gc'$  on each iteration, thanks to the transitivity of $\sim_{\fc}$ and $\sim_{\gc}$.

In this linear setting, the minimal pairings have a special structure.  The proof of the following is vacuous.

\begin{lemma}  
If $\sim_\fc$ and $\sim_\gc$ are linear orders, then the set $S$ of all $(\fc,\gc)$-minimal pairs is itself a minimal acyclic pairing.  Every minimal pairing is a subset of $S$, in this case.
\end{lemma}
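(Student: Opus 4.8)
The plan is to read both sides of the claimed equality straight off the definitions, invoking the hypothesis that $\sim_\fc$ and $\sim_\gc$ are linear orders in two guises: that $\fc$ and $\gc$ are injective, and that $\sim_\fc$ and $\sim_\gc$ are reflexive, transitive and antisymmetric. I would first recall the working description given just above: $(f,g)\in\lk\times\uk$ is $(\fc,\gc)$-minimal exactly when $T(f,g)$ is invertible, $\fc(f)$ is the least value of $\fc$ on the row support of $T(\lk,\{g\})$, and $\gc(g)$ is the greatest value of $\gc$ on the column support of $T(\{f\},\uk)$; equivalently, $R_\d\su\sim_\fc$ and $R^\d\su\sim_\gc$ for the singleton pairing $\d=\{(f,g)\}$. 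Since a minimal pairing is a partial matching whose induced relations land in $\sim_\fc$ and $\sim_\gc$, and these are antisymmetric, Lemma \ref{lem:acyclic} makes any minimal pairing automatically acyclic; so the work reduces to three checks: that $S$ is a partial matching, that $R_S\su\sim_\fc$ and $R^S\su\sim_\gc$, and that every minimal pairing is contained in $S$.

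For the first check, I would argue that injectivity of $\gc$ forbids two pairs $(f,g),(f,g')\in S$ with $g\neq g'$: both $\gc(g)$ and $\gc(g')$ are the maximum of $\gc$ over the one column support of $T(\{f\},\uk)$, so they coincide; the mirror argument with $\fc$ rules out a twice-matched column, so $S$ is a combinatorial matching. To see it is a genuine pairing of the array $T(\lk,\uk)$, I would note that for $(f,g)\in S$ every other matched row $f''$ with $T(f'',g)\neq 0$ has $\fc(f)<\fc(f'')$ strictly, by minimality of $\fc(f)$ and injectivity of $\fc$; ordering the rows of $T(S\der\sharp,S\der\flat)$ by $\fc$ and the columns to follow their partners then displays this block as triangular up to permutation with nonvanishing diagonal, hence invertible.

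For the second check, the key observation is that $\lk[S]$ is obtained from $\lk$ by performing the singleton exchanges of all pairs in $S$ simultaneously and independently — each matched $f$ replaced by $Tg$, whose $\lk$-coordinate column is exactly the one it contributes in the singleton exchange, the rest left fixed — so $\Supp(1(\lk,\lk[S]))$, transported to $\lk\times\lk$ by the pairing functions, lies in the union of the diagonal with the singleton supports $\Supp(1(\lk,\lk[\{(f,g)\}]))$, $(f,g)\in S$. Each of those transitively-reflexively closes up to $R_{\{(f,g)\}}\su\sim_\fc$ by $(\fc,\gc)$-minimality of $(f,g)$, the diagonal lies in $\sim_\fc$ by reflexivity, and since $\sim_\fc$ is transitive the closure $R_S$ of the union stays inside $\sim_\fc$; the dual argument gives $R^S\su\sim_\gc$. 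Together with the first check this yields the first sentence of the lemma.

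For the converse I would take any minimal pairing $\d$ and a pair $(f,g)\in\d$, note that the column $(f,g)$ contributes to $1(\lk,\lk[\{(f,g)\}])$ also appears in $1(\lk,\lk[\d])$, so after the canonical reindexing $R_{\{(f,g)\}}\su R_\d\su\sim_\fc$ and dually $R^{\{(f,g)\}}\su R^\d\su\sim_\gc$; hence $(f,g)$ is $(\fc,\gc)$-minimal and $\d\su S$. The argument is essentially bookkeeping; the one place to be careful — and the nearest thing to an obstacle — is keeping straight the orientation conventions for $R_\d$ and $R^\d$ together with the fact that the transitive closure of a union of relations can strictly exceed the union of the individual closures. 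This is harmless here only because $\sim_\fc$ and $\sim_\gc$ are themselves transitive, which is precisely where the linear-order hypothesis does its (modest) work, so I expect this convention-tracking rather than any genuine difficulty to be the main point of attention.
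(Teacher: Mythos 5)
Your proof is correct and is the intended one: the paper labels this lemma's proof ``vacuous'' and provides no argument at all, so a fleshed-out definitional unwinding of exactly the kind you give is what the author has in mind. You correctly isolate the places where the linear-order hypothesis actually does work (injectivity of $\fc,\gc$ to rule out doubly-matched rows/columns, and transitivity to tame the closure of the union of singleton supports), you correctly reduce acyclicity of $S$ to antisymmetry via Lemma \ref{lem:acyclic}, and your converse argument via $R_{\{(f,g)\}}\su R_\d\su\sim_\fc$ is the right one. The only place you do more than the paper implicitly would is the explicit triangularity check that $T(S\der\sharp,S\der\flat)$ is invertible; the paper would regard that as already settled by the earlier remark that an acyclic partial matching on the support relation automatically indexes an invertible (triangular-up-to-permutation) block, but your direct argument is equivalent and fine.
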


\begin{remark}  
The structure of minimal pairings for linear $\sim_\fc$ and $\sim_\gc$ is highly natural. It plays a foundational  role in work of M. Kahle on  probabilistic topology \cite{KahleTopology11}, and has been discussed by Carlsson in reference to persistent homology calculations, in personal correspondence.  More recently, the minimal pairs of a linear order have been remarked as ``obvious pairs'' in the work of U. Bauer on fast persistence algorithms, in reference to the computational library \emph{Ripser} \cite{ripserwebsite}.  
\end{remark}

\subsubsection{Graded blocks}
\label{sec:gradblocks}

In the following chapter we introduce the notion of a graded operator on a vector space $\oplus_p C_p$.  To such a space one may associate a relation $u \sim v$ iff $\{u, v\} \su C_p - \{0\}$ for some $p$.   This relation is  transitive, and so falls under the umbrella of Corollary \ref{cor:potchkai}.  It may thus be shown that any variant on the Jordan or LU exchange algorithms will return a graded  basis, provided that the initial (co)product structures are graded.


\part{Applications}

\chapter{Efficient Homology Computation}
\label{ch:efficienthomologycomputation}

\section{The linear complex}
\label{sec:lincomp}

A $\Z$-grading on a $\field$-linear space $C$ is a $\Z$-indexed family of linear subspaces $(C_p)_{p \in \Z}$ such that the coproduct map
$$
\oplus_p C_p \to C
$$
induced by the inclusions $C_p \to C$ is an isomorphism.  

\begin{remark}  This definition agrees with that of a grading on the matroid $(C, \ic)$, where $\ic$ is the family  of $\field$-linearly independent subsets of $C$, c.f.\ Example  \ref{ex:lineargrades}.  
\end{remark}

\begin{remark}  In the special case where $C = \oplus_p C_p$, we call $(C_p)$ the \emph{canonical} grading on $C$.
\end{remark}

%
%
If $C$ and $D$ are graded spaces, then a map $T: C \to D$ is \emph{graded  of degree $k$} if 
$$
TC_p \su T D_{p+k}
$$ 
for all $p \in \Z$.  A degree -1 endomorphism on $C$ is a \emph{differential} if  $T^2 = 0$.   A \emph{linear  complex} is a pair $(C,\partial)$, where $C$ is a graded vector space and $\partial$ is a differential on $C$.

It is customary to write
\begin{align*}
Z_p = K(\partial) \cap C_p && B_p = I(\partial) \cap C_p && \partial_p = \partial|_{C_p}
\end{align*}
when working with complexes.  We call $Z_p$ the space of \emph{cycles}, $B_p$ the space of \emph{boundaries}, and $\partial_p$ the \emph{$p$-dimensional boundary operator}.  Where context leaves  room for doubt, the first and second of these may be expressed $Z_p(C)$ and $B_p(C)$, respectively.  The   \emph{homology group in dimension $p$}, or simply the \emph{$p$th homology group}  of $C$  is the quotient space 
$$
H_p(C) = Z_p / B_p.
$$

By mathematical synecdoche, one generally denotes $(C,\partial)$ by $C$ alone, the associated differential being understood from context.   For example, a \emph{map} from one complex to another is a degree zero commutator of the respective differentials.  In symbols, this means a map  $T:C \to D$ for which $T \partial = \partial T$.  The differential on the left is that of $C$, and that on the right is that of $D$.  The situation of these maps relative to $T$ leaves no ambiguity as to the intended meaning.

Recall that the quotient of a vector space $W$ by a subspace $U$ is canonically realized as the set of  equivalence classes $[w] = \{w + u : u \in U\}$ equipped with an addition $[w] + [v] = [w + v]$ and  a scalar multiplication $\ak \cdot [w] = [\ak \cdot w]$.  It is an foundational fact of homological algebra that every map of chain complexes induces a map on homology groups
\begin{align*}
T_*: H_p(C) \to H_p(D) 
\end{align*}
determined by the rule
\begin{align*}
 T_*[w] = [Tw].
\end{align*}

\begin{remark}  
If we define $H(C) = K(\partial)/I(\partial)$, then there exists a well-defined linear map  $H_p(C) \to H(C)$, defined by the rule $[v] \mapsto [v]$.    If we identify $H_p(C)$ with its image under this inclusion, then the $\Z$ indexed family of homology groups $H_p(C)$ determines a natural grading on $H(C)$.
\end{remark}

A  \emph{combinatorial simplicial complex} $\xc$ on ground set $V$ is a family  of subsets of $V$ closed under inclusion.  Recall that this means $I \in V$ when $J \in V$ and $I \su J$.  An \emph{ordered} combinatorial simplicial complex is a pair $(\xc, <)$, where $<$ is a linear order on the ground set $V$.   We write $\xc \der p$ for the family of all subsets of $\xc$ of cardinality $p+1$.  

Ordered simplicial complexes provide a useful class of complexes in algebra and topology, by the following construction.   Let $C = \field^\xc$, and for each $I \in \xc$, define 
$$
\chi_I(J) = \dk_{IJ}.
$$
If $C_p$ is the linear span of
$$
\{\chi_I : |I| = p + 1\},
$$
then the family $(C_p)_{p \in \Z}$ is a grading on $C$.   If one arranges the elements of each  $I \in \xc$ into a tuple $(i_1, \ld, i_{|I|})$ such that $i_p < i_q$ when $p < q$, and setting
$$
I_p =  I - \{i_p\},
$$
then it can be shown that the map $\partial: C \to C$ defined  by
\[
\partial(\chi_I) = \sum_{p = 1}^{|I|} (-1)^{p} \chi_{I_p}
\]
is a differential on $C$.  The pair $(C, \partial)$ is the $\field$\emph{-linear chain complex} of $\xc$ with respect to $<$.

\begin{example}  
Let $G = (V, E)$ be any combinatorial graph on vertex set $V = \{1, \ld, m\}$.  The set $E \cup V \cup\{ \emptyset\}$ is a simplicial complex on ground set $V$.  The matrix representation of the associated differential with respect to basis 
$$
\{\chi_{I} : I \in E \cup V \cup \{\emptyset\}\},
$$ 
has block form
\vspace{.5cm}
\[
\arraycolsep=10pt\def\arraystretch{1}
\begin{array}{c|ccc|}
\multicolumn{1}{c}{}&E & V & \multicolumn{1}{c}{\emptyset} \\
\cline{2-4}
E& &  & \\
V&A& & \\
\emptyset&&  &  \\ \cline{2-4}
\end{array}
\vspace{.4cm}
\]
where blank entries denote zero blocks, as per convention, and $A$ is the node-incidence matrix of $G$.
\end{example}

\section{The linear persistence module}
\label{sec:linpersmod}
\renewcommand{\r}{r}
\newcommand{\s}{s}

A $\Z$-graded linear persistence module is a $\Z$-graded map of degree one, augmented by the following data.

Let $\field[t]$ denote the  space of polynomials in  variable $t$ with coefficients in ground field $\field$.  
An \emph{action} of $\field[t]$ on a vector space $W$ is a bilinear mapping  $\mk: \field[t] \times W \to W$ such that
\begin{align*}
\mk(\r, \mk(\s,v))=\mk(\r\s,v)
\end{align*}
for all $\r,\s \in \field[t]$ and all $v \in W$.  One typically writes $\r v$ for $\mk(\r,v)$.  Every action is uniquely determined by its restriction to $\{t\} \times W  \to W$, so actions are in 1-1 correspondence with linear operators on $W$.

A $\field[t]$-\emph{module} is a vector space equipped with an action.  A \emph{graded} module of degree $k$ is a graded vector space on which $t$  acts by degree $k$ map, that is, for which 
$$
t W_p \su W_{p+k}
$$ 
for all $p \in \Z$.  A $\field$-linear \emph{persistence module} is a graded $\field[t]$-module of degree one.

A \emph{submodule} of $W$ is a subspace $V \su W$ for which $t V \su V$.  A submodule is \emph{graded} if the family of intersections $V_p = V \cap W_p$ is a grading on $V$.   If $U$ and $V$ are submodules, we say that the subspace $U + V$ is an (internal) direct sum of $U$ and $V$ if $U \cap V = 0$.  A submodule is \emph{indecomposable} if it cannot be expressed as the internal direct sum of two nontrivial submodules.  It is an elementary fact that the only indecomposable $\field[t]$ modules are those of form $\field[t] \ell= \{rv : r \in \field[t], \; v \in \ell\}$, where $\ell$ is a subspace of dimension zero or one.

The \emph{support} of a graded module $W$ is $\Supp(W) = \{p : W_p \neq 0\}$.  The support of every indecomposable module is  an integer interval.  The following structure theorem is a mainstay of modern applications in topological data analysis.

\begin{proposition} 
\label{prop:structuretheorem} 
Every finite-dimensional persistence module is an internal direct sum of  indecomposable submodules.  If $U_1 \oplus \cd \oplus U_m$ and $V_1 \oplus \cd \oplus V_m$ are any two such sums, then $|\{p: \Supp(U_p) = I\}| = |\{q: \Supp(V_q) = I\}|$ for every nontrivial interval $I$.
\end{proposition}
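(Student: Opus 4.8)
The plan is to deduce this from the classification of finitely generated modules over the PID $\field[t]$, then specialize to the graded setting. First I would observe that a finite-dimensional persistence module $W$ is in particular a finitely generated $\field[t]$-module: any $\field$-basis of $W$ is a finite generating set over $\field[t]$. Since $\field[t]$ is a principal ideal domain, the structure theorem for finitely generated modules over a PID gives a decomposition $W \cong \field[t]^r \oplus \bigoplus_i \field[t]/(p_i^{a_i})$ with $p_i$ irreducible. Finite dimensionality over $\field$ forces $r = 0$, and the grading (the fact that $t$ raises degree by one and each $W_p$ is finite-dimensional with $W_p = 0$ for $p \ll 0$) forces each $p_i$ to be the linear polynomial $t$ — otherwise $t$ would act invertibly on a graded piece with no bottom, contradicting boundedness below. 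Thus $W$ decomposes as an internal direct sum of submodules each isomorphic to $\field[t]/(t^a)$ for various $a$, which are exactly the indecomposables noted just before the proposition (here $\ell$ one-dimensional; the zero-dimensional case covers the trivial summand, vacuously).

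The key point requiring care is that the abstract decomposition can be chosen \emph{graded}, i.e., realized as internal direct sum of \emph{graded} submodules. For this I would invoke the graded version of the structure theorem: $\field[t]$ with its standard grading ($\deg t = 1$) is a graded PID, and every finitely generated graded module over it decomposes as a direct sum of graded cyclic modules $\Sigma^n \field[t]/(t^a)$ (shifts of $\field[t]/(t^a)$). The shift $n$ records where the interval begins; the exponent $a$ records its length. Concretely, one builds such a decomposition by a graded analogue of the usual Smith-normal-form / elementary-divisor argument, choosing generators of minimal degree at each stage — this is the content already foreshadowed in \S\ref{sec:gradcanform} and the generalized Jordan form material (Theorem \ref{thm:generalizedcanonicalform}), applied to the degree-one operator "multiplication by $t$" on $\oplus_p W_p$. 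Each graded summand $\Sigma^n\field[t]/(t^a)$ has support equal to the integer interval $[n, n+a-1]$.

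For the uniqueness clause, the plan is standard Krull--Schmidt: the indecomposable submodules $\field[t]\ell$ (with $\ell$ of dimension $\le 1$) have local endomorphism rings, so any two decompositions into indecomposables have the same multiset of isomorphism types. An isomorphism type of a graded indecomposable is exactly its support interval $I$, so the number of summands with $\Supp = I$ is an invariant: $|\{p : \Supp(U_p) = I\}| = |\{q : \Supp(V_q) = I\}|$ for every nontrivial interval $I$. Alternatively, and more in keeping with the combinatorial spirit of the thesis, one can derive uniqueness directly by a dimension count: for an interval $I = [a,b]$, the multiplicity of summands supported on a set \emph{containing} $[a,b]$ can be read off as $\rk\big(t^{b-a}\colon W_a \to W_b\big)$ minus the contribution of strictly larger intervals, and Möbius inversion over the poset of intervals (Theorem \ref{thm:mobiusgeneral}) recovers the exact multiplicities. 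The main obstacle, and the step I would spend the most care on, is the graded lift — ensuring the decomposition can be taken internal and graded rather than merely abstract — since everything downstream (the interval/barcode interpretation) depends on it; once that is in hand, existence and uniqueness are routine applications of the PID structure theorem and Krull--Schmidt.
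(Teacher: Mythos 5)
Your argument is correct, but it is not the route the paper takes, and the contrast is instructive. You reach for the classical structure theorem for finitely generated modules over the PID $\field[t]$, reduce to elementary divisors $t^a$ via nilpotence, invoke a graded refinement of that theorem (or a graded Smith-normal-form argument) to lift the decomposition to an internal direct sum of graded cyclic submodules, and then finish with Krull--Schmidt (or, alternatively, a rank/M\"obius-inversion count over the interval poset). The paper does none of this. Its proof is a two-step reduction entirely within the thesis's own machinery: first, the degree-one operator $T$ given by the action of $t$ is nilpotent (positively graded, finite dimensions), and orbits of $T$ are in bijection with indecomposable submodules, so proper decompositions into indecomposables correspond bijectively to projective classes of graded Jordan bases (Lemma~\ref{lem:jordanclasses}, ``definition checking''); second, existence and barcode-uniqueness of graded Jordan bases for a nilpotent strong map were already established matroid-theoretically in Proposition~\ref{prop:uniquejordan}. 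Thus the proposition is exhibited as a \emph{corollary} of the matroid classification of graded canonical forms, rather than as an application of module theory over a PID. The trade-off: your approach is classical and self-contained but hides the hardest step (the graded lift) in an appeal to the graded structure theorem -- which in substance is the same work as building graded Jordan bases, and which your sketch leaves at the level of ``choose generators of minimal degree at each stage.'' The paper's approach makes that work visible and reusable, since the graded Jordan basis machinery was built once and then specialised; it also exposes the combinatorial content (the barcode multiplicities are the lengths of orbits of a $\zc$-$\bc$-minimal basis) that the PID argument obscures. Your alternative uniqueness argument via $\rk(t^{b-a}\colon W_a \to W_b)$ and M\"obius inversion is closer in spirit to the thesis than Krull--Schmidt and is a valid side remark, but it is still not what the paper's proof does.
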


This result has a simple proof in the language of nilpotent maps on matroids.   The linear operator $T: W \to W$  induced by the action of $t$ is nilpotent, since it is positively graded and $W$ is finite-dimensional.   Recalling that
$$
\orb(v)= \{T^p v : p \ge 0, \; T^p v \neq 0\}
$$ 
is the \emph{orbit} of $v$ under $T$, observe that every orbit {freely generates} an indecomposable submodule.  Conversely, every indecomposable submodule is freely generated by an orbit.  

For precision, let us say that the \emph{projective class} of a Jordan basis $\cup_p \orb(v_p)$ is the family of all  bases that may be expressed in form $\cup_p\orb(\ak_p v_p)$  for some family of nonzero scalars $\ak$.  Let us further say that a decomposition $W = \oplus_p U_p$ is \emph{proper} if each $U_p$ is indecomposable and nontrivial.  

\begin{lemma} 
\label{lem:jordanclasses} 
Proper decompositions are in 1-1 correspondence with the projective classes of graded Jordan bases.
\end{lemma}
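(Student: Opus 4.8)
The plan is to produce explicit maps in both directions and check they are mutually inverse. In the forward direction, I would send a proper decomposition $W = \oplus_p U_p$ to the set of projective classes of Jordan bases compatible with it; in reverse, I would send a projective class to the decomposition whose summands are the submodules freely generated by the orbits in any (equivalently every) representative basis. The key structural facts I would invoke are those established just above the statement: the operator $T$ induced by the action of $t$ is nilpotent (positively graded, $W$ finite-dimensional), every orbit $\orb(v)$ freely generates an indecomposable submodule, and conversely every indecomposable submodule is freely generated by an orbit. I would also lean on Corollary \ref{cor:jordanconditions} and Proposition \ref{prop:uniquejordan}: the Jordan bases of $T$ are exactly the orbits of $\kc$-minimal bases of $\mc/\Im(T)$, and the multiset of orbit-lengths (indeed the barcode, in the graded setting) is an invariant of $T$.

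First I would set up the reverse map carefully. Given a graded Jordan basis $\cup_p \orb(v_p)$, let $U_p = \field[t]\,v_p$ be the indecomposable submodule freely generated by $\orb(v_p)$. Since the union of the orbits is a basis of $W$ and each orbit is a basis of $U_p$, the sum $\sum_p U_p$ is direct and equals $W$, so $W = \oplus_p U_p$ is a proper decomposition. I would then check this depends only on the projective class: replacing each $v_p$ by $\ak_p v_p$ with $\ak_p \neq 0$ leaves $\field[t]\,v_p$ unchanged, so the assignment factors through projective classes. For the forward map, given a proper decomposition $W = \oplus_p U_p$, each $U_p$ is indecomposable, hence (by the cited fact) freely generated by an orbit $\orb(v_p)$ for some $v_p \in U_p$; the collection $\{v_p\}$ is determined up to nonzero scalars and permutation because any two orbit-generators of the same indecomposable $\field[t]$-module differ by a unit scalar (the generator must live in $\K(p^{\dim V})$ minus its proper submodule, which is a single scalar line's worth of choices). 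So $\cup_p \orb(v_p)$ is a well-defined projective class of Jordan bases.

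The two composites are then immediate: starting from a decomposition, choosing orbit-generators, and rebuilding $\oplus_p \field[t]\,v_p$ returns the original decomposition since $\field[t]\,v_p = U_p$; starting from a projective class, building the $U_p$, and re-extracting orbit-generators returns the same class since each $U_p$ pins down its generator up to scalar. The main obstacle — and the one step deserving genuine care rather than a wave of the hand — is the claim that the orbit-generator of an indecomposable $\field[t]$-submodule is unique up to a nonzero scalar, together with the compatibility of gradings: one must verify that a graded indecomposable submodule is generated by a \emph{homogeneous} orbit, so that the decomposition and the Jordan basis respect the grading simultaneously. I would handle this by noting that $U_p$ indecomposable graded forces $U_p \cong \field[t]/(t^k)$ for some $k$ with $\Supp(U_p)$ an interval $[a, a+k-1]$; its generator is any element of $(U_p)_a \setminus t\cdot(U_p)_{a-1} = (U_p)_a \setminus \{0\}$ projecting to a generator of the top quotient, and two such differ by a unit of $\field[t]/(t^k)$ that preserves degree, i.e.\ a nonzero scalar. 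Everything else is bookkeeping about direct sums.
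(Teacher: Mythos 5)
The paper itself gives no proof of Lemma~\ref{lem:jordanclasses}, dismissing it as ``an elementary exercise in definition checking,'' so there is no canonical argument to compare against; your write-up supplies exactly the check the paper declines to spell out. Your two maps, the well-definedness checks, and the verification that the composites are identity are all sound, and you correctly identify the one place where care is required: the orbit-generator of an indecomposable graded submodule is unique only up to a nonzero scalar \emph{because the generator is forced to be homogeneous}, living in the one-dimensional bottom graded piece $(U_p)_a$.

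One remark on your intermediate justification. The parenthetical ``the generator must live in $\K(p^{\dim V})$ minus its proper submodule, which is a single scalar line's worth of choices'' is imported from the generalized-Jordan-form section (\S\ref{sec:gencanform}) and does not apply here; more to the point, the bare assertion that two generators of an \emph{ungraded} indecomposable $\field[t]$-module differ by a nonzero scalar is false (in $\field[t]/(t^2)$ both $1$ and $1+t$ are generators, and they differ by the unit $1+t$, not a scalar). You recover from this in your final paragraph by restricting to homogeneous generators, which is where the actual content of the lemma lies, so the proof as a whole stands — but the earlier parenthetical should simply be deleted, since it is both out of context and incorrect as stated. It is also worth noting explicitly, as you implicitly do, that ``proper decomposition'' must be read as a decomposition into \emph{graded} indecomposable submodules; without that reading the lemma is false (one can always skew a graded decomposition into an ungraded one), and it is the reading required for Proposition~\ref{prop:structuretheorem} to make sense, since $\Supp(U_p)$ is only defined for graded $U_p$.
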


The proof of Lemma \ref{lem:jordanclasses} an elementary exercise in definition checking.  In light of this fact, Proposition \ref{prop:structuretheorem}  follows directly from Proposition \ref{prop:uniquejordan}, which we repeat here for ease of reference.

{
\renewcommand{\thetheorem}{\ref{prop:uniquejordan}}
\begin{proposition}
If $I_1, \ld, I_m$ and $J_1, \ld, J_n$ are the orbits that compose two graded Jordan bases, then there exists a bijection $\fk: \{1, \ld, m\} \to \{1, \ld, n\}$ such that 
$$
\Supp(I_p) = \Supp(J_{\fk(p)})
$$
for each $p$ in $\{1, \ld, m\}$.
\end{proposition}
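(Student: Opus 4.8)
The plan is to reduce the statement to the combinatorial classification of Jordan bases already obtained, and then to read the barcode off as an invariant of $T$. Since two graded Jordan bases are assumed to exist, Corollary~\ref{cor:jordanconditions} tells us that $T^m$ is complementary for every $m$ and that every Jordan basis of $T$ is the orbit $\orb(B)=\bigcup_{e\in B}\orb(e)$ of a $\kc$-minimal basis $B$ of $\mc/\Im(T)$, where $\kc$ is the weight with $\kc_{\le m}=\K(T^m)$ and $\mc$ is the matroid of linear independence.

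First I would promote this to the graded setting. If $J$ is a graded Jordan basis, then $J\su\bigcup_p\hc_p$; writing $B$ for the set of roots of the orbits composing $J$, one has $\orb(B)=J$ and $B\su J\su\bigcup_p\hc_p$, and by Corollary~\ref{cor:jordanconditions} this $B$ is $\kc$-minimal in $\mc/\Im(T)$. Using the matroid-union identity $(\bigcup_p\hc_p)/\Im(T)=\bigcup_p(\hc_p/\Im(T))$ recorded in the proof of Proposition~\ref{prop:uniquejordan}, such a $B$ is then precisely a $\kc$-minimal basis of the matroid union $\bigcup_p(\hc_p/\Im(T))$, and conversely the orbit of any such basis is a graded Jordan basis. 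So it suffices to show that any two $\kc$-minimal bases of $\bigcup_p(\hc_p/\Im(T))$ carry the same barcode.

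The key translation is that the barcode is just the joint distribution of grade and kernel-weight over the generating basis: for homogeneous $e$ of grade $\ag(e)=p$, degree-one-ness of $T$ gives $\orb(e)=\{e,Te,\ldots,T^{\kc(e)-1}e\}$ with $T^ie\in\hc_{p+i}$, so
\[
\Supp(\orb(e))=\{\,p,\ p+1,\ \ldots,\ p+\kc(e)-1\,\},
\]
an integer interval of length $\kc(e)$ with left endpoint $\ag(e)$. Hence the barcode of $\orb(B)$ is the multiset $\{[\ag(e),\ag(e)+\kc(e)-1]:e\in B\}$, which is determined by the multiset of pairs $(\ag(e),\kc(e))$, $e\in B$.

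It remains to see that this multiset does not depend on the $\kc$-minimal basis. Here the ground sets $\hc_p$ are pairwise disjoint modulo loops, so for a basis $B$ of $\bigcup_p(\hc_p/\Im(T))$ each $B\cap\hc_p$ is a basis of $\hc_p/\Im(T)$, and $\kc$-minimality of $B$ (by Lemma~\ref{lem:genlem}, generation of the $\kc$-filtration) forces, via submodularity of $\rk$, that $B\cap\hc_p$ generates the $\kc$-filtration inside each piece. Consequently
\[
\#\{\,e\in B:\ \ag(e)=p,\ \kc(e)=m\,\}=\rk\!\big((\hc_p\cap\K(T^m))/\Im(T)\big)-\rk\!\big((\hc_p\cap\K(T^{m-1}))/\Im(T)\big),
\]
a quantity depending only on $T$ and the grading; the same formula governs a second $\kc$-minimal basis, so the two barcodes agree and the bijection $\fk$ exists. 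I expect the only delicate point to be this last step — justifying that a $\kc$-minimal basis of the matroid union restricts to a $\kc$-minimal basis of each $\hc_p/\Im(T)$ — which follows because submodularity forces the grade-by-grade rank inequalities to be equalities; everything else is unwinding definitions.
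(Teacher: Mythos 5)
Your proof follows essentially the same route as the paper's: reduce to the claim that graded Jordan bases are orbits of $\kc$-minimal bases of the matroid union $\bigcup_p(\hc_p/\Im(T))$ via Proposition~\ref{prop:complementaryiffjordan} and Corollary~\ref{cor:jordanconditions}, and then argue that all $\kc$-minimal bases of that union yield the same barcode. The paper's version of the last step is a single sentence (``The orbits of any two such bases will determine identical barcodes''), whereas you spell it out with the explicit formula
\[
\#\{\,e\in B:\ \ag(e)=p,\ \kc(e)=m\,\}
= \rk\bigl((\hc_p\cap\K(T^m))/\Im(T)\bigr)
- \rk\bigl((\hc_p\cap\K(T^{m-1}))/\Im(T)\bigr),
\]
and you justify, via the direct-sum structure of the matroid union, that a $\kc$-minimal basis of the union restricts to a $\kc$-minimal basis of each summand $\hc_p/\Im(T)$. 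That justification is precisely the content the paper leaves implicit, and it is correct: because the $\hc_p$ have pairwise disjoint ground sets modulo loops, the rank of a sublevel set decomposes as a sum over $p$, so the global rank equality in Proposition~\ref{prop:intro2}/Lemma~\ref{lem:genlem} forces the termwise equalities. One stylistic wrinkle: you cite the proof of Proposition~\ref{prop:uniquejordan} (the statement you are proving) for the identity $(\bigcup_p\hc_p)/\Im(T)=\bigcup_p(\hc_p/\Im(T))$; since that identity is an elementary consequence of the definition of contraction and the disjointness of the $\hc_p$'s, it should be cited on its own merits rather than back-referenced.
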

\addtocounter{theorem}{-1}
} 

\section{Homological Persistence}
\label{sec:hompers}

To each $k \in \Z$ and each  sequence of $\field$-linear complexes and complex maps 
\begin{align}
\cd \lr{} C \der {p-1} \lr{} C \der p \lr{} C \der{p+1} \lr{} \cd  \label{eq:complexsequence}
\end{align}
one may associate  a sequence of linear maps
\begin{align}
\cd \lr{} H_k(C \der {p-1}) \lr{} H_k(C \der p) \lr{} H_k(C \der{p+1}) \lr{} \cd. \label{eq:homologicalpm}
\end{align}
Sequence \eqref{eq:homologicalpm} determines an endomorphism $Q$ on $\oplus_p H_k(C \der p)$, which is graded of degree 1 with respect to the canonical grading.   The associated  module is the \emph{$k$-dimensional homological persistence module} of \eqref{eq:complexsequence}. 

Homological persistence modules play a premier role in the field of topological data analysis, and the primary mode of understanding these objects is by way of  the Jordan bases of $Q$.

Let us focus on the special case where each map $C \der p \to C \der {p+1}$ is an inclusion of complexes, and where $H_k(C \der p)$ vanishes for $p$ outside some integer interval $[0,m]$.   Better still, let us assume that $C \der p$ vanishes for $p < 0$, and
\begin{align}
\label{eq:vanishesoverm}
Z_k(C \der p) = B_k(C \der p) = B_k(C \der m)
\end{align}
for $p> m$.  The family of complexes that meet these criteria includes most of those found  in modern practice, either directly or  by minor technical modifications.

Let $\zc_p$ and $\bc_p$ denote the space of degree-$k$ cycles and boundaries, respectively,  in $C \der p$.   These spaces determine filtrations 
\begin{align*}
\zc = (\zc_0, \ld, \zc_m)  && \bc = (\bc_1, \ld, \bc_m)
\end{align*} 
which we may naturally regard as integer-valued functions on 
$$
V = Z_k(C \der m).
$$  
The space  $H_k(C \der p)$ is the linear quotient 
$$
 V_p = \zc_p / \bc_p = H_k(C \der p)
 $$ 
 and $Q$ is the induced map on $\oplus_p V_p$.   Consequently Proposition \ref{prop:gradedquotientbases}, which we repeat here for ease of reference, carries through without modification.

{
\renewcommand{\thetheorem}{\ref{prop:gradedquotientbases}}
\begin{proposition}
The  graded Jordan bases of $Q$ are the orbits of the $\zc$-$\bc$ minimal bases of $V$.
\end{proposition}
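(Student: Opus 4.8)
The statement to prove is Proposition~\ref{prop:gradedquotientbases} in its restated form: in the homological-persistence setting just set up, the graded Jordan bases of $Q$ are exactly the orbits of the $\zc$-$\bc$ minimal bases of $V = Z_k(C\der m)$. The plan is to observe that the entire apparatus developed in \S\ref{sec:gradcanform} applies verbatim, and the only work is to check that the data of the present situation satisfies the hypotheses under which Proposition~\ref{prop:gradedquotientbases} was proved in that section.

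First I would recall the construction of \S\ref{sec:gradcanform}: one starts with a finite-rank matroid $\mc$, two filtrations $\zc$, $\bc$ by \emph{closed} sets with $\bc_p \su \zc_p$ for all $p$, sets $\hc_p = \zc_p \sslash \bc_p$, forms $\nc = \cup_p \hc_p$, and lets $T:\nc\to\nc$ be the shift $e\der p \mapsto e\der{p+1}$. Here I would take $\mc$ to be the linear matroid on ground set $V = Z_k(C\der m)$ whose independent sets are the $\field$-linearly independent subsets, which has finite rank since $V$ is finite-dimensional; and I would take $\zc$, $\bc$ to be the cycle and boundary filtrations regarded as integer-valued weight functions on $V$ as in the setup, extended to filtrations of $\mc$ by sublevel sets. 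The key technical point to verify is that each $\zc_p$ and each $\bc_p$ is a \emph{flat} (closed set) of $\mc$: this holds because $\zc_p$, $\bc_p$ are linear subspaces of $V$, and linear subspaces are exactly the flats of a linear matroid. The inclusion $\bc_p \su \zc_p$ holds because boundaries are cycles (as $\partial^2 = 0$), together with the normalization \eqref{eq:vanishesoverm} which guarantees both filtrations terminate at the ground set $V$.

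Next I would invoke the closing paragraph of \S\ref{sec:gradcanform}, which treats precisely the ``linear regime'': when $\zc_p$ and $\bc_p$ are linear filtrations on a vector space $W$ and $V_p = \zc_p/\bc_p$, the inclusion maps $\zc_p \to \zc_{p+1}$ induce $V_p \to V_{p+1}$, assembling into the graded operator $Q$ on $\oplus_p V_p$, and the canonical set map $\phi: \cup_p\hc_p \to \cup_p V_p$ identifies matroid-theoretic Jordan bases of $T$ with linear Jordan bases of $Q$ via the commutative square displayed there. Applying this with $W = V = Z_k(C\der m)$ and recognizing that the $Q$ of that paragraph is exactly the $Q$ of \S\ref{sec:hompers} (both are the map on $\oplus_p H_k(C\der p) = \oplus_p \zc_p/\bc_p$ induced by the inclusions), Proposition~\ref{prop:gradedquotientbases} of \S\ref{sec:gradcanform} states: the graded Jordan bases of $Q$ are the orbits of the $\zc$-$\bc$ minimal bases in $W$. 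Substituting $W = V$ gives exactly the assertion to be proved, so the proof reduces to the sentence ``This is Proposition~\ref{prop:gradedquotientbases} applied to the matroid $(V,\ic)$ with the cycle and boundary filtrations.''

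The main obstacle, such as it is, is bookkeeping rather than mathematics: one must check (i) that $\mc$ has finite rank --- immediate from finite-dimensionality of $V$; (ii) that the cycle and boundary subspaces really are flats and really do form nested filtrations terminating at $V$ --- this uses $\partial^2=0$ and the terminal condition \eqref{eq:vanishesoverm}; and (iii) that the operator $Q$ appearing in \S\ref{sec:hompers} agrees with the operator of the same name constructed in the final paragraph of \S\ref{sec:gradcanform}, which is a matter of unwinding both definitions and seeing that each is ``the degree-one shift induced on $\oplus_p \zc_p/\bc_p$ by the inclusions $C\der p \hookrightarrow C\der{p+1}$.'' No new estimate or construction is needed; the substance was all done in Propositions~\ref{prop:jordanforses}, \ref{prop:fgminimalquotientjordanbases}, and \ref{prop:gradedquotientbases}. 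Accordingly the written proof will be short, essentially a verification of hypotheses followed by citation.
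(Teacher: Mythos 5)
Your proposal matches the paper's own treatment: \S\ref{sec:hompers} simply observes that, once $\zc$ and $\bc$ are identified as the cycle and boundary filtrations on $V = Z_k(C\der m)$ and $Q$ as the induced degree-one map on $\oplus_p \zc_p/\bc_p$, Proposition~\ref{prop:gradedquotientbases} ``carries through without modification.'' Your bookkeeping checklist (finite rank, flats, nesting via $\partial^2 = 0$ and \eqref{eq:vanishesoverm}, and identification of the two $Q$'s) is exactly the hypothesis verification that is implicit in that sentence, so the argument is correct and takes essentially the same route as the paper.
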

\addtocounter{theorem}{-1}
} 

This observation entails a significant reduction in complexity, as the space $\oplus_p V_p$ is in general many orders of magnitude larger (by dimension) than $V$.  Moreover, a simple means of computing $\zc$-$\bc$ minimal bases is readily available.  If we let $\fc$ denote the integer-valued function on $C \der m$ whose $p$th sublevel set is the subspace $C \der p$, then any $\fc$-minimal Jordan basis of the differential on $C \der m$ will necessarily generate $\zc$ and $\bc$.  Any variation on Algorithm \ref{alg:BPJEfiltered} may be used to obtain such a basis.

\section{Optimizations}
\label{sec:ops}

The grand challenge in a preponderance of data driven homological persistence computations is combinatorial explosion in the dimension of the input.   As in the Neuroscience applications of \cite{GPC+Clique15} alluded to in Chapter \ref{ch:intro}, complexes with many trillions of dimensions frequently derive from modest starting data.  Input reduction and the avoidance of fill in sparse matrix operations therefore bear directly on efficient computation.

\subsection{Related work}

The  \emph{standard algorithm} to compute persistent homology was introduced for coefficients in the two element field by Edelsbrunner, Letscher, and Zomorodian in \cite{ELZTopological02}.  The adaptation of this algorithm for arbitrary field coefficients was presented by Carlsson and Zomorodian in \cite{ZCComputing05}.  The standard algorithm is known to have worst case cubic complexity, a bound that was shown to be sharp by Morozov in \cite{MorozovPersistence05}.  Under certain sparsity conditions, the complexity of this algorithm is less than cubic.  An algorithm by Milosavljevi\'c, Morozov, and Skraba has been shown to perform the same computation in $O(n^{\omega})$ time, where $\omega$ is the matrix-multiplication coefficient \cite{MMSZigzag11}.   

A number of closely related algorithms share the cubic worst-case bound while demonstrating dramatic improvements in performance empirically.  These include the \emph{twist} algorithm of Chen and Kerber \cite{CKPersistent11}, and the \emph{dual} algorithm of de Silva, Morzov, and Vejdemo-Johansson \cite{SMVDualities11,SMVPersistent11}.  Some parallel algorithms for shared memory systems include the \emph{spectral sequence} algorithm \cite{EHComputational10},  and the \emph{chunk} algorithm \cite{BKRClear14}.   Algorithms for distributed computation include the \emph{distributed algorithm} \cite{Bauer:2014:DCP:2790174.2790178} and the spectral sequence algorithm of Lipsky, Skraba, and Vejdemo-Johansson \cite{lipsky2011spectral}.  

The \emph{multifield} algorithm is a sequential algorithm that computes persistent homology over multiple fields simultaneously \cite{boissonnat2014computing}.  Efficient algorithms for multidimensional persistence have been described by Lesnick and Wright in \cite{2015arXiv151200180L}.  The methods of nested dissection and simplicial collapse yielded highly efficient algorithms for spaces generated from finite metric data, e.g.\ \cite{Kerber:2016:PHN:2884435.2884521, 2016arXiv160907517D}.    Discrete Morse theory has yielded remarkable advances in speed and memory efficiency, for instance in algorithms of Mrozek and Batko \cite{MBcoreduction09}, of D\l{}otko and Wagner \cite{DWComputing12}, and of Mischaikow and Nanda \cite{HMM+Efficiency10,MNMorse13,nandathesis}.  

Efficient streaming algorithms have recently been introduced by Kerber and Schriber \cite{Kerber2017BarcodesOT}.   U.\ Bauer has recently introduced an algorithm that takes special advantage of the natural compression of Vietoris-Rips complexes vis-a-vis the associated distance matrix, which has yielded tremendous  improvements in both time and memory performance \cite{ripserwebsite}.

While this list is by no means comprehensive, it reflects the breadth and variety of work in this field.  A number of  efficient implementations are currently available, e.g. 
\cite{
ripserwebsite,
dipha,
Bauer2014,
DBLP:journals/corr/BubenikD15,
2016arXiv160907517D,
dlotckopltoolbox,
fasyrtda,
2014arXiv1411.1830F,
lesnickrivet,
maria2014gudhi,
morozovdionysus,
nandaperseus,
deysimpers,
deygicomplex,
perryplex,
TVAjavaPlex12}.

Work leveraging the matroid-theoretic structure of persistence modules has also been conducted.  In \cite{chenfreedman2010}, for example, Chen and Freedman exploit vector matroids to compute natural generators in a greedy manner.

\subsection{Basis selection}

Suppose that $T$ is a graded map of degree one on $W$, and fix any graded basis (not necessarily Jordan).  If the elements of this basis are arranged in ascending order by  grade, then the associated matrix representation will have block form
\vspace{.5cm}
\[
\arraycolsep=7pt\def\arraystretch{.7}
\left(
\begin{array}{cccccc}
0 & 0  & 0 & \cd & 0 &0 \\
* & 0 & 0 & \cd & 0 &0\\
0 & * & 0  & \cd & 0 &0\\
\vdots & \vdots & \vdots & \ddots & \vdots  & \vdots\\
0 & 0 & 0 & \cd & 0 & 0\\
0 & 0 & 0 & \cd & * & 0
\end{array}
\right)
\vspace{.4cm}
\]
with respect to the partition of rows and columns by grade.

Let us denote this array by $A$.  If $T^2 = 0$, then the change effected on $A$ by a row-first Jordan pivot on element $(i,j)$ of block $(p+1,p)$ admits a simple description: block $(p+1,p)$ will be replaced by the array formed by a row and column clearing operation on $(i,j)$, and in $A$ column $i$ and row $j$ will be cleared.

This observation suggests that  Jordan pivots may be understood, at least in  part,  in terms of standard row and column operations.  The influence of such operations on sparsity, in turn, relates closely to that of Gauss-Jordan elimination.  If $M$ is an $m \times n$  array on which sequential row-first pivot operations are performed on elements $(1,1), \ld, (p,p)$, then block $(p+1, \ld, m, p+1, \ld, n)$ of the resulting array will be identical to that of the array produced by row-only pivot operations on the same sequence of elements.

Let us briefly recall the elements of \S\ref{sec:circuits} regarding matrix representations and matroid circuits.  If $M \in \field^{I \times J}$ has form $[\; 1 \; | \; * \; ]$, where the lefthand block is an $I \times I$ identity matrix, then the \emph{fundamental circuit} of column  $j$ with respect to basis $I$ in the column space of $M$ is  $r \cup \{j\}$, there $r$ is the row support of column $j$.  In particular, the sparsity pattern of $M$ is entirely determined by the fundamental circuits of $I$.

It is in general difficult to model and control the fill pattern of a sparse matrices during factorization.  It has been shown, for example, that the problem of minimizing fill in Cholesky factorization of a positive definite array is NP hard \cite{yannakakis1981}.  Optimizations that seek to reduce fill must therefore rely on specialized structures particular to a restricted class of inputs. The class of inputs on which we focus are the boundary operators of cellular chain complexes, and the structure that we seek to leverage is the topology of the underlying topological space.

Even in this restricted setting, it is difficult to formulate provably efficient models for optimization.   Input to cellular homology computations is of an highly  complex nature, and including  highly regular, structured, and symmetric objects from idealized mathematical models and highly irregular, noisy, low-grade data sets  from all branches sciences and engineering .   No sound corpus of benchmarks exists in either case, nor do there exist broadly effective random models.    Efforts in each of these directions have yielded a number of important results over the past decade,  e.g. \cite{KahleTopology11,OPT+roadmap}, however none may be considered broadly representative of either mathematical or scientific data.  A principled approach to optimization, therefore, must rely on one or more simplifying assumptions.   
 

The assumption we propose in the current work is a close in spirit to that which informs spectral graph theory.  Recall that spectral graph embedding formalizes the notion of ``smooth variation'' for a real-valued function on the vertices of a graph by means of the spectral decomposition of the  graph Laplacian.   A function is considered to be ``smooth'' if it may be expressed as a linear combination of eigenfunctions with eigenvalues  close to zero.  This formal notion of smoothness  has been observed to agree with subjective evaluations of smooth variation, and this correspondence, while imperfect, has proved sufficiently robust to support tremendous applications algebraic graph theory, both pure and applied.      A number of variations on this  approach have been employed in practice, using weighted or unweighted, normalized or unnormalized Laplacians, and while significant differences are observed in the performance of these variations across application domains, in the main, a consistent relationship between input and output may be seen throughout.

The model we propose  seeks to maximize another subjective quantity,  ``normality'' relative to a putative surface or submanifold.  Formally  this notion is no more meaningful than smoothness on a graph, since neither  graphs nor a cell complexes have smooth structures, and, like smoothness, it may be formalized in a  variety of different ways.

Why is normality to be desired?  Suppose $G_1$ and $G_2$ are two disjoint copies of the  cycle graph on vertex set $\{1, \ld, 12\}$, whose edges are  the unordered pairs 
\begin{align*}
\left \{\{p, p+1\} : 1 \le p \le 11 \right \} \cup \{1,12\}.
\end{align*} 
 Let $G$ be the graph obtained from the union of  $G_1$ and $G_2$ by adding an edge from vertex $p$ in $G_1$ to vertex $p$ in $G_2$, for all $p$.  The result may be visualized as a pair of concentric circles in the plane, with 12 spokes connecting the inner to the outer.   Heuristically, we  regard the spoke edges to be normal to some putative underlying manifold, and the remaining edges to be tangential.

Let $T_1$ be a spanning tree in $G$ composed of all the spoke edges plus every edge from the inner circle, save one.  Let $T_2$ be a tree  composed of the spoke connecting $12$ to $12$, plus every tangential edge except the two copies of $\{1,12\}$.   In our heuristic formulation, $T_1$ is relatively normal (having a large number of normal edges) and $T_2$  relatively tangential (having a larger number of tangential edges).

How do the fundamental circuits of these two bases compare?   Those of the normal basis are relatively small.  With  two exceptions, each fundamental circuit has cardinality four.  Among the two exceptional circuits, the larger  has size 14.  The circuits of the tangential basis are much larger, in general.  That determined by the spoke connecting $1$ to $1$, for example,  will have size 24, and in total the cardinalities of the fundamental circuits of  $T_2$ sum to $178$, while those of $T_1$ sum to just $70$.

The phenomenon described by this example is observable in practice.  In Figure \ref{fig:spanningtrees} are displayed two spanning trees for a graph $G = (V, E)$, where $V$ is a sample of 200 points drawn with uniform noise from the unit circle in $\R^2$, and $E = \{ \{u,v\}  \su V : \nn{u-v} \le 0.5\}$.   The basis on the left is obtained by Gauss-Jordan elimination on the rows of a permuted node-incidence matrix of $G$ whose columns have been  arranged in increasing order with respect to $w$,  the function that assigns to each simplex (in this case, edge) the volume of its convex hull in the normalized spectral embedding of the underlying graph. The basis on the right was likewise obtained by Gauss-Jordan elimination on a permuted node-incidence matrix, this one with columns ordered by $-w$.   It may be shown that these orders favor edges that lie normal and tangent, respectively, to the unit circle in $\R^2$.

Two observations are immediate upon inspection.  First, each basis appears qualitatively similar.  This fact reflects a loss of information incurred by the removal of all metric data relating pairs points in $V$, save for the adjacency information encoded by $E$, when one passes from the point cloud to $G$.  Second, the sparsity structures of the associated fundamental circuits are dramatically different, with a net difference  approaching a full order of magnitude in total number of edges.   This phenomenon is observably robust; we find that a similar relationship between sparsity and normality holds consistently across a wide range of examples and  a wide variety of formal metrics for normality.    

\begin{figure}
    \centering
    \begin{subfigure}[b]{0.4\textwidth}
        \includegraphics[width=\textwidth]{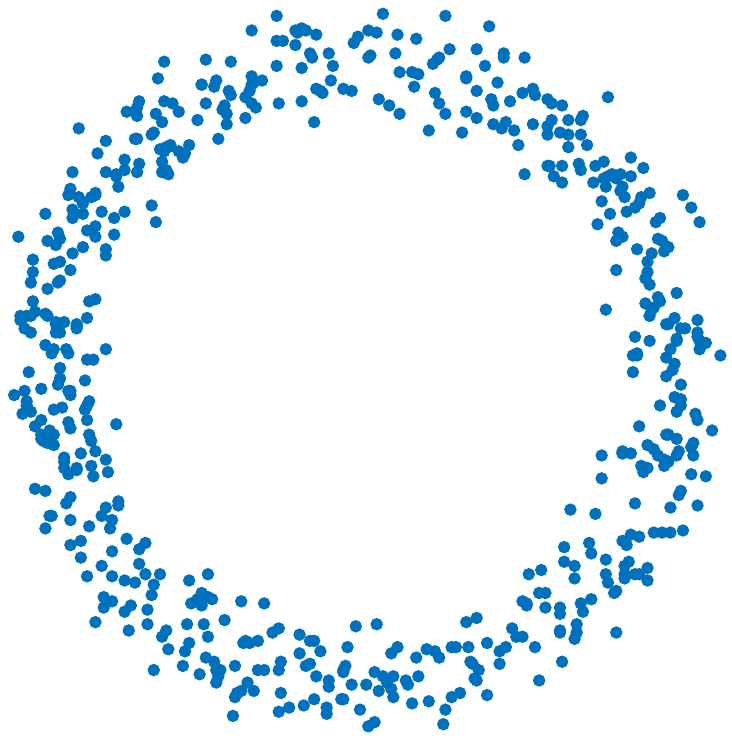}
        \label{fig:gull}
    \end{subfigure} \vspace{.5cm}
    \caption{A population of 600 points sampled with noise from the unit circle in the Euclidean plane.  Noise vectors were drawn from the uniform distribution on the disk of radius 0.2, centered at the origin.}\label{fig:pcloud}
\end{figure}

\begin{figure}
    \centering
    \begin{subfigure}[b]{0.4\textwidth}
        \includegraphics[width=\textwidth]{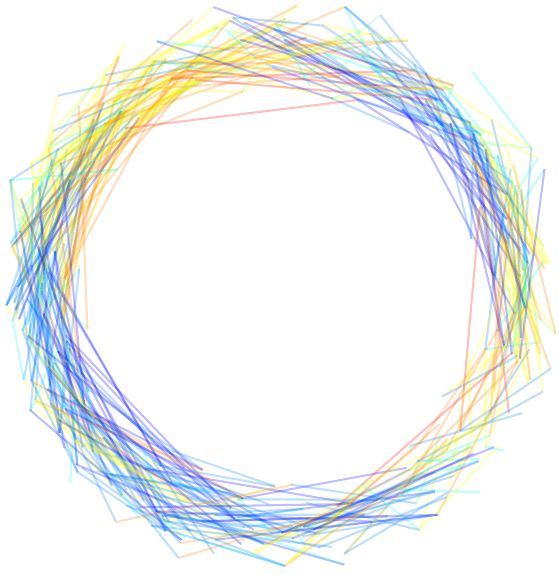}
        \vspace{0.35cm}
        \caption{A $w$ minimal spanning tree.}
        \label{fig:tiger}
    \end{subfigure} \\ \vspace{1.5cm}
    ~ 
    \begin{subfigure}[b]{0.4\textwidth}
        \includegraphics[width=\textwidth]{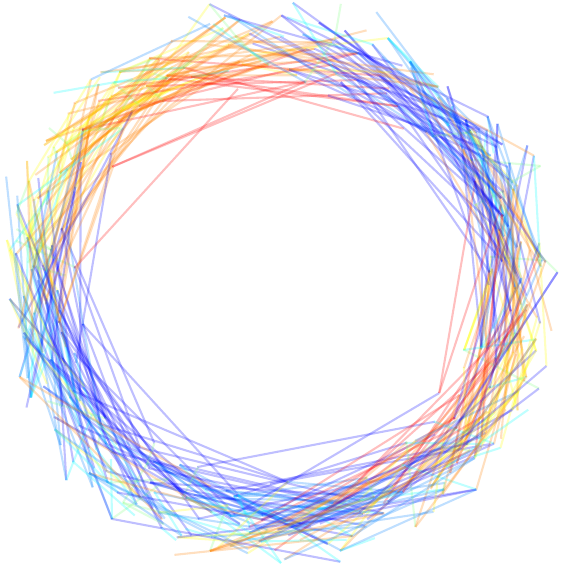}
        \vspace{0.35cm}
        \caption{A $-w$ minimal spanning tree.}
        \label{fig:mouse}
    \end{subfigure} \vspace{1cm}
    \caption{Spanning trees in the one-skeleton of the Vieotris-Rips complex with scale parameter 0.5 generated by the point cloud in Figure \ref{fig:pcloud}.  The cardinalities of the fundamental circuits determined by the upper basis sum to $7.3 \times 10^5$, with a median circuit length of 11 edges.  The cardinalities of the fundamental circuits determined by the lower basis sum to $4.9 \times 10^6$, with a median  length of 52 edges.   }\label{fig:spanningtrees}
\end{figure}

\begin{figure}
    \centering
	\hspace{.2cm}
    \begin{subfigure}[b]{0.49\textwidth}
        \includegraphics[width=\textwidth]{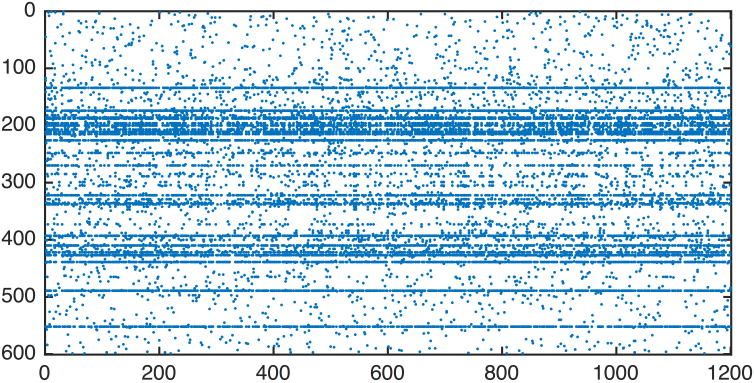}
        \vspace{0.35cm}
        \caption{A $w$ minimal spanning tree.}
        \label{fig:sparsemat}
    \end{subfigure} \\ \vspace{1.5cm}
    ~ 
    \begin{subfigure}[b]{0.49\textwidth}
        \includegraphics[width=\textwidth]{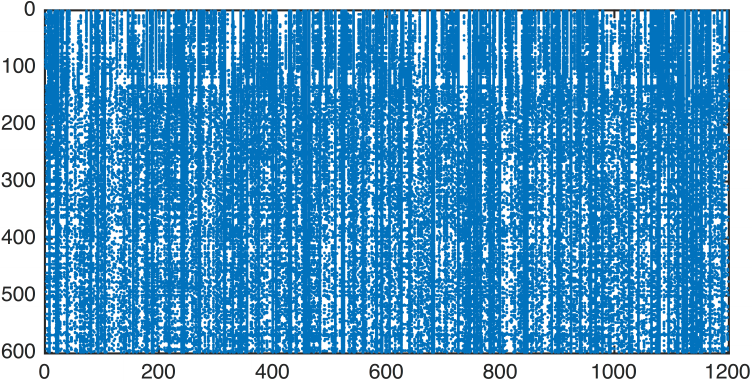}
        \vspace{0.35cm}
        \caption{A $-w$ minimal spanning tree.}
        \label{fig:densemat}
    \end{subfigure} \vspace{1cm}
    \caption{ (a) Sparsity pattern of a subsample of 1200 columns selected at random from the row-reduced node incidence array determined by  spanning tree (a) in Figure \ref{fig:spanningtrees}.  The full array has a total of $7.3\times10^5$ nonzero entries.    (b) Sparsity pattern of a subsample of 1200 columns selected at random from the row-reduced node incidence array determined by the spanning tree (b) in Figure \ref{fig:spanningtrees}.   The full array has a total of $4.9\times10^6$ nonzero entries.  }\label{fig:animals}
\end{figure}
\subsection{Input reduction}

In a majority of the regimes where persistence is currently computed,  only modules of relatively small dimension (10 and below) find application.  It is therefore standard to delete (or never to construct) the rows and columns associated to cells of dimension greater than $1 + p$, where $p$ is the greatest dimension of interest.  Moreover, as the basis vectors for $C_{p+1}$ bare only indirectly on the structure of $H_{p}(C)$,  it is generally advantageous to avoid storing them in memory.

One means to do so is to discard  cells of dimension $p+1$ after they have been matched with cells of dimension $p+2$ in an acyclic pairing.   In the special case of Vietoris-Rips complexes, such pairings may be determined before construction of the matrix representation for the differential operator has begun, thus avoiding construction of much of $C_{p+1}$ altogether.    This strategy is employed in the \emph{Eirene} library  by means of linear ordering of simplices, as described in \S\ref{sec:blockexch}.  

\section{Benchmarks}
\label{sec:benchmarks}

An instance of Algorithm \ref{alg:BPJEfiltered} incorporating the optimizations described above has been implemented in the \emph{Eirene}  library for homological persistence \cite{EIRENE}.   Experiments were conducted on a collection of sample spaces published recently in \cite{OPT+roadmap}.

The libraries in this publication were tested on both a cluster and a shared-memory system.  The cluster is  a Dell Sandybridge cluster, with 1728 (i.e.\ $108 \times 16$) cores of $2.0$GHz  Xeon SandyBridge,  RAM of 64 GiB in 80 nodes and RAM of 128 GiB in 4 nodes, and a scratch disk of 20 TB. It runs the operating system (OS) Red Hat Enterprise Linux 6. The shared-memory system is an IBM System x3550 M4 server with 16 (i.e., $2 \times 8$) cores of 3.3GHz, RAM of 768 GB, and storage of 3 TB. It runs the OS Ubuntu 14.04.01.11.   Results for eleg, Klein, HIV, drag 2, and random are reported for the cluster.  Results for fract r are reported for the shared memory system.  Detailed results for both systems may be found in the original publication.

Experiments for \emph{Eirene} were conducted on a personal computer with Intel Core i7 processor at 2.3GHz, with 4 cores, 6MB of L3 Cache, and 16GB of RAM.  Each core has 256 KB of L2 Cache.   Results for time and memory performance are reported in Tables \ref{tab:time} and \ref{tab:mem}.

\begin{table}
\begin{center}
\resizebox{\columnwidth}{!}{%
\begin{tabular}{ lllllll}
\hline
Data set  & \tb{eleg}  & \tb{Klein} & \tb{HIV} & \tb{drag 2} & \tb{random} & \tb{fract r}  \\ \hline
Size of complex &  $4.4\times10^6$ &$1.1\times10^7$ &$2.1\times10^8$&$1.3\times 10^9$ & $3.1\times 10^9$ & $2.8\times 10^9$ \\
Max. dim. & 2 &2 & 2 &2 & 8 & 3\\ \hline
\textsc{javaPlex} (st)		& 84 	&747 	& - 	&-		&-&-\\
\textsc{Dionysus} (st) 	&474 	& 1830 	&- 	&-		&-&-\\
\textsc{DIPHA} (st) 	&6 		&90		&1631 &142559&-&-\\
\textsc{Perseus} (st)  	& 543 	&1978	& - 	&- &-&-\\
\textsc{Dionysus} (d)  	& 513 	&145	& - 	&- &-&572764\\
\textsc{DIPHA} (d)  	& 4 		&6		&81	&2358&5096&3923 \\
\textsc{Gudhi}   		& 36 	&89		&1798&14368&-&4590 \\
\textsc{ripser}   		& 1 		&1		& 2	&6 & 348&1517\\
\textsc{Eirene}   		& 2 		&10		&193&138 & 16 & 63\\ \hline
\end{tabular}
}
\end{center}
\vspace{0.5cm}
\caption{Wall-time in seconds.  Comparison results for eleg, Klein, HIV, drag 2, and random are reported for computation on the cluster.  Results for fract r are reported for computation on the shared memory system. }
\label{tab:time}
\end{table}

\begin{table}
\begin{center}
\resizebox{\columnwidth}{!}{%
\begin{tabular}{ lllllll}
\hline
Data set  & \tb{eleg}  & \tb{Klein} & \tb{HIV} & \tb{drag 2} & \tb{random} & \tb{fract r}$^\dag$  \\ \hline
Size of complex &  $4.4\times10^6$ &$1.1\times10^7$ &$2.1\times10^8$&$1.3\times 10^9$ & $3.1\times 10^9$ & $2.8\times 10^9$ \\
Max. dim. & 2 &2 & 2 &2 & 8 & 3\\ \hline
\textsc{javaPlex} (st)		& $<5$ 	&$<15$ 	& $>64$ 	&$>64$	&$>64$& $>700$\\
\textsc{Dionysus} (st) 	&1.3 	& 11.6 	&- 		&-		&-		&-\\
\textsc{DIPHA} (st) 	&0.1 	& 0.2	&2.7 	&4.9	&-		&-\\
\textsc{Perseus} (st)  	& 5.1 	&12.7	& - 		&- 		&-		&-\\
\textsc{Dionysus} (d)  	& 0.5 	&1.1	& - 		&- 		&-		&268.5\\
\textsc{DIPHA} (d)  	& 0.1 	&0.2	&1.8	&13.8	&9.6	&276.1 \\
\textsc{Gudhi}   		& 0.2 	&0.5	&8.5	&62.8	&-		&134.8 \\
\textsc{ripser}   		& 0.007 	&0.02	& 0.06	&0.2 	&24.7	&155\\
\textsc{Eirene}   		& 0.36 	&0.19	&0.24	&2.61 	&0.63 	& 3.7\\ \hline
\end{tabular}
}
\end{center}
\vspace{0.5cm}
\caption{Max Heap in GB.  Comparison results for eleg, Klein, HIV, drag 2, and random are reported for computation on the cluster.  Results for fract r are reported for computation on the shared memory system.}
\label{tab:mem}
\end{table}


\chapter{Morse Theory}
\label{ch:morsetheory}

\emph{Morse theory} refers to a number of theories  that relate the topology of a space to the critical points of a function on that space.    The seminal work in this field is attributed to Marston Morse, with applications  in the study of geodesics on a Riemannian manifold: see \cite{MilnorMorse63,MilnorDifferential11}.

Each generation refashions Morse theory anew. Classical Morse theory specializes to smooth manifolds and smooth functions with nondegenerate Hessians: so strong a hold did this \emph{Morse condition} exert that relaxation came but slowly. \emph{Morse-Bott theory} permits smooth functions whose critical sets are closed submanifolds, and whose Hessians are non-degenerate in the normal direction.   R. Bott used this theory in his original proof of the \emph{Bott periodicity theorem} \cite{bott1982}. 
\emph{Stratified Morse theory}, initiated by Goresky and MacPherson, extends this theory to the more general domain of stratified spaces \cite{GMStratified88}, with phenomenal applications in algebraic geometry.  Basic ties to dynamical systems  came through Thom, Smale, and others, reaching their zenith in the ultimately general approach of C. Conley in his eponymous index theory \cite{ConleyIsolated78}. Since then, Morse theory has been a key ingredient in fields as far flung as knot theory and symplectic topology \cite{MSIntroduction98}. E.\ Witten rediscovered and repackaged elements of this theory in the language of quantum field theory via deformations in the function-space attached to a manifold by Hodge theory \cite{witten1982}.

In the late 1990s, Forman \cite{FormanMorse98}, following earlier work of Banchoff  \cite{BanchoffCritical70}, 
related the ideas of smooth Morse theory to cellular complexes.  The resulting Discrete Morse Theory [DMT] has been successfully applied to a wide variety of setting. Batzies and Welker have used discrete Morse theory in an algebraic setting to construct minimal resolutions of generic and shellable monomial ideals \cite{BWDiscrete00,jollenbeck2009minimal}. Farley and Sabalka used DMT to characterize the configuration spaces of robots constrained to graphs \cite{farley2005discrete}. 
Sk\"oldberg  has developed this theory in an abstract algebraic setting \cite{SkoeldbergMorse06}.  Recently, Curry, Ghrist, and Nanda have described the close relationship between discrete and algebraic Morse theories and applications on cellular sheaves \cite{Curry:2016:DMT:2979741.2979751}.

This list constitutes only a fraction of the principle branches of modern Morse theory, and we will not attempt a comprehensive description of ties between the observations made in the following sections with the  field as a hold.  Rather we limit the scope to two branches initiated by Forman and Witten, discrete Morse theory for algebraic complexes and discrete Morse-Witten theory for cell complexes.   The former has proved immensely productive in combinatorial topology and applications.  It is fundamental to modern methods of homology computation, as we will see, and has been beautifully treated in a number of publications and books.  The latter models after a program initiated by  Smale and Witten which remains highly active in modern math and physics, for example in the study of supersymmetry.

We briefly sketch  some ideas from these works before discussing the main results.

\section{Smooth Morse Theory}
This summary outlines the exposition of Bott \cite{bott1982} in recalling  some observations of Thom, Witten, and Smale on a closed manifold $M$ with Riemann structure $g$.  Assume a smooth function $f: M \to \R$.  A \emph{critical point} of $f$ is a point $p \in M$ for which the gradient
\[
\nabla f_p = 0,
\]
where $\nabla f$ is the gradient of $f$ with respect to $g$.  A critical point is \emph{nondegenerate} if  the Hessian
\[
\H_p f =  \left(  { \frac{\partial^2 f}{\partial x_i \partial x_j}}  \right)_{ij} 
\]
is nonsingular for some (equivalently) every local coordinate system $(x_i)$.     To every nondegenerate critical point we associate an \emph{index} $\lk_p$, the number of negative eigenvalues of $\H_p f$, counting multiplicity.   We say $f$ is \emph{Morse} if it has  finitely many critical points, none degenerate.

Every point $q$ that is not a critical point of $f$ lies on a unique 1-dimensional integral manifold $X_q$ of $\nabla f$ which will start (limit in backwards time) at some critical point $p$ and end (limit in forwards time) at another.   This integral manifold is a \emph{heteroclinic} or \emph{connecting} orbit. (The use of \emph{instanton} among physicists is an unfortunate obfuscation.)  

The union of all connecting orbits having $p$ as an initial point, which we denote $W_p$, is a cell of dimension $\lk_p$.  We call this the ``descending cell'' through $p$.   The descending cells form a pairwise-disjoint partition $M$, however this partition may  be quite complicated in general.   It was an idea of Smale to introduce the following transversality condition to simplify their behavior.   Let us denote the descending cell of $-f$ for critical point $p$ by $W'_p$.  We call this the \emph{ascendening} cell of $f$ at $p$, and say $\nabla f$ \emph{transversal} if the ascending and descending cells of $f$ meet in the most generic way possible, that is, if at any  $q \in W_p \cap W_r'$ the tangent spaces $T_qW_p$ and $T_q W'_r$ span the full tangent space $T_q M$.  In this case $f$ is \emph{Morse-Smale}.

\begin{remark}  By \emph{modularity}, the transversality condition  equates to the criterion that
\[
\dim(T_qW_p \cap T_qW_r) = \lk_p - \lk_r.
\]
Thus the number of connecting orbits that join points $p$ and $q$ is finite when $\lk_p = \lk_r +1$.   As remarked in \ref{rmrk:modulartransversetame}, this is a concrete expression of modularity at the foundations of geometric topology.
\end{remark}

We may associate a natural chain complex to a Morse-Smale function $f$ as follows.  Orient each $T_q X_q$ by $- \nabla f_x$, and fix arbitrary orientations for descending cells $W_p$.  If we assume, for simplicity, that $M$ is oriented, then this assignment  determines  orientations for the ascending cells as well.   When $\lk_p =  \lk_r + 1$, we may assign to each connecting orbit $\ck \su W_p \cap W'_r$ a value of $\pm 1$ according to whether the exact sequence
\[
0 \longrightarrow T_q X_q \longrightarrow T_q W_p \oplus T_q W'_r \longrightarrow T_q M \longrightarrow 0
\]
preserves orientation.  Let us denote this value $e(\ck)$.   Let
\[
C^f(M) = \Z^{\{[W_p]\}_p}
\]
be the free group over $\Z$ generated by the descending cells, and grade $C^f(M)$ by the dimension of $W_p$.    We may then define a degree $-1$ operator $\partial$ by counting connecting orbits with sign:
\[
\partial[W_p] =  \sum_{\dim(W_q) = \dim(W_p)-1} \; \;\sum_{\ck \su W_p \cap W_q'} \; \; e(\ck) [W_q].
\]
The following is a consequence of work by Smale.  The complex $C^f(M)$ has been called by many names (various combinations of Thom, Smale,  and Witten). This is Morse Theory; it is the Morse complex.
\begin{theorem}  
The operator $\partial$ is a differential on $C^f(M)$.  With respect to this differential,  $H(M, \Z) \cong H(C^f(M))$.
\end{theorem}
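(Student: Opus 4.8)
The plan is to realize the Morse complex $C^f(M)$ as a deformation retract of the singular (or cellular) chain complex of $M$ and to transport the quasi-isomorphism along the gradient flow. First I would recall the standard dynamical picture: the Morse-Smale condition guarantees that $M$ admits a CW-structure whose cells are the unstable manifolds $W_p$, one cell of dimension $\lk_p$ for each critical point $p$. This is the classical Morse decomposition theorem, proved by analyzing the passage of sublevel sets $M^a = f^{-1}(-\infty,a]$ across critical values: below a critical value $M^a$ deformation-retracts along $-\nabla f$ onto a smaller sublevel set, and crossing a critical value of index $\lk$ attaches a single $\lk$-cell. The key input here is that $-\nabla f$ has no zeros away from the finitely many critical points, so integral curves are well-defined and the retraction is smooth; the Morse lemma supplies the local normal form near each critical point that identifies the attaching map.

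Next I would identify the cellular chain complex of this CW-structure with $C^f(M)$. The $\lk$-chains are freely generated over $\Z$ by the $\lk$-cells $[W_p]$, which is exactly the definition of $C^f_\lk(M)$, so the only point to verify is that the cellular boundary map agrees with the signed count of connecting orbits. For adjacent indices $\lk_p = \lk_r + 1$, the degree of the attaching map of $W_p$ restricted to the cell $W_r$ (in the quotient sphere) is computed by counting, with sign, the gradient trajectories from $p$ to $r$; the Morse-Smale transversality guarantees this intersection $W_p \cap W'_r$ is a compact oriented $0$-manifold, i.e.\ a finite signed set, and the sign attached to each orbit $\ck$ is precisely the orientation comparison $e(\ck)$ appearing in the statement via the exact sequence $0 \to T_qX_q \to T_qW_p \oplus T_qW'_r \to T_qM \to 0$. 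This is exactly the content attributed to Smale. That $\partial^2 = 0$ then follows either abstractly (it is a cellular boundary map) or directly by the gluing/compactification argument: the boundary of the one-dimensional moduli space of broken trajectories from an index-$\lk$ to an index-$(\lk-2)$ critical point consists precisely of the once-broken trajectories, and counting this boundary with sign gives zero.

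Finally, since cellular homology computes singular homology for any CW-complex, $H(C^f(M)) = H^{CW}_*(M;\Z) \cong H_*(M;\Z)$, which is the desired isomorphism; naturality of the gradient-flow retraction gives that the isomorphism is canonical. I expect the \emph{main obstacle} to be the sign bookkeeping in the second step: verifying that the orientation conventions (orienting $T_qX_q$ by $-\nabla f$, fixing arbitrary orientations on the $W_p$, and inducing orientations on the $W'_r$) make the signed trajectory count literally equal to the cellular boundary degree, rather than off by a global sign or a sign depending on $\lk_p$. This is the place where one must be careful with the compactified moduli spaces of trajectories and the induced boundary orientations; the rest of the argument is the standard Morse-theoretic machinery and requires no new ideas. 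An alternative route that sidesteps the CW-structure entirely is the analytic/Witten deformation approach — conjugating $d$ by $e^{-tf}$ and taking $t \to \infty$ to concentrate harmonic forms near critical points — but for a clean self-contained proof I would favor the geometric CW argument sketched above.
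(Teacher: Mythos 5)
The paper gives no proof of this theorem: the entire subsection ``Smooth Morse Theory'' is expository background, explicitly framed as a summary of Bott's account, and the statement is introduced with ``The following is a consequence of work by Smale'' and left uncited beyond that. So there is no in-paper argument to compare your proposal against; the relevant question is only whether your sketch is a sound outline of the classical proof.

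Your sketch is essentially the standard Milnor-style argument, and the overall architecture is right, but there is one point where the phrasing overreaches and would need repair in a careful write-up. You assert that the Morse--Smale condition ``guarantees that $M$ admits a CW-structure whose cells \emph{are} the unstable manifolds $W_p$.'' That is a genuinely delicate claim: the closure of an unstable manifold of a Morse--Smale flow need not be a subcomplex, and the question of when the unstable-manifold partition is literally a CW decomposition (with the incidence data one wants) was not settled by Smale and required additional work (Laudenbach, Qin, and others, under additional hypotheses or with additional care about compactifications of flow lines). Milnor's theorem is weaker and safer: crossing a critical level attaches an abstract $\lambda$-cell to the sublevel set, giving $M$ the \emph{homotopy type} of a CW complex, but those cells are not the $W_p$ themselves. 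The clean classical route is therefore: (i) build Milnor's CW model via sublevel-set attachment; (ii) prove separately, via a gluing/compactness theorem for broken trajectories, that the cellular boundary degree in Milnor's model coincides with the signed trajectory count defining $\partial$; (iii) conclude $\partial^2=0$ and $H(C^f(M))\cong H_*(M;\Z)$ from the cellular-equals-singular isomorphism. You do gesture at (ii) and (iii), and you correctly identify the sign bookkeeping and the broken-trajectory compactification as the hard part; but step (i) should not be stated as ``the $W_p$ are the cells,'' since that both isn't Milnor's theorem and isn't free. With that correction, the sketch is a faithful outline of the standard proof, and your remark that the Witten deformation offers an alternative analytic route is also accurate and is in fact the direction the thesis itself develops in \S\ref{sec:morseresult}.
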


A related construction, obtained by very different means, has been described by Witten.   One considers the de Rham complex $\aw^*$, given by the sequence
\[
 \aw^0 \longrightarrow \aw^1 \longrightarrow \cd  \longrightarrow \aw^n.
\]
with codifferential $d$.  The Riemannian metric determines an adjoint to $d$, and the spectral decomposition of the corresponding Laplacian
\[
\ad = d d^* + d^* d
\]
separates $\aw^q$ into a direct sum of finite-dimensional eigenspaces 
\[
\aw_\lk^q = \{\varphi \in \aw^q : \ad \varphi = \lk \varphi\}.
\]
If we denote the restriction of $d$ to $\aw^*_\lk = \oplus_q \aw_\lk^q$, then the Hodge theory provides that 
\[
0 \longrightarrow \aw^q_\lk \lr{d_\lk} \aw_\lk^{q+1} \lr{} \cd \lr{}\aw_\lk^n \lr{} 0
\]
is exact for $\lk > 0$, and that
$$
 H^q(M) \cong  \aw^q_0 
$$
for all $q$.  As an immediate consequence, for each $a > 0$ the finite-dimensional complex 
$$
\aw_{a}^* = \oplus_{\lk \le a} \aw^*_\lk
$$
has $H^*(M)$ as its cohomology.  To this construct Witten associated a family of operators
\[
d_t = e^{-t f} \com d \com e^{t f},
\]
parametrized by $t \in \R$.   One has a cohomology $H_t(M) = K(d_t) / I(d_t)$ since $d_t$ squares to the zero, and it is quickly verified that $H^*_t(M) \cong H^*(M)$, since  $d_t$ is obtained by conjugation.  As above, the Laplacians
\[
\ad_t = d_td_t^* + d^*_t d_t
\]
yield spectral decompositions $\aw^*(t) = \oplus_\lk \aw^*_\lk(t)$.   Likewise each $a>0$ yields a finite-dimensional complex of differential forms $\aw^*_a(t)$ spanned by eigenforms of $\ad_t$ with eigenvalues $\lk \le a$.

\newcommand{\cmf}{{\mathrm N}}
Witten argues that as  $t$ approaches $+\infty$ the spectrum of $\ad_t$  separates into a finite set of values  close to zero, and a complementary set of values much greater than zero.  More precisely, if $\cmf_q$ is the set of critical values of $f$ of index $q$, then there exists a $t$-parametrized family of maps $\psi_t: \cmf_q \to \aw_a^q(t)$ so that for large values of $t$,
\begin{enumerate}
\item $\psi_t(\cmf_q)$ is a basis of eigenfunctions for $\aw^q_a(t)$  and
\item $\psi_t(p)$ concentrates on a neighborhood of $p$, for each $p \in \cmf_q$.
\end{enumerate}

As Witten claimed and  Helder and Sj\"ostrand later confirmed \cite{helffer1987puits}, when $f$ is transversal the  induced codifferential on $\aw_a^q(t)$ may be calculated to vanishingly small error as $t \to +\infty$  by counting connecting orbits, in much the same spirit as that of the Morse complex.

\section{Discrete and Algebraic Morse Theory}
\label{sec:DMT}

The ingredients of the  complex introduced by Forman \cite{FormanMorse98} are conceptually close to those of the traditional Morse complex.  One begins with a topological CW complex, on which is defined some suitable function $f$.  From $f$  is derived a collection of critical cells (critical points) and integral curves (connecting orbits).   New cells form by combining critical cells with  the integral curves that descend from them, and the resulting family is organized into the structure of a complex via incidence relations.

Let us make this description  precise.   Posit a regular topological CW complex $X$, and let $X \der p$ denote the associated family of $p$-dimensional cells.  We write $\sk \der p$ when $\sk$ has dimension $p$ and $\tk > \sk$ when $\sk$ lies in the boundary of $\tk$.   A function
\[
f: X \to \R
\]
is a \emph{discrete Morse function}  if
\begin{align}
\# \{\tk \der{p+1} > \sk : f(\tk) \le f(\sk) \}& \le 1 \label{eq:morsecondition1}\\
\# \{\uk \der{p-1} > \sk : f(\uk) \ge f(\sk) \} &\le 1. \label{eq:morsecondition2}
\end{align}
for every  $\sk$.   We say $\sk\der p$ is \emph{critical} with index $p$ if equality fails in both cases, that is if
\begin{align*}
\# \{\tk \der{p+1} > \sk : f(\tk) \le f(\sk) \}& =0 \\
\# \{\uk \der{p-1} > \sk : f(\uk) \ge f(\sk) \} &= 0.
\end{align*}
The \emph{gradient vector field} of $f$ is the family 
$$
\nabla f = \left \{ (\sk \der p, \tk \der{p+1})  : \sk < \tk, \; \; f(\tk) \le f(\sk)\right \}.
$$
 It can be shown that
\begin{align}
\# \{(\sk, \tk) \in \nabla f : \sk  = \uk\;\; \mathrm{or} \;\; \tk = \uk\} \le 1 \label{eq:partialpairing}
\end{align}
for any $\uk \in X$, so $\nabla f$ is a partial matching on the incidence relation $<$.

Forman defines a \emph{discrete vector field} to be any partial matching  $R$ on $<$ for which \eqref{eq:partialpairing} holds with $R$ in place of $\nabla f$.    A \emph{gradient path} on $R$ is a sequence of form
\begin{align}
\ak_0 \der p, \bk_0 \der{p+1} , \ak_1 \der p, \bk_1 \der{p+1}, \ld, \bk_r \der{p+1}, \ak_{r+1} \der p
\end{align}
where 
\begin{align*}
(\ak_i, \bk_i) \in V && \ak_{i+1} < \bk_i && and && \ak_i \neq \ak_{i+1}
\end{align*} 
for all  $i \in \{0, \ld, r\}$.

Much like the Morse complex,  the discrete Morse complex has a basis indexed by critical cells and a differential expressed as a sum of values determined by integral curves.   One assigns an orientation to each cell of $X$, and defines the \emph{multiplicity} of a gradient path $\ck$ by
\[
m(\ck) = \prod_{p = 0}^{k-1} -\nr{\partial \bk_p, \ak_p} \nr{\partial \bk_p, \ak_{p+1}}.
\]
Let $C^f$ denote the free abelian group generated by the critical cells of $X$, graded by dimension, and let $\ag(\bk,\ak)$ denote the set of gradient paths that run from a maximal face of $\bk$ to $\ak$.  The \emph{discrete Morse boundary operator} is the degree $-1$ map on $\tilde \partial: C^f \to C^f$ defined by
\begin{align}
\tilde \partial \bk = \sum_{critical\; \ak \der p} c_{\ak, \bk} \ak,  \label{eq:morseformula}
\end{align}
where $c_{\ak, \bk} = \sum_{\ag (\bk, \ak)} m(\ck)$.

\begin{theorem}[Forman \cite{FormanMorse98}] 
\label{thm:formanmorsecomplex}  
The pair $(C^f, \tilde \partial)$ is a complex, and $H(C^f) \cong H(X, \Z)$.
\end{theorem}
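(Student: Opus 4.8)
The plan is to identify the discrete Morse complex $(C^f,\tilde\partial)$ with a \emph{Jordan complement} (equivalently, a Schur complement) of the cellular chain complex of $X$, produced by the block exchange machinery of \S\ref{sec:blockexch} applied to the gradient vector field. Work with $C = \Z^X$, graded by dimension, equipped with the cellular differential $\partial$, so that $\partial^2 = 0$ and $\partial$ is graded of degree $-1$; let $\wk$ be the standard coproduct structure of characteristic functions $\chi_\sk$, and put $A = \partial(\wk,\wk)$. The only nonzero blocks of $A$ lie on the superdiagonal, and $A(\sk,\tk) = \langle\partial\chi_\tk,\chi_\sk\rangle$ is the incidence number of the regular pair $\sk < \tk$, hence $\pm1$ whenever nonzero. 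First I would check that $\nabla f$ is, verbatim, an acyclic partial matching $\d$ on $A$ in the sense of \S\ref{sec:exchangerelations}: condition \eqref{eq:partialpairing} says it is a partial matching on $\Supp(A)$, and because a discrete Morse function strictly decreases along every nontrivial gradient path, the induced relations $R_\d$ and $R^\d$ are acyclic (Lemma \ref{lem:acyclic}); ordering the matched pairs along a linear extension of $R_\d$ then exhibits the matched submatrix $A(\d\der\sharp,\d\der\flat)$ as triangular up to permutation with $\pm1$ diagonal, so it is invertible over $\Z$ and $\d$ is a bona fide (block) partial matching on $A$.

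Next I would apply the graded block form of the Jordan exchange (Algorithm \ref{alg:BPJE}; see \S\ref{sec:blockexch}) along $\d$. Since $\partial^2 = 0$, the nontrivial orbits of $\partial$ are precisely the matched pairs, and the exchange replaces $\wk$ by a coproduct structure $\wk[[\d]]$ that is still graded, because the relation ``$u$ and $v$ lie in the same $C_p$'' is transitive, so Corollary \ref{cor:potchkai} applies as in \S\ref{sec:gradblocks}. The part of $\wk[[\d]]$ complementary to the vectors $\chi_\tk$ and $\partial\chi_\tk$ coming from matched pairs is a graded family indexed by the critical cells; its span, graded by dimension, is Forman's $C^f$. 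By Propositions \ref{prop:jordanexchange} and \ref{prop:schurexchange} the matrix of $\partial$ with respect to $\wk[[\d]]$ is block diagonal, $\diag(\mk,\,\sk)$, where $\sk$ is the Schur complement of $A(\d\der\sharp,\d\der\flat)$ and the block $\mk$ carries each $\chi_\tk$ isomorphically to $\partial\chi_\tk$ and annihilates $\partial\chi_\tk$. Set $\tilde\partial := \sk$. The exchange is a change of biproduct structure, i.e.\ conjugation of $\partial$ by an automorphism, so $\partial^2 = 0$ transports to $\diag(\mk,\tilde\partial)^2 = 0$, forcing $\mk^2 = 0$ and $\tilde\partial^2 = 0$; hence $(C^f,\tilde\partial)$ is a complex. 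The block-diagonal form moreover realizes $C$ as the internal direct sum, as complexes, of a subcomplex built from the $\mk$-part — a direct sum of copies of $0 \lr{} \Z \lr{1} \Z \lr{} 0$, hence acyclic — and $(C^f,\tilde\partial)$; additivity of homology then gives $H(C^f,\tilde\partial) \cong H(C,\partial) = H(X,\Z)$.

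It remains to verify that $\tilde\partial = \sk$ is Forman's operator \eqref{eq:morseformula}. Here I would invoke Proposition \ref{prop:jordancomplementmobiusformula} (the $T^2=0$ refinement of Proposition \ref{prop:schurmobius}, itself an instance of Theorem \ref{thm:mobiusgeneral}): for critical cells $\ak \der p$ and $\bk \der{p+1}$ it expresses $\sk(\ak,\bk)$ as a signed sum $\sum (-1)^{\ell(p)+1}A[p]\,T(h,\ak)$ over $A$-paths through the matched basis vectors, with $A$ the relevant change-of-basis array $1(\wk[\d],\wk)$. I would then set up the obvious bijection between those $A$-paths and Forman's gradient paths $\ak_0, \bk_0, \ak_1, \bk_1, \ld, \bk_r, \ak_{r+1}$ from a maximal face of $\bk$ to $\ak$: each matched step $(\ak_i,\bk_i)\in\nabla f$ contributes a diagonal factor, the matched incidence number, which is its own inverse over $\{\pm1\}$, and each remaining step $\ak_{i+1} < \bk_i$ contributes the incidence $\langle\partial\chi_{\bk_i},\chi_{\ak_{i+1}}\rangle$, so that the telescoping product $A[p]$ together with the sign $(-1)^{\ell(p)+1}$ reassembles precisely into Forman's multiplicity $m(\ck) = \prod_p -\langle\partial\bk_p,\ak_p\rangle\langle\partial\bk_p,\ak_{p+1}\rangle$. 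Summing over $\ag(\bk,\ak)$ recovers the coefficient $c_{\ak,\bk}$, so $\tilde\partial$ agrees with \eqref{eq:morseformula} and the proof is complete.

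I expect this last paragraph to be the main obstacle: matching the $(-1)^{\ell(p)+1}A[p]$ bookkeeping of the generalized M\"obius inversion formula termwise against Forman's product $\prod_p(-1)\langle\partial\bk_p,\ak_p\rangle\langle\partial\bk_p,\ak_{p+1}\rangle$ — in particular, getting the parity of the number of sign flips to agree and handling the cellular incidence numbers (orientations) with care. By contrast, the acyclicity transport, the reduction to block-diagonal form, and the invariance of homology under change of biproduct structure are formal consequences of the splitting and exchange lemmas (Lemmas \ref{lem:splitting} and \ref{lem:exchange}) together with the results of \S\ref{sec:blockexch}.
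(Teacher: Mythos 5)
Your proposal follows the same route the paper takes in Theorem \ref{thm:abmorse}: realize $\nabla f$ as an acyclic Jordan pairing on $\partial$, apply the graded block Jordan exchange to obtain a biproduct decomposition whose complementary summand is a direct sum of two-term acyclic pieces, and identify the surviving Schur complement with Forman's operator \eqref{eq:morseformula} via the M\"obius formula of Proposition \ref{prop:jordancomplementmobiusformula}. The paper compresses this to the one-line instruction ``one need only compare'' those two formulae; your write-up supplies the bookkeeping the paper leaves implicit — acyclicity of the induced relations, $\Z$-invertibility of the matched block via triangularity with $\pm1$ diagonal, preservation of grading through Corollary \ref{cor:potchkai}, and the path-by-path sign matching.
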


The discrete Morse complex is a highly versatile algebraic tool, with natural applications in topology, combinatorics, and algebraic geometry.  The relation between smooth and discrete Morse theory has been refined by a number of results over the past two decades, e.g.\ \cite{gallais:hal-00264905}, where it was shown that every smooth Morse complex may be realized as a discrete one, via triangulation.   Given the breadth and depth of the correspondence between smooth and discrete regimes, it is reasonable to consider wether an analog to the Hodge-theoretic approach of Witten might exist for cellular spaces, as well.  This is indeed the case, as shown by Forman \cite{FormanWitten98}.

In the spectral setting, one begins with a suitably well-behaved CW complex $X$, equipped with a discrete Morse function $f$.  For each real number $t$ one defines an operator $e^{tf}$ on $C_*(X,\R)$ by 
$$
e^{tf} \sk = e^{t f(\sk)} \sk
$$
for $\sk \in X$.      To the associated  boundary operator one assigns a family of differentials
\[
\partial_t = e^{tf} \partial  e^{-tf}.
\]
to which correspond a $t$-parametrized family of chain complexes
\[
(C, \partial_t) : \; \; 0 \lr{} C_n \lr{\partial_t} C_{n-1} \lr{\partial_t} \cd \lr{\partial_t} C_0 \lr{} 0
\]
with associated Laplacians
\[
\ad (t) = \partial_t \partial_t^* + \partial_t^* \partial_t.
\]
Here $\partial_t^*$ denotes the linear adjoint to $\partial_t$ with respect to the inner product on $C_*(X, \R)$ for which  basis  $X$ is orthonormal.   The operator $\ad(t)$ is symmetric, hence diagonalizable, and  for each $\lk \in \R$ we denote the  $\lk$-eigenspace of $\ad(t)$ by
\[
E_p^\lk(t)  = \{c \in C_p : \ad (t) c = \lk c\}.
\]
Since $\partial_t \ad(t) = \ad(t) \partial_t$, operator $\partial_t$ preserves eigenspaces.  For each $\lk \in \R$ one therefore has a differential complex
\[
E^\lk(t) : \; \; 0 \lr{} E_n^\lk(t) \lr{\partial_t} E_{n-1}^\lk(t) \lr{\partial_t} \cd \lr{\partial_t} E_0^\lk(t) \lr{} 0.
\]
Forman defines a well-behaved class of Morse functions (\emph{flat} Morse functions) and shows that for flat $f$ 
\[
\ad(t) \to
\left (
\begin{array}{cc}
0 & 0 \\
0 & D
\end{array}
\right)
\]
as $t \to +\infty$, where $D$ is a block submatrix indexed by the set $\{\sk_1, \ld, \sk_k\}$ of critical cells of $f$, and $D$ has form
$$
\diag(a_{\sk_1}, \ld, a_{\sk_k}),
$$
for some  $a \in (\Z_{> 0} \cup \{+\infty\})^{\{\sk_1, \ld, \sk_k\}}$.

As $ t \to \infty$, therefore, the spectrum of $\ad(t)$ separates into a portion close to zero and a portion close to  one.  Fixing arbitrary $0 < \ek < 1$, we thus define the \emph{Witten complex} of $f$ to be
 $$
 \wc(t) = \oplus_{\lk < \ek} E^\lk(t)
 $$ 
writing
\[
\wc(t) : \; \;  0 \lr{} W_n(t) \lr{\partial_t} W_{n-1}(t) \lr{\partial_t} \cd \lr{\partial_t} W_0(t) \lr{} 0.
\]
for the natural grading induced by dimension.

Let $\pi(t)$ denote orthogonal projection from $C(X, \R)$ to $W_p(t)$, and for each critical $p$-cell $\sk$ put
\[
g_\sk(t) = \pi(t) \sk.
\]
One has $g_\sk(t) = \sk + O(e^{-tc})$ for some positive constant $c$.  The $g_\sk$ form a basis of $W_p(t)$ when $t$ is sufficiently large, but not an orthonormal one in general.  If we define $G$ to be the square matrix indexed by  critical $p$-cells  such that
$$
G_{\sk_1, \sk_2} = \nr{g_{\sk_1}, g_{\sk_2}},
$$
however, and 
$$
h_{\sk} = G^{-1/2} g_\sk,
$$
then the $h_\sk$ do form an orthonormal basis.
Forman's observation is that the matrix representation of $\partial_t$ with respect to this basis tends to that of the discrete Morse complex.

\begin{theorem}[Forman \cite{FormanWitten98}] 
\label{thm:morsewittenformancomplex}  
Suppose that $f$ is a flat Morse function.   Then for any critical  $\sk \der p$ and  $\tk \der p$ there exists a constant $c> 0$ such that
\[
\nr{\partial_t h_\tk, h_\sk}  = e^{t(f(\sk) - f(\tk))} \left[  \nr{\tilde \partial \tk, \sk } + O(e^{-tc}) \right] ,
\]
where $\tilde \partial$ is as in \eqref{eq:morseformula}.
\end{theorem}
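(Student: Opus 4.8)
The plan is to track how the conjugated differential $\partial_t$, the eigenbasis $g_\sk(t)=\pi(t)\sk$, and its orthonormalization $h_\sk(t)=G^{-1/2}g_\sk(t)$ behave in the standard basis $X$ of $C_*(X,\R)$, keeping careful account of exponential weights, and then to identify the combinatorial sum that survives in the leading order of $\nr{\partial_t h_\tk,h_\sk}$ as Forman's Morse coefficient in \eqref{eq:morseformula}. First I would record the standard-basis expansion: conjugating $\partial$ by $e^{tf}$ gives
\[
\partial_t\sk \;=\; \sum_{\uk<\sk}\nr{\partial\sk,\uk}\,e^{t(f(\uk)-f(\sk))}\,\uk ,
\]
so every matrix entry of $\partial_t$ (and dually of $\partial_t^{*}$) carries an exponential weight $e^{t(f(\uk)-f(\sk))}$; by the Morse conditions \eqref{eq:morsecondition1}--\eqref{eq:morsecondition2} at most one facet $\uk<\sk$ has $f(\uk)\ge f(\sk)$ (the cell matched to $\sk$ by $\nabla f$ when $\sk$ is the smaller member of its pair), and all remaining weights decay exponentially with a gap uniform in $t$. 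I would then invoke the facts established just before the theorem: for a flat Morse function $\ad(t)\to\mathrm{diag}(0,D)$ with $D$ diagonal and positive as $t\to+\infty$, so a fixed small circle $\gamma$ about $0$ separates the part of the spectrum below $\ek$ for all large $t$; consequently $\dim\wc_p(t)$ equals the number of critical $p$-cells and $g_\sk(t)=\sk+O(e^{-tc})$.

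The heart of the argument is to sharpen that last estimate. Writing $\pi(t)=\tfrac{1}{2\pi i}\oint_\gamma(z-\ad(t))^{-1}\,dz$ and expanding the resolvent as a Neumann series in the off-``critical-block'' part of $\ad(t)$, each entry of $\pi(t)$ becomes a path sum of products of entries of $\partial_t$ and $\partial_t^{*}$ --- exactly the type of expression handled by the path-product and M\"obius-inversion machinery of \S\ref{sec:mobius}. Since $\nabla f$ is an acyclic partial matching, only the products that run along gradient paths fail to vanish (cf. Corollary \ref{cor:clearingpattern} and Proposition \ref{prop:holdacyclic}), and along such a path the exponents telescope: the $\uk$-component of $g_\sk(t)-\sk$ has size $e^{t(f(\uk)-f(\sk))}$ times an integer counting gradient paths descending from $\sk$ to $\uk$, plus a strictly smaller remainder. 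Because $g_\sk(t)=\sk+O(e^{-tc})$, the Gram matrix $G$ is $I+O(e^{-tc})$, hence $G^{-1/2}=I+O(e^{-tc})$ and $h_\sk(t)=g_\sk(t)+O(e^{-tc})$ with the same structured corrections. Substituting these expansions into $\nr{\partial_t h_\tk,h_\sk}$, expanding $\partial_t$ in the basis $X$, and collecting the terms of exponential order $e^{t(f(\sk)-f(\tk))}$, the surviving sum ranges over gradient paths joining a maximal face of $\tk$ to $\sk$, each weighted by the product of incidence numbers that is precisely the multiplicity $m(\ck)$; summing over $\ag(\tk,\sk)$ reproduces $\nr{\tilde\partial\tk,\sk}$ from \eqref{eq:morseformula} --- the same combinatorial Schur-complement data underlying Theorem \ref{thm:formanmorsecomplex} --- while every remaining contribution is bounded by $e^{t(f(\sk)-f(\tk))}\,O(e^{-tc})$. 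This yields the stated identity.

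The main obstacle is the bookkeeping behind the refined estimate for $g_\sk(t)$: one must show that in each term of the resolvent/Neumann expansion the accumulated exponent is either exactly $f(\uk)-f(\sk)$, when the supporting path is a genuine gradient path with those endpoints, or strictly more negative by an amount independent of $t$ --- so that, after orthonormalization and composition with $\partial_t$, no ``off-path'' term can pollute the leading exponential order $e^{t(f(\sk)-f(\tk))}$. This is exactly where the two Morse conditions and the acyclicity of $\nabla f$ are used, and where flatness is needed to keep the separating contour $\gamma$ and the spectral gap uniform in $t$. A secondary but routine point is the sign reconciliation between the products of incidence numbers $-\nr{\partial\bk_p,\ak_p}\nr{\partial\bk_p,\ak_{p+1}}$ that appear here and Forman's multiplicity $m(\ck)$ in \eqref{eq:morseformula}; this is a straightforward orientation chase, entirely parallel to the sign arguments already made for the algebraic exchange formulae.
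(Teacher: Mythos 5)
Your sketch is correct as far as it goes, but it essentially reconstructs Forman's original argument from \cite{FormanWitten98}, whereas the paper proves a strictly more general statement (Theorem \ref{thm:schurformanwitten}) and obtains Forman's theorem as a special case. Three differences are worth noting. First, you approximate the spectral projection $\pi_t$ via a Riesz contour integral and a Neumann series for the resolvent; the paper instead uses the elementary polynomial approximation $(1-\ad_t^N)^N = \pi_t + O(e^{-cNt})$, avoiding functional calculus and reducing the exponent bookkeeping to pure matrix estimates. Second, you identify the leading coefficient directly as Forman's gradient-path sum $\nr{\tilde \partial \tk, \sk}$, which requires carrying the acyclicity of $\nabla f$ and the path-telescoping argument through the spectral estimate itself; the paper factors this differently, proving $\nr{T_t h_\tk, h_\sk} \in e^{\tk - \sk}(\tk T k \sk) + o_e(e^{\tk-\sk})$, where $k$ is the Schur/Jordan-complement idempotent projecting onto the null space of $(\times \d f \der \sharp)T_t$ along the image of $\oplus \d f\der \flat$, and only afterward recovers the path-sum formula from Proposition \ref{prop:jordancomplementmobiusformula}. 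This decoupling is exactly what buys the generality: the analytic estimates nowhere use $T^2 = 0$ or a grading, so the theorem holds for any real operator equipped with a monotone, acyclic, canted weight function, not merely a cellular boundary. Third --- and this is a genuine weak point of your plan rather than a stylistic choice --- the crude bound $G^{-1/2} = I + O(e^{-tc})$ with a single fixed $c$ is not strong enough to control the cross terms in the Gram-corrected expansion of $\nr{T_t h_\tk, h_\sk}$, because the Gram-correction factors multiply quantities $\nr{T_t g_{\tk_1}, g_{\sk_1}}$ that may carry exponential weight larger than $e^{\tk - \sk}$; one needs the structured estimate $(G^{-1/2})_{\sk_0,\sk_1} \in o_e(e^{-|\sk_0 - \sk_1|})$ for distinct critical $\sk_0,\sk_1$, as the paper proves. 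Your phrase ``with the same structured corrections'' is the right instinct, but it names exactly what must be established, not something that comes for free.
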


\section{Morse-Witten Theory}
\label{sec:morseresult}

Forman's  Theorem \ref{thm:formanmorsecomplex} has been  extended in several directions.  Kozlov \cite{KozlovDiscrete05} gives a beautiful exposition of this result, illustrating that the discrete Morse complex may be realized as summand  of the initial complex obtained by a sequence of splitting operations  removing ``atomic'' subcomplexes with trivial homology.   Sk\"oldberg develops the most abstract version of which we are aware, which relies only on a direct sum decomposition of a complex of modules, together with some nondegeneracy requirements \cite{SkoeldbergMorse06}.  In this exposition the Morse complex is realized as the image of $\pi = \mathrm{id} - (\phi d + d \phi)$ where $\phi$ is a certain splitting homotopy.

These expositions share two properties in common: first, that the objects treated are differential complexes, and second, that the proofs are of an elementary but rather technical nature.  We believe that these issues are related.  In fact, we submit that the technical formalities found in these treatments owe not to the nature of the Morse construction, but to the  restriction of a general fact about nilpotent operators to the (in this case over-structured) special case of complexes.  

In evidence let us argue that Theorem \ref{thm:formanmorsecomplex} is not special to differentials, but holds for arbitrary 2-nilpotent operators on an object in an abelian category --  even one with no biproduct structure {\em a priori}.   In fact this was already shown in Theorem  \ref{prop:jordansplit}.    

\begin{theorem} \label{thm:abmorse} Let $(C,\partial)$ be any chain complex and $\wk$ any graded coproduct structure on $C$.  If 
$$
f: \wk \to \R
$$ 
is a discrete Morse function on $\wk$, then $\nabla f$ is an acyclic Jordan pairing on $\partial(E,E)$, and the Morse complex of $f$ is Jordan complement of $\nabla f$ in $\partial$.
\end{theorem}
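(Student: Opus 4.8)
The plan is to reduce Theorem~\ref{thm:abmorse} to the already-established machinery of Section~\ref{sec:jordanalgebra}, in particular Theorem~\ref{prop:jordansplit} and the discussion of Jordan exchange, by interpreting the gradient vector field $\nabla f$ as a (sequence of) Jordan pairings. First I would recall the setup: $\wk$ is a graded coproduct structure on $C$, so each $w \in \wk$ corresponds to a basis element, and $\partial(\wk,\wk)$ is the matrix representation of $\partial$ with respect to $\wk$. Since $\partial$ has degree $-1$ and $\partial^2 = 0$, this array is block-bidiagonal in the grading, exactly as displayed in \S\ref{sec:ops}. A pair $(\sk,\tk) \in \nabla f$ has $\tk$ one dimension above $\sk$ and $\partial\tk$ incident to $\sk$, which over a field means the entry $\partial(\sk,\tk)$ is a nonzero scalar, hence invertible; so each such pair is an elementary exchange pair in the sense of \S\ref{subsec:exchange}, and — because $\partial(\tk) $ lands one grade below — it is a \emph{Jordan} pair for $\partial$ in the sense of \S\ref{sec:jordanalgebra}. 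The condition \eqref{eq:partialpairing} says $\nabla f$ is a partial matching on $<$, so distinct pairs are supported on disjoint rows and columns, which is precisely what lets the individual Jordan exchanges be performed simultaneously (block Jordan exchange, \S\ref{sec:blockexch}).

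Next I would establish acyclicity. The relevant claim is that $\nabla f$, viewed as a partial matching on $R = \Supp(\partial(\wk,\wk))$, has acyclic induced relations $R_{\nabla f}$ and $R^{\nabla f}$ in the sense of \S\ref{sec:relations}. This is Forman's classical fact that the modified Hasse diagram of a discrete vector field has no nontrivial closed $V$-paths (no closed gradient paths), restated in the present language: a cycle in $R_{\nabla f}$ would yield a closed gradient path, contradicting that $f$ strictly decreases along gradient paths away from the matched direction. I would cite \cite{FormanMorse98} for the combinatorial input and Lemma~\ref{lem:acyclic} / Corollary~\ref{cor:clearingpattern} to package it; the point is that $f$ itself furnishes the linear extension witnessing acyclicity. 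Once acyclicity is in hand, the block Jordan exchange algorithm (Algorithm~\ref{alg:BPJE}, acyclic case) applies: running it on the pairs of $\nabla f$ produces, by Corollary~\ref{cor:jordansplit} and Proposition~\ref{prop:jordanexchange}, a new graded coproduct structure $\wk[[\nabla f]]$ with respect to which $\partial$ has the block form $\mathrm{diag}(\text{Jordan blocks for }\nabla f,\ \sk)$, where $\sk$ is the Schur complement supported on the \emph{unmatched} basis elements — i.e.\ the critical cells of $f$.

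The final step is to identify this Schur complement $\sk$ with Forman's discrete Morse boundary operator $\tilde\partial$ of \eqref{eq:morseformula}. Here I would invoke the M\"obius-inversion formula for Schur complements, Proposition~\ref{prop:jordancomplementmobiusformula} (the $T^2 = 0$ case), which expresses each entry $\sk(\fk,\gk)$ for critical $\fk,\gk$ as a signed sum over paths $\pc_A(\fk,\hk)$ weighted by $A[p]\,\partial(\hk,\gk)$, where $A = 1(\wk[\nabla f],\wk)$ and the paths run through matched pairs. Unwinding $A$ via the formula $k = \lk_2 - \lk_1 a_{11}^{-1}a_{12}$ of Example~\ref{r2}, one sees each path contributes exactly a product $\prod -\nr{\partial\bk_p,\ak_p}\nr{\partial\bk_p,\ak_{p+1}}$, which is Forman's multiplicity $m(\ck)$; summing over paths recovers $c_{\ak,\bk} = \sum_{\ag(\bk,\ak)} m(\ck)$. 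Thus the Jordan complement of $\nabla f$ in $\partial$ \emph{is} the Morse complex $(C^f,\tilde\partial)$, and Theorem~\ref{thm:formanmorsecomplex} becomes a corollary (the complement of a coproduct summand has isomorphic homology). The main obstacle, I expect, is this last bookkeeping: matching the sign conventions and the path-weight combinatorics of M\"obius inversion against Forman's definition of $m(\ck)$ precisely enough to get equality on the nose rather than up to an automorphism of the critical-cell basis — the algebra is routine but the indexing is delicate, and care is needed that the "otherwise" branch of Proposition~\ref{prop:jordancomplementmobiusformula} is summed over the correct path family $\pc_A$ restricted to the matched elements.
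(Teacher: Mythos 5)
Your proposal is correct and follows the same route as the paper's own proof, which consists of a single sentence pointing at the comparison between Forman's formula \eqref{eq:morseformula} and the M\"obius-inversion expression for the Jordan-complemented matrix representation in Proposition~\ref{prop:jordancomplementmobiusformula}. You have simply made explicit the steps the paper leaves to the reader — that each pair in $\nabla f$ is a Jordan pair, that $\nabla f$ is acyclic because gradient paths admit no cycles, and that the Schur complement on the critical cells carries the Forman multiplicities — all of which are the intended content of that one-line comparison.
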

\begin{proof}
One need only compare the discrete Morse complex \eqref{eq:morseformula} with the formula for the matrix representation of $\partial$ with respect to $\wk[[\nabla f]]$, as given by Proposition \ref{prop:jordancomplementmobiusformula}.
\end{proof}

We will show in the following section that Forman's Theorem \ref{thm:morsewittenformancomplex}, moreover, requires neither  that $\partial$ be graded nor that $\partial^2 = 0$.  It is rather a property of arbitrary operators on finite-dimensional real vector spaces.

Algebraic Morse theory has formed the foundation of several intensely active areas of mathematical research over the past two decades, and it is reasonable to ask why, given the high level of mathematical activity that surrounds it, the basic connection between the algebraic Morse complex and  classical matrix algebra, as outlined in Theorem \ref{thm:abmorse} and its proof,  have gone so long unremarked.   The elements of this proof, which is neither technical nor complicated, are M\"obius inversion and Theorem  \ref{prop:jordansplit}.  M\"obius inversion began to enter maturity with a foundational paper by Rota in 1964 \cite{rota1964}, and the essential ingredients of Theorem \ref{prop:jordansplit}, at least in the  case of complex  vector spaces, entered circulation no later than 1956 \cite{ptak1956}.

The strength of the ties that link these results to combinatorial topology, moreover, has been recognized for some time.  The notion of an acyclic relation is fundamental  both to the general theory of M\"obius inversion and to the classical formulation of algebraic Morse theory.   The text of Kozlov  \cite{KozlovCombinatorial08} devotes an entire chapter to the M\"obius transformation immediately ahead of the chapter on algebraic Morse theory -- though, so far as we are aware, no direct line is drawn between the two.  Even before the birth of algebraic Morse theory in its modern form, basic connections between the inversion formula and combinatorial topology were well understood, for example, vis-\`{a}-vis matroids and shellability \cite{white1992}.  

Connections with elementary exchange have been similarly understood.  This understanding is implicit in the various works that characterize  the Morse complex as a direct summand.  In the special case of  gradient fields with cardinality one, the fundamental result of algebraic Morse theory coincides exactly with a property of clearing operations described by Chen and Kerber in \cite{CKPersistent11}.  Precursors to this observation can be found even among the seminal papers on persistent homology computation, e.g.\ \cite{ZCComputing05}.   Exchange, moreover, plays a pivotal role in nearly every work on computational homology, especially so in the pioneering work of Mischaikow and Nanda \cite{MNMorse13}.   Adding to the body of interconnections, A.\ Patel has recently built on the work of Rota, Bender, Goldman, and Leinster  \cite{bender1975,leinster2012,rota1964} to extend the notion of constructible persistence to constructible maps into abelian groups, via M\"obius inversion \cite{patel2016}.

How, then, has the essential simplicity of the  Morse complex (and its relation to  M\"obius inversion, elementary exchange) gone  for so long unremarked?   This question is  ill posed for a several of reasons.  In the first place, while we have made every effort to find a reference on this subject, there necessarily  remains the possibility that one or more have been published (we would be grateful to hear of them).  In the second, while several authors have offered extremely helpful historical insights, the collective editorial decisions that shaped the body of published literature as it exists today are largely unknowable.  

Nevertheless, we regard it plausible that the genesis of Morse theory as a  tool in homological algebra may have influenced  its  circulation.  Morse theory assumed its first definite form in the setting smooth geometry, and entered   algebra by way of combinatorial topology.   The construction of the algebraic Morse complex closely mirrors that of an associated \emph{topological} complex, which is in general more complicated and carries much more information.  It would be unsurprising, given the extensive relations between the algebraic construction and its geometric and topological counterparts, that researchers in this area might infer a dependence relationship between the core  results of algebraic Morse theory and the underlying geometric, topological, or homological structure.


The idea that this construction might be more general in nature came to us by way of matroid theory, and specifically matroid duality.   The theory of linear matroids relies extensively on the study of Schur complements and their relation to finite families of vectors, and it was through this lens that the  M\"obius transformation  came into view.    This clarification did much to simplify the story, however a direct understanding of the Morse complex as a \emph{Jordan complement} required the idea of a \emph{dual exchange}, for which we find traditional language of vectors and vector space duality ill suited.  It was the matroid theoretic notion of complementarity, that is \emph{matroid duality}, that illustrated the symmetry between primal and dual exchange operations that coalesce into a single Jordan pivot.  In fact, in this matter we consistently find  matroid duality  a more ready than the traditional venues for linear duality, e.g.\ tensors.

These observations tell one part of a larger story, wherein combinatorial ideas unearth new truths about algebraic, analytic, topological, and geometric objects by providing recognizable indicators of structure.  We have seen this in the algebraic constructions of discrete Morse theory and will see it in the spectral/analytic constructions of Morse-Forman-Witten theory.  One piece of this story which deserves independent recognition is the Schur complement.   This is an extremely general \emph{categorical} object that has been viewed, historically, as an almost exclusively as a linear construction.  So much so, in fact, that so far as we are aware it has not been remarked anywhere in the literature that the Schur complement \emph{is} in fact a complement, in the categorical sense.   This despite the fact of the fundamental role played by the Schur complement in such diverse fields as probability, geometry, combinatorics, operator algebras, optimization, and matrix analysis.   That entire books have been written on this subject without reference to its categorical (if not historic) foundations speaks to the breadth and depth of opportunities to expand our understanding of these application areas from a categorical perspective.

Let us now examine the algebraic foundations of Forman's Theorem \ref{thm:morsewittenformancomplex}.  Posit an operator $T$ on a finite-dimensional real vector space $V$ with coproduct structure 
$$
\wk \su \Hom(\R, V).
$$  
We write $T^*$ for the linear adjoint to $T$ with respect to this structure, that is, the unique map for which 
$$
\sk^\sharp T^* \tk^\flat = \tk^{\sharp} T \sk^\flat
$$
for all $\sk, \tk \in E$.
  For economy we will sometimes drop the sharp and flat operators from the elements of $E$, writing, for example
$$
\sk T^*T \tk
$$
for $\sk^\sharp T T^*\tk^\flat$.  Context will make the intended meaning clear where this convention is invoked.
A function 
$$
f: E \to \R
$$
is \emph{monotone} if $R = \Supp(T(E,E))$ includes into $\sim _f$,  \emph{acyclic} if
$$
\d f = \{(\sk, \tk) \in R : f(\sk) = f(\tk)\}
$$
is an acyclic pairing on $R$, and \emph{canted} if $(\d f) \der \sharp \cap (\d f ) \der \flat  = \emptyset$.    We will assume a monotone, acyclic, canted $f$, throughout.  The elements of 
$$
\ag = \wk -\left( \d f \der \flat \cup \d f \der \sharp \right)
$$ 
we call \emph{critical cells}.     

For convenience we will sometimes substitute $\sk$ and $\tk$ for $f(\sk)$ and $f(\tk)$, for instance writing
\begin{align*}
\sk \le \tk
&& and &&
e^{-|\sk - \tk|}
\end{align*}
for $f(\sk) \le f(\tk)$ and $e^{-|f(\sk) - f(\tk)|}$, respectively.
Given $t \in \R$, we write $e^{tf}$ for the operator on $V$ such that
\begin{align*}
e^{tf} \sk = e^{tf(\sk)} \sk 
\end{align*}
for $\sk \in E$, and define
$$
T_t = e^{-tf} T e^{tf}.
$$
The associated \emph{Laplacian} is 
$$
\ad_t = T_t T_t^* + T_t^*T_t.
$$
As $\Supp\left (\lim_{t \to + \infty}  T_t \right ) = \d f$, the matrix representation of $\ad_t$ with respect to $E$ tends to an  array in $\R^{E \times E}$ supported on the diagonal of
$$
 \left (\d f\der \flat \cup \d f \der \sharp \right )\times \left (\d f\der \flat \cup \d f \der \sharp \right).
$$
The spectrum of $\ad_t$ therefore separates into a low sector of $|\wk| - 2 |\d f|$ eigenvalues which, for $t$ sufficiently large, lie in any open ball around zero, and a high sector bounded away from zero.   Following the convention of Forman, we write $W(t)$ for the subspace spanned by the eigenvectors in the low sector, $ \pi_t$ for orthogonal projection onto $W(t)$ and
\begin{align*}
g_\sk = \pi_t \sk.
\end{align*}
for each $\sk \in \ag$.  For $t$ sufficiently large,  $\{g_\sk : \sk  \in \ag \}$ forms a basis for $W(t)$, and  if for $\sk_1, \sk_2 \in \ag$ 
\begin{align*}
G_{\sk_1, \sk_2} = \nr{g_{\sk_1}, g_{\sk_2}} && h_{\sk_1} = G^{-1/2} g_{\sk_1} ,
\end{align*}
then $\{h_\sk : \sk  \in \ag \}$ is an \emph{orthonormal} basis for $W(t)$.

If  $g$ is a function $ \R \to \R$ and  $h$ is a function  $\R \to \R^m$, then we write
\begin{align*}
h \in o_e(g)  
\end{align*}
when there exist real constants  $a < b$ such that $\nn{h}_2 \in O(e^{at})$ and $e^{bt} \in O(g)$.  Similarly we write 
$$
h \in O_e(g)
$$ 
when there exists a constant $c$ so that $\nn{h}_2 \in O(e^{tc})$ and $e^{tc} \in O(g)$.
If $g$ is a vector-valued function $\R \to \R^m$, then we write  $h \in o_e(g) $ if
\begin{align*}
h_i \in o_e(g_i)  && i = 1,\ld, m,
\end{align*}
where  $\pi_i$ is orthogonal projection onto the $i$th axis,  $h_i = \pi_i h$, and $g_i = \pi_i g$.

\renewcommand{\k}[1]{k_{#1}}
\newcommand{\n}{N}

Finally,  write $\k t$ for the idempotent  that projects onto the null space of 
$$
\left ( \times \d f \der \sharp \right ) T_t
$$
along the image of $\oplus \d f \der \flat$,  setting $k = k_0$ for economy.
It is simple to check that
$$k_t = e^{-tf} k e^{tf}$$
for all $t$.

Theorem \ref{thm:morsewittenformancomplex}  of Forman is  a special case of the following.

\begin{theorem} \label{thm:schurformanwitten} Under the stated conventions, 
$$
\nr{T_t h_\tk, h_\sk}  \in e^{\tk - \sk} (\tk T k \sk )  + o_e(e^{\tk - \sk}).
$$
for critical $\sk$ and $\tk$.
\end{theorem}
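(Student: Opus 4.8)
The plan is to reduce the asymptotic statement to the purely algebraic identity for the Schur complement under Jordan exchange, exactly as the analogous Theorem~\ref{thm:abmorse} reduces Forman's combinatorial Theorem~\ref{thm:formanmorsecomplex} to Proposition~\ref{prop:jordancomplementmobiusformula}. The key observation is that the quantity $\tk T k \sk$ on the right-hand side is precisely the entry of the Schur complement of the invertible block $\bigl(\times \d f\der\sharp\bigr) T \bigl(\oplus \d f \der\flat\bigr)$ indexed by the critical cells $\sk, \tk$, by Proposition~\ref{prop:schurexchange} (or the third characterization $\lk_2 T k$ of $\sk$ in \S\ref{sec:kernels}). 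So the content of the theorem is that the matrix of $T_t$ with respect to the \emph{orthonormal} basis $\{h_\sk\}$ converges, after rescaling by $e^{\sk - \tk}$, to the matrix of this Schur complement with respect to the critical cells.

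First I would record the conjugation identity $T_t = e^{-tf} T e^{tf}$ together with $k_t = e^{-tf} k e^{tf}$, which is immediate from the definitions since $e^{tf}$ is diagonal on $E$; this shows $\tk T_t k_t \sk = e^{t(\sk - \tk)}(\tk T k \sk)$ on the nose, with \emph{no} error term. Second I would pin down the asymptotics of the spectral projection $\pi_t$. Since $\Supp(\lim_{t\to+\infty} T_t) = \d f$ and $\d f$ is acyclic and canted, the limiting Laplacian is supported on the diagonal of $(\d f\der\flat \cup \d f\der\sharp)^{\times 2}$ with strictly positive diagonal entries there, so the spectrum splits into a low sector (dimension $|\wk| - 2|\d f|$, eigenvalues $\to 0$) and a high sector bounded away from $0$; standard perturbation theory (Riesz projection / resolvent contour integral) then gives $\pi_t \sk = g_\sk \in \sk + o_e(1)$ for $\sk \in \ag$, and more precisely that $g_\sk$ differs from the component of $\sk$ in $\mathrm{span}(\ag)$ only by terms in $o_e(1)$ lying in the span of the paired cells. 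Third, the Gram matrix $G_{\sk_1,\sk_2} = \nr{g_{\sk_1},g_{\sk_2}}$ is therefore $I + o_e(1)$, so $G^{-1/2} = I + o_e(1)$ and $h_\sk = g_\sk + o_e(1)$. Putting these together,
\[
\nr{T_t h_\tk, h_\sk} = \nr{T_t g_\tk, g_\sk} + o_e(\nn{T_t}_{\mathrm{op}}),
\]
and since $g_\sk = \pi_t \sk$ with $\pi_t$ the projection \emph{onto} the low sector, which (again up to $o_e$) is the $k_t$-image of $\mathrm{span}(\ag)$, one has $\nr{T_t g_\tk, g_\sk} = \tk T_t k_t \sk + o_e(e^{\tk-\sk}) = e^{\tk - \sk}(\tk T k \sk) + o_e(e^{\tk-\sk})$. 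The bookkeeping on the exponential orders of $\nn{T_t}_{\mathrm{op}}$ versus $e^{\tk-\sk}$ is what forces the $o_e$ rather than $O_e$ estimate, and uses monotonicity of $f$ to control which entries of $T_t$ blow up.

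\textbf{Main obstacle.} The genuinely delicate step is the second one: showing that the low-sector spectral subspace $W(t)$ converges, with exponentially controlled error $o_e(1)$, to the ``Jordan complement'' subspace $\mathrm{span}\{k\sk : \sk \in \ag\}$ rather than merely to $\mathrm{span}(\ag)$ itself. This is where the off-diagonal structure of $T_t$ enters and where Forman's original argument does its real work; I expect to handle it by a resolvent estimate $\pi_t = \frac{1}{2\pi i}\oint (\zeta - \ad_t)^{-1}\,d\zeta$ around a small circle, expanding $\ad_t$ as a diagonal-plus-nilpotent-order perturbation graded by the values of $f$, and reading off that the leading correction to the naive projection onto $\mathrm{span}(\ag)$ is exactly the idempotent $k$ applied to the critical cells. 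Everything else is routine: the conjugation identity is exact, and the Gram-matrix normalization is a soft perturbation argument once $g_\sk = h_\sk + o_e(1)$-type estimates are in hand.
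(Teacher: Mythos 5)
Your reduction scheme is, at the top level, the same as the paper's: (i) record the exact conjugation identity $\tk T_t k_t \sk = e^{t(\sk-\tk)}(\tk T k \sk)$; (ii) prove the estimate for $\nr{T_t g_\tk, g_\sk}$, which is Proposition~\ref{prop:mainwittenprop}; (iii) pass to the orthonormal $h$-basis via the Gram matrix. Replacing the paper's polynomial approximation $(1-\ad_t^N)^N = \pi_t + O(e^{-cNt})$ by a Riesz resolvent contour integral in step (ii) is a legitimate alternative device. The difficulty is not in the architecture but in the error bookkeeping, and there the proposal has a genuine gap.

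You record $g_\sk = \sk + o_e(1)$, deduce $G = I + o_e(1)$, hence $h_\sk = g_\sk + o_e(1)$, and conclude
$\nr{T_t h_\tk, h_\sk} = \nr{T_t g_\tk, g_\sk} + o_e(\nn{T_t}_{\mathrm{op}})$.
Since $T_t$ converges entrywise to an array supported on $\d f$, the norm $\nn{T_t}_{\mathrm{op}}$ is $O(1)$, so your correction term is merely $o_e(1)$. But the target $e^{\tk - \sk}(\tk T k \sk)$ decays exponentially in $t$ whenever $f(\sk)\neq f(\tk)$, so an $o_e(1)$ error swamps it rather than being absorbed by it. What the paper actually proves, and needs, is the strictly stronger \emph{weighted} Gram estimate $G_{\sk_0,\sk_1}\in o_e\bigl(e^{-|\sk_0-\sk_1|}\bigr)$ for distinct critical cells, which feeds into the expansion $\sum_{(\sk_1,\tk_1)}(G^{-1/2})_{\tk\tk_1}\nr{T_t g_{\tk_1},g_{\sk_1}}(G^{-1/2})_{\sk_1\sk}$ together with a weighted bound on the middle factor, so that each cross term is $o_e(e^{\tk-\sk})$ by adding exponents. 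The flat $o_e(1)$ bound you propose cannot produce this cancellation.

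The same weighted precision is required in the step you flag as the main obstacle, and this is where the proposal would actually fail. It does not suffice to know that $W(t)$ converges to the Jordan-complement subspace $k\,\mathrm{span}(\ag)$ in an operator-norm or subspace sense; one must track, coordinate by coordinate, how fast each component of $g_\sk - \sk$ decays \emph{relative to the diagonal weight} $\ek$ with $\ek(\mk) = e^{-|\mk - \sk|}$. That is precisely the content of the paper's Lemma~\ref{prop:synthesis}, which expands $\ad_t^{N+1}\sk$ with an error term of the form $o_e(\ek)$ rather than $o_e(1)$. If you pursue the resolvent route you would need the analogous weighted estimate on the Neumann series of $(\zeta - \ad_t)^{-1}$, graded term by term against $\ek$; invoking ``standard perturbation theory'' as a black box delivers only the unweighted $o_e(1)$ control, and the argument breaks exactly there.
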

\newcommand{\alo}{T}

The remainder of this section is devoted to the proof of Theorem \ref{thm:schurformanwitten}.
Our argument will begin and end in a fashion almost identical to that of Forman  \cite{FormanWitten98}.  It will be greatly simplified, however, by observing first that the result holds for general $T$ -- not only for the boundary operators of real chain complexes -- and second that the formula for the limiting array may be expressed in terms of $k$.

By our initial assumptions there exists a positive constant $c$ such that
\begin{align*}
\ad^N_t = 
\begin{cases}
O(e^{-cNt}) & \mathrm{on} \;\;\;\;\; W(t) \\
1 + O(e^{-ct}) & \mathrm{on} \;\;\;\;\; W(t)^\perp \\
\end{cases}
\end{align*}
hence
\begin{align*}
1-\ad^N_t = 
\begin{cases}
1+O(e^{-cNt}) & \mathrm{on} \;\;\;\;\; W(t) \\
O(e^{-ct}) & \mathrm{on} \;\;\;\;\; W(t)^\perp \\
\end{cases}
\end{align*}
therefore
\begin{align*}
(1-\ad^N_t)^N = 
\begin{cases}
1+O(e^{-cNt}) & \mathrm{on} \;\;\;\;\; W(t) \\
O(e^{-cNt}) & \mathrm{on} \;\;\;\;\; W(t)^\perp \\
\end{cases}
\end{align*}
for some positive constant $c$ and every positive integer $N$.  Consequently 
$$
(1 - \ad_t^N)^N = \pi_t + O(e^{-cNt})
$$
in the operator norm.  This  explicit construction provides all the traction we  need to reason about $\pi_t$.

\newcommand{\tms}{e^{-|\tk-\sk|}}
\begin{lemma}  One has
\begin{align}
\tk \alo_t^{*} \alo_t \sk = 
\begin{cases}
[\tk,\sk] \tms + o_e(\tms) & \sk \notin \d f\der \flat,  \tk \in \d f\der \flat,  \tk < \sk, \;\;\; \mathrm{or}  \\
& \sk \in \d f\der \flat, \tk \in \d f\der \flat \\
o_e(\tms) & \sk \notin \d f\der \flat, \tk \in \d f\der \flat, \tk \ge \sk, \;\;\; \mathrm{or}  \\
& \sk \notin \d f\der \flat, \tk \notin \d f\der \flat
\end{cases}
\label{eq:LtLest}
\end{align}
and
\begin{align}
\tk \alo _t \alo_t^{*} \sk = 
\begin{cases}
[\sk, \tk] \tms + o_e(\tms) & \sk \notin \d f\der\sharp, \tk \in \d f\der\sharp, \tk > \sk, \; \; \mathrm{or} \\
& \sk \in  \d f\der\sharp, \tk \in \d f\der\sharp \\
o_e(\tms) & \sk \notin \d f\der\sharp, \tk \in \d f\der\sharp, \tk \le \sk, \; \; \mathrm{or} \\
& \sk \notin \d f\der\sharp, \tk \notin \d f\der\sharp.
\end{cases}
\label{eq:LLtest}
\end{align}
\end{lemma}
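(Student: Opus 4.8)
The plan is to compute the matrix entries of $T_t^*T_t$ and $T_tT_t^*$ directly from the definition of $T_t = e^{-tf}Te^{tf}$ and track the exponential orders of each term. First I would write out $T_t(\sk,\tk) = e^{t(f(\tk)-f(\sk))}T(\sk,\tk)$, so that $T_t(\sk,\tk)$ is nonzero only when $(\sk,\tk)\in R = \Supp(T(E,E))$, in which case $f(\sk)\le f(\tk)$ by monotonicity; moreover $T_t(\sk,\tk)$ carries the exponential weight $e^{t(\tk-\sk)}$ (using the abbreviation $\sk$ for $f(\sk)$). The adjoint satisfies $T_t^*(\sk,\tk) = T_t(\tk,\sk) = e^{t(\sk-\tk)}T(\tk,\sk)$, which is nonzero only when $(\tk,\sk)\in R$, i.e.\ $\tk\le\sk$. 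Then $\tk T_t^* T_t\sk = \sum_{u\in E} T_t^*(\tk,u)T_t(u,\sk) = \sum_u e^{t(\tk-u)}T(u,\tk)\,e^{t(u-\sk)}T(u,\sk) = e^{t(\tk-\sk)}\sum_u T(u,\tk)T(u,\sk)$, where $u$ ranges over indices with $(u,\tk),(u,\sk)\in R$, forcing $u\le\tk$ and $u\le\sk$. Crucially the exponential weight $e^{t(\tk-\sk)}$ factors cleanly out of every surviving term, so the leading behaviour is governed entirely by the combinatorics of the support.

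Next I would analyze when the sum $\sum_u T(u,\tk)T(u,\sk)$ is nonzero and identify the limiting coefficient. The key case split is exactly the one in the statement: when $\tk\in\d f\der\flat$ (so $\tk$ is matched, say $\tk = v\der\flat$ for $v = v\der\sharp\in\d f\der\sharp$) the dominant index $u$ in the sum is $v$, because $T(v,\tk) = T(\d f\der\sharp,\d f\der\flat)$ is the invertible matched block entry and all other contributions are subordinate once one accounts for the weights hidden in $f(\tk)=f(v)$. When additionally $\tk<\sk$ (or $\sk\in\d f\der\flat$ also), this leading term survives at order $e^{t(\tk-\sk)}$ with coefficient $[\tk,\sk]$, the $(\tk,\sk)$-entry of the relevant matrix; when $\tk\ge\sk$ with $\sk\notin\d f\der\flat$, or when neither $\sk$ nor $\tk$ lies in $\d f\der\flat$, the matched contribution either cancels against the complementary terms or is strictly dominated, leaving only $o_e(e^{-|\tk-\sk|})$. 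Here I would invoke Lemma \ref{lem:acyclic} (via the canted/acyclic hypotheses on $\d f$) to guarantee that the induced relations on $R_{\d f}$ and $R^{\d f}$ are genuine partial orders, so that the "dominant index" is well-defined and the subordinate terms really are of strictly smaller exponential order; this is where the acyclicity does its work, ruling out cycles of matched pairs that would otherwise make the leading orders coincide. The symmetric computation for $\tk T_t T_t^*\sk = e^{t(\tk-\sk)}\sum_u T(\tk,u)T(\sk,u)$ with $u\ge\tk$, $u\ge\sk$ gives \eqref{eq:LLtest} by the dual argument, with $\d f\der\sharp$ playing the role of $\d f\der\flat$ and the inequalities reversed.

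The main obstacle I expect is the bookkeeping in the "borderline" cases --- precisely the first branch of each display, where $\sk,\tk$ are both in $\d f\der\flat$ (resp.\ both in $\d f\der\sharp$), and the case $\sk\notin\d f\der\flat$, $\tk\in\d f\der\flat$, $\tk<\sk$. There one must show not merely that the matched block entry dominates, but that the correction is genuinely $o_e$ and not merely $O_e$, which requires a uniform strict gap between the exponent of the leading term and the next; establishing that gap means comparing $f$-values along chains in the transitive closure of $R$ and using that $f$ is injective on matched pairs (cantedness) together with the strict monotonicity forced by acyclicity. I would organize this by peeling off the matched part: write $T = D + N$ where $D$ is supported on $\d f$ and $N$ on the complement, expand $T_t^*T_t$ into four pieces $D^*D$, $D^*N$, $N^*D$, $N^*N$ scaled by the appropriate exponentials, and show the $D^*D$ piece produces the stated leading term while the mixed and $N^*N$ pieces are $o_e(e^{-|\tk-\sk|})$ --- the latter by a direct estimate on the exponents using that $N_t\to 0$ with a definite rate, as already recorded in the remark $k_t = e^{-tf}ke^{tf}$. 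Once \eqref{eq:LtLest} and \eqref{eq:LLtest} are in hand, their sum gives the matrix representation of $\ad_t$, and this is exactly the input needed to run Forman's argument (the $(1-\ad_t^N)^N = \pi_t + O(e^{-cNt})$ machinery) toward Theorem \ref{thm:schurformanwitten}.
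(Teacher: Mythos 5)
Your expansion of $\tk \alo_t^* \alo_t \sk$ contains a sign error that collapses the argument before it starts. You stated the rule $T_t(\sk,\tk) = e^{t(f(\tk)-f(\sk))}T(\sk,\tk)$, but then wrote $T_t(u,\sk) = e^{t(u-\sk)}T(u,\sk)$; applying your own rule with first slot $u$ and second slot $\sk$ gives $T_t(u,\sk) = e^{t(\sk-u)}T(u,\sk)$ instead. The point is that the factor $e^{\mp tf(u)}$ enters $T_t^*(\tk,u)$ and $T_t(u,\sk)$ with the \emph{same} sign --- in both, $u$ sits in the position that is conjugated by the same exponential --- so the $u$-dependence \emph{doubles} rather than cancels. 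The $u$th summand carries weight $e^{t(2u - \tk - \sk)}$ (or $e^{t(\tk+\sk-2u)}$, depending on which of the two sign conventions for $T_t$ one adopts; the paper's prose and its formula in the proof disagree on this, but either way), not the $u$-independent $e^{t(\tk-\sk)}$ you obtained. With the exponent factored out as you had it, $\tk\alo_t^*\alo_t\sk$ would be a fixed scalar times a single pure exponential, which could not carry any $o_e(\tms)$ correction at all. The $u$-dependence of the rate \emph{is} the lemma: one maximizes $f(u)$ over the common support of columns $\tk$ and $\sk$, observes (by cantedness and the definition of $\d f$) that $f(u) = f(\tk)$ with $(u,\tk)\in R$ is possible only when $\tk\in\d f\der\flat$ and $u = \tk\der\sharp$, and concludes that all remaining summands decay strictly faster than $\tms$.

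The same error undermines your $T = D + N$ decomposition. With $D$ supported on $\d f$ and $f$ constant on matched pairs, $D_t = D$ is stationary, and
\begin{align*}
(D_t^*D_t)(\tk,\sk) \;=\; \sum_u D(u,\tk)D(u,\sk)
\end{align*}
is nonzero only when a single $u$ matches both $\tk$ and $\sk$; since $\d f$ is a partial matching this forces $\sk = \tk$. So $D^*D$ is \emph{diagonal} and cannot produce the off-diagonal leading coefficient $[\tk,\sk]\tms$ in the first branch of \eqref{eq:LtLest}. That leading term lives in the cross pieces $D_t^*N_t + N_t^*D_t$: the invertible matched entry $D(\tk\der\sharp,\tk)$ is multiplied against a strictly decaying entry $N_t(\tk\der\sharp,\sk)$, whose rate is exactly $e^{t(f(\tk)-f(\sk))}$.

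Finally, your appeal to acyclicity and Lemma \ref{lem:acyclic} is not what the paper does here, and it is not needed. The paper's (very short) proof uses only monotonicity --- which forces $f(u)\le f(\tk)$ and $f(u)\le f(\sk)$ on every nonzero summand --- and cantedness, which rules out diagonal entries of $T$ and ensures that the equality $f(u)=f(\sk)$ can only be achieved via the matched partner when $\sk\in\d f\der\flat$. The dominant exponent is then identified by a one-line arithmetic comparison, with no appeal to induced partial orders or to the $D+N$ splitting. Acyclicity does real work later in the chapter (where the limiting array is identified with a Schur/Jordan complement), but not in this entrywise estimate.
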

\begin{proof}[Proof of \eqref{eq:LtLest}]  In each case $\tk \alo _t ^{*} \alo _t \sk$ is a sum of terms of form $[\ck,\tk][\ck, \sk] e^{\ck - \tk + \ck - \sk}$.    In the upper two cases nonzero terms satisfy $\ck \le \sk$ and $\ck \le \tk$.  Let us swap $\sk$ and $\tk$ as necessary in the second case, so that  $\tk \le \sk$ (we may do so since $\tk \alo_t^{*} \alo_t \sk = \sk \alo_t^{*} \alo \tk$).   Among the terms in our sum the greatest exponent can then be found on $[\tk, \sk] e^{\tk - \sk} =[\tk, \sk] e^{|\tk - \sk|}$, corresponding to $\ck = \tk$.  Thus the first two cases are justified.  In the lower two cases our sum runs over $\ck$ strictly less than $\sk$ \emph{and} $\tk$.  Since $\ck - \tk - \ck- \sk < -|\tk - \sk|$ for all such $\ck$, the lower expression is justified.
\end{proof}

\begin{proof}[Proof of \eqref{eq:LLtest}]
As before, $\tk \alo \alo^{*} \sk$ is a sum of terms $[\tk, \ck][\sk, \ck] e^{\tk - \ck + \sk - \ck}$.  In each of the upper two cases, nonzero terms satisfy $\ck \ge \sk$ and $\ck \ge \tk$.  Let us swap $\sk$ and $\tk$ as necessary in the second case, so that  $\tk \ge \sk$ (we may do so since $\tk \alo_t \alo_t^{*} \sk = \sk \alo_t \alo^{*} \tk$).     Among the terms in our sum $[\sk, \tk] e^{\sk - \tk} = [\sk, \tk] e^{|\sk - \tk|}$ has the greatest exponent, corresponding to $\ck = \tk$, hence the first two cases.  All nonzero terms in the lower two cases satisfy $\ck < \sk$ \tm{and} $\ck < \tk$, so the exponent $  \tk - \ck + \sk - \ck $ is strictly lower than $-|\tk  - \sk|$.
\end{proof}

\begin{lemma}  \label{prop:secondestimate}
Suppose that $\sk \in \d f\der \flat$ and $\sk < \sk_0$, and fix any $\tk$.  Then
\begin{align}
\tk \alo_t ^{*} \alo_t \sk \cdot e^{-|\sk - \sk_0|} = 
\begin{cases}
[\tk, \sk] e^{-|\tk - \sk_0|} + o_e( e^{-|\tk - \sk_0|}) & \tk  \le \sk, \tk  \in \d f\der \flat  \\
o_e(e^{-|\tk - \sk_0|}) & else.
\end{cases}
\end{align}
\end{lemma}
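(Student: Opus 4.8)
The statement to be proved, Lemma~\ref{prop:secondestimate}, is a bookkeeping refinement of the previous lemma: it re-expresses the estimate \eqref{eq:LtLest} for $\tk \alo_t^* \alo_t \sk$ after multiplying through by the exponential factor $e^{-|\sk - \sk_0|}$, under the extra hypotheses that $\sk \in \d f\der\flat$ and $\sk < \sk_0$. The plan is to reduce everything to \eqref{eq:LtLest} and then carry out a short exponent comparison. First I would invoke \eqref{eq:LtLest} with $\sk$ in the role named there: since $\sk \in \d f\der \flat$, the relevant branches of \eqref{eq:LtLest} are the ones where the second argument lies in $\d f\der\flat$, i.e.\ we are in the case ``$\sk \in \d f\der \flat$'' and so \eqref{eq:LtLest} gives
\[
\tk \alo_t^* \alo_t \sk =
\begin{cases}
[\tk,\sk] e^{-|\tk - \sk|} + o_e(e^{-|\tk-\sk|}) & \tk \in \d f\der\flat \\
o_e(e^{-|\tk-\sk|}) & \tk \notin \d f\der\flat,
\end{cases}
\]
where I should double-check the sub-case bookkeeping: when $\tk \in \d f\der\flat$ the first line of \eqref{eq:LtLest} applies unconditionally, and when $\tk \notin \d f\der\flat$ we land in the lower block, whose first option ($\tk < \sk$) or ``$\sk\notin \d f\der\flat$'' options must be reconciled — but in fact since $\sk \in \d f\der\flat$ we are always in a line of \eqref{eq:LtLest} whose hypothesis mentions $\sk \in \d f\der\flat$, so only the ``both in $\d f\der\flat$'' first line and the generic $o_e$ last line survive. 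This gives the dichotomy on $\tk \in \d f\der\flat$ vs.\ not.

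\textbf{Key steps.} Having that dichotomy, I would multiply by $e^{-|\sk-\sk_0|}$ and compare exponents. The main elementary fact I need is a triangle-type identity for the sub-level values: because $\sk < \sk_0$,
\[
e^{-|\tk-\sk|}\, e^{-|\sk-\sk_0|} = e^{-|\tk - \sk| - (\sk_0 - \sk)}.
\]
If moreover $\tk \le \sk$, then $|\tk - \sk| = \sk - \tk$ and $|\tk - \sk| + (\sk_0 - \sk) = \sk_0 - \tk = |\tk - \sk_0|$ (using $\tk \le \sk < \sk_0$), so the product is exactly $e^{-|\tk-\sk_0|}$ and the $o_e(e^{-|\tk-\sk|})$ error, multiplied by $e^{-|\sk-\sk_0|}$, becomes $o_e(e^{-|\tk-\sk_0|})$ by the same identity. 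This yields the first branch: when $\tk \le \sk$ and $\tk \in \d f\der\flat$ we get $[\tk,\sk]e^{-|\tk-\sk_0|} + o_e(e^{-|\tk-\sk_0|})$. For every other case — either $\tk \notin \d f\der\flat$, or $\tk > \sk$ — I must show the whole product is $o_e(e^{-|\tk-\sk_0|})$. When $\tk \notin \d f\der\flat$ the main term is already $o_e(e^{-|\tk-\sk|})$, so after multiplication it is $o_e(e^{-|\tk-\sk_0|})$ provided $e^{-|\tk-\sk|}e^{-|\sk-\sk_0|} \le C e^{-|\tk - \sk_0|}$, which follows from the ordinary triangle inequality $|\tk - \sk_0| \le |\tk-\sk| + |\sk - \sk_0|$. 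When $\tk \in \d f\der\flat$ but $\tk > \sk$, the main term $[\tk,\sk]e^{-|\tk-\sk|}$ has exponent $-|\tk - \sk| - (\sk_0 - \sk)$, which I must show is strictly less than $-|\tk - \sk_0|$; this amounts to $|\tk - \sk| + (\sk_0 - \sk) > |\tk - \sk_0|$, i.e.\ $|\tk - \sk| + |\sk - \sk_0| > |\tk - \sk_0|$ with strict inequality — strictness holds because $\sk$ does not lie between $\tk$ and $\sk_0$ on the real line when $\tk > \sk$ and $\sk_0 > \sk$ (both lie on the same side of $\sk$), so the triangle inequality is strict unless $\tk = \sk_0$, and even in that degenerate case $\sk_0 - \sk > 0$ forces the left side to strictly exceed $0 = |\tk-\sk_0|$. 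Hence that term, and a fortiori the $o_e$ error, is absorbed into $o_e(e^{-|\tk-\sk_0|})$.

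\textbf{Expected obstacle.} The proof is entirely routine once \eqref{eq:LtLest} is in hand; there is no deep content. The only place where care is genuinely required is the exponent bookkeeping in the ``$\tk > \sk$, $\tk \in \d f\der\flat$'' sub-case, where one needs the \emph{strict} inequality $|\tk - \sk| + |\sk - \sk_0| > |\tk - \sk_0|$ to conclude that a term which \eqref{eq:LtLest} reports as genuinely present (order exactly $e^{-|\tk-\sk|}$) nonetheless falls into the error term $o_e(e^{-|\tk-\sk_0|})$ of the new estimate. This is the point most likely to be mis-stated, and I would write it out carefully using the hypothesis $\sk < \sk_0$ (which is exactly what places $\sk$ and $\sk_0$ on the same side of $\sk$ relative to any $\tk > \sk$, making the triangle inequality strict). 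Everything else is a direct substitution into \eqref{eq:LtLest} and an application of $|\tk - \sk_0| \le |\tk - \sk| + |\sk - \sk_0|$, together with the definition of $o_e$ as a family of bounds stable under multiplication by bounded exponentials.
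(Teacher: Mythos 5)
Your proof is correct and follows essentially the same line as the paper: substitute into \eqref{eq:LtLest} and then do the exponent bookkeeping, using $\sk<\sk_0$ to get equality in the triangle inequality exactly when $\tk\le\sk$ and strict inequality when $\tk>\sk$. The paper's own proof is the same two-step reduction, just stated more tersely.

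One small slip worth flagging: you assert that whenever $\tk\notin\d f\der\flat$ the estimate from \eqref{eq:LtLest} is already $o_e(e^{-|\tk-\sk|})$. With $\sk\in\d f\der\flat$ fixed, \eqref{eq:LtLest} does not directly cover the case $\tk\notin\d f\der\flat$ at all; one must apply it after swapping $\sk\leftrightarrow\tk$ (legitimate since $\tk\alo_t^*\alo_t\sk=\sk\alo_t^*\alo_t\tk$). Doing so, the $o_e$ conclusion obtains only when $\sk\ge\tk$; when $\tk>\sk$ the swapped form lands in the \emph{first} branch and produces a genuine main term of order exactly $e^{-|\tk-\sk|}$. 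So the dichotomy ``main term iff $\tk\in\d f\der\flat$'' is not quite right. This does not damage the argument, because your strict-triangle-inequality computation for ``$\tk>\sk$, $\tk\in\d f\der\flat$'' in fact makes no use of $\tk\in\d f\der\flat$ and absorbs every $\tk>\sk$. A cleaner organization — and the one the paper implicitly uses — is to split first on $\tk\le\sk$ versus $\tk>\sk$ (where only a uniform $O(e^{-|\tk-\sk|})$ bound is needed for the latter), and reserve the $\tk\in\d f\der\flat$ distinction for the $\tk\le\sk$ branch, where the exponent is exact and one must know whether the \eqref{eq:LtLest} coefficient actually survives.
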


\begin{proof}
In either case one has 
$$
\tk \alo^{*}_t \alo_t \sk = [\tk, \sk] e^{-|\tk - \sk|} + o_e(e^{-|\tk - \sk|})
$$
by  \eqref{eq:LtLest}.  If $\tk \le \sk$ then  $-|\tk - \sk| - |\sk - \sk_0| = -|\tk  - \sk_0|$, and if $\sk < \tk$ then $-|\tk - \sk| - |\sk - \sk_0| < -|\tk  - \sk_0|$.
\end{proof}

\begin{lemma}  \label{prop:dualtosecondestimate}
Suppose $\sk \in \d f\der \sharp$ and $\sk_0 < \sk$, and fix any $\tk$.  Then
\begin{align}
\tk \alo_t  \alo_t^{*} \sk \cdot e^{-|\sk - \sk_0|} = 
\begin{cases}
[\tk, \sk] e^{-|\tk - \sk_0|} + o_e( e^{-|\tk - \sk_0|}) & \sk  \le \tk, \tk  \in \d f\der \sharp  \\
o_e(e^{-|\tk - \sk_0|}) & else.
\end{cases}
\end{align}
\end{lemma}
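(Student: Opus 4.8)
The plan is to mirror, essentially line for line, the proof of Lemma~\ref{prop:secondestimate}, interchanging the roles of the ``flat'' and ``sharp'' halves of the pairing $\d f$: where that argument invoked the estimate \eqref{eq:LtLest} for $T_t^{*}T_t$, here one invokes the estimate \eqref{eq:LLtest} for $T_tT_t^{*}$; where it combined the hypothesis $\sk<\sk_0$ with the condition $\tk\le\sk$, here one combines $\sk_0<\sk$ with the condition $\sk\le\tk$. Since by hypothesis $\sk\in\d f\der{\sharp}$, the clause of \eqref{eq:LLtest} that lands us in its first (non-negligible) branch is precisely ``$\tk\in\d f\der{\sharp}$'', so the case split in \eqref{eq:LLtest} collapses to the dichotomy $\tk T_tT_t^{*}\sk = [\sk,\tk]\,e^{-|\tk-\sk|}+o_e(e^{-|\tk-\sk|})$ when $\tk\in\d f\der{\sharp}$, and $\tk T_tT_t^{*}\sk = o_e(e^{-|\tk-\sk|})$ otherwise.

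First I would multiply both sides of this dichotomy by $e^{-|\sk-\sk_0|}$ and reduce the rest to a triangle-inequality bookkeeping. Using $\sk_0<\sk$: if $\sk\le\tk$ then $\sk_0<\sk\le\tk$ are linearly ordered, so $-|\tk-\sk|-|\sk-\sk_0| = -(\tk-\sk_0) = -|\tk-\sk_0|$, and the leading term $[\sk,\tk]\,e^{-|\tk-\sk|}\cdot e^{-|\sk-\sk_0|}$ acquires exactly the exponent claimed in the statement, with the $o_e$ remainder likewise rescaling into $o_e(e^{-|\tk-\sk_0|})$. If instead $\tk<\sk$, one checks, separately in the sub-cases $\sk_0<\tk<\sk$ and $\tk<\sk_0<\sk$, that $|\tk-\sk|+|\sk-\sk_0|$ strictly exceeds $|\tk-\sk_0|$, so that $e^{-|\tk-\sk|}\cdot e^{-|\sk-\sk_0|}\in o_e(e^{-|\tk-\sk_0|})$, and in particular both the $[\sk,\tk]$ term (when $\tk\in\d f\der{\sharp}$ but $\tk<\sk$) and the plain $o_e$ term fall into the negligible bucket. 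Collecting cases: the leading term survives, with exponent $-|\tk-\sk_0|$, exactly when $\sk\le\tk$ and $\tk\in\d f\der{\sharp}$, and otherwise the product is $o_e(e^{-|\tk-\sk_0|})$; this is the asserted formula, once one identifies $[\sk,\tk]$ with $[\tk,\sk]$ via the adjointness relation $\sk^{\sharp}T^{*}\tk^{\flat}=\tk^{\sharp}T\sk^{\flat}$.

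I do not anticipate a genuine obstacle here — as the text already remarks for the analogous earlier pairs of results, the statement is ``entirely dual'' to Lemma~\ref{prop:secondestimate} — and the only point demanding any care is the case bookkeeping just described, namely confirming that the configuration ``$\tk\in\d f\der{\sharp}$, $\tk<\sk$'' really does become negligible after multiplication by $e^{-|\sk-\sk_0|}$. One way to sidestep writing this out entirely is to apply Lemma~\ref{prop:secondestimate} verbatim to the operator $T^{*}$ with the reversed weight function $-f$: one has $(T^{*})_t=(T_t)^{*}$ relative to $-f$, the pairing of $T^{*}$ relative to $-f$ is the transpose of $\d f$ (so $\d(-f)\der{\flat}=\d f\der{\sharp}$), the order ``$<$'' reverses, and $|\cdot-\cdot|$ is unchanged, so the conclusion of Lemma~\ref{prop:secondestimate} translates term by term into the present claim. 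Either route is short; since the paper writes out the corresponding estimates explicitly, I would carry out the direct computation above.
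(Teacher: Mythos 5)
Your overall approach — dualize the proof of Lemma~\ref{prop:secondestimate} by replacing the estimate \eqref{eq:LtLest} for $T_t^{*}T_t$ with the estimate \eqref{eq:LLtest} for $T_tT_t^{*}$, swap the roles of $\d f\der\flat$ and $\d f\der\sharp$, and reverse the inequalities around $\sk$ and $\sk_0$ — is exactly what the paper intends, since the text's only stated proof is the remark that the argument is ``entirely analogous'' to that of Lemma~\ref{prop:secondestimate}. Your case bookkeeping is correct, including the point that needs the most care (that $\tk<\sk$ forces the product into the $o_e(e^{-|\tk-\sk_0|})$ bucket even when $\tk\in\d f\der\sharp$), and the alternative route via applying Lemma~\ref{prop:secondestimate} to $T^{*}$ with weight $-f$ is also sound.

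One closing remark is wrong, however. The adjointness relation $\sk^\sharp T^{*}\tk^\flat=\tk^\sharp T\sk^\flat$ equates the $(\sk,\tk)$-entry of $T^{*}$ with the $(\tk,\sk)$-entry of $T$; it does \emph{not} equate $[\sk,\tk]=\sk^\sharp T\tk^\flat$ with $[\tk,\sk]=\tk^\sharp T\sk^\flat$, which are two different entries of the same operator $T$, and $T$ is not assumed symmetric. What \eqref{eq:LLtest} actually hands you as a leading coefficient is $[\sk,\tk]$; the $[\tk,\sk]$ printed in the statement of Lemma~\ref{prop:dualtosecondestimate} is a transposition in the source (note that \eqref{eq:LtLest} and Lemma~\ref{prop:secondestimate} do agree on their coefficient, whereas \eqref{eq:LLtest} and the present statement do not). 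The discrepancy is harmless for the subsequent asymptotics, which track only exponents and re-derive the limiting coefficient in terms of $k$ in Proposition~\ref{prop:mainwittenprop}, but the identification you invoke is a false equality and should be replaced by an explicit note of the transposition rather than a purported derivation of it.
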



The proof of Lemma \ref{prop:dualtosecondestimate} is entirely analogous to that of Lemma \ref{prop:secondestimate}.  These observations bound scalars of form $\tk \alo_t ^{*} \alo_t \sk$ for $\sk \in \d f\der \flat$, and those of form  $\tk \alo_t  \alo_t^{*} \sk$ for $\sk \in \d f\der \sharp$.  The following control ``cross-terms,'' scalars of form  $\tk \alo_t ^{*} \alo_t \sk$ for $\sk \in \d f\der \sharp$, and those of form $\tk \alo_t  \alo_t^{*} \sk$ for $\sk \in \d f\der \flat$.

\begin{lemma}
If $\sk \in \d f\der \flat$ and $\sk < \sk_0$, then for any $\tk$,
\begin{align}
\tk \alo_t \alo_t^{*} \sk \cdot e^{-|\sk - \sk_0|}  = o_e(e^{-|\tk - \sk_0|}).  \label{eq:crossterm1}
\end{align}
Likewise, if $\sk \in \d f\der \sharp$ and $\sk > \sk_0$, then for any $\tk$,
\begin{align}
\tk \alo_t^* \alo_t \sk \cdot e^{-|\sk - \sk_0|}  = o_e(e^{-|\tk - \sk_0|}).  \label{eq:crossterm2}
\end{align}
\end{lemma}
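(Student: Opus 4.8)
The plan is to mimic the proofs of \eqref{eq:LtLest}, \eqref{eq:LLtest} and of Lemmas \ref{prop:secondestimate} and \ref{prop:dualtosecondestimate}, the single new ingredient being the observation that having $\sk$ in $\d f \der \flat$ rather than in $\d f \der \sharp$ deletes the slowest-decaying term from the relevant path sum. I would prove \eqref{eq:crossterm1} and deduce \eqref{eq:crossterm2} as its formal dual, exchanging $\alo_t$ with $\alo_t^{*}$, $\d f \der \flat$ with $\d f \der \sharp$, and reversing the inequalities, exactly as \eqref{eq:LLtest} is dual to \eqref{eq:LtLest}.

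For \eqref{eq:crossterm1} I would first expand
\[
\tk\,\alo_t\alo_t^{*}\,\sk \;=\; \sum_{\ck}(\tk\,\alo_t\,\ck)(\sk\,\alo_t\,\ck),
\]
a finite sum of terms of the shape $[\tk,\ck][\sk,\ck]\,e^{\tk-\ck+\sk-\ck}$, a term being nonzero only if $(\tk,\ck)$ and $(\sk,\ck)$ both lie in $R = \Supp(\alo(E,E))$, in which case monotonicity fixes the sign of the bracketed exponent just as in the proof of \eqref{eq:LLtest}. The crucial point is that $\sk\in\d f \der \flat$ forces $\sk\notin\d f \der \sharp$ by cantedness, so no pair containing $\sk$ that contributes to the sum lies in $\d f$; hence $f(\ck)\neq f(\sk)$ for every $\ck$ appearing above, and since $E$ is finite there is a strictly positive $\delta$ with $|f(\sk)-f(\ck)|\ge\delta$ for all such $\ck$. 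This $\delta$ is exactly what promotes the $|\tk-\sk|$-rate of decay seen in the companion lemmas to strictly faster decay: the ``diagonal'' contribution, which in \eqref{eq:LLtest} produces a term of exact order $e^{-|\tk-\sk|}$, is simply absent here.

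It then remains to check, term by term, that the exponent of $[\tk,\ck][\sk,\ck]\,e^{\tk-\ck+\sk-\ck}\cdot e^{-|\sk-\sk_0|}$ is bounded above by $-|\tk-\sk_0|$ with a gap uniform over $\ck$. I would use the hypothesis $\sk<\sk_0$ to remove the absolute value in $e^{-|\sk-\sk_0|}$, combine this with the inequalities among $f(\ck)$, $f(\tk)$, $f(\sk)$ supplied by monotonicity and by $f(\ck)\neq f(\sk)$, and then split into the two cases $f(\tk)\le f(\sk_0)$ and $f(\tk)>f(\sk_0)$, precisely as Lemmas \ref{prop:secondestimate} and \ref{prop:dualtosecondestimate} do; the strict inequalities $f(\sk)<f(\sk_0)$ and $f(\ck)\neq f(\sk)$ then yield the gap, and finiteness of $E$ lets one $\delta$ serve every $\ck$ at once, which is what $o_e(e^{-|\tk-\sk_0|})$ requires. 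The step I expect to be the main obstacle is the bookkeeping at the $\tk$-end of the path: when $\tk\in\d f \der \sharp$ the intermediate cell $\ck$ may be the partner of $\tk$, so no gap is available there, and the estimate has to be organized so that the single strict inequality at the $\sk$-end, together with $\sk<\sk_0$ placing $\sk$ on the correct side, already beats $e^{-|\tk-\sk_0|}$. Invoking acyclicity of $\d f$ through Proposition \ref{prop:holdacyclic} at the same place the companion lemmas use it should be what finally closes this case.
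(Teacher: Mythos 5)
Your plan would work, but it takes a longer route than the paper, and the "single new ingredient" you identify as crucial is not quite the right mechanism.

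The paper's own proof is a two-line reduction to the already-established estimates, not a fresh path-sum argument. When $\sk < \tk$, so that $\sk$ lies strictly below both $\tk$ and $\sk_0$, the triangle inequality $|\sk - \tk| + |\sk - \sk_0| \ge |\tk - \sk_0|$ is strict, and then the worst-case $O_e(e^{-|\tk - \sk|})$ bound supplied by \eqref{eq:LLtest} --- which does not need to be re-derived --- already multiplies up to $o_e(e^{-|\tk - \sk_0|})$ after the extra factor $e^{-|\sk - \sk_0|}$. When $\tk \le \sk$, the fact that $\sk \notin \d f\der\sharp$ (from cantedness) places us in the $o_e(e^{-|\tk - \sk|})$ branches of \eqref{eq:LLtest} directly, and the non-strict triangle inequality closes the argument. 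The second assertion is the mirror image of the first, as you say. Note also that Lemmas \ref{prop:secondestimate} and \ref{prop:dualtosecondestimate} are likewise proved by reduction to \eqref{eq:LtLest} and \eqref{eq:LLtest} plus a triangle-inequality check, not by re-expanding the sum.

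The headline mechanism you propose --- that $\sk \in \d f\der\flat$ ``deletes the slowest-decaying term'' so that the diagonal contribution is ``simply absent'' --- is overstated, and your own last paragraph contradicts it. The term of exact order $e^{-|\tk-\sk|}$ in the first branch of \eqref{eq:LLtest} lives at the $\tk$-end of the path: it is the $\ck$ with $f(\ck) = f(\tk)$, present exactly when $\tk \in \d f\der\sharp$ and $\tk > \sk$, a configuration your hypothesis on $\sk$ does nothing to exclude. What $\sk \notin \d f\der\sharp$ removes is the symmetric term at the $\sk$-end, and that term is the dominant one only when $\tk \le \sk$. In the remaining case the gap comes entirely from the hypothesis $\sk < \sk_0$, which makes the triangle inequality strict regardless of the structure of $\d f$. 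Your term-by-term exponent bookkeeping would in fact close both cases --- the split on $\tk$ versus $\sk_0$ does arrange for the right hypothesis to carry the weight --- but the initial characterization of what drives the estimate would mislead a reader, and you already have a finished tool (\eqref{eq:LLtest}/\eqref{eq:LtLest}) that makes the path-sum expansion unnecessary. Finally, Proposition \ref{prop:holdacyclic} and acyclicity of $\d f$ play no role here or in the companion lemmas; those proofs use only monotonicity and cantedness.
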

\begin{proof}  Suppose $\sk \in \d f\der \flat$ and $\sk < \sk_0$.   If $\sk < \tk$ then $-|\sk - \tk|-|\sk - \sk_0| < -|\tk - \sk_0|$, and the desired conclusion follows.  All other cases relevant to \eqref{eq:crossterm1} are addressed by \eqref{eq:LLtest} directly.  Now suppose $\sk \in \d f\der \sharp$ and $\sk_0 < \sk$.  If $\tk < \sk$, then $-|\sk - \tk|-|\sk - \sk_0| < -|\tk - \sk_0|$, and the desired conclusion follows.  All other cases relevant to \eqref{eq:crossterm2} are addressed  by \eqref{eq:LtLest} directly.
\end{proof}

The following is a convenient repackaging of the  preceding remarks.    By abuse of notation, we will write $\ek$ both  for  the diagonal array such that 
$$
\ek(\sk, \sk) = e^{-|\sk - \sk_0|}.
$$
and for the  {tuple}  
$$
\ek(\sk) = e^{-|\sk - \sk_0|}.
$$
By a second overload, we will write   $\flat$   for  the linear operator on the base space of $\alo$  defined 
\begin{align*}
\flat(\sk) = 
\begin{cases}
\sk \der \flat & \sk \in \d f\der \sharp \\
0 & \sk \in E - \d f\der \sharp
\end{cases}
\end{align*}
and for the restriction of this map to the isomorphism
$$
\spann \left ( \d f\der \sharp \right  ) \longrightarrow \spann \left (\d f\der \flat \right).
$$
We define $\sharp$ similarly.  Evidently $\sharp^* = \flat$, both  as isomorphisms and as operators on the base space.

\begin{lemma} \label{prop:repackage1}  If $\ak$, $\wk$ are constant vectors supported on $\d f\der \flat_{< \sk_0}$ and $\d f\der \sharp_{>\sk_0}$, respectively, then for any $\n \ge 0$ one has
$$
\ad ^\n (\ek \ak + \ek \wk) = \ek(\flat \alo )^\n \ak + \ek (\flat^* \alo ^* )^\n \wk + o_e(\ek),
$$
the first term having support on $\d f\der\flat_{< \sk_0}$,  the second on $\d f\der\sharp_{>\sk_0}$.
\end{lemma}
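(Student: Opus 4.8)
\textbf{Proof plan for Lemma \ref{prop:repackage1}.}

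The plan is to prove the identity by induction on $\n$, using the estimates collected in the preceding lemmas as the engine of the inductive step. For the base case $\n = 0$ there is nothing to prove: the claimed identity reads $\ek\ak + \ek\wk = \ek\ak + \ek\wk + o_e(\ek)$, which holds trivially, and the support claims are true by hypothesis on $\ak$ and $\wk$. For the inductive step, I would assume the formula holds for some $\n \ge 0$ and apply $\ad_t$ once more. Writing $\ad_t = T_tT_t^* + T_t^*T_t$, I would expand
$$
\ad_t^{\n+1}(\ek\ak + \ek\wk) = \ad_t\bigl(\ek(\flat \alo)^\n\ak + \ek(\flat^*\alo^*)^\n\wk + o_e(\ek)\bigr).
$$
The term $\ad_t(o_e(\ek))$ contributes only $o_e(\ek)$, since $\ad_t$ has entries bounded by a fixed exponential rate and $o_e(\ek)$ is closed under multiplication by such an operator; that absorption needs to be checked but is routine. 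So the real content is to show
$$
\ad_t\bigl(\ek(\flat\alo)^\n\ak\bigr) = \ek(\flat\alo)^{\n+1}\ak + o_e(\ek),
\qquad
\ad_t\bigl(\ek(\flat^*\alo^*)^\n\wk\bigr) = \ek(\flat^*\alo^*)^{\n+1}\wk + o_e(\ek),
$$
with the first term supported on $\d f\der\flat_{<\sk_0}$ and the second on $\d f\der\sharp_{>\sk_0}$.

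I would handle the first of these (the second being entirely dual). The vector $(\flat\alo)^\n\ak$ is supported on $\d f\der\flat$, and more precisely — tracking the support through the definition of $\flat$ and the acyclicity/monotonicity of $f$ — on $\d f\der\flat_{<\sk_0}$, so it is a sum of terms $c_\sk\,\sk$ with $\sk\in\d f\der\flat$, $\sk<\sk_0$. Now I would compute $\ad_t(\ek\,\sk) = \ek(\sk)\,T_tT_t^*\sk + \ek(\sk)\,T_t^*T_t\sk$ entrywise: the coefficient on a basis element $\tk$ picks up $\tk T_tT_t^*\sk\cdot e^{-|\sk-\sk_0|}$ and $\tk T_t^*T_t\sk\cdot e^{-|\sk-\sk_0|}$. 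By \eqref{eq:crossterm1}, the $T_tT_t^*$ contribution is $o_e(e^{-|\tk-\sk_0|}) = o_e(\ek(\tk))$ — the cross-terms die. By Lemma \ref{prop:secondestimate}, the $T_t^*T_t$ contribution equals $[\tk,\sk]\,e^{-|\tk-\sk_0|} + o_e(e^{-|\tk-\sk_0|})$ when $\tk\le\sk$ and $\tk\in\d f\der\flat$, and $o_e(e^{-|\tk-\sk_0|})$ otherwise. Summing over $\sk$ and $\tk$, the surviving terms assemble exactly into $\ek$ times the matrix of $\flat\alo$ applied to $(\flat\alo)^\n\ak$, i.e. $\ek(\flat\alo)^{\n+1}\ak$, with all error collected into $o_e(\ek)$; the support statement $\d f\der\flat_{<\sk_0}$ is preserved because $[\tk,\sk]$ is nonzero only for $\tk\le\sk<\sk_0$ with $\tk\in\d f\der\flat$. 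The dual computation for $\wk$ uses \eqref{eq:crossterm2} and Lemma \ref{prop:dualtosecondestimate} in place of \eqref{eq:crossterm1} and Lemma \ref{prop:secondestimate}.

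I expect the main obstacle to be bookkeeping rather than a conceptual gap: one must be careful that $o_e(\ek)$ is genuinely a stable notion under the operations performed — closed under addition of boundedly many terms, under left multiplication by $\ad_t$ (whose entries grow no faster than a fixed exponential in $t$), and under the exponential shifts $e^{-|\sk-\sk_0|}$ that move between the reference points $\sk$ and $\sk_0$ along the chain $\tk\le\sk<\sk_0$. The triangle-inequality manipulations on exponents ($-|\tk-\sk|-|\sk-\sk_0| = -|\tk-\sk_0|$ when $\tk\le\sk\le\sk_0$, strict otherwise) are exactly the ones already used in Lemmas \ref{prop:secondestimate}--\ref{prop:dualtosecondestimate}, so the new work is just to see that they propagate cleanly through $\n$ iterations, which the induction makes automatic. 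A secondary point worth stating carefully is why the two pieces do not interfere: the $\ek\ak$ piece stays in $\spann(\d f\der\flat_{<\sk_0})$ and the $\ek\wk$ piece stays in $\spann(\d f\der\sharp_{>\sk_0})$ at every stage, and the cross-term lemmas \eqref{eq:crossterm1}--\eqref{eq:crossterm2} are precisely what guarantees that $\ad_t$ does not leak mass from one subspace into the other beyond $o_e(\ek)$.
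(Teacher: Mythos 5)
Your proposal is correct and follows essentially the same route as the paper. The paper's proof records exactly the four single-step estimates you derive from the cross-term lemma together with Lemmas \ref{prop:secondestimate} and \ref{prop:dualtosecondestimate} — namely that $T_t^*T_t(\ek\ak) = \ek(\flat\alo\ak) + o_e(\ek)$, $T_tT_t^*(\ek\wk) = \ek(\flat^*\alo^*\wk) + o_e(\ek)$, and that the two cross applications are $o_e(\ek)$ — and then disposes of the rest with ``the desired conclusion follows by direct calculation.'' Your induction on $\n$, together with the observation that $\ad_t$ (whose entries decay like $e^{-|\tk-\sk|}$) carries $o_e(\ek)$ into $o_e(\ek)$ by the triangle inequality on exponents, is precisely that direct calculation spelled out; the error-absorption step you flag as needing to be checked is indeed the one piece the paper leaves silently to the reader.
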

\begin{proof}
The three preceding lemmas imply that $\flat \alo \ek \ak$ and $\flat^* \alo^* \ek \wk$ have support on $\d f\der\flat _{< \sk_0}$ and $\d f\der\sharp_{> \sk_0}$, respectively, and
\begin{align*}
\alo_t^* \alo_t (\ek \ak)  &=  \ek (\flat \alo \ak) + o_e(\ek) & \alo_t \alo_t^* (\ek \wk)  &=  \ek(\flat^* \alo \wk) + o_e(\ek)\\
 \alo_t^* \alo_t (\ek \wk)  &= o_e(\ek)  &  \alo_t \alo_t^* (\ek \ak)  &= o_e(\ek).  
\end{align*}
The desired conclusion follows by direct calculation.
\end{proof}

The $\ak$ and $\wk$ that will occupy our attention are those that derive from expressions of form $\flat^* \alo \sk_0$ and $\flat^* \alo^* \sk_0$.  Like the preceding observation, Lemma \ref{prop:secondrepackage} is a  convenient repackaging.

\begin{lemma}  \label{prop:secondrepackage}
If $\sk_0$ is critical then $\flat \alo \sk_0$ and $\flat^* \alo^* \sk_0$ have support on $\d f\der \flat _{< \sk_0}$ and $\d f\der \sharp_{> \sk_0}$, respectively, and
\begin{align}
\alo_t^* \alo_t \sk_0 =  \ek \flat \alo \sk_0 + o_e(\ek) &&  \alo_t  \alo_t ^* \sk_0 = \ek  \flat^* \alo ^* \sk_0 + O(\ek).  \label{eq:firstcolation}
\end{align}
\end{lemma}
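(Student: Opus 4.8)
The plan is to handle the statement in two halves: the support claims for $\flat\alo\sk_0$ and $\flat^*\alo^*\sk_0$, which are purely combinatorial, and the two asymptotic identities \eqref{eq:firstcolation}, which repackage the termwise estimates \eqref{eq:LtLest} and \eqref{eq:LLtest}. For the support claims I would argue directly from monotonicity of $f$ and the criticality of $\sk_0$. If $[\ck,\sk_0]$ is a nonzero entry of $\alo$ then $(\ck,\sk_0) \in R = \Supp(\alo(E,E))$, so $f(\ck) \le f(\sk_0)$ because $f$ is monotone; and equality is impossible, since $f(\ck) = f(\sk_0)$ would place $(\ck,\sk_0) \in \d f$ and hence $\sk_0 \in \d f\der\flat$, contradicting the hypothesis that $\sk_0$ is critical. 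Thus $f(\ck) < f(\sk_0)$ for every $\ck$ in the support of $\alo\sk_0$. Because the pairing function $\flat$ of $\d f$ sends $\d f\der\sharp$ into $\d f\der\flat$ and preserves the value of $f$ (the pairs of $\d f$ having equal $f$-value), the vector $\flat\alo\sk_0 = \sum_{\ck \in \d f\der\sharp} [\ck,\sk_0]\,(\ck\der\flat)^\flat$ is supported on $\d f\der\flat_{<\sk_0}$. The claim for $\flat^*\alo^*\sk_0$ follows by the dual argument applied to $\alo^*(E,E)$, using the reverse inequality $f(\sk_0) \le f(\ck)$ and the pairing function $\sharp$.

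For the first identity in \eqref{eq:firstcolation} I would compute $\alo_t^*\alo_t\sk_0$ coordinatewise and invoke \eqref{eq:LtLest} with $\sk = \sk_0$ — legitimate precisely because $\sk_0 \notin \d f\der\flat$. That estimate gives $\tk\,\alo_t^*\alo_t\sk_0 = [\tk,\sk_0]\,\ek(\tk) + o_e(\ek(\tk))$ when $\tk \in \d f\der\flat$ and $\tk < \sk_0$, and $o_e(\ek(\tk))$ for every other $\tk$; summing over $E$ yields $\alo_t^*\alo_t\sk_0 = \ek\left(\sum_{\tk \in \d f\der\flat_{<\sk_0}} [\tk,\sk_0]\,\tk^\flat\right) + o_e(\ek)$. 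It then remains only to identify the leading vector with $\flat\alo\sk_0$, which is exactly the reindexing of $\sum_{\ck \in \d f\der\sharp}[\ck,\sk_0](\ck\der\flat)^\flat$ along $\ck \mapsto \ck\der\flat$ that was justified in the support step. The second identity is the mirror image: apply \eqref{eq:LLtest} with $\sk = \sk_0$ (valid since $\sk_0 \notin \d f\der\sharp$), keep the surviving coordinates $\tk \in \d f\der\sharp_{>\sk_0}$, and recognize the leading vector as $\flat^*\alo^*\sk_0$ through the pairing function $\sharp$; here the error is absorbed into $O(\ek)$.

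I expect the only genuine work to be the bracket-and-index bookkeeping in the second step of each identity — unwinding the notation $[\cdot,\cdot]$ so that the weighted sum of surviving coordinates is literally $\flat\alo\sk_0$ (respectively $\flat^*\alo^*\sk_0$), and confirming that every coordinate not of that shape is genuinely $o_e(\ek)$. Both points are already essentially dispatched by \eqref{eq:LtLest} and \eqref{eq:LLtest}, so the lemma is in the end a translation of those estimates into the coordinate-free form that Lemma \ref{prop:repackage1} needs in order to iterate the Laplacian; no new analytic estimate is required.
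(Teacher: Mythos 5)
Your proof matches the paper's: both obtain the support claims from monotonicity of $f$, criticality of $\sk_0$, and the fact that $\d f$-matched pairs share an $f$-value, and both read the asymptotic identities off the case analyses of \eqref{eq:LtLest} and \eqref{eq:LLtest} at $\sk = \sk_0$ (the paper's proof is literally the one-line statement that these are ``collations of some cases'' of those estimates). The one place needing care — reconciling the estimate's leading coefficient $[\tk,\sk_0]$ with the $[\tk\der\sharp,\sk_0]$ coefficient that arises on reindexing $\flat\alo\sk_0$ by $\ck\mapsto\ck\der\flat$ — is a notational gloss already present in the statement of \eqref{eq:LtLest} itself, so you have inherited rather than introduced any slack.
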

\begin{proof}  That  $\flat \alo \sk_0$ and $\flat^* \alo^* \sk_0$ have support on $\d f\der \flat _{< \sk_0}$ and $\d f\der \sharp_{> \sk_0}$, respectively, follows directly from our hypothesis on $f$.  The left and righthand identities in \eqref{eq:firstcolation} are simply collations of some cases in  \eqref{eq:LtLest} and \eqref{eq:LtLest}, respectively.
\end{proof}

Lemma \ref{prop:synthesis} is a direct synthesis of the two preceding remarks.

\begin{lemma} \label{prop:synthesis}  If $\sk_0 \in y$ and $n$ is a nonnegative integer, then
\begin{align}
\ad^{\n+1}_t \sk_0 = \ek (\flat \alo )^\n(\flat \alo \sk_0) + \ek (\flat^* \alo ^*)^\n (\flat^* \alo^ * \sk_0) + o_e(\ek),  \label{eq:deltaform}
\end{align}
the first term having support on $x_{< \sk_0}$,  the second on $x^*_{>\sk_0}$.
\end{lemma}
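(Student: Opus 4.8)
The plan is to unfold the definition $\ad_t = \alo_t\alo_t^* + \alo_t^*\alo_t$ and to read off \eqref{eq:deltaform} by feeding the two preceding lemmas into the identity $\ad_t^{\n+1}\sk_0 = \ad_t^\n(\ad_t\sk_0)$. First I would expand the inner factor as $\ad_t\sk_0 = \alo_t^*\alo_t\sk_0 + \alo_t\alo_t^*\sk_0$ and apply Lemma \ref{prop:secondrepackage} (valid because $\sk_0$ is critical) to each summand, so that $\ad_t\sk_0 = \ek\,\ak + \ek\,\wk + o_e(\ek)$ with $\ak = \flat\alo\sk_0$ and $\wk = \flat^*\alo^*\sk_0$. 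The same lemma guarantees that $\ak$ and $\wk$ are \emph{constant} vectors supported on $\d f\der\flat_{<\sk_0}$ and $\d f\der\sharp_{>\sk_0}$ respectively --- exactly the hypotheses needed to invoke Lemma \ref{prop:repackage1}.

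The second step is to apply Lemma \ref{prop:repackage1} to the pair $(\ek\ak,\ek\wk)$ with its exponent taken to be $\n$, which gives
\[
\ad_t^\n(\ek\ak + \ek\wk) = \ek(\flat\alo)^\n\ak + \ek(\flat^*\alo^*)^\n\wk + o_e(\ek),
\]
the first term supported on $\d f\der\flat_{<\sk_0}$ and the second on $\d f\der\sharp_{>\sk_0}$. Substituting back $\ak = \flat\alo\sk_0$ and $\wk = \flat^*\alo^*\sk_0$ turns this right-hand side into precisely the right-hand side of \eqref{eq:deltaform}, together with the asserted supports. All that remains is to absorb the extra term $\ad_t^\n(o_e(\ek))$ coming from the remainder in $\ad_t\sk_0$; i.e.\ to check that $\ad_t^\n$ maps the class $o_e(\ek)$ into itself.

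I expect this last verification to be the only delicate point. I would settle it by a short induction on $\n$ --- the case $\n = 0$ being trivial --- for which it suffices that a single application of $\ad_t = \alo_t\alo_t^* + \alo_t^*\alo_t$ preserves $o_e(\ek)$. Decomposing an error in this class into its parts supported on $\d f\der\flat_{<\sk_0}$, on $\d f\der\sharp_{>\sk_0}$, and on the complementary set, the bounds needed are exactly the aligned estimates of Lemmas \ref{prop:secondestimate} and \ref{prop:dualtosecondestimate} together with the cross-term estimates \eqref{eq:crossterm1}--\eqref{eq:crossterm2}, all of which return outputs again lying in $o_e(\ek)$; this is the same computation that proves Lemma \ref{prop:repackage1}. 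The one subtlety is that the remainder furnished by Lemma \ref{prop:secondrepackage} must carry no stray support, and it does not: since $\sk_0$ is critical, the case analysis behind \eqref{eq:LtLest}--\eqref{eq:LLtest} puts every non-$o_e(\ek)$ contribution to $\alo_t^*\alo_t\sk_0$ (resp.\ $\alo_t\alo_t^*\sk_0$) onto $\d f\der\flat_{<\sk_0}$ (resp.\ $\d f\der\sharp_{>\sk_0}$). With that bookkeeping done, \eqref{eq:deltaform} follows by direct substitution, matching the lemma's billing as a ``direct synthesis of the two preceding remarks.''
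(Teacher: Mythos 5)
Your proposal is correct and is exactly the argument the paper has in mind: the paper gives no written proof of this lemma, introducing it only with the phrase that it ``is a direct synthesis of the two preceding remarks,'' and you have reconstructed that synthesis faithfully---apply Lemma~\ref{prop:secondrepackage} to write $\ad_t\sk_0 = \ek\,\flat\alo\sk_0 + \ek\,\flat^*\alo^*\sk_0 + o_e(\ek)$, then feed the constant vectors $\flat\alo\sk_0$ and $\flat^*\alo^*\sk_0$ (supported as required) into Lemma~\ref{prop:repackage1} with exponent $\n$, and absorb $\ad_t^\n(o_e(\ek))$ by the same $O(e^{-|\tk-\sk|t})$ bounds on the entries of $\ad_t$ that underlie Lemma~\ref{prop:repackage1}. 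Your identification of $\d f\der\flat_{<\sk_0}$ and $\d f\der\sharp_{>\sk_0}$ with the paper's otherwise-undefined symbols $x_{<\sk_0}$ and $x^*_{>\sk_0}$ in the support claim is also the right reading.
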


We are now ready to state the main observation.  In preparation, let $\lk_\flat$ and $\lk_\sharp$ denote the inclusion  and projection maps
\begin{align*}
\spann \left ( \d f\der \flat \right)  \lr{} V && and && V \lr{} \spann \left( \d f\der \sharp \right ).
\end{align*}
Each $v \in V$ may be uniquely expressed  as the sum of an element in the image of
\begin{align}
\times(E - \d f\der \flat).
\end{align}
and  one from the image of $ \times \d f\der \flat $.  We refer to the latter as the \emph{flat component} of $v$.

\begin{proposition} \label{prop:mainwittenprop}
For critical $\sk$ and $\tk$, 
$$
\nr{ \alo_t\pi_t \tk,  \pi_t \sk}  \in  e^{\tk - \sk} \tk \alo \k {} \sk + o_e(e^{\tk - \sk}).
$$
\end{proposition}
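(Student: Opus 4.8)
The plan is to follow the blueprint of Forman's argument in \cite{FormanWitten98}, but at the level of abstraction established above — arbitrary $T$ on a finite-dimensional real vector space, no assumption $T^2=0$, and with the limiting coefficient identified as the Schur-type expression $\tk \alo k \sk$ rather than a count of gradient paths. The key reduction is that $\pi_t$ has been given the explicit polynomial form $\pi_t = (1-\ad_t^N)^N + O(e^{-cNt})$ in the operator norm, so everything reduces to computing iterates of $\ad_t$ applied to critical basis vectors $\sk_0$, modulo the error class $o_e$.

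First I would unwind $\nr{\alo_t\pi_t\tk, \pi_t\sk}$ using $\pi_t = (1-\ad_t^N)^N + O(e^{-cNt})$ and the self-adjointness of $\pi_t$, rewriting it as $\nr{\alo_t (1-\ad_t^N)^N\tk, (1-\ad_t^N)^N\sk}$ up to negligible terms. Expanding $(1-\ad_t^N)^N$ by the binomial theorem turns this into a finite sum of terms of the form $\nr{\alo_t \ad_t^{aN}\tk, \ad_t^{bN}\sk}$. The workhorse is Lemma \ref{prop:synthesis}: for critical $\sk_0$, the iterate $\ad_t^{n+1}\sk_0$ equals $\ek(\flat\alo)^n(\flat\alo\sk_0) + \ek(\flat^*\alo^*)^n(\flat^*\alo^*\sk_0) + o_e(\ek)$, with the two principal terms supported on $\d f\der\flat_{<\sk_0}$ and $\d f\der\sharp_{>\sk_0}$ respectively. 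Feeding these into $\alo_t$ and pairing against the analogous expansion for $\sk$, I would use Lemmas \ref{prop:repackage1}, \ref{prop:secondestimate}, \ref{prop:dualtosecondestimate}, and the cross-term estimates \eqref{eq:crossterm1}–\eqref{eq:crossterm2} to collapse the surviving contributions. The scalar weights $e^{-|\cdot-\cdot|}$ compose additively along chains, so all the $\ek$ factors telescope; most terms fall into $o_e(e^{\tk-\sk})$ because a strict inequality appears somewhere in the exponent, and exactly one geometric series of principal terms survives.

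The surviving series is where $k$ enters. Summing the principal contributions over all iterate-depths amounts to summing $\sum_{p\ge 0}(\flat\alo)^p$ applied to $\flat\alo\sk_0$ — precisely the Neumann series that defines the idempotent projection $\k{}$ onto the null space of $(\times\d f\der\sharp)\alo$ along the image of $\oplus\d f\der\flat$, exactly as in the M\"obius-inversion / Schur-complement calculation of \S\ref{sec:mobius} and Proposition \ref{prop:jordancomplementmobiusformula}. (Here the acyclicity of $\d f$ guarantees nilpotence of $\flat\alo$, so the series is finite and the choice of $N$ large enough kills the tail.) This identifies the limiting coefficient as $\tk\alo\k{}\sk$ with the weight $e^{\tk-\sk}$, completing the estimate.

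The main obstacle, and the step I expect to require the most care, is the bookkeeping in the binomial expansion of $(1-\ad_t^N)^N$: one must verify that the cross-terms between the ``$\flat$-branch'' of $\ad_t^{aN}\tk$ and the ``$\sharp$-branch'' of $\ad_t^{bN}\sk$ (and vice versa) genuinely lie in $o_e(e^{\tk-\sk})$, using that $\d f$ is canted so $\d f\der\sharp$ and $\d f\der\flat$ are disjoint, and that the support constraints ($\d f\der\flat_{<\sk_0}$ versus $\d f\der\sharp_{>\sk_0}$) force a strict drop in the exponent whenever the two branches meet. Once that is pinned down, the remaining algebra — telescoping the $\ek$ factors and recognizing the Neumann series for $\k{}$ — is routine. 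I would then remark that Theorem \ref{thm:schurformanwitten} follows from Proposition \ref{prop:mainwittenprop} by the change of basis $h_\sk = G^{-1/2}g_\sk$, since $G = 1 + o_e(1)$ and hence $G^{-1/2}$ contributes only to the error, exactly as in Forman's concluding step.
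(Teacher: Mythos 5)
Your outline diverges from the paper's proof in a way that leaves two genuine gaps. First, the paper never expands $(1-\ad_t^N)^N$ by the binomial theorem. It instead (i) constructs a small correction $v = \fk_\flat\inv(\flat\alo_t)u$ supported on $\d f\der\flat$ so that $\k{t}u = u \mp v$ lies in the image of the idempotent $\k{t}$; (ii) proves a free-standing claim $\nr{u_\tk, \alo_t\k{t}\hk} = e^{\tk-\hk}\tk\alo\k{}\hk + o_e(e^{\tk-\hk})$ for basis elements $\hk\in E - \d f\der\flat$, by a three-case analysis on the running index $\mk$; and (iii) applies that claim term-by-term to $\alo_t\k{t}u = \sum_\hk \hk^\sharp(u)\,\alo_t\k{t}\hk$, isolating $\hk=\sk_0$. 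The $\k{}$ in the final coefficient enters directly from the idempotent inserted into the inner product, \emph{not} from any surviving series — that insertion is what the correction $v$ licenses, and your plan has no analogue of it.

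The more serious gap is the claim that the surviving principal terms form the Neumann series $\sum_{p\ge 0}(\flat\alo)^p$ applied to $\flat\alo\sk_0$. The operator $\flat\alo$ restricted to $\spann(\d f\der\flat)$ is, up to the relabeling $\flat$, the block $\fk_\flat = \flat\lk_\sharp\alo_t\lk_\flat$, and the matching hypothesis makes this block \emph{invertible}, not nilpotent: its diagonal entries are the matched coefficients $\alo(\mk,\mk\der\flat)$, which are nonzero. So the iterates $(\flat\alo)^p(\flat\alo\sk_0)$ do not die out, the geometric series $\sum_p(\flat\alo)^p$ will generally diverge, and in any case it does not equal $\fk_\flat\inv$, which is the quantity that actually appears in $\k{}\sk_0 = \sk_0 - \lk_\flat\fk_\flat\inv\lk_\sharp\alo\sk_0$. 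Recovering $\fk_\flat\inv$ from a path sum requires the \emph{full} M\"obius formula of Theorem~\ref{thm:mobiusgeneral}, with alternating signs and the diagonal normalization $A[p]$, not $\sum_p(\flat\alo)^p$; the paper sidesteps the issue entirely via the indirect argument that $\ad_tu\in o_e(\ek)$, which determines the leading flat and sharp components of $u$ without computing any series. Relatedly, you never establish the structural identity $\mk\alo\k{}\hk = 0$ for $\mk\ge\hk$ (Case~2 of the paper's second claim), and without it the ``strict inequality somewhere in the exponent'' you invoke to kill cross-terms simply fails to hold for $\mk\ge\hk$. Your concluding remark about passing from $g_\sk$ to $h_\sk$ via $G^{-1/2} = 1 + o_e(1)$ does match the paper's final step.
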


\begin{proof}
Let $\fk_\flat$ and $\fk_\sharp$ denote the isomorphisms
\begin{align*}
\fk_\flat = \flat \lk_\sharp \alo_t \lk_\flat && and && \fk_\sharp =  \flat^*\lk_\flat^* \alo_t^* \lk_\sharp^*
\end{align*}
respectively.  Under this convention \eqref{eq:deltaform} may be expressed
$$
\ad^{\n}_t(\ek \ak + \ek \wk) = \ek\fk_\flat^\n\ak  + \ek \fk_\sharp^\n \wk + o_e(\ek).
$$
If
$$
u = (1- \ad_t^\n)^\n \sk_0.
$$
then  $\lk_\sharp \alo_t u$ does not in general vanish, however we claim that there is a ``small'' time-varying vector $v$ such that $\lk_{\sharp} \alo_t (u-v)$ does vanish, specifically, 
$$
v = \fk \inv_\flat (\flat  \alo_t) u.
$$

\noindent \emph{Claim:}  $v \in o_e(\ek)$.

\noindent \emph{Proof:}  There exist constant vectors $\ak$, $\wk$ supported on $\d f\der \flat_{<\sk_0}$ and $\d f\der \sharp_{>\sk_0}$, respectively, such that $\ad_t u = \ek \ak + \ek \wk + o_e(\ek)$.   Precomposition with $\ad^{N-1}$ yields $O(e^{-cNt})$ on the lefthand side, and $\ek \fk_\flat^{N-1} \ak + \ek \fk_{\sharp}^{N-1} \wk + o_e(\ek)$ on the right.   As $\fk_\flat$ and $\fk_\sharp$ are isomorphisms, it follows that $\ak$ and $\wk$  vanish for sufficiently large $N$.  When they do, $\ad_t u \in o_e(\ek)$.   The flat components of  $\ad_t u$ and $(\flat \alo_t) u$ agree up to an error of $o_e(\ek)$, so the latter is $o_e(\ek)$, also.  Moreover, it is simple to check that $\fk_\flat\inv $ sends $O_e(\ek)$ to $O_e(\ek)$ and, consequently, $o_e(\ek)$ to $o_e(\ek)$. (The operant observation is that  $\fk_\flat\inv $ is triangular, and its $(\tk, \sk)$ entry is proportional to $e^{-|\tk - \sk|}$).  Consequently $\fk_\flat\inv (\flat \alo_t) u \in o_e(\ek)$, which was to be shown.

Now, fix $\tk \in y$, and set $u_{\tk} = (1- \ad_t^N)^N \tk$.  Since
\begin{align*}
\nr{u_\tk, \alo_t u } = \nr{\pi \tk, \alo_t \pi \sk_0} + O(e^{-cNt}) && \nr{u_\tk, \alo_t  v} = O(e^{\tk - \sk}),
\end{align*}
our ultimate objective may be realized by establishing
\begin{align}
\nr{u_\tk, \alo_t (u+v) }  \in e^{\tk - \sk_0} \alo \k{0} \sk_0 + o_e(e^{\tk - \sk}). \label{eq:estimatedinnerproduct}
\end{align}
For this we require one further  observation.

\noindent \emph{Claim:}  $\nr{u_\tk, \alo_t \k{t} \hk} = e^{\tk - \hk} \tk \alo \k{0} \hk + o_e(e^{\tk - \hk})$, for any $\hk \in E- \d f\der \flat$.

\noindent \emph{Proof:} The inner product is a sum of terms that are either proportional to or dominated by $\mk \cdot e^{\mk - \hk - |\mk - \tk|}$, with $\mk$ running over all cells.  We consider the individual contribution of each term in three exhaustive cases: 

\noindent \emph{Case 1:} $\mk = \tk$.  This term contributes $e^{\tk - \ck} \tk \alo k \hk$.  

\noindent \emph{Case 2:}  $\mk \ge \hk$.  
Since $
\k{t} = e^{tf} \k{} e^{-tf},
$
one has   
$$
e^{tf} \alo \k{} e^{-tf} = \alo_t \k{t}.
$$  
As $\alo_t$ tends to zero on $E - \d f\der \flat$ and $\k{t}$ tends to orthogonal projection onto the span of $E - \d f\der \flat$, the product $\alo_t \k{t}$ tends to zero.  
%
Consequently $\mk \alo_t \k{t} \hk = e^{\mk - \hk} \mk \alo \k{} \hk$ tends to zero.  When $\mk \ge \hk$, this is only possible if $\mk \alo \k{} \hk$ vanishes.  The contributions of all such $\mk$ are therefore vacuous. 

\noindent \emph{Case 3:} $\mk < \hk$.  If $\mk < \tk$ then the exponent $\mk - \hk - |\mk-\tk|$ is strictly lower than $-|\hk - \tk|$.   If $\tk < \mk$ then $\mk - \hk - |\mk-\tk| = |\hk - \tk|$, and we may consider three subcases: (a) $\mk \in \d f\der \sharp$.  By definition of $k$, $\mk \alo \k{} \hk$ vanishes.  (b) $\mk \notin  \d f\der \flat \cup \d f\der \sharp$.  The $\mk$-component of $u_\tk$ is strictly dominated by $e^{-|\mk - \tk|}$, so the contribution is strictly dominated by $e^{-|\hk - \tk|}$.  (c) $\mk \in \d f\der \flat$.  Since by assumption $ \tk < \mk$,  the $\mk$-component of $u_\tk$ is  strictly dominated by $e^{-|\mk - \tk|}$, hence the contribution is strictly dominated by  $e^{-|\mk - \tk|}$.

The stated claim follows.
 
Since $u + \uk = \k{t} u$, one has $\k{t} (u + \uk) = u+ \uk$, and therefore
$$
\alo_t(u+v) = \alo_t \k{t} (u+v).
$$
Recalling that $\k{t}$ annihilates  $\d f\der \flat$, we may express the righthand side as a sum of terms $\alo_t \k t \hk (u+v)$ with $\hk$ running over $E - \d f\der \flat$.  The second claim asserts that, up to negligible error, the inner product of $u_\tk$ with any such term is 
$$
e^{\tk - \hk }  [\hk(u+v)]  (\tk \alo \k{} \hk).
$$
If $\hk = \sk_0$ then $\hk (u + v) = 1 + o_e(1)$.  This term contributes $e^{\tk - \sk_0} \tk \alo \k{} \sk_0 + o_e(e^{\tk - \sk_0})$ to \eqref{eq:estimatedinnerproduct}.  
If $\hk \neq \sk$ and either $\hk \notin (\d f\der \flat \cup \d f\der \sharp)$ or $\hk  \in  \d f\der \sharp_{\le \sk_0}$, then $\hk(u+v) \in o_e(e^{-|\hk - \sk_0|})$, so its contribution is $o_e(e^{\tk - \sk_0})$.  If $\hk \in \d f\der \sharp_{> \sk_0}$ then  $e^{\tk - \sk_0}$ strictly dominates $e^{\tk - \hk}$.  As $\hk(u+v)$ is bounded, $e^{\tk - \sk_0}$ strictly dominates this contribution as well.  In summary, we have established \eqref{eq:estimatedinnerproduct}, which was to be shown.
\end{proof}

One may rephrase Proposition \ref{prop:mainwittenprop} as the statement that
$$
\nr{T_t g_\tk,  g_\sk} \in e^{\tk - \sk} \tk T \k{} \sk + o_e(e^{\tk - \sk})
$$
for critical $\sk$ and $\tk$.  We would like the same to hold for $\nr{T_th_\tk,  h_\sk}$.  To check that it does, let us first bound $G$.

\begin{proposition}
For critical $\sk_0$ and $\sk_1$, 
\begin{align}
G_{\sk_0, \sk_1} = \nr{g_{\sk_0}, g_{\sk_1}} = 
\begin{cases}
o_e(e^{-|\sk_0- \sk_1|}) & \sk_0 \neq \sk_1\\
1 + o_e(1) & \sk_0 = \sk_1.
\end{cases} \notag
\end{align}
\end{proposition}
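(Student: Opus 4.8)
The plan is to mimic the structure of the preceding lemmas, using the explicit construction $\pi_t = (1 - \ad_t^N)^N + O(e^{-cNt})$ together with the expansion of $\ad_t^{n+1}\sk_0$ from Lemma~\ref{prop:synthesis}. First I would record the obvious facts $g_{\sk_0} = \pi_t \sk_0$ and $\nr{g_{\sk_0}, g_{\sk_1}} = \nr{\pi_t \sk_0, \sk_1}$ (since $\pi_t$ is an orthogonal projection, hence self-adjoint and idempotent). This reduces the claim to estimating $\nr{\pi_t \sk_0, \sk_1}$ for critical cells $\sk_0, \sk_1$. Using $\pi_t = (1-\ad_t^N)^N + O(e^{-cNt})$ and expanding the binomial, $\pi_t \sk_0$ is, up to $O(e^{-cNt})$, a $\Z$-linear combination of the vectors $\ad_t^{jN}\sk_0$ for $j = 0, \ldots, N$.

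The heart of the argument is then Lemma~\ref{prop:synthesis}, which gives, for $m \ge 0$,
\[
\ad_t^{m+1}\sk_0 = \ek(\flat \alo)^m(\flat \alo \sk_0) + \ek(\flat^* \alo^*)^m(\flat^* \alo^* \sk_0) + o_e(\ek),
\]
where $\ek(\sk) = e^{-|\sk - \sk_0|}$, the first term supported on $x_{<\sk_0}$ and the second on $x^*_{>\sk_0}$. In particular, for every $m \ge 0$ the vector $\ad_t^{m+1}\sk_0$ lies in $o_e(\ek) + \operatorname{span}(\ek \ak + \ek \wk)$ with $\ak, \wk$ supported strictly below, resp.\ strictly above, $\sk_0$ in $f$-value; so all of its entries are in $O_e(\ek)$, and in fact its $\sk_1$-entry is $o_e(1)$ whenever $\sk_1 = \sk_0$ is excluded, since $\ak$ and $\wk$ are supported on $\d f\der\flat \cup \d f\der\sharp$, which is disjoint from the critical set $\ag$, so $\sk_0, \sk_1 \in \ag$ forces the $\sk_1$-component of every $\ad_t^{m+1}\sk_0$ with $m \ge 0$ to vanish. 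Therefore $\nr{\ad_t^{jN}\sk_0, \sk_1} = o_e(1)$ for $j \ge 1$ and $\sk_0 \neq \sk_1$, while the $j=0$ term contributes $\nr{\sk_0, \sk_1} = 0$; summing and absorbing the $O(e^{-cNt})$ error gives $G_{\sk_0,\sk_1} \in o_e(e^{-|\sk_0-\sk_1|})$ when $\sk_0 \neq \sk_1$ (the exponential weight $e^{-|\sk_0-\sk_1|}$ being then $\le 1$, so $o_e(1)$ certainly fits inside $o_e(e^{-|\sk_0-\sk_1|})$ once one tracks the correct decay rate --- see the refinement below). For the diagonal case $\sk_0 = \sk_1$, the $j=0$ term gives $\nr{\sk_0,\sk_0} = 1$, and every $j \ge 1$ term is $o_e(1)$ as above, yielding $G_{\sk_0,\sk_0} = 1 + o_e(1)$.

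The step I expect to be the main obstacle is getting the \emph{sharp} exponential rate $e^{-|\sk_0-\sk_1|}$ rather than merely $o_e(1)$ in the off-diagonal case: one must verify that in the expansion of $\ad_t^{jN}\sk_0$ the contribution to the $\sk_1$-component is not just small but decays at least like $e^{-(1+\delta)|\sk_0-\sk_1|}$ for some $\delta > 0$. This requires an induction on the iterates $(\flat\alo)^m$ and $(\flat^*\alo^*)^m$ tracking how the weight $\ek$ propagates: each application of $\flat\alo$ or $\flat^*\alo^*$ multiplies the relevant matrix entry by a factor proportional to $e^{-|\cdot - \cdot|}$ (by the estimates \eqref{eq:LtLest}, \eqref{eq:LLtest} and Lemmas~\ref{prop:secondestimate}--\ref{prop:dualtosecondestimate}), and since all intermediate cells lie in $\d f\der\flat \cup \d f\der\sharp$ while $\sk_1$ is critical, the path from $\sk_0$ to $\sk_1$ through such cells is forced to be strictly monotone in $f$-value at the two ends, producing the extra strict inequality. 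This is exactly the triangular-and-proportional-to-$e^{-|\tk-\sk|}$ structure already exploited in the proof of Proposition~\ref{prop:mainwittenprop} (the parenthetical remark about $\fk_\flat\inv$ being triangular with $(\tk,\sk)$ entry proportional to $e^{-|\tk-\sk|}$), so the estimate can be quoted in essentially the same form; the remaining work is bookkeeping of cases analogous to Cases 1--3 in that proof.
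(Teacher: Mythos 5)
Your proposal is correct and takes essentially the same route as the paper: express $\pi_t\sk_0 = (1-\ad_t^N)^N\sk_0 + O(e^{-cNt}) = \sk_0 + \ek\ak + \ek\wk + o_e(\ek)$ using Lemma~\ref{prop:synthesis}, observe that the $\ek\ak$ and $\ek\wk$ parts are supported off the critical set, and take the inner product (the paper pairs $\pi_t\sk_0$ against $\pi_t\sk_1$, you pair against $\sk_1$ via self-adjointness of $\pi_t$ --- both work).

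One correction, though: the ``refinement'' you flag as the main obstacle is not actually needed. You under-read Lemma~\ref{prop:synthesis}. Since $\ek(\sk) = e^{-|\sk - \sk_0|}$ \emph{by definition}, the $o_e(\ek)$ error term already says that the $\sk_1$-component of $\ad_t^{m+1}\sk_0$ is $o_e(\ek(\sk_1)) = o_e(e^{-|\sk_1-\sk_0|})$, not merely $o_e(1)$. So the off-diagonal estimate $G_{\sk_0,\sk_1} \in o_e(e^{-|\sk_0-\sk_1|})$ falls out immediately once one notes that $\sk_1 \notin \d f \der \flat \cup \d f \der \sharp$ kills the two leading terms; there is no further induction or bookkeeping over the iterates $(\flat\alo)^m$ to carry out. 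The decay you want is already priced into the definition of $\ek$ and the $o_e$ notation --- the real work was done in Lemmas~\ref{prop:repackage1}--\ref{prop:synthesis}, and the proposition is a two-line corollary.
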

\begin{proof}
For $N$ sufficiently large,  the projection  $\pi \sk_0 = (1-\ad_t^N)^N \sk_0 + O(e^{-cNt})$ may be expressed
$
\sk_0 + \ek \ak + \ek\wk + o_e(\ek)
$
for some $\ak, \wk$ supported on $\d f \der \flat_{< \sk_0}$ and $\d f \der \sharp_{> \sk_0}$, respectively.  A similar expression may be derived for $\pi_{\sk_1}$.   The desired conclusion follows.
\end{proof}

As an immediate consequence, one has
\begin{align}
(G^{-1/2})_{\sk_0, \sk_1}  = 
\begin{cases}
o_e(e^{-|\sk_0- \sk_1|}) & \sk_0 \neq \sk_1\\
1 + o_e(1) & \sk_0 = \sk_1.
\end{cases}  \notag
\end{align}
for critical $\sk_0, \sk_1$, whence
\begin{align*}
\nr{ T_t  h _\tk, h_\sk} & = \sum_{(\sk_1, \tk_1) \in \ag} (G^{-1/2})_{\tk \tk_1} \nr{ T_t g_{\tk_1}, g_{\sk_1}} (G^{-1/2})_{\sk_1\sk} \\
& = (G^{-1/2})_{\tk \tk} \nr{T_t g_\tk, g_\sk} (G^{-1/2})_{\sk \sk} + \sum_{\ag  - \{(\sk, \tk)\}}  (G^{-1/2})_{\tk \tk_1} \nr{ T_t g_{\tk_1}, g_{\sk_1}} (G^{-1/2})_{\sk_1\sk}.
\end{align*}
The first term is 
$$
e^{\tk - \sk} \tk T \k{} \sk + o_e(e^{\tk - \sk})
$$
while the second is a sum over $(\sk_1, \tk_1) \in \ag - \{(\sk, \tk)\}$ of terms in
$$
O_e(e^{-|\tk - \tk_1|}) O_e (e^{\tk - \sk}) O_e(e^{-|\sk - \sk_1|}).
$$
Each of these is $o_e(e^{\tk - \sk})$, so
$$
\nr{T_t h_\tk, h_\sk}  \in e^{\tk - \sk} \tk T \k{} \sk + o_e(e^{\tk - \sk})
$$
as desired.

\chapter{Abelian Matroids}

This chapter introduces a natural extension to the idea of a linear matroid representation: a \emph{(semisimple) abelian}  representation.  Several of the prime movers in linear representation theory --  deletion, contraction, and duality -- have natural analogs for abelian representations.  These objects are simpler and more general than their linear counterparts;  the relation between primal and dual representations, for example, is little more than a restatement of the (categorical) exchange lemma.   Duality is broadly recognized as a primary source of depth and structure in the study of matroids, and the light cast on this construction by the categorical approach speaks to real potential for interaction between the fields.

For the reader unfamiliar with the language of category theory, we invoke the conventions laid out at the opening to Chapter \ref{ch:exchangeformulae}.  The terms \emph{object} and \emph{morphism} may be replaced  by $\field$\emph{-linear vector space} and $\field$\emph{-linear map}, respectively.  The term \emph{map} is occasionally used in place of  morphism.   A \emph{monomorphism} is an injection and an \emph{epimorphism} is a surjection.   An \emph{endomorphism} on $W$ is a morphism $W \to W$, and an \emph{automorphism} is an invertible endomorphism.  With these substitutions in place, the phrases \emph{in a preadditive category} and \emph{in an abelian category} may be stricken altogether.

\section{Linear Matroids}

Let us recall the elements of linear representation theory outlined in \S\ref{sec:basisexchange}.  

\subsubsection{Linear representations}

A $\field$-\emph{linear representation} of a matroid $\mc = (E, \ic)$ is a function
$$
r: E \to W
$$
such that 
\begin{enumerate}
\item $W$ is a $\field$-linear vector space, and 
\item $S \su E$ is independent in $\mc$ iff $r|_S$ is a linearly independent indexed family in $W$. 
\end{enumerate}

\begin{remark} Independence for $r|_S$ implies independence for $r(S)$, but the converse may fail.   For example, suppose $\mc$ is the matroid on ground set $E = \{ 1, 2\}$ for which every subset of cardinality strictly less than two is independent.  Then the constant map $r: E \to R$ sending each element to 1 is a linear representation.  However $r |_E$ is not linearly independent as an indexed family in $\R$, while $r(E) = \{1 \}$ is.
\end{remark}

 To every linear representation $r$ and every $S \su E$ correspond a canonical \emph{restriction} and a non-canonical \emph{contraction}  operation (Lemma \ref{lem:linrepminors}).     We refer to $r|_S$ as the restriction of $r$ to $S$, and to $q \com \left(r|_{E-S} \right)$ as the \emph{contraction} of $r$ by $S$.  We do not define an operation to produce a matrix representation of  $\mc^*$, given $r$, either canonical or otherwise.

 \begin{lemma} \label{lem:linrepminors} If $r: E \to W$ is a linear representation of $\mc$, then
 \begin{align*}
 r|_{S}  && and && q \com \left(r|_{E-S} \right)
 \end{align*}
 represent $\mc|_S$ and $\mc/S$, respectively, where $q$ is any morphism such that
 $$
\K(q) =  \mathrm{\emph{span}}(S).
 $$
 \end{lemma}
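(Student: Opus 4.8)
The plan is to handle the two claims separately. The restriction statement is essentially a tautology: the ground set of $\mc|_S$ is $S$, the map $r|_S$ lands in the $\field$-linear space $W$, and for $I \su S$ one has $(r|_S)|_I = r|_I$ while $I$ is independent in $\mc|_S$ exactly when $I \in \ic$; so the defining property of $r$ as a representation of $\mc$ restricts verbatim to the asserted property of $r|_S$. I would dispatch this in a sentence or two.

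For the contraction statement the first step is to read $\mathrm{span}(S)$ as $\mathrm{span}(r(S)) \su W$ (per the convention identifying $S$ with its image), so that $K := \K(q) = \mathrm{span}(r(S))$. I would then fix a basis $B$ of the matroid $\mc|_S$ and record that $r(B)$ is a basis of $K$: linear independence of $r|_B$ is immediate from $B \in \ic$, and $r(B)$ spans $r(S)$ because for each $s \in S$ maximality of $B$ forces $B \cup \{s\}$ to be dependent in $\mc$, hence $r|_{B \cup \{s\}}$ carries a nontrivial linear relation whose coefficient on $r(s)$ cannot vanish (it would otherwise contradict independence of $r|_B$), giving $r(s) \in \mathrm{span}(r(B))$.

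The heart of the argument is then the chain of equivalences, for $I \su E - S$: $I$ is independent in $\mc/S$ $\iff$ $I \cup B$ is independent in $\mc$ $\iff$ $r|_{I \cup B}$ is a linearly independent indexed family $\iff$ $q \com r|_I$ is a linearly independent indexed family. The first equivalence is condition (i) in the definition of the contraction minor (applied with the fixed basis $B$ of $S$); the second is the representation property of $r$; the third is the linear-algebra step. For that step, note $I \cap B = \emptyset$, so a relation $\sum_{i} c_i\, q(r(i)) = 0$ is the same as $\sum_i c_i\, r(i) \in K = \mathrm{span}(r(B))$, i.e.\ the same as a linear relation among $r|_{I \cup B}$; reading this in both directions and using independence of $r|_B$ to annihilate the $B$-coefficients yields the equivalence. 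Since $q \com r|_I = (q \com r|_{E - S})|_I$, the chain says precisely that $q \com (r|_{E-S})$ represents $\mc/S$, as a map into $\D\op(q)$ on ground set $E - S$.

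The only point that needs care is the standing distinction between a linearly independent \emph{indexed family} and its underlying image set — in particular not silently assuming $r$ is injective — and the choice to route everything through a single fixed basis $B$ of $S$ via condition (i), rather than through condition (iii), which would instead force comparing arbitrary independent subsets of $K$ with images of independent subsets of $S$. With $B$ fixed the bookkeeping is routine, and I anticipate no genuine obstacle beyond it.
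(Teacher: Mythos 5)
Your proof is correct.  The paper actually states Lemma~\ref{lem:linrepminors} without supplying a proof — it is presented as a recalled fact whose proof the author regards as standard — so there is no argument in the text to compare against.  That said, your argument is the one the surrounding material invites: the restriction half really is a tautology, and for the contraction half you route everything through a fixed basis $B$ of $S$ and use condition~(i) from the paper's own discussion of contraction minors in the Minors section, which is exactly the right handle.  Two small points worth keeping explicit, both of which you already flag: first, $\mathrm{span}(S)$ in the statement must be read as $\mathrm{span}(r(S))$ (the paper tacitly identifies $S$ with its image under $r$); second, independence is a property of the indexed family $r|_{I \cup B}$, not of the image set, and your ``annihilate the $B$-coefficients'' step is what makes the third equivalence go through cleanly — if all the $I$-coefficients vanished, the residual relation on $r|_B$ would contradict the independence of $B$.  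The disjointness $I \cap B = \emptyset$ (forced by $I \su E - S$, $B \su S$) is what keeps the bookkeeping on the indexed family honest, and you use it exactly where it is needed.
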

 
 \subsubsection{Matrix representations}

A $\field$-linear \emph{matrix representation} of $\mc$ is an array $M \in \field^{I \times E}$ such that 
$$
r_M: E \to \field^I
$$
is a linear representation, where 
$$
r_M(e)(i) = M(i,e)
$$
for all $i \in I$ and  $e \in E$.

For every subset $S \su E$ the  restriction $M|_{I \times S}$ yields a canonical representation of $\mc|_E$.   There is no analogous operation to produce a canonical representation of $\mc/S$, however there are many non-canonical operations. To illustrate, fix subsets $\ak \su I$ and $\bk \su S$ such that $M(\ak, \bk)$ is invertible and $|\ak| = |\bk| = \rk(S)$.  For convenience identify each $i \in I$ with the standard unit vector $\chi_i \in \field^I$,  and let $T$ be any endomorphism on $\field^I$ such that
\begin{align*}
T r_M(\bk) = \ak && and && T(I- \ak) = I - \ak.
\end{align*}
If $U = q T$ is the postcomposition of $T$ with the deletion operator $q:\field^I \to \field^{I-\ak}$, then the kerenel of $U$ is the linear span of $r_M(S)$, hence  
\begin{align}
s = (U \com  r_M)|_{E - S}  \label{eq:s}
\end{align}
is a linear representation of $\mc/S$.  The array $N \in \field^{{(I - \ak)} \times {(E- S)}}$ defined
$$
N(i,e) = s(e)(i)
$$
is then a matrix representation $\mc/S$.  We do not define an operation to produce a matrix representation of  $\mc^*$, given $M$, either canonical or otherwise.

\subsubsection{Based Representations}

A \emph{based} representation is a pair $(r,B)$, where  $B \in \bc(M)$ and
$$
r: E \to \field^B
$$
satisfies
$$
r(b) = \chi_b
$$
for all $b \in B$.  Based representations inherit the canonical restriction operation $r \mapsto r|_S$ and the non-canonical contraction operation $r \mapsto s$, where $s$ is the representation defined by \eqref{eq:s} in the special case where  $M$ satisfies $r = r_M$.

Based representations have, in addition, a canonical dual operation $(r,B) \mapsto (r^*, E - B)$, where
$$
r: E \to \field^{E - B}
$$
satisfies
$$
r^*(e)(b) = r(b)(e)
$$
for all $e \in (E - B)$ and all $b \in B$.  That $r^*$  is a {bona fide}  representation for $\mc^*$ is a nontrivial fact of representation theory.

\subsubsection{Standard Representations}
 
 An array $M \in \field^{B \times (E - B)}$ is a \emph{$B$-standard} matrix representation of $\mc$ if the pair $(r,B)$ is a based representation, where $r: E \to \field^B$ is the function defined by
 \begin{align*}
 r(b) = \chi_b && r(e)(b) = M(b,e)
 \end{align*}
 for all $b \in B$ and all $e \in (E - B)$.
 
It is worth noting that standard representations are not matrix representations, since their column indices do not run over all of $E$.  Every standard representation uniquely determines a matrix representation, however, which may be characterized either as the unique array $N$ such that $r = r_N$, or more concretely by
$$
N = \left [\;\; \dk^B \;\; | \;\; M \;\;  \right],
$$
where $\dk^B$ is the Dirac delta on $B \times B$.

\begin{lemma} \label{lem:matroidlindual} If \emph{$M \in \field^{B \times (E - B)}$} is a standard representation for $\mc$, then 
$$
M ^* \in \mathrm{\emph{$\field$}}^{(E - B) \times B}
$$
is a standard representation for $\mc^*$.
\end{lemma}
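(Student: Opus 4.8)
The statement to prove is Lemma \ref{lem:matroidlindual}: if $M \in \field^{B \times (E-B)}$ is a $B$-standard representation of $\mc$, then $M^* \in \field^{(E-B)\times B}$ is an $(E-B)$-standard representation of $\mc^*$. My plan is to reduce this to the already-asserted fact about based representations, namely the paragraph preceding this lemma: ``that $r^*$ is a bona fide representation for $\mc^*$ is a nontrivial fact of representation theory.'' Since that claim is stated (if not reproved) in the excerpt, I am entitled to invoke it. The entire content of the present lemma is then bookkeeping: unwinding the definitions of ``standard representation'' and ``based representation'' to see that the dual of a $B$-standard representation is, up to the canonical identifications, an $(E-B)$-standard representation.

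\textbf{Key steps.} First I would recall that a $B$-standard matrix representation $M \in \field^{B\times(E-B)}$ is by definition precisely the data of the based representation $(r,B)$ with $r\colon E \to \field^B$ given by $r(b)=\chi_b$ for $b\in B$ and $r(e)(b)=M(b,e)$ for $e\in E-B$. Second, I would apply the canonical dual operation on based representations: $(r,B)\mapsto (r^*,E-B)$, where $r^*\colon E \to \field^{E-B}$ satisfies $r^*(e)(b)=r(b)(e)$ for all $e\in E-B$, $b\in B$. By the cited fact, $(r^*, E-B)$ is a based representation of $\mc^*$; in particular $r^*(b') = \chi_{b'}$ for $b'\in E-B$ (this is forced by the based-representation axiom applied to the basis $E-B$ of $\mc^*$, and can also be checked directly from the definition since the $(E-B)\times(E-B)$ block of the dual array is the Dirac delta). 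Third, I would read off that the associated matrix $N^* \in \field^{(E-B)\times B}$ of this based representation — characterized as the unique array with $r^* = r_{N^*}$, or concretely as $[\,\dk^{E-B}\mid M^*\,]$ — has off-basis block exactly $M^*$, where $M^*(e,b) = M(b,e)$; this is the transpose operation already named $r\mapsto r^*$, $M\mapsto M^*$ in the ``Based Representations'' paragraph. Hence $M^*$ is, by definition, an $(E-B)$-standard representation of $\mc^*$, which is the claim.

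\textbf{Main obstacle.} The only genuine mathematical content — that $r^*$ represents the dual matroid — is exactly the ``nontrivial fact of representation theory'' the excerpt flags but does not prove, so in the spirit of the text I would take it as given rather than reprove it (a proof would go via the standard argument that a set $S\subseteq E$ is independent in $\mc^*$ iff $E-S$ spans $\mc$ iff the corresponding rows of $[\,\dk^B\mid M\,]$, equivalently the columns indexed by $E-S$ of $[\,-M^{\mathsf T}\mid \dk^{E-B}\,]$ after row reduction, are independent — i.e.\ a rank/orthogonal-complement computation). Granting that, the ``hard part'' is purely notational: keeping straight the three layers of data (standard representation $M$, based representation $(r,B)$, matrix representation $[\,\dk^B\mid M\,]$) and the canonical identification $i \leftrightarrow \chi_i$, and verifying that the transpose $M\mapsto M^*$ is compatible with the dual $(r,B)\mapsto(r^*,E-B)$ under these identifications. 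I expect this to be a short verification once the dictionary is set up, so the proof will essentially be: "Immediate from the definition of a standard representation together with the fact, recorded above, that $(r^*,E-B)$ is a based representation of $\mc^*$."
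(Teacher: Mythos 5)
Your reduction is sound as far as it goes, but you should be aware that it takes a genuinely different route from the paper's. You treat the assertion in the ``Based Representations'' paragraph — that $(r,B)\mapsto(r^*,E-B)$ yields a bona fide representation of $\mc^*$ — as a black box and observe, correctly, that Lemma~\ref{lem:matroidlindual} is then just transcription between the three layers of data (standard array $M$, based pair $(r,B)$, full matrix $[\,\dk^B\mid M\,]$). The paper, however, does not leave that assertion as a black box: the remark immediately preceding Theorem~\ref{thm:abmatroiddual} explains that Lemma~\ref{lem:matroidlindual} is obtained by specializing that theorem to $T\lk,\uk$ the canonical coproduct structures on $\field^B,\field^{E-B}$, and Theorem~\ref{thm:abmatroiddual} itself is proved via the Exchange Lemma (Lemma~\ref{lem:exchange}): given any basis $C$ of $\mc$, the block $T(B-C,\,C-B)$ is invertible, so the exchanged family $(B-C)^\sharp T \cup ((E-B)-C)^\sharp$ is a product structure, whence $E-C$ is independent in the matroid represented by $(T,\lk^\sharp,\uk^\sharp)$, and conversely. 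In effect the paper replaces the rank/orthogonal-complement computation you sketch in your ``Main obstacle'' paragraph with a cleaner categorical argument that works in any abelian category, and derives the linear statement as a corollary; your version is logically consistent but reduces the lemma to an earlier assertion of essentially the same depth, which the paper is at pains precisely to justify by other means. If you wanted your proposal to match the paper's intent, you would cite Theorem~\ref{thm:abmatroiddual} rather than the unproved sentence about $r^*$.
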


\section{Covariant Matroids}

Let us fix a small abelian category $\cc$.    
Recall that an object in $\cc$  is \emph{simple} if it has exactly one proper subobject -- namely 0.   An object $W$ is \emph{semisimple} if it is isomorphic to a coproduct $\oplus V$, where $V$ is an indexed family of simple objects.  As a special case, the Jordan-H\"older theorem for abelian categories states that  such decompositions are essentially unique up to permutation.

\begin{theorem}[Jordan-H\"older]  If $V = (V_i)_{i = 1}^m$ and $W = (W_j)_{j \in J}$ are indexed families of simple objects in $\cc$ and
$$
\oplus V \cong  \oplus W,
$$
then there exists a bijection $\fk: \{1, \ld, m\} \to J$ such that
$$
V_p \cong W_{\fk(p)}
$$
for all $p \in \{1, \ld, m\}$.
\end{theorem}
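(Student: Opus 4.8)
The plan is to induct on $m$, turning the standard Jordan--H\"older cancellation into an application of the Schur complement machinery of Chapter~\ref{ch:exchangeformulae}. The only ingredient not already in the text is Schur's lemma for abelian categories: a nonzero morphism $f\colon S\to S'$ between simple objects is an isomorphism. This is immediate, since $\K(f)\neq S$ forces $\K(f)=0$ (so $f$ is monic) and $\I(f)\neq 0$ forces $\I(f)=S'$ (so $f$ is epic), and a map that is both monic and epic in an abelian category is invertible. In particular $\Hom(S,S')=0$ whenever $S\not\cong S'$, and $\mathrm{End}(S)$ is a division ring.

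For the base case $m=0$ we have $\oplus V=0$, hence $\oplus W\cong 0$; if $J$ were nonempty the split monomorphism $\iota_{k_0}\colon W_{k_0}\to\oplus W$ would exhibit a nonzero subobject of $0$, so $J=\emptyset$ and the empty bijection works. For the inductive step, fix an isomorphism $\phi\colon\oplus V\to\oplus W$ and consider the composites $\pi_k\com\phi\com\iota_1\colon V_1\to W_k$ for $k\in J$. These cannot all vanish, for otherwise $\phi\com\iota_1=0$, contradicting that $\phi\com\iota_1$ is a monomorphism out of the nonzero object $V_1$. Choose $j$ with $\alpha:=\pi_j\com\phi\com\iota_1\neq 0$; by Schur's lemma $\alpha$ is an isomorphism, and in particular $V_1\cong W_j$.

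Now write $\oplus V=V_1\oplus A$ and $\oplus W=W_j\oplus B$ for the biproduct structures with $A=\oplus_{i\ge 2}V_i$ and $B=\oplus_{k\in J-\{j\}}W_k$ (these complements exist because finite coproducts are biproducts and abelian categories have images). With respect to these decompositions $\phi$ acquires a block form
$$
\phi=\begin{pmatrix}\alpha & \beta\\ \gamma & \delta\end{pmatrix}
$$
whose $(1,1)$-entry $\alpha$ is invertible. Applying a column-clearing operation followed by a row-clearing operation on $\alpha$ --- exactly the operations $A\mapsto AU$ and $A\mapsto LA$ of Chapter~\ref{ch:exchangeformulae}, with $L$ and $U$ unitriangular hence invertible --- transforms $\phi$ into the block-diagonal morphism $\mathrm{diag}(\alpha,\sk)$, where $\sk=\delta-\gamma\alpha\inv\beta\colon A\to B$ is the Schur complement of $\alpha$ in $\phi$. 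Since $L\phi U$ is invertible and a direct-sum morphism is invertible iff each summand is, $\sk$ is an isomorphism, so $\oplus_{i\ge 2}V_i\cong\oplus_{k\in J-\{j\}}W_k$. The inductive hypothesis then supplies a bijection $\fk'\colon\{2,\ld,m\}\to J-\{j\}$ with $V_p\cong W_{\fk'(p)}$; extending it by $1\mapsto j$ yields the required $\fk$ (and, along the way, forces $J$ to be finite of cardinality $m$).

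The main obstacle is precisely the cancellation step: one must know that ``invertible $(1,1)$-block together with invertible total matrix implies invertible Schur complement'' persists for morphisms in an arbitrary abelian category, not merely for matrices over a field. This is what the row- and column-clearing identities of Chapter~\ref{ch:exchangeformulae} provide, since they are proved there at the level of block morphisms $\lk_i T\uk_j$ for (co)product structures $(\lk,\uk)$, which is exactly the generality needed here; the splitting lemma (Lemma~\ref{lem:splitting}) underlies the relevant decompositions. A variant route avoids cancellation entirely: apply the additive functor $\Hom(S,-)$ for each simple $S$, use the isomorphism $\Hom(S,\oplus(\text{finite}))\cong\oplus\Hom(S,-)$ together with Schur's lemma to identify $\Hom(S,\oplus V)$ as a module of length $\#\{i:V_i\cong S\}$ over the division ring $\mathrm{End}(S)$, and conclude from $\oplus V\cong\oplus W$ that this count equals $\#\{k:W_k\cong S\}$ for every $S$; assembling these equalities over the finitely many isomorphism classes produces $\fk$. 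I would present the inductive argument as primary, since it stays within the biproduct and Schur-complement language already developed.
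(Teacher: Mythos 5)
The paper does not supply a proof of this theorem; it is stated without argument as a recalled classical result, immediately after the definition of semisimple objects. Your proof is therefore not ``different from the paper's'' so much as ``filling a gap the paper leaves open,'' and it does so correctly. The only external ingredient you need beyond the paper's own machinery is Schur's lemma, which you prove cleanly from the definition of a simple object given in the text, and the cancellation step is an appealing application of the Schur-complement formalism of Chapter~\ref{ch:exchangeformulae}: once $\alpha=\pi_j\com\phi\com\iota_1$ is shown invertible, the row/column clearing operations with unitriangular $L$ and $U$ give $L\phi U=\mathrm{diag}(\alpha,\sk)$, and invertibility of a block-diagonal morphism forces invertibility of each block, so $\sk\colon A\to B$ is an isomorphism. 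This is a genuinely nice way to tie the classical Jordan--H\"older cancellation into the paper's framework, and it avoids the ad hoc ``common refinement'' bookkeeping of textbook proofs.

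One small logical wrinkle: the step ``these cannot all vanish, for otherwise $\phi\com\iota_1=0$'' uses joint monicity of the family $(\pi_k)_{k\in J}$, which is automatic for a finite biproduct but needs an argument if $J$ is allowed to be infinite a priori. You acknowledge this parenthetically at the end (``forces $J$ to be finite of cardinality $m$''), but strictly one should establish finiteness of $J$ \emph{before} invoking the $\pi_k$: since $\oplus W\cong\oplus V$ has finite length $m$, any finite $J'\su J$ gives a length-$|J'|$ subobject $\oplus_{j\in J'}W_j\hookrightarrow\oplus W$, hence $|J'|\le m$ and so $|J|\le m$. Placing that one-line observation before the inductive step makes the argument airtight. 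The alternative route you sketch via $\Hom(S,-)$ and the division ring $\mathrm{End}(S)$ is also valid and has the advantage of proving the multiplicity statement directly without induction; your choice to lead with the Schur-complement argument is reasonable given its closer alignment with the text's themes.
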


In light of the Jordan-H\"older theorem, we say  any object isomorphic to the coproduct of $m$  simple objects has \emph{length $m$}.

\newcommand{\Sim}{\mathrm{Sim}}

Let $Y$ denote the class of  simple objects in $\cc$, and for each $W$ define
\begin{align*}
\Sim(*, W) = \bigcup_{y \in Y} \Hom(y,W)  && \Sim(W, *) = \bigcup_{y \in Y} \Hom(W,y).
\end{align*}
If $W$ has finite length, then we may define $\ic^\flat$ to be the class of all subfamilies $\lk \su \Sim(*, W)$ such that $\oplus \lk$ is a monomorphism.  Dually,  $\ic^\sharp$ is the class of all subfamilies $\uk \su \Sim(W,*)$ such that $\times \uk$ is an epimorphism.  

\begin{proposition}
The pairs
\begin{align*}
\left ( \Sim(*, W), \;\ic^\flat \right ) && and && \left ( \Sim(W,*), \;\ic^\sharp \right ) 
\end{align*}
are matroid independence systems.
\end{proposition}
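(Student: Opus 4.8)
The plan is to verify the two matroid axioms---closure under inclusion (hereditary property) and the Steinitz Exchange Axiom---for each of the two pairs, treating the $\ic^\flat$ case in detail and obtaining the $\ic^\sharp$ case by categorical duality. Throughout I would exploit the fact that $W$ has finite length $n$, so every strictly increasing (or decreasing) chain of subobjects has length at most $n$; this replaces the dimension-counting arguments familiar from linear matroids.

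First, the hereditary property. Suppose $\lk \in \ic^\flat$, so $\oplus \lk$ is a monomorphism, and let $\lk' \su \lk$. The coproduct inclusion $\oplus \lk' \to \oplus \lk$ is a monomorphism (it is a biproduct inclusion, hence a split mono, as recorded in \S\ref{sec:biproducts}), and $\oplus \lk' = (\oplus \lk) \circ (\text{that inclusion})$, a composite of monomorphisms, hence itself a monomorphism. Thus $\lk' \in \ic^\flat$. The empty family maps to $0 \to W$, which is vacuously a monomorphism. Dually, if $\times \uk$ is an epimorphism and $\uk' \su \uk$, then $\times \uk$ factors through $\times \uk'$ via the split epi projection $\times \uk \to \times \uk'$; since $\times \uk = (\text{split epi}) \circ (\text{something})$ is no good---rather, the projection $\times\uk \to \times\uk'$ post-composed appropriately gives $\times\uk' $ as a factor, so one argues $\times\uk'$ is epi because $\times\uk$ is. I would spell out the correct factorization here (it is the mirror of the $\flat$ argument under $\cc \mapsto \cc^{op}$).

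Second, the Exchange Axiom. Let $\lk, \jk \in \ic^\flat$ with $|\lk| < |\jk|$. I want some $j \in \jk$ with $\lk \cup \{j\} \in \ic^\flat$. The image of $\oplus \lk$ is a subobject of $W$ of length exactly $|\lk|$ (finite length of $W$ makes "length" well-defined and additive on the simple summands, by Jordan--Hölder). If \emph{every} $j \in \jk$ failed---i.e.\ $\oplus(\lk \cup \{j\})$ were not monic for each $j$---then each $\I(j)$ would be contained in $\I(\oplus \lk)$ after the obvious reduction (a map from a simple object is either zero or monic, and the only way $\lk \cup \{j\}$ fails independence is that $j$ factors through $\I(\oplus\lk)$ modulo the others; one must be slightly careful and argue this cleanly). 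Then $\I(\oplus \jk) \su \I(\oplus \lk)$, forcing $|\jk| = \mathrm{length}(\I(\oplus\jk)) \le \mathrm{length}(\I(\oplus\lk)) = |\lk|$, contradicting $|\lk| < |\jk|$. The $\ic^\sharp$ case is the formal dual, working with kernels and quotient lengths in place of images and subobject lengths.

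\textbf{Main obstacle.} The delicate point is the step "$\lk \cup \{j\}$ fails to be independent $\Rightarrow$ $\I(j) \su \I(\oplus\lk)$," which in a general abelian category cannot be waved through by picking vectors. The clean way is the Splitting/Exchange Lemma apparatus of \S\ref{sec:kernels}--\S\ref{subsec:splittinglemma}: $\oplus(\lk \cup \{j\})$ is monic iff $j$ composed with the cokernel $W \to \mathrm{coker}(\oplus\lk)$ is monic (equivalently nonzero, since its domain is simple), and then iterate/induct to pass from "all singletons fail" to "$\I(\oplus\jk) \su \I(\oplus\lk)$." So I expect the real work is assembling this factorization-through-the-image statement for abelian categories and then feeding it into the length count; once that lemma is in hand, both axioms and both dual cases follow by short formal arguments.
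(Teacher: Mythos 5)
Your proposal is correct and follows essentially the same route as the paper's (much terser) two-sentence proof: closure under inclusion is formal, and the Steinitz Exchange Axiom reduces to a Jordan--H\"older length count on images once one shows that failure of independence of $\lk \cup \{j\}$ forces $\I(j) \su \I(\oplus \lk)$. The ``delicate step'' you flag is the right thing to worry about, and your fix is the right one: the cokernel $q$ of $\oplus\lk$ satisfies $\K(q)=\I(\oplus\lk)$, and $q\com j$ is a map out of a simple object, hence monic or zero, so $\oplus(\lk\cup\{j\})$ fails to be monic precisely when $q\com j=0$, i.e.\ when $j$ factors through $\I(\oplus\lk)$ --- after which $\I(\oplus\jk)\su\I(\oplus\lk)$ and the length inequality $|\jk|\le|\lk|$ gives the contradiction, with $\ic^\sharp$ following by the formal dual.
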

\begin{proof}
It is evident that $\ic^\flat$ and $\ic^\sharp$ are closed under inclusion.  The Steinitz Exchange property follows from the Jordan-H\"older theorem, and from the fact that subobjects of semisimple objects are semisimple.
\end{proof}

We call submatroids of $\left ( \Sim(*, W), \;\ic^\flat \right ) $  \emph{covariant}, and submatroids of matroids of $\left ( \Sim(W,*), \;\ic^\sharp \right ) $ \emph{contravariant}.

\subsubsection{Abelian representations}

A \emph{covariant  representation} of a matroid $\mc = (E, \ic)$ is a function 
\begin{align*}
r: E \to \Sim(*,W)
\end{align*}
such that 
\begin{align*}
\ic = \{I \su E: r|_I  \in \ic^\flat\}.
\end{align*}
A  \emph{contravariant  representation} is a  function
\begin{align*}
r: E \to \Sim(W,*)
\end{align*}
such that 
\begin{align*}
\ic = \{I \su E : r|_I  \in \ic^\sharp \}.
\end{align*}
By a slight abuse of terms, we will say that $r$ is \emph{semisimple} if $W$ is a semisimple object of finite length.

 To every abelian representation $r$ and every $S \su E$ corresponds a canonical \emph{restriction} operation $r \mapsto r|_S$.  While there exists no canonical \emph{contraction}, there do exist many non-canonical {contractions}  when $r$ is semisimple (Lemma \ref{lem:abrepminors}).     
 We do not define an operation to produce a matrix representation of  $\mc^*$ from a give representation  $r$, either canonical or otherwise.

 \begin{lemma} \label{lem:abrepminors} If $r$ is a semisimple covariant representation of $\mc$ and  $e$ is an idempotent  such that
 $$
\K(e) =  \I(\oplus r|_S), 
 $$
then
 \begin{align*}
 e \com \left(r|_{E-S} \right)
 \end{align*}
is a  semisimple  covariant representation of $\mc/S$.  Dually, if $r$ is a semisimple contravariant representation and $e$ is an idempotent such that 
 $$
\I(e) =  \K(\times r|_S),
 $$
then
  \begin{align*}
\left(r|_{E-S} \right) \com e
 \end{align*}
 is a semisimple contravariant representation of $\mc/S$.

 \end{lemma}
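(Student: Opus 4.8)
The plan is to verify the lemma directly from the definitions, treating the covariant and contravariant cases separately (they are formal duals, so only one requires real work). Recall that a semisimple covariant representation $r\colon E \to \Sim(*,W)$ encodes independence via the condition that $\oplus(r|_I)$ be a monomorphism; since $W$ is semisimple of finite length, images of such coproducts are subobjects that are themselves semisimple, and the Jordan--H\"older theorem governs lengths. The key structural fact I would isolate first is the following: if $J \su E$ is a basis of $S$ in $\mc$ (equivalently $r|_J$ is a maximal independent subfamily indexing $S$), then $\I(\oplus r|_S) = \I(\oplus r|_J)$, and this is precisely a subobject of $W$ of length $\rk(S)$. This matches the hypothesis that $\K(e) = \I(\oplus r|_S)$, so $e$ is an idempotent whose kernel is a length-$\rk(S)$ subobject and whose image is a complementary subobject of length $\mathrm{length}(W) - \rk(S)$.

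Next I would show that $e \com (r|_{E-S})$ is a covariant representation of $\mc/S$. The target object is $\I(e)$, which is semisimple of finite length, so the ``semisimple'' qualifier is immediate. For the independence condition, recall from \S\ref{ch:modularity} (the discussion of contraction) that $I \su E-S$ is independent in $\mc/S$ iff $I \cup J$ is independent in $\mc$ for some (equivalently every) basis $J$ of $S$. So I must show: $\oplus\big(e \com r|_I\big)$ is a monomorphism iff $\oplus(r|_{I \cup J})$ is a monomorphism. The forward implication: if $I \cup J$ is independent, then $\oplus r|_{I\cup J}$ is mono, so in particular $\oplus r|_I$ is mono and its image meets $\I(\oplus r|_J) = \K(e)$ trivially; post-composing with $e$ therefore does not collapse anything, and a short diagram-chase (or dimension/length count in the semisimple setting, using that $e$ is injective on any subobject complementary to its kernel) shows $\oplus(e \com r|_I)$ is mono. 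The reverse implication: if $\oplus(e \com r|_I)$ is mono, then $\oplus r|_I$ is mono and $\I(\oplus r|_I) \cap \K(e) = 0$; since $\K(e) = \I(\oplus r|_J)$ contains $\I(\oplus r|_J)$ on the nose, the sum $\I(\oplus r|_I) + \I(\oplus r|_J)$ is direct, and by length-additivity $\oplus r|_{I \cup J}$ is mono, i.e.\ $I \cup J$ is independent in $\mc$. This is the technical heart; the cleanest packaging is probably to invoke the splitting lemma (Lemma \ref{lem:splitting}) and the idempotent calculus of \S\ref{sec:biproducts}, noting that injectivity of $e$ restricted to a subobject complementary to $\K(e)$ is exactly a statement about biproduct structures.

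For the contravariant case, everything dualizes: $\times(r|_S)$ is an epimorphism onto a quotient of $W$ of length $\rk(S)$, the idempotent $e$ with $\I(e) = \K(\times r|_S)$ has coimage of length $\rk(S)$, and $(r|_{E-S}) \com e$ lands in maps out of $\I(e)$. The independence condition transfers by applying the covariant argument in the opposite category $\cc\op$, using that $\Sim(W,*)$ in $\cc$ is $\Sim(*,W)$ in $\cc\op$ and that the exchange/splitting lemmas are self-dual (this is the recurring theme of the chapter). So the second half of the lemma requires no new work beyond citing the first half and the formal duality.

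The main obstacle I anticipate is keeping the length/dimension bookkeeping honest in a general abelian category without secretly assuming finite-dimensional vector spaces: one cannot simply ``count dimensions'' but must instead argue via monomorphisms, epimorphisms, and the Jordan--H\"older length function, and must be careful that ``the image of $\oplus r|_S$ equals the image of $\oplus r|_J$ for a basis $J$'' genuinely holds at the level of subobjects (it does, because maximality of $J$ means each $r(s)$ for $s \in S$ factors through $\I(\oplus r|_J)$, by the definition of independence in $\mc$ together with simplicity of the source objects). Once that lemma-within-the-lemma is pinned down, the rest is a routine application of the biproduct and splitting-lemma machinery already developed in Chapter \ref{ch:exchangeformulae}.
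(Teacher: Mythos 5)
The paper states Lemma~\ref{lem:abrepminors} without proof, so there is no argument of the author's to compare against; what you supply is a correct one, and it is the natural argument. You rightly isolate the two facts that do the work: first, that $\I(\oplus r|_S) = \I(\oplus r|_J)$ for any basis $J$ of $S$, so that the hypothesis on $e$ really says $\K(e)$ is a semisimple subobject of length $\rk(S)$; and second, that for $I \su E - S$, $\oplus(e \circ r|_I)$ is a monomorphism iff $\oplus r|_{I \cup J}$ is, which you correctly reduce to the elementary observation that $e \circ f$ is mono precisely when $f$ is mono and $\I(f) \cap \K(e) = 0$. Combined with the characterization of $\ic(\mc/S)$ in terms of $I \cup J$, this yields the covariant half, and the contravariant half follows by passing to $\cc\op$ as you say.

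The one place where you lean on the reader, and which deserves to be pinned down, is the first fact. You assert each $r(s)$, $s \in S$, factors through $\I(\oplus r|_J)$ ``by the definition of independence together with simplicity of the source objects,'' but the mechanism that actually makes this work is projectivity of simple objects in a semisimple category. Concretely: if $J \cup \{s\}$ is dependent then $K = \K(\oplus r|_{J \cup \{s\}})$ is nonzero; since $\oplus r|_J$ is mono, $K$ cannot lie inside the $\oplus_J y_j$ summand, so it surjects onto $y_s$; projectivity of $y_s$ gives a splitting $\sigma = (\sigma_J, 1_{y_s}) : y_s \to K \su (\oplus_J y_j) \oplus y_s$, and composing with $\oplus r|_{J \cup \{s\}}$ gives $(\oplus r|_J)\sigma_J + r(s) = 0$, i.e.\ $r(s)$ factors through $\I(\oplus r|_J)$. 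This is precisely the step that fails outside the semisimple setting, so it is worth recording rather than waving at. Once that is written down, the rest of your argument closes cleanly.
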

 
 \subsubsection{Morphic Representations}

Posit a morphism 
$$
T: W \to W\op.
$$
We say that a product structure $\lk\op$ on $W\op$ is semisimple if $\lk\op \su \Sim(W,\;*\;)$, and  a coproduct structure $\lk$ on $W$ is semisimple if $\lk \su \Sim(\;* \;, W )$.  

A  \emph{covariant morphic representation} is a pair $(T,\lk)$, where $\lk: E \to \Sim(\; * \;, W)$ is a semisimple coproduct structure and
$$
T \lk = (T\lk_e)_{e \in E}
$$
is a covariant  representation.  Dually, a  \emph{contravariant morphic representation} is a pair $(T,\lk\op)$,  where $\lk\op$ is a semisimple product structure on $W\op$ and 
$$
\lk\op T  = (\lk\op_e T)_{e \in E}
$$
is a contravariant  representation.  

The restriction and contraction operations described for linear matrix representations have natural analogs for abelian representations.  Restriction for a covariant representation may be achieved by replacing  $\lk$ with $\lk|_S$ and $W$ with $\I(\lk|_S)$, for example.  Contraction may be realized by first identifying a pair $(\ak, \bk)$ such that $\bk \su \lk$ and $T(\ak, \bk)$ is invertible and $|\ak| = |\bk| = \rk(S)$,  then substituting $T$ with the restriction to $\I(\lk|_{E - S})$ of $Te$, where $e$ is projection onto the kernel of $(\times \ak ) T$ along $\I(\oplus \bk)$.  

\subsubsection{Standard representations}

Let $B$ be a basis in $\mc$.  A \emph{covariant $B$-standard representation} of $\mc$  is a triple $(T,\lk, \uk)$, where
$$
T: W \to W\op,
$$
$T\lk : B \to \Sim(\; * \; , W\op)$ and $\uk : (E-B) \to \Sim(\; * \; , W)$ are coproduct structures, and $Tr$ is a covariant representation of $\mc$, where
\begin{align*}
r|_B = \lk && r|_{E - B} = \uk.
\end{align*}

Dually, a \emph{contravariant $B$-standard representation} of $\mc$ is a triple $(T, \lk\op, \uk\op)$, where $\lk\op T : B \to \Sim(  W\op, \; * \;)$ and $\uk\op : (E-B) \to \Sim( W,\; * \;)$ are product structures and $r\op T$ is a covariant representation of $\mc$, where
\begin{align*}
r|_B = \lk\op && r|_{E - B} = \uk\op.
\end{align*}
(Note that by a slight abuse of notation, we write $\lk\op T$ and  $r\op T$ for the functions $e \mapsto \lk\op_e T$ and  $e\mapsto r\op_e T$).

\begin{remark}  In the special case where $T \lk$ and $\uk$ are the canonical coproduct structures on $\field^B$ and $\field^{E-B}$, respectively, then the rule
$$
(T, \lk, \uk) \mapsto  T(\lk, \uk)
$$
determines a 1-1 correspondence between covariant $B$-standard representations and $\field$-linear $B$-standard matrix representations.  A similar statement holds for the corresponding dual structures.  Therefore Theorem \ref{thm:abmatroiddual} implies Lemma \ref{lem:matroidlindual}.
\end{remark}

The simplicity of the proof of Theorem \ref{thm:abmatroiddual} should be contrasted with the technical arguments found in most introductory treatments. 

\begin{theorem} \label{thm:abmatroiddual} If 
$$
(T, \lk, \uk)
$$ is a $B$-standard covariant representation of $\mc$, then 
\begin{align*}
(T,\lk^\sharp, \uk^\sharp) 
\end{align*}
is an $(E-B)$-standard contravariant representation of $\mc^*$.   Dually, if $(T, \lk, \uk)$ is a $B$-standard contravariant representation of $\mc$, then 
$$
(T,\lk^\flat, \uk^\flat)
$$
is an $(E-B)$ standard covariant representation of $\mc^*$.
\end{theorem}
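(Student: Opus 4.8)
The plan is to show the theorem reduces, almost immediately, to the Exchange Lemma (Lemma \ref{lem:exchange}) together with the definition of matroid duality and the characterization of independence in $\ic^\flat$ and $\ic^\sharp$. First I would unpack the hypothesis: $(T,\lk,\uk)$ being a $B$-standard covariant representation means $T\lk$ is a (semisimple) coproduct structure on $W\op$ indexed by $B$, that $\uk$ is a (semisimple) coproduct structure on $W$ indexed by $E-B$, and that the combined function $Tr$ (with $r|_B=\lk$, $r|_{E-B}=\uk$) is a covariant representation of $\mc$, i.e. a subset $S\su E$ is independent in $\mc$ iff $\oplus (Tr)|_S$ is a monomorphism. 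The first substantive step is to observe that since $T\lk$ is a coproduct structure, $\lk^\sharp$ is the dual \emph{product} structure on $W\op$ (the sharp operator applied to the coproduct $T\lk$), and $\uk^\sharp$ is the dual product structure on $W$; so $(\lk^\sharp,\uk^\sharp)$ is at least the correct \emph{type} of data for an $(E-B)$-standard contravariant representation, and the $B$-standardness condition $\lk^\sharp(T\lk) = \dk$ holds by the defining property $f g^\sharp = \dk_{fg}$ of dual structures.

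The heart of the matter is the independence correspondence: I must show $I\su E$ is independent in $\mc^*$ iff $(r\op T)|_I \in \ic^\sharp$, where $r\op$ is built from $\lk^\sharp$ and $\uk^\sharp$. By definition of the dual matroid, $I$ is independent in $\mc^*$ iff $E-I$ contains a basis of $\mc$. The key algebraic input is Lemma \ref{lem:exchange}: given the product structure $\{T\lk_b : b\in B\}\cup\{\text{complement}\}$ — more precisely, grouping $B$ and $E-B$ appropriately — a family indexed by a subset $J\su E$ of cardinality $\rk(\mc)$ is a basis exactly when the relevant square submatrix of $T(\text{the product structure},\text{the coproduct structure})$ is invertible, and invertibility of complementary blocks is what links ``$E-I$ spans'' to ``$I$-indexed rows are independent.'' Concretely, for $I\su E-B$ and its complement within the ground set, the block of $T(\lk^\sharp,\uk)$ indexed by $(B\setminus\text{something}, I)$ being invertible is equivalent both to a basis-exchange statement in $\mc$ and to $\times((r\op T)|_I)$ being an epimorphism; this is precisely the content of the Exchange Lemma read in the two directions. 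I would carry this out by first treating the special linear case mentally (where it is the classical fact that the row space of $[\,\dk^B\mid M\,]$ and the row space of $[\,-M^{\mathsf T}\mid \dk^{E-B}\,]$ are orthogonal complements), then noting that every step used only the formal identities $f g^\sharp=\dk_{fg}$, $f^\flat g=\dk_{fg}$, $\sum_{f\in\wk}e_f=1$, and the Jordan--H\"older/semisimplicity facts already invoked to set up $\ic^\flat,\ic^\sharp$ — so the argument transfers verbatim to the abelian setting.

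The dual assertion (swapping covariant and contravariant, $\sharp$ and $\flat$) follows by the same argument with all arrows reversed, i.e. by applying the first half in the opposite category $\cc\op$, which is again abelian, so no separate work is needed beyond citing this symmetry. The main obstacle I anticipate is not conceptual but bookkeeping: getting the index sets right when the exchanged basis $J$ straddles $B$ and $E-B$, so that the ``complementary block'' genuinely corresponds to $E-I$ and the invertibility statements match up on the nose. I expect the cleanest route is to reduce first to the case $I\su E-B$ (a single basis exchange handles the general case, moving $I\cap B$ out of the standard basis and invoking that a standard representation exists for any basis, cf.\ \S\ref{sec:basisexchange}), and for that case to write down the one $2\times 2$ block identity
\[
\begin{pmatrix} \dk^B & M \end{pmatrix}
\begin{pmatrix} -M \\ \dk^{E-B} \end{pmatrix}^{\!\sharp}
= -M + M = 0,
\]
interpreted as $(\lk^\sharp)(\,\uk\text{-dual}\,) = 0$ in the abelian category, and then quote Lemma \ref{lem:exchange} to convert vanishing plus dimension (length) count into the basis/independence equivalence. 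Once that single identity is in place the theorem is immediate.
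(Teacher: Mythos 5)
Your proposal is correct and reaches for the same tool as the paper: both reduce the theorem to a single invocation of the Exchange Lemma (Lemma~\ref{lem:exchange}), and your block identity
\[
\begin{pmatrix} \dk^B & M \end{pmatrix}
\begin{pmatrix} -M \\ \dk^{E-B} \end{pmatrix}
= 0
\]
is the right concrete shadow of it. The organizations differ, though. You propose first reducing to $I \subseteq E - B$ by a change of standard basis, which silently requires a coherence lemma -- that passing from a $B$-standard to a $C$-standard representation carries $(T, \lk^\sharp, \uk^\sharp)$ to the dual structure of the new representation; you flag this as bookkeeping but do not discharge it. The paper sidesteps the reduction entirely by working with the symmetric difference: for an arbitrary basis $C$ of $\mc$, the submatrix $T(B-C,\,C-B)$ is invertible, so one application of the Exchange Lemma produces the product structure $(B-C)^\sharp T \cup ((E-B)-C)^\sharp$, indexed precisely by $E-C$, whence $E-C$ is independent in $(T,\lk^\sharp,\uk^\sharp)$; reversing gives the converse. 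Your version isolates a transparent $2\times 2$ block identity; the paper's bypasses the change-of-basis compatibility lemma at the cost of a slightly less explicit intermediate object. Both routes work, and both bottom out at exactly the same lemma.
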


\begin{proof}  Let $C$ be any basis of $\mc$, and let
\begin{align*}
\ak\op = B - C && \ak = C - B.
\end{align*}
Submatrix  $T(\ak\op, \ak)$ is invertible, so the exchange lemma  provides that 
$$
(B-C)^\sharp T \cup ((E-B)-C)^\sharp
$$
is a product structure.  Thus $E-C$ is independent in the matroid represented by $(T, \lk^\sharp, \uk^\sharp)$.  This argument is easily reversed, and the desired conclusion follows.
\end{proof}

\bibliographystyle{acm}
\bibliography{EATbib.bib}{}

\end{document}